
\documentclass[11pt,a4paper]{article}
\usepackage[german,english]{babel}
\usepackage{latexsym}
\usepackage{amsmath, amsfonts}
\usepackage{amssymb}	
\usepackage{amsthm}	
\usepackage{ifthen}
\usepackage{hyperref} 
\usepackage{graphicx} 
\usepackage[arrow,matrix]{xy} 
\usepackage{verbatim} 

\setlength{\textwidth}{15.5 cm}
\setlength{\textheight}{23.2cm}
\setlength{\oddsidemargin}{0.0cm}	
\setlength{\evensidemargin}{0.0cm}	

\setlength{\topmargin}{0.0cm}
\setlength{\topskip}{0.0cm}
\setlength{\headheight}{0.0cm}
\setlength{\headsep}{1.0cm}


\author{Michael Klotz\footnote{Technische Universit\"at Darmstadt, Schlo\ss gartenstra\ss e 7, D-64289 Darmstadt, Deutschland,\newline klotz@mathematik.tu-darmstadt.de}}
\date{}
\title{An Integrability Criterion for Banach--Lie Triple Systems}

\theoremstyle{definition} 
\newtheorem{definition}{Definition}[section]
\theoremstyle{plain} 
\newtheorem{theorem}[definition]{Theorem}	
\newtheorem{proposition}[definition]{Proposition}	
\newtheorem{lemma}[definition]{Lemma}
\newtheorem{corollary}[definition]{Corollary}	
\theoremstyle{definition}
\newtheorem{example}[definition]{Example}
\newtheorem{remark}[definition]{Remark}

%


%
\newenvironment{acknowledgements}{\section*{Acknowledgements}}{}

\newcommand{\temp}{0} 
	{\renewcommand{\temp}{\arraystretch} \renewcommand{\arraystretch}{#1}}
	{\renewcommand{\arraystretch}{\temp}}

\newcommand{\NN}{\mathbb N}
\newcommand{\ZZ}{\mathbb Z}

\newcommand{\RR}{\mathbb R}
\newcommand{\CC}{\mathbb C}

\newcommand{\eins}{{\bf 1}}

\newcommand{\calU}{{\cal U}}


\DeclareMathOperator{\ad}{ad}

\DeclareMathOperator{\Aut}{Aut}
\DeclareMathOperator{\const}{const}

\DeclareMathOperator{\Der}{Der}

\DeclareMathOperator{\Exp}{Exp}
\DeclareMathOperator{\ev}{ev}

\DeclareMathOperator{\flow}{Fl}
\DeclareMathOperator{\Fr}{Fr}
\DeclareMathOperator{\GL}{GL}

\DeclareMathOperator{\id}{id}
\DeclareMathOperator{\im}{im}
\DeclareMathOperator{\Inn}{Inn}

\DeclareMathOperator{\Lts}{Lts}
\DeclareMathOperator{\per}{per}

\DeclareMathOperator{\spann}{span}

\DeclareMathOperator{\Sym}{Sym}
\DeclareMathOperator{\Tor}{Tor}

\newcommand{\gl}{\mathfrak{gl}}
\newcommand{\calLts}{\mathfrak{Lts}}

\begin{document}
\maketitle
%
%
%
%
\begin{abstract}
	To give a criterion for the integrability of Banach--Lie triple systems, we follow the construction of the period group of a Lie algebra and define the period group of a Lie triple system as an analogous concept. We show that a Lie triple system is integrable if and only if its period group is discrete. Along the way, we see how to turn the path and the loop space of a pointed symmetric space into pointed symmetric spaces.\\[-0.5\baselineskip]
			
	\noindent Keywords: Banach symmetric space, Lie triple system, period group, path space\\[-0.5\baselineskip]
	
	\noindent MSC2010: 53C35, 22E65
\end{abstract}
%
%
%
\section{Introduction}
\label{introduction}
A \emph{symmetric space} in the sense of O.~Loos (cf.\ \cite{Loo69}) is a smooth manifold $M$ endowed with a smooth multiplication map $\mu\colon M\times M\rightarrow M$ such that each left multiplication map $\mu_x:=\mu(x,\cdot)$ (with $x\in M$) is an involutive automorphism of $(M,\mu)$ with isolated fixed\linebreak point $x$.

Some basic material on infinite-dimensional symmetric spaces can be found in \cite{Nee02Cartan} and \cite{Ber08}. In \cite{Klo09b} and \cite{Klo10Subspaces}, the author started working towards a Lie theory of symmetric spaces modelled on Banach spaces. In particular, in \cite{Klo09b}, one finds an integrability theorem for morphisms of Lie triple systems and the result that the automorphism group of a connected symmetric space $M$ is a Banach--Lie group acting smoothly and transitively on $M$. Moreover, in \cite{Klo10Subspaces}, the author deals with the different kinds of reflection subspaces and their Lie triple systems and gives a quotient theorem.
The purpose of this paper is to continue on this path, giving an integrability criterion for Lie triple systems.

In the finite-dimensional case, every Lie triple system is integrable to a pointed symmetric space, i.e., arises as the Lie triple system of a pointed symmetric space. Indeed, given a Lie triple system $\mathfrak{m}$, it can be embedded into a symmetric Lie algebra $S(\mathfrak{m})=S(\mathfrak{m})_+\oplus \mathfrak{m}$ called the standard embedding, which is integrable to a 1-connected symmetric Lie group $(G,\sigma)$, with the consequence that the Lie triple system of the pointed 1-connected symmetric space $M:=G/(G^\sigma)_0$ is isomorphic to $\mathfrak{m}$ (cf.\ \cite[p.~116]{Loo69}).

Since infinite-dimensional Lie algebras are not always integrable, we also expect obstructions concerning the integrability of infinite-dimensional Lie triple systems. In the Banach context, it is a well-known result that a Banach--Lie algebra $\mathfrak{g}$ is integrable if and only if its period group $\Pi(\mathfrak{g})$ (which is a subgroup of the center $\mathfrak{z}(\mathfrak{g})$ of $\mathfrak{g})$ is discrete (cf.\ \cite{EK64} and \cite{GN03}). In this work, we give a similar integrability criterion for Lie triple systems. To this end, we prove that a Lie triple system $\mathfrak{m}$ is integrable if and only if its standard embedding is integrable to a (symmetric) Lie group. Then we define a period group $\Pi(\mathfrak{m})$ (which is a subgroup of the center $\mathfrak{z}(\mathfrak{m})$ of $\mathfrak{m}$) by following closely the corresponding construction of a period group given in \cite{GN03}. We show that $\mathfrak{m}$ is integrable if and only if its period group $\Pi(\mathfrak{m})$ is discrete.

The idea for defining the period group $\Pi(\mathfrak{m})$ is to refine, in a first step, the process leading to the period group $\Pi(\mathfrak{g})$ of a Banach--Lie algebra $\mathfrak{g}$ by considering additional involutive automorphisms and secondly to apply this refinement to the standard embedding of $\mathfrak{m}$. By turning the arising symmetric Lie groups into suitable symmetric pairs, we obtain morphisms of symmetric pairs that induce morphisms of symmetric spaces. More generally, we show that this works not only for the standard embedding, but also for every symmetric Lie algebra $\mathfrak{g}=\mathfrak{g}_+\oplus\mathfrak{g}_-$ with $\mathfrak{g}_-=\mathfrak{m}$ and $\mathfrak{z}(\mathfrak{g})=\mathfrak{z}(\mathfrak{m})$. We observe that $\Pi(\mathfrak{g})\subseteq \Pi(\mathfrak{m})$. One of our main results is then the equivalence of four conditions, namely the integrability of $\mathfrak{m}$, the integrability of $\mathfrak{g}$, the discreteness of $\Pi(\mathfrak{m})$ and the discreteness of $\Pi(\mathfrak{g})$.

Given a pointed 1-connected symmetric space $(M,b)$ with Lie triple system $\mathfrak{m}$, the period group $\Pi(\mathfrak{m})$ can also be computed by a more explicit formula: The exponential map $\Exp_{(M,b)}$ restricts to a morphism $\Exp_{(M,b)}|_{\mathfrak{z}(\mathfrak{m})}$ of pointed symmetric spaces and we obtain $\Pi(\mathfrak{m})=\ker(\Exp_{(M,b)}|_{\mathfrak{z}(\mathfrak{m})})$.

By means of our results, we explain how examples of non-integrable Lie triple systems can be obtained: One source of examples are suitable quotients of integrable Lie triple systems with non-trivial period group. Other examples are obtained from non-integrable Lie algebras. Further, we apply our results to characterize the integrability of complexifications of real Lie algebras: Given a real Banach--Lie algebra $\mathfrak{g}$, its complexification $\mathfrak{g}_\CC:=\mathfrak{g}\oplus i\mathfrak{g}$ is integrable if and only if the Lie algebra $\mathfrak{g}$ and the Lie triple system $i\mathfrak{g}$ both are integrable.

Since the approach of \cite{GN03} is quite geometric, we prepare the required analogous concepts for Banach symmetric spaces:
We provide the basic facts on universal covering spaces and see how to turn the path and the loop space of a pointed symmetric space into pointed symmetric spaces.

To state this more precisely, let $(M,b)$ be a pointed symmetric space. The set $P(M,b):=\{\gamma\in C([0,1],M)\colon \gamma(0)=b\}$ carries a natural structure of a pointed reflection space,
where the multiplication on $P(M,b)$ is defined pointwise and where the base point is given by the constant curve $\const_b$ with value $b$. We observe that it carries a unique structure of a Banach manifold making it a pointed symmetric space with Lie triple system $P(\mathfrak{m}):=\linebreak\{\gamma\in C([0,1],\mathfrak{m})\colon \gamma(0)=0\}$ (where the Lie bracket is defined pointwise) such that $$P(\Exp_{(M,b)})\colon P(\mathfrak{m})\rightarrow P(M,b),\ \gamma \mapsto \Exp_{(M,b)}\circ\gamma$$
is its exponential map. The topology of this \emph{path (symmetric) space} $P(M,b)$ coincides with the compact-open topology, so that $P(M,b)$ is contractible.

The guiding philosophy of our work is that a connected symmetric space actually is a Banach homogeneous space: It can be identified with the quotient of its automorphism group by a stabilizer subgroup (cf.\ \cite{Klo09b}). This is based on the theorem that its automorphism group is a Banach--Lie group. Therefore, we recall symmetric Lie groups, symmetric pairs and the functor $\Sym$ that assigns to a symmetric pair its quotient symmetric space (cf.\ \cite{Klo10Subspaces}). Since this functor preserves the exactness of short exact sequences, it constitutes a useful tool to translate between symmetric Lie groups and symmetric spaces.

\tableofcontents

%
%
%
\section{Basic Concepts and Notation}
%
%
\subsection{Terminology for Submanifolds and Lie Subgroups}
\label{sec:terminologySub}
A subset $N$ of a smooth Banach manifold $M$ is called a \emph{local submanifold at $x\in N$} if there exists a chart $\varphi\colon U\rightarrow V\subseteq E$ of $M$ at $x$ and a closed subspace $F$ of $E$ such that $\varphi(U\cap N)=V\cap F$. If $N$ is a local submanifold at each $x\in N$, then it is called a \emph{submanifold} and we obtain charts $\varphi|_{U\cap N}^{V\cap F}$ for $N$ that define on $N$ the structure of a manifold, which is compatible with the subspace topology. If each $F$ can be chosen as a split subspace of $E$, then $N$ is called a \emph{split submanifold}.
Note that, for each $x\in N$, the tangent map $T_x\iota$ of the inclusion map $\iota\colon N\hookrightarrow M$ is a (closed) topological embedding. A submanifold $N$ splits if and only if the inclusion map $\iota$ is an immersion, i.e., if for each $x\in N$, the image of the topological embedding $T_x\iota$ splits as a Banach space.

A subgroup $H$ of a Banach--Lie group $G$ is called a \emph{Lie subgroup} if it is a submanifold of $G$. It is called a \emph{split Lie subgroup} if it is even a split submanifold.
Every Lie subgroup is closed. Given a Lie subgroup $H\leq G$ with inclusion map $\iota\colon H\hookrightarrow G$, the Lie algebra $L(H)$ of $H$ can be identified with $\mathfrak{h}:=L(\iota)(L(H))$, which is given by $\mathfrak{h}=\{x\in L(G)\colon \exp_G(\RR x)\subseteq H\}$. There exists an open $0$-neighborhood $V\subseteq L(G)$ such that $\exp_G|_V$ is a local diffeomorphism onto an open subset of $M$ and $\exp_G(V\cap \mathfrak{h})=\exp_G(V)\cap H$ (cf.\ \cite[Th.~IV.3.3]{Nee06} or \cite[Prop.~3.4]{Hof75}).

An \emph{integral subgroup} of $G$ is a subgroup $H\leq G$ endowed with a Banach--Lie group structure such that the inclusion map $\iota\colon H\rightarrow G$ is smooth and $L(\iota)$ is a topological embedding.
Given some closed subalgebra $\mathfrak{h}$ of the Lie algebra $L(G)$ of $G$, then the subgroup $H:=\langle\exp_G(\mathfrak{h})\rangle$ carries a unique structure of a connected integral subgroup of $G$ with Lie algebra $\mathfrak{h}$ (cf.\ \cite[Satz~12.3]{Mai62}).
More generally, every Lie algebra $\mathfrak{h}$ that admits an injective smooth homomorphism $\mathfrak{h}\rightarrow L(G)$ is integrable (cf.\ \cite[(***) in \S 3]{EK64} or \cite[Cor.~IV.4.10]{Nee06}).
Note that an integral subgroup $H\leq G$ that is compatible with the subspace topology is a Lie subgroup.

%
%
\subsection{Lie Triple Systems and Symmetric Spaces} \label{sec:LtsAndSymSpace}
A \emph{Lie triple system} (cf.\ \cite{Loo69}) is a Banach space $\mathfrak{m}$ endowed with a continuous trilinear map\linebreak $[\cdot,\cdot,\cdot]\colon \mathfrak{m}^3\rightarrow\mathfrak{m}$ that satisfies $[x,x,y]=0$ and $[x,y,z] + [y,z,x] + [z,x,y] = 0$ as well as
\[[x,y,[u,v,w]] \ =\ [[x,y,u],v,w]+[u,[x,y,v],w]+[u,v,[x,y,w]]\]
for all $x,y,z,u,v,w\in \mathfrak{m}$. Continuous linear maps between Lie triple systems that are compatible with the Lie brackets are called \emph{morphisms}. A subspace $\mathfrak{n}$ of $\mathfrak{m}$ is called a \emph{triple subsystem} (denoted by $\mathfrak{n}\leq \mathfrak{m}$) if it is stable under the Lie bracket. If it satisfies the stronger condition $[\mathfrak{n},\mathfrak{m},\mathfrak{m}]\subseteq \mathfrak{n}$, then it is called an \emph{ideal} and we write $\mathfrak{n} \unlhd \mathfrak{m}$. An ideal $\mathfrak{n}\unlhd\mathfrak{m}$ automatically satisfies also the conditions $[\mathfrak{m},\mathfrak{n},\mathfrak{m}]\subseteq \mathfrak{n}$ and $[\mathfrak{m},\mathfrak{m},\mathfrak{n}]\subseteq \mathfrak{n}$.
The closed ideal $\mathfrak{z}(\mathfrak{m}):=\cap_{y,z\in\mathfrak{m}}\ker([\cdot,y,z])\unlhd \mathfrak{m}$ is called the \emph{center of $\mathfrak{m}$}. It also satisfies $[\mathfrak{m},\mathfrak{z}(\mathfrak{m}),\mathfrak{m}]=0$ and $[\mathfrak{m},\mathfrak{m},\mathfrak{z}(\mathfrak{m})]=0$.

A \emph{reflection space} (cf.\ \cite{Loo67a,Loo67b}) is a set $M$ endowed with a multiplication map $\mu\colon M\times M\rightarrow M$, $(x,y)\mapsto x\cdot y$, such that each left multiplication map $\mu_x:=\mu(x,\cdot)$ (with $x\in M$) is an involutive automorphism of $(M,\mu)$ with fixed point $x$. A subset of $M$ that is stable under $\mu$ is called a \emph{reflection subspace}. Given a subset $S\subseteq M$, we denote by $\langle S\rangle\leq M$ the \emph{generated reflection subspace}, i.e., the smallest reflection subspace of $M$ that contains $S$.
A \emph{symmetric space} is a smooth Banach manifold $M$ endowed with a smooth multiplication map $\mu\colon M\times M\rightarrow M$ such that $(M,\mu)$ is a reflection space for which each $x\in M$ is an isolated fixed point of the symmetry $\mu_x$. Maps between reflection spaces that are compatible with multiplication are called \emph{homomorphisms}. Concerning symmetric spaces, we refer to smooth homomorphisms as \emph{morphisms}.
If there is no confusion, we usually denote a reflection space (resp., symmetric space) simply by $M$ instead by $(M,\mu)$.

The following facts about symmetric spaces can be essentially found in \cite{Klo09b}, which is partially due to \cite{Ber08} and \cite{Nee02Cartan}, and are presented in nearly this form also in \cite{Klo10Subspaces}.
The tangent bundle $TM$ endowed with the multiplication $T\mu$ is a symmetric space. In each tangent space $T_xM$ (with $x\in M$), the product satisfies $v\cdot w =2v - w$. A smooth vector field on $M$ is called a \emph{derivation} if it is a morphism of symmetric spaces. Note that every derivation is a complete vector field. The set $\Der(M)$ of all derivations is a Lie subalgebra of the Lie algebra of all smooth vector fields on $M$.\footnote{Here, the term \emph{Lie algebra} does not include a topological structure.} Given a distinguished point $b\in M$, called the \emph{base point}, the symmetry $\mu_b$ induces an involutive automorphism $(\mu_b)_\ast$ of $\Der(M)$ given by $(\mu_b)_\ast(\xi):=T\mu_b\circ\xi\circ\mu_b$. The (+1)-eigenspace $\Der(M)_+$ of $(\mu_b)_\ast$ is a subalgebra of $\Der(M)$ and coincides with the kernel of the evaluation map $\ev_b\colon\Der(M)\rightarrow T_bM$, $\xi\mapsto \xi(b)$. The (-1)-eigenspace $\Der(M)_-$ of $(\mu_b)_\ast$ is stable under the triple bracket $[\cdot,\cdot,\cdot]:=[[\cdot,\cdot],\cdot]$. Via the evaluation isomorphism $\ev_b|_{\Der(M)_-}\colon\Der(M)_-\rightarrow T_bM$ of vector spaces, the tangent space $T_bM$ can be equipped with that triple bracket. It becomes a \emph{Lie triple system} that we denote by $\Lts(M,b)$. Assigning to each morphism of pointed symmetric spaces its tangent map at the base point, we obtain a covariant functor $\Lts$ (called the \emph{Lie functor}) from the category of pointed symmetric spaces to the category of Lie triple systems.

A Banach space can be considered as a symmetric space with natural multiplication $x\cdot y :=2x-y$. From this perspective, a smooth curve $\alpha\colon \RR\rightarrow M$ is called a \emph{one-parameter subspace of $M$} if $\alpha$ is a morphism of symmetric spaces. For each $v\in \Lts(M,b)$, there is a unique one-parameter subspace $\alpha_v$ with $\alpha_v^\prime(0)=v$. The map
$\Exp_{(M,b)}\colon \Lts(M,b)\rightarrow M$, $v\mapsto \alpha_v(1)$
is called the \emph{exponential map of $(M,b)$}. It is a smooth map with $T_0\Exp_{(M,b)}=\id_{\Lts(M,b)}$, so that it is a local diffeomorphism at $0$ and hence admits restrictions that are charts at $b$ (called \emph{normal charts}).
A morphism $f\colon (M_1,b_1)\rightarrow (M_2,b_2)$ of pointed symmetric spaces intertwines the exponential maps in the sense that
$f\circ \Exp_{(M_1,b_1)} = \Exp_{(M_2,b_2)} \circ \Lts(f)$. For details concerning the exponential map, see \cite{Klo09b}, whose approach is based on affine connections.\footnote{In \cite{Klo09b}, one-parameter subspaces are not dealt with, but it is easy to check that they coincide with geodesics. Cf.\ \cite[p.~87]{Loo69} for the finite-dimensional case.}

Given a one-parameter subspace $\alpha$, we call the automorphisms $\tau_{\alpha,s}:=\mu_{\alpha(\frac{1}{2}s)}\circ\mu_{\alpha(0)}$, $s\in\RR$, of $M$ \emph{translations along $\alpha$}. They satisfy $\tau_{\alpha,s}(\alpha(t))=\alpha(t+s)$ for all $t\in\RR$.
If $M$ is connected, then any two points can be joined by a sequence of one-parameter subspaces, since we have normal charts. Therefore, in view of the identities $\alpha(1)=(\tau_{\alpha,\frac{1}{n}})^n(\alpha(0))$ for all $n\in\NN$, it is easy to see that a connected $M$ is generated by each subset $U\subseteq M$ with non-empty interior. As a consequence, the \emph{basic connected component} $M_0$ of $(M,b)$ is generated by the image of the exponential map $\Exp_{(M,b)}$.

The automorphism group $\Aut(M)$ of a reflection space $M$ (resp., of a symmetric space) has two important (normal) subgroups:
The set of all symmetries $\mu_x$, $x\in M$, generates a subgroup which is denoted by $\Inn(M)$ and is called the \emph{group of inner automorphisms}. The subgroup $G(M)$ generated by all products $\mu_x\mu_y$, $x,y\in M$, is called the \emph{group of displacements} (cf.\ \cite[p.~64]{Loo69}).
For a connected symmetric space $M$, these groups of automorphisms act transitively on $M$, since there are translations along one-parameter subspaces.

Given a homomorphism $f\colon M_1\rightarrow M_2$, then for all $g_1\in\Inn (M_1)$, there exists a (not necessarily unique) $g_2\in\Inn(M_2)$ with
\begin{equation} \label{eqn:imageOfInnAut}
	f\circ g_1 \ =\ g_2 \circ f,
\end{equation}
because for a decomposition $g_1= \mu_{x_1}\mu_{x_2}\cdots\mu_{x_n}$, we can put $g_2:=\mu_{f(x_1)}\mu_{f(x_2)}\cdots\mu_{f(x_n)}$. As a consequence, a homomorphism $f\colon M_1\rightarrow M_2$ of reflection spaces that is locally smooth around some $b\in M_1$ is automatically smooth (and hence a morphism of symmetric spaces) if $M_1$ is $\Inn(M_1)$-transitive. Thus, given pointed symmetric space $(M_1,b_1)$ and $(M_2,b_2)$ with Lie triple systems $\mathfrak{m}_1$ and $\mathfrak{m}_2$, respectively, then a homomorphism $f\colon (M_1,b_1)\rightarrow (M_2,b_2)$ of pointed reflection spaces that satisfies
$f\circ \Exp_{(M_1,b_1)} = \Exp_{(M_2,b_2)} \circ A$
for some continuous linear map $A\colon \mathfrak{m}_1\rightarrow \mathfrak{m}_2$  is a morphism of pointed symmetric spaces with $\Lts(f)=A$.

Morphisms of pointed symmetric spaces to which the Lie functor assigns the same map coincide if the domain space is connected. If the domain space is moreover 1-connected (i.e., connected and simply connected), then every morphism of the Lie triple systems can be uniquely integrated to a morphism of pointed symmetric spaces. We shall refer to this fact as the \emph{Integrability Theorem} (cf.\ \cite[Th.~5.20]{Klo09b}). Note that isomorphisms are integrated to isomorphisms.

A symmetric space $M$ carries a torsionfree natural affine connection such that all symmetries $\mu_x$, $x\in M$, are affine automorphisms. The geodesics are just the one-parameter subspaces. We shall only briefly touch these aspects and refer to \cite{Klo09b} for details on affine Banach manifolds.

%
%
\subsection{Reflection Subspaces and Quotients of Symmetric Spaces}
\label{sec:subspacesAndQuotients}

An \emph{integral subspace} of a symmetric space $M$ is a reflection subspace $N\leq M$ endowed with a symmetric space structure such that the inclusion map $\iota\colon N\rightarrow M$ is smooth and for each $b\in N$, the induced morphism $\Lts_b(\iota)\colon \Lts(N,b)\rightarrow \Lts(M,b)$ of Lie triple systems is a (closed) topological embedding.
In the light of (\ref{eqn:imageOfInnAut}), we know for $\Inn(N)$-transitive (e.g.\ connected) $N$ that, for each $b_1,b_2\in N$, the map $\Lts_{b_1}(\iota)$ is a topological embedding if and only if $\Lts_{b_2}(\iota)$ is one.

We shall frequently identify $\Lts(N,b)$ with its image $\mathfrak{n}\subseteq \Lts(M,b)$ under $\Lts_b(\iota)$. Thus, the exponential map $\Exp_{(N,b)}$ of $(N,b)$ is the restriction $\Exp_{(M,b)}|_\mathfrak{n}$ of the exponential map $\Exp_{(M,b)}$ of $(M,b)$.
The basic connected component $N_0$ of $N$ is given by $N_0=\langle\Exp_{(M,b)}(\mathfrak{n})\rangle$ (cf.\ Section~\ref{sec:LtsAndSymSpace}).
\begin{proposition}[{cf.\ \cite[Prop.~3.14]{Klo10Subspaces}}] \label{prop:integralSubreflectionSpace}
	Let $(M,b)$ be a pointed symmetric space and $\mathfrak{n}$ a closed triple subsystem of $\Lts(M,b)$. Then $N:=\langle \Exp_{(M,b)}(\mathfrak{n})\rangle \leq M$ can be uniquely made an integral subspace of $M$ with $\Lts(N,b)=\mathfrak{n}$. Note that $N$ is connected.
\end{proposition}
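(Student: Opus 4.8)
The plan is to realize $N$ as the orbit of $b$ under a suitable integral subgroup of a Banach--Lie group acting on $M$, and to read off its symmetric space structure from a symmetric pair. I would begin with \emph{uniqueness}, the softer half. Any integral subspace structure on $N$ has exponential map $\Exp_{(M,b)}|_{\mathfrak n}$ and hence admits $\Exp_{(M,b)}|_{\mathfrak n}$ as a normal chart at $b$. So if $S_1,S_2$ are two integral subspace structures on the set $N$ with $\Lts(N,b)=\mathfrak n$ — each automatically connected since $N=N_0=\langle\Exp_{(M,b)}(\mathfrak n)\rangle$ — then in the normal charts at $b$ the identity $\id_N\colon (N,S_1)\to (N,S_2)$ is represented by $\id_{\mathfrak n}$ near $0$ and is therefore locally smooth at $b$. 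Since $(N,S_1)$ is connected, hence $\Inn(N)$-transitive, the criterion following (\ref{eqn:imageOfInnAut}) upgrades $\id_N$ to a morphism with $\Lts(\id_N)=\id_{\mathfrak n}$; the same applies to its inverse, so $S_1=S_2$.

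For \emph{existence}, note first that $N=\langle\Exp_{(M,b)}(\mathfrak n)\rangle\subseteq M_0$, so I may replace $M$ by its basic connected component and use the guiding philosophy that $M_0$ is a Banach homogeneous space: choose a Banach--Lie group $G$ acting smoothly and transitively on $M_0$ (e.g.\ $\Aut(M_0)_0$ or the group of displacements), with stabilizer $H$ of $b$ and associated involution $\sigma$ coming from conjugation by $\mu_b$. This equips $\mathfrak g:=L(G)$ with the structure of a symmetric Lie algebra $\mathfrak g=\mathfrak g_+\oplus\mathfrak g_-$, where $\mathfrak g_\pm$ are the eigenspaces of $L(\sigma)$ and $\mathfrak g_-\cong\mathfrak m=\Lts(M_0,b)$, compatibly with the exponential maps in the sense that $\Exp_{(M,b)}(v)=\exp_G(v)\cdot b$ for $v\in\mathfrak g_-$.

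Inside $\mathfrak g$ I would form the candidate subalgebra $\mathfrak h:=\overline{[\mathfrak n,\mathfrak n]}\oplus\mathfrak n$, the closure of the standard embedding of $\mathfrak n$ realized in $\mathfrak g$. A short computation using only continuity of the bracket and the triple-system axioms shows that $\mathfrak h$ is a closed, $\sigma$-invariant subalgebra with $\mathfrak h\cap\mathfrak g_-=\mathfrak n$: one checks $[\,\overline{[\mathfrak n,\mathfrak n]},\mathfrak n\,]\subseteq\overline{[\mathfrak n,\mathfrak n,\mathfrak n]}\subseteq\mathfrak n$ and, via the Jacobi identity, $[\,\overline{[\mathfrak n,\mathfrak n]},\overline{[\mathfrak n,\mathfrak n]}\,]\subseteq\overline{[\mathfrak n,\mathfrak n]}$, while closedness follows because $\overline{[\mathfrak n,\mathfrak n]}\subseteq\mathfrak g_+$ and $\mathfrak n\subseteq\mathfrak g_-$ lie in complementary closed summands. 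By the result recalled in Section~\ref{sec:terminologySub}, $K:=\langle\exp_G(\mathfrak h)\rangle$ is a connected integral subgroup of $G$ with Lie algebra $\mathfrak h$, and $L(\sigma)(\mathfrak h)=\mathfrak h$ makes $K$ $\sigma$-invariant, so $(K,\sigma|_K)$ is a symmetric Lie group. Turning it into a symmetric pair and applying $\Sym$ yields a pointed symmetric space with Lie triple system $\mathfrak h\cap\mathfrak g_-=\mathfrak n$. Because $K$ is a genuine subgroup of $G$, the orbit map induces an injection $K/(K\cap H)\hookrightarrow G/H=M_0$, so the orbit $K\cdot b$ is literally a subset of $M$ carrying a symmetric space structure with $\Lts=\mathfrak n$, whose inclusion is smooth and whose $\Lts_b$ is the inclusion $\mathfrak n\hookrightarrow\mathfrak m$ — a closed topological embedding since $\mathfrak n$ is closed. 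Finally, intertwining of exponentials gives $\Exp_{(K\cdot b,\,b)}=\Exp_{(M,b)}|_{\mathfrak n}$, so the connected space $K\cdot b$ is generated by $\Exp_{(M,b)}(\mathfrak n)$ and hence equals $N$, exhibiting $N$ as the desired integral subspace.

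The main obstacle is the bookkeeping at the group level: verifying that $M_0$ genuinely admits the symmetric-pair description with $\mathfrak g_-\cong\mathfrak m$ and $\Exp_{(M,b)}(v)=\exp_G(v)\cdot b$, and that $\Sym$ returns exactly the orbit $K\cdot b$ with the claimed Lie triple system. A more hands-on route would avoid the group and instead build a chart at $b$ directly from a normal chart, showing $N\cap U=\Exp_{(M,b)}(V\cap\mathfrak n)$ for a small $0$-neighborhood $V\subseteq\mathfrak m$; there the crux is a local product formula, namely that $\mu_{\Exp_{(M,b)}(v)}\bigl(\Exp_{(M,b)}(w)\bigr)\in\Exp_{(M,b)}(\mathfrak n)$ for $v,w\in\mathfrak n$ near $0$ — that is, local stability of $\Exp_{(M,b)}(\mathfrak n)$ under its own symmetries — whose clean proof again reduces to the triple-system closure of $\mathfrak n$.
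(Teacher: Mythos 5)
Your argument is correct. The paper does not prove this proposition itself but imports it from \cite[Prop.~3.14]{Klo10Subspaces}; your reconstruction is sound and uses exactly the machinery the paper assembles for this purpose: uniqueness via shared normal charts plus $\Inn(N)$-transitivity (the criterion after (\ref{eqn:imageOfInnAut})), and existence via homogeneity $M_0\cong \Aut(M_0)/\Aut(M_0)_b$, the integral subgroup $K=\langle\exp_G(\mathfrak h)\rangle$ attached to the closed $\sigma$-invariant subalgebra $\mathfrak h=\overline{[\mathfrak n,\mathfrak n]}\oplus\mathfrak n$ (the analogue, for $\mathfrak n$, of the paper's own $\mathfrak g^\prime(M,b)$ in Section~\ref{sec:AutOfConnectedSymSpace}), and Example~\ref{ex:integralSubspaceOfHomogeneousSpace} to read off the integral subspace structure on the orbit $K\cdot b=N$. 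The only points worth writing out in full are the verification that $(K^{\sigma|_K})_0\subseteq K\cap G_b$ (which holds because $\mathfrak h_+\subseteq\Der(M)_+=\ker(\ev_b)$, so $\exp_G(\mathfrak h_+)$ fixes $b$) and the identification of the orbit with $\langle\Exp_{(M,b)}(\mathfrak n)\rangle$ via connectedness; both go through as you indicate.
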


A \emph{symmetric subspace} of a symmetric space $M$ is a reflection subspace $N\leq M$ that is a submanifold of $M$. It is clear that such $N$ itself is a symmetric space.
Let $\iota\colon N\rightarrow M$ be the inclusion map. Then, for each $b\in N$, the induced morphism $\Lts_b(\iota)$ is a topological embedding, $N$ being a submanifold. Therefore, a symmetric subspace is an integral subspace and we shall frequently identify $\Lts(N,b)$ with its image $\mathfrak{n}$ in $\Lts(M,b)$ under $\Lts_b(\iota)$.
It is given by $\mathfrak{n}= \{x\in\Lts(M,b)\colon \Exp_{(M,b)}(\RR x)\subseteq N\}$ (cf.\ \cite[Prop.~3.17]{Klo10Subspaces}).
Every open reflection subspace of a symmetric space is a symmetric subspace and is automatically closed (cf.\ \cite[Prop.~3.16]{Klo10Subspaces}). In particular, the connected components of a symmetric space are symmetric subspaces.
\begin{proposition}[{cf.\ \cite[Prop.~3.18]{Klo10Subspaces}}] \label{prop:expChartOfSubsymSpace}
	Let $(M,b)$ be a pointed symmetric space and $(N,b)$ be a symmetric subspace with Lie triple system $\mathfrak{n}\leq \Lts(M,b)$. Then there exists an open $0$-neighborhood $V\subseteq\Lts(M,b)$ such that $\Exp_{(M,b)}|_V$ is a diffeomorphism onto an open subset of $M$ and 
	$$\Exp_{(M,b)}(V\cap \mathfrak{n}) \ =\ \Exp_{(M,b)}(V) \cap N.$$
\end{proposition}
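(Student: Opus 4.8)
Let $(M,b)$ be a pointed symmetric space and $(N,b)$ a symmetric subspace with Lie triple system $\mathfrak{n} \leq \Lts(M,b)$. Then there exists an open $0$-neighborhood $V \subseteq \Lts(M,b)$ such that $\Exp_{(M,b)}|_V$ is a diffeomorphism onto an open subset of $M$ and $\Exp_{(M,b)}(V \cap \mathfrak{n}) = \Exp_{(M,b)}(V) \cap N$.

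This is the symmetric-space analogue of the Lie-group fact stated earlier in §2.1 (for $\exp_G$, with $\mathfrak{h} = L(H)$). Let me think about how to prove it.

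**What we know:**
- $\Exp := \Exp_{(M,b)}$ is a local diffeomorphism at $0$, with $T_0\Exp = \id$. So there's already an open $0$-neighborhood on which it's a diffeo — the content is the compatibility with $N$.
- $N$ is a submanifold, so it's a local submanifold at $b$: there's a chart $\varphi: U \to V' \subseteq E$ of $M$ at $b$ with $\varphi(U \cap N) = V' \cap F$ for a closed subspace $F$.
- $\mathfrak{n} = \{x \in \Lts(M,b) : \Exp(\RR x) \subseteq N\}$ (stated in the excerpt).
- $\Exp|_{\mathfrak{n}} = \Exp_{(N,b)}$, the exponential map of $(N,b)$ (from §2.3).

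**The "$\subseteq$" direction** is the easy half: if $v \in V \cap \mathfrak{n}$, then $\Exp(v) = \Exp_{(N,b)}(v) \in N$ and trivially $\in \Exp(V)$. So $\Exp(V \cap \mathfrak{n}) \subseteq \Exp(V) \cap N$. This holds for *any* $V$.

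**The "$\supseteq$" direction** is the real content: if $m \in \Exp(V) \cap N$, I must find $v \in V \cap \mathfrak{n}$ with $\Exp(v) = m$. Since $\Exp|_V$ is injective, $m = \Exp(v)$ for a unique $v \in V$; the problem is showing $v \in \mathfrak{n}$, i.e., that $m$'s preimage actually lands in the subsystem. Equivalently: $\Exp(V) \cap N \subseteq \Exp(V \cap \mathfrak{n})$.

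**My plan.** I'll identify $\Lts(M,b)$ with the model space $E$ via a normal chart. The key idea is to use the normal chart $\Exp$ itself to *straighten* $N$ near $b$, and compare with the submanifold structure.

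First I'd use the submanifold hypothesis to get a chart straightening $N$. Then, because $\Exp$ is a normal chart and $\Exp|_{\mathfrak{n}}$ is the exponential of the symmetric subspace $N$, I expect the normal chart of $M$ to *restrict* to the normal chart of $N$. Concretely: $N$ being a symmetric subspace, it has its own exponential map $\Exp_{(N,b)} = \Exp|_{\mathfrak{n}}$, which is a diffeomorphism from a $0$-neighborhood $W \subseteq \mathfrak{n}$ onto an open subset $W' \subseteq N$ (open in $N$'s manifold topology, which is the subspace topology). The plan is to shrink $V$ so that:
\begin{enumerate}
\item $\Exp|_V$ is a diffeomorphism onto an open set $\Omega \subseteq M$;
\item $V \cap \mathfrak{n} \subseteq W$, so $\Exp(V \cap \mathfrak{n}) = \Exp_{(N,b)}(V \cap \mathfrak{n})$ is open in $N$;
\item $\Omega \cap N = \Exp_{(N,b)}(V \cap \mathfrak{n})$.
\end{enumerate}
The crux is item 3 — showing that no point of $N$ sneaks into $\Omega$ except through $V \cap \mathfrak{n}$.

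**The main obstacle** is exactly this last point: a priori, $\Exp(v)$ for $v \in V \setminus \mathfrak{n}$ could accidentally lie on $N$. I'd rule this out by a dimension/splitting-and-shrinking argument. Choose a closed complement is not available in general Banach spaces, so instead I'd argue topologically: using the straightening chart $\varphi$ of $N$, compose $\psi := \varphi \circ \Exp|_V$, a chart of $M$ (after shrinking $V$ so $\Exp(V) \subseteq U$). Under $\psi$, the set $\Exp(V) \cap N$ corresponds to $\psi(\Exp(V) \cap N) = V' \cap F \cap \psi(\Omega)$. On the other hand, $\psi(V \cap \mathfrak{n})$ is the image of the subsystem. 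The map $\psi$ restricted to $\mathfrak{n}$ is a chart of $N$ at $b$, and since $N$ is a submanifold, $\psi|_{V\cap\mathfrak{n}}$ and the straightening coordinates $V' \cap F$ both describe $T_bN = \Lts_b(\iota)(\mathfrak{n})$ at the origin; matching tangent spaces via $T_0\Exp = \id$ forces agreement. So I'd prove $\psi(V \cap \mathfrak{n})$ and $V' \cap F \cap \psi(\Omega)$ coincide after sufficient shrinking — this is where one invokes that both $\mathfrak{n}$ (identified inside $\Lts(M,b) \cong E$) and $F$ are the image of the same tangent space $T_bN$ under $T_0\psi$, so they are literally the same closed subspace of $E$. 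The inverse function theorem plus openness of $N$'s exponential image then close the gap, giving equality rather than mere inclusion. I expect the bookkeeping of which neighborhood to shrink to — ensuring simultaneously (1)–(3) — to be the only delicate part, but no deep new idea beyond identifying $\mathfrak{n}$ with $F$ as subspaces tangent to $N$ at $b$.
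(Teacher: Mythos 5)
The paper does not prove this proposition itself --- it is imported by citation from \cite[Prop.~3.18]{Klo10Subspaces} --- so there is no in-paper proof to compare against. Your strategy is the standard one and is essentially complete: the easy inclusion $\Exp_{(M,b)}(V\cap\mathfrak{n})\subseteq\Exp_{(M,b)}(V)\cap N$, plus the observation that $\Exp_{(M,b)}|_{\mathfrak{n}}=\Exp_{(N,b)}$ maps a small $0$-neighborhood of $\mathfrak{n}$ onto a subset that is \emph{open in $N$} (equivalently, that $\psi|_{\mathfrak{n}}=\varphi\circ\Exp_{(M,b)}|_{\mathfrak{n}}$ is a transition map between the normal chart of $N$ and the straightening chart $\varphi$, hence has open image in $F$), after which one shrinks $V$ into the corresponding open subset and uses injectivity of $\Exp_{(M,b)}|_V$ to pull any point of $\Exp_{(M,b)}(V)\cap N$ back into $V\cap\mathfrak{n}$. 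One caveat on wording: the clause ``matching tangent spaces \dots forces agreement'' is not a valid inference on its own --- two sets tangent at a point need not coincide near it --- and what actually closes the gap is the openness of $\psi(V\cap\mathfrak{n})$ in $F$ together with the final shrink-and-inject step. Since you name both of these ingredients explicitly, I read this as loose phrasing rather than a missing idea, but in a written-out version that sentence should be replaced by the openness argument it is standing in for.
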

\begin{proposition}[{cf.\ \cite[Prop.~3.23]{Klo10Subspaces}}] \label{prop:integralSubSpace=subSymSpace}
	Let $(M,b)$ be a pointed symmetric space and $(N,b)\leq (M,b)$ be an $\Inn(N)$-transitive (e.g.\ connected) integral subspace with Lie triple system $\mathfrak{n}\leq \Lts(M,b)$ that splits as a Banach space. Let $F$ be a complement of $\mathfrak{n}$, i.e., $\Lts(M,b)=\mathfrak{n}\oplus F$.
	Then the following are equivalent:
	\begin{enumerate}
		\item[\rm (a)] $N$ is a symmetric subspace of $M$.
		\item[\rm (b)] There exists a $0$-neighborhood $W\subseteq F$ with $N\cap \Exp_{(M,b)}(W)=\{b\}$.
	\end{enumerate}
\end{proposition}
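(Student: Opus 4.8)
The plan is to prove the two implications separately, using Proposition~\ref{prop:expChartOfSubsymSpace} for the forward direction and constructing a straightening chart by hand for the converse. For \textrm{(a)}$\Rightarrow$\textrm{(b)}, assume $N$ is a symmetric subspace. Then Proposition~\ref{prop:expChartOfSubsymSpace} furnishes an open $0$-neighborhood $V\subseteq\Lts(M,b)$ such that $\Exp_{(M,b)}|_V$ is a diffeomorphism onto an open set and $\Exp_{(M,b)}(V\cap\mathfrak{n})=\Exp_{(M,b)}(V)\cap N$. I would simply set $W:=V\cap F$. If $x\in N\cap\Exp_{(M,b)}(W)$, then $x=\Exp_{(M,b)}(w)$ for some $w\in V\cap F$, and since $x\in\Exp_{(M,b)}(V)\cap N=\Exp_{(M,b)}(V\cap\mathfrak{n})$ we also have $x=\Exp_{(M,b)}(v)$ for some $v\in V\cap\mathfrak{n}$. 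Injectivity of $\Exp_{(M,b)}|_V$ gives $v=w$, and $\mathfrak{n}\cap F=\{0\}$ forces $v=w=0$, whence $x=b$. Thus $N\cap\Exp_{(M,b)}(W)=\{b\}$.

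For the converse \textrm{(b)}$\Rightarrow$\textrm{(a)}, the goal is to exhibit $N$ as a (split) submanifold of $M$; being a reflection subspace, it is then automatically a symmetric subspace. The idea is to straighten $N$ near $b$ by a chart built from the $N$-preserving translations $\tau_{\alpha_v,1}=\mu_{\Exp_{(M,b)}(v/2)}\circ\mu_b$ along the one-parameter subspaces $\alpha_v$ with $v\in\mathfrak{n}$. Concretely, I would consider the smooth map
\[
\Phi\colon\mathfrak{n}\times F\rightarrow M,\qquad \Phi(v,w):=\tau_{\alpha_v,1}\bigl(\Exp_{(M,b)}(w)\bigr),
\]
which satisfies $\Phi(v,0)=\tau_{\alpha_v,1}(b)=\Exp_{(M,b)}(v)$ and $\Phi(0,w)=\Exp_{(M,b)}(w)$ (note $\tau_{\alpha_0,1}=\id$). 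Differentiating in each slot at the origin and using $T_0\Exp_{(M,b)}=\id$, one finds $T_{(0,0)}\Phi=\id_{\mathfrak{n}\oplus F}$ under $T_bM\cong\Lts(M,b)$, so the inverse function theorem turns $\Phi$ into a diffeomorphism from a product neighborhood $U_{\mathfrak{n}}\times U_F$ onto an open neighborhood $O$ of $b$.

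The crucial point, and the step I expect to be the main obstacle, is to show that in this chart $N$ becomes flat, i.e.\ $\Phi^{-1}(O\cap N)=(U_{\mathfrak{n}}\times\{0\})$. Since $b$ and $\Exp_{(M,b)}(v/2)=\Exp_{(N,b)}(v/2)$ lie in the reflection subspace $N$, the symmetries $\mu_b$ and $\mu_{\Exp_{(M,b)}(v/2)}$ preserve $N$; hence each $\tau_{\alpha_v,1}$ is an automorphism of $M$ with $\tau_{\alpha_v,1}(N)=N$, and therefore $\Phi(v,w)\in N$ if and only if $\Exp_{(M,b)}(w)\in N$, a condition on $w$ alone. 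Here hypothesis \textrm{(b)} enters: after shrinking $U_F$ so that $U_F\subseteq W$ and $\Exp_{(M,b)}|_{U_F}$ is injective, the relation $\Exp_{(M,b)}(w)\in N\cap\Exp_{(M,b)}(W)=\{b\}$ forces $w=0$. Thus $\Phi(v,w)\in N\iff w=0$ on $U_{\mathfrak{n}}\times U_F$, and since $\mathfrak{n}$ is a closed split subspace with complement $F$, the chart $\Phi^{-1}$ exhibits $N$ as a split local submanifold at $b$.

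Finally, I would propagate this local description to all of $N$ using $\Inn(N)$-transitivity. Given $y\in N$, choose $g\in\Inn(N)$ with $g(b)=y$ and write $g=\mu_{x_1}\cdots\mu_{x_n}$ with $x_i\in N$; by \eqref{eqn:imageOfInnAut} the corresponding product of symmetries of $M$ defines an automorphism $\tilde g$ of $M$ with $\tilde g|_N=g$, and $\tilde g(N)=N$ since each factor preserves $N$. As a diffeomorphism of $M$ with $\tilde g(b)=y$, the map $\tilde g$ carries the local submanifold $O\cap N$ at $b$ onto $\tilde g(O)\cap N$, a local submanifold at $y$. Hence $N$ is a local submanifold at every point, thus a submanifold, and therefore a symmetric subspace, which completes the argument.
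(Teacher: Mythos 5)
The paper does not prove this proposition itself; it imports it verbatim from \cite[Prop.~3.23]{Klo10Subspaces}, so there is no in-paper argument to compare yours against. Judged on its own, your proof is correct. The direction (a)$\Rightarrow$(b) is exactly the right one-line consequence of Proposition~\ref{prop:expChartOfSubsymSpace} together with $\mathfrak{n}\cap F=\{0\}$. For (b)$\Rightarrow$(a), your chart $\Phi(v,w)=\tau_{\alpha_v,1}(\Exp_{(M,b)}(w))$ is the symmetric-space analogue of the product chart $(x,y)\mapsto\exp(x)\exp(y)$ used to recognize Lie subgroups, and all the supporting facts you invoke are available in the paper: $\Exp_{(M,b)}(\mathfrak{n})\subseteq N$ because $\Exp_{(N,b)}=\Exp_{(M,b)}|_{\mathfrak{n}}$ for an integral subspace, $\mu_x(N)=N$ for $x\in N$ by involutivity, the total derivative at $(0,0)$ is the identity on the topological direct sum $\mathfrak{n}\oplus F$ so the inverse function theorem applies, and hypothesis (b) (after shrinking $U_F\subseteq W$ to an injectivity domain of $\Exp_{(M,b)}$) flattens $N$ to $U_{\mathfrak{n}}\times\{0\}$. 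The propagation to arbitrary points of $N$ via (\ref{eqn:imageOfInnAut}) is precisely where the $\Inn(N)$-transitivity hypothesis is consumed, and it is used correctly: the lifted inner automorphism $\tilde g$ of $M$ preserves $N$ and transports the straightening chart. The only caveat worth recording is interpretive rather than mathematical: in (a)$\Rightarrow$(b) you apply Proposition~\ref{prop:expChartOfSubsymSpace} with the given $\mathfrak{n}$ as the Lie triple system of the symmetric subspace $N$, which presumes that condition (a) refers to the given integral structure being a submanifold structure (so that its Lie triple system is indeed $\mathfrak{n}$ and not a priori larger); this is the intended reading of the statement, so no repair is needed.
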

\begin{proposition}[{cf.\ \cite[Cor.~3.26]{Klo10Subspaces}}]\label{prop:kernelOfMorphismOfPointedSymSpaces}
	Let $f\colon (M_1,b_1)\rightarrow (M_2,b_2)$ be a morphism of pointed symmetric spaces. Then its kernel $\ker(f):=f^{-1}(b_2)$ is a closed symmetric subspace of $(M_1,b_1)$ with Lie triple system $\Lts(\ker(f))=\ker(\Lts(f))$.
\end{proposition}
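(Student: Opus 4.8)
Write $\mathfrak{m}_i:=\Lts(M_i,b_i)$ and $A:=\Lts(f)\colon\mathfrak{m}_1\to\mathfrak{m}_2$, and put $K:=\ker(f)=f^{-1}(b_2)$. The plan is to first dispose of the easy structural properties, then establish the submanifold property pointwise by means of normal charts, and finally read off the Lie triple system at the base point. Closedness of $K$ is immediate, since $f$ is continuous and $\{b_2\}$ is closed. That $K$ is a reflection subspace follows from the homomorphism property: for $x,y\in K$ we get $f(x\cdot y)=f(x)\cdot f(y)=\mu_{b_2}(b_2)=b_2$, so $x\cdot y\in K$; in particular $b_1\in K$.

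The heart of the matter is the submanifold property, and the key observation is that $f$ is a morphism of pointed symmetric spaces based at \emph{every} point of $K$: since $f(x)=b_2$ for each $x\in K$, the map $f\colon(M_1,x)\to(M_2,b_2)$ is such a morphism. I would set $A_x:=\Lts_x(f)\colon\Lts(M_1,x)\to\mathfrak{m}_2$; being a morphism of Lie triple systems, $A_x$ has closed kernel $\ker(A_x)$, a triple subsystem, and it satisfies the intertwining relation $f\circ\Exp_{(M_1,x)}=\Exp_{(M_2,b_2)}\circ A_x$. Next I would choose a $0$-neighborhood $V_x\subseteq\Lts(M_1,x)$ small enough that $\Exp_{(M_1,x)}|_{V_x}$ is a diffeomorphism onto an open neighborhood of $x$ and that $A_x(V_x)$ is contained in a $0$-neighborhood on which $\Exp_{(M_2,b_2)}$ is injective (both are possible because the exponential maps are local diffeomorphisms at $0$ and $A_x$ is continuous). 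For $v\in V_x$ the intertwining relation then yields $\Exp_{(M_1,x)}(v)\in K$ if and only if $\Exp_{(M_2,b_2)}(A_xv)=b_2=\Exp_{(M_2,b_2)}(0)$, i.e.\ if and only if $A_xv=0$; hence
\[
K\cap\Exp_{(M_1,x)}(V_x)\ =\ \Exp_{(M_1,x)}(V_x\cap\ker(A_x)).
\]
Through the normal chart $(\Exp_{(M_1,x)}|_{V_x})^{-1}$ and the closed subspace $\ker(A_x)$, this exhibits $K$ as a local submanifold at $x$. As $x\in K$ was arbitrary, $K$ is a submanifold, and being also a reflection subspace it is a (closed) symmetric subspace of $M_1$; note that the model subspace is allowed to depend on $x$, so that no transitivity or homogeneity argument is needed here.

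It remains to identify the Lie triple system. At the base point $b_1$ the model subspace appearing above is $\ker(A_{b_1})=\ker(A)=\ker(\Lts(f))$, which gives $\Lts(K,b_1)=\ker(\Lts(f))$ directly. Alternatively, one may invoke the description $\Lts(K,b_1)=\{v\in\mathfrak{m}_1\colon\Exp_{(M_1,b_1)}(\RR v)\subseteq K\}$ valid for symmetric subspaces: by the intertwining relation $t\mapsto f(\Exp_{(M_1,b_1)}(tv))=\Exp_{(M_2,b_2)}(tAv)$ is the one-parameter subspace with initial velocity $Av$, and this curve is constantly $b_2$ if and only if $Av=0$, so the set on the right is exactly $\ker(A)$.

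I expect the only genuinely delicate point to be the choice of the neighborhoods $V_x$, which must be small enough that the intertwining relation pins the local kernel down to precisely $V_x\cap\ker(A_x)$ rather than a larger preimage; this is where the local injectivity of $\Exp_{(M_2,b_2)}$ near $0$ is essential. Everything else is a routine transcription of the normal-chart computation already underlying Proposition~\ref{prop:expChartOfSubsymSpace}.
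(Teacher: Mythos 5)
Your argument is correct. The paper itself gives no proof of this proposition — it only cites \cite[Cor.~3.26]{Klo10Subspaces} — so there is nothing in this document to compare against line by line; but your normal-chart computation is exactly the kind of argument the surrounding toolkit is built on (compare Proposition~\ref{prop:expChartOfSubsymSpace}). The one genuinely load-bearing observation, which you identify and use correctly, is that $f$ may be re-based at an arbitrary $x\in\ker(f)$, so that the intertwining relation $f\circ\Exp_{(M_1,x)}=\Exp_{(M_2,b_2)}\circ A_x$ together with local injectivity of $\Exp_{(M_2,b_2)}$ at $0$ pins down $\ker(f)\cap\Exp_{(M_1,x)}(V_x)$ as $\Exp_{(M_1,x)}(V_x\cap\ker(A_x))$; since the paper's definition of submanifold lets the closed model subspace vary with the point and does not require it to split, this settles the submanifold property without any transitivity argument, and the identification $\Lts(\ker(f))=\ker(\Lts(f))$ then follows from either of the two routes you give.
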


An equivalence relation $R\subseteq M\times M$ on a reflection space $M$ that is a reflection subspace of $M\times M$ is called a \emph{congruence relation}. A congruence relation $R$ is just an equivalence relation for which the multiplication map on $M$ induces a multiplication map on the quotient $M/R$ (cf.\ quotient laws for magmas in \cite[I.1.6]{Bou89Algebra}). It can be easily checked that $M/R$ becomes a reflection space.
Given a congruence relation $R$ on $M$, every inner automorphism $g\in\Inn(M)$ maps equivalence classes onto equivalence classes (cf.\ \cite[Sec.~3.5]{Klo10Subspaces}).
Thus, if the group $\Inn(M)$ of inner automorphisms of $M$ acts transitively on $M$ (e.g.\ if $M$ is a connected symmetric space), then a congruence relation is determined by each of its equivalence classes, which we then call \emph{normal reflection subspaces of $M$}, denoted by $N\unlhd M$. We denote the pointed reflection space $(M/R,N)$ also by $M/N$.

The Lie triple system $\mathfrak{n}\leq \Lts(M,b)$ of a closed normal symmetric subspace $N\unlhd M$ is a closed ideal (cf.\ \cite[Prop.~3.32]{Klo10Subspaces}), and conversely, the connected integral subspace $N:=\langle\Exp_{(M,b)}(\mathfrak{n})\rangle\leq M$ corresponding to a closed ideal $\mathfrak{n}\unlhd \Lts(M,b)$ is normal (cf.\ \cite[Prop.~3.38]{Klo10Subspaces}).

\begin{theorem}[{cf.\ \cite[Th.~3.44 and Rem.~3.45]{Klo10Subspaces}}] \label{th:quotients}
	Let $(M,b)$ be a pointed connected symmetric space with Lie triple system $\mathfrak{m}$ and let $(N,b)\unlhd (M,b)$ be a closed connected normal symmetric subspace with Lie triple system $\mathfrak{n}\leq \mathfrak{m}$. Then $M/N$ can be made a symmetric space with Lie triple system $\mathfrak{m}/\mathfrak{n}$ such that the quotient map $\pi\colon (M,b)\rightarrow M/N$ is a morphism of pointed symmetric spaces and $\Lts(\pi)\colon \mathfrak{m}\rightarrow\mathfrak{m}/\mathfrak{n}$ is the natural quotient map.
\end{theorem}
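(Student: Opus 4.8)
The plan is to build a Banach manifold structure on $M/N$ from a single chart at the base point $[b]:=N$ and then spread it over the whole space by the transitively acting group of displacements; once $M/N$ is a symmetric space, the assertions about $\pi$ will drop out of the exponential-intertwining criterion of Section~\ref{sec:LtsAndSymSpace}. First the algebraic preliminaries: since $\mathfrak{n}\unlhd\mathfrak{m}$ is a closed ideal, the triple bracket descends to the Banach space $\mathfrak{m}/\mathfrak{n}$ and makes it a Lie triple system with continuous, linear, open quotient map $q\colon\mathfrak{m}\rightarrow\mathfrak{m}/\mathfrak{n}$; this is the candidate for $\Lts(M/N,[b])$. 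By the discussion preceding the theorem, $M/N=M/R$ is already a pointed reflection space, and since $G(M)\subseteq\Inn(M)$ acts transitively on the connected space $M$ while carrying $R$-classes to $R$-classes, the group of displacements $G(M)$ descends to a transitively acting group of automorphisms of $M/N$. I will use this homogeneity to transport charts, together with the fact that $G(M)$ is a Banach--Lie group acting smoothly on $M$ (cf.\ \cite{Klo09b}).

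To construct the chart at $[b]$, choose by Proposition~\ref{prop:expChartOfSubsymSpace} an open $0$-neighborhood $V\subseteq\mathfrak{m}$ on which $\Exp_{(M,b)}$ is a diffeomorphism with $\Exp_{(M,b)}(V\cap\mathfrak{n})=\Exp_{(M,b)}(V)\cap N$, and define $\overline{\Psi}\colon q(V)\rightarrow M/N$ by $\overline{\Psi}(q(v)):=\pi(\Exp_{(M,b)}(v))$. The decisive point, which is also the main obstacle, is the local description of the congruence:
\[\text{for } v,v'\in V:\qquad \Exp_{(M,b)}(v)\ \sim_R\ \Exp_{(M,b)}(v')\quad\Longleftrightarrow\quad v-v'\in\mathfrak{n}.\]
The case $v'=0$ is immediate from Proposition~\ref{prop:expChartOfSubsymSpace} (namely $\Exp_{(M,b)}(v)\in N\iff v\in\mathfrak{n}$ for $v\in V$); the general case is the statement that, near $b$, the partition of $M$ into $R$-classes is a \emph{trivial} fibration whose fibres are translates of the totally geodesic $N$. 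This local product structure is exactly where the normality of $N$ and the ideal property of $\mathfrak{n}$ are genuinely used; concretely I would reduce a general pair to the base-point case by a displacement $g\in G(M)$ with $g(\Exp_{(M,b)}(v'))=b$ (which preserves $R$) and control the resulting comparison diffeomorphism modulo $\mathfrak{n}$. Granting the displayed equivalence, $\overline{\Psi}$ is well defined and injective; since $q$ is an open quotient map and the relevant $R$-saturation of $\Exp_{(M,b)}(V)$ is open, $\overline{\Psi}$ is a homeomorphism onto an open neighborhood of $[b]$, and its inverse is a chart modelled on the Banach space $\mathfrak{m}/\mathfrak{n}$ (here a splitting $\mathfrak{m}=\mathfrak{n}\oplus F$, as in the hypothesis of Proposition~\ref{prop:integralSubSpace=subSymSpace}, is what guarantees $\mathfrak{m}/\mathfrak{n}$ is an admissible model space).

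Spreading the chart and verifying smoothness is then bookkeeping. Using the transitive $G(M)$-action on $M/N$, I translate the base-point chart to obtain an atlas; two overlapping charts differ by the action of a displacement, which is smooth on $M$ and descends compatibly, so the transition maps are smooth and $M/N$ becomes a Banach manifold. Hausdorffness follows because $N$, and hence every $R$-class, is closed. The induced multiplication is smooth because in these charts it is read off from the smooth multiplication of $M$ through $\pi$, so that each left multiplication is an involutive automorphism with the base point an isolated fixed point; thus $M/N$ is a symmetric space.

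Finally I identify the Lie functor data. In the base-point chart one has $\overline{\Psi}^{-1}\circ\pi\circ\Exp_{(M,b)}|_V=q|_V$, so $\pi$ is a locally smooth homomorphism of pointed reflection spaces and $\pi\circ\Exp_{(M,b)}=\overline{\Psi}\circ q$. Moreover $\overline{\Psi}=\Exp_{(M/N,[b])}$, since the one-parameter subspaces of $M/N$ through $[b]$ are precisely the $\pi$-images of those of $M$. Being a locally smooth homomorphism that intertwines the exponential maps via the continuous linear map $q$, the criterion of Section~\ref{sec:LtsAndSymSpace} shows that $\pi$ is a morphism of pointed symmetric spaces with $\Lts(\pi)=q$; in particular $\Lts(M/N,[b])\cong\mathfrak{m}/\mathfrak{n}$ and $\Lts(\pi)$ is the natural quotient map. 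As a consistency check, Proposition~\ref{prop:kernelOfMorphismOfPointedSymSpaces} recovers $\ker(\pi)=N$ with $\Lts(\ker\pi)=\ker(q)=\mathfrak{n}$. The genuine work, as noted, is the local product structure of the congruence $R$ near the base point; everything downstream is formal.
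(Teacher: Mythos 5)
Note first that this paper does not actually prove Theorem~\ref{th:quotients}: it imports it from \cite[Th.~3.44 and Rem.~3.45]{Klo10Subspaces}, and the framework recalled around it (the homogeneity $M\cong G^\prime(M,b)/G^\prime(M,b)_b$ of Section~\ref{sec:AutOfConnectedSymSpace} and the exact functor $\Sym$) indicates that the intended proof realizes $M/N$ as a quotient of a Banach--Lie group by a Lie subgroup containing the stabilizer of $b$, so that the manifold structure on $M/N$ comes from the theory of Banach homogeneous spaces. Your proposal instead attempts a direct chart construction on $M/R$, and it has a genuine gap at exactly the step you yourself flag as ``the main obstacle'': the displayed local linearization $\Exp_{(M,b)}(v)\sim_R\Exp_{(M,b)}(v')\iff v-v'\in\mathfrak{n}$ for $v,v'\in V$. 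Proposition~\ref{prop:expChartOfSubsymSpace} gives only the case $v'=0$. For general $v'$, the $R$-class of $\Exp_{(M,b)}(v')$ is $\mu_{\Exp_{(M,b)}(v'/2)}(N)$, and its preimage under $\Exp_{(M,b)}|_V$ is a subset through $v'$ that there is no a priori reason to identify with the affine translate $v'+\mathfrak{n}$ (even its tangency to $v'+\mathfrak{n}$ at $v'$ requires an argument, since $T_{v'}\Exp_{(M,b)}$ and $T_b\mu_{\Exp_{(M,b)}(v'/2)}$ differ by curvature terms). The equivalence is true for sufficiently small $V$, but the only easy derivation of it uses the existence of $M/N$ as a symmetric space with exponential map $\Exp_{M/N}\circ q$ --- i.e., the theorem itself. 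Your one-sentence plan (``reduce to the base-point case by a displacement and control the comparison diffeomorphism modulo $\mathfrak{n}$'') does not carry this out: the displacement does not act affinely in the exponential chart, and ``controlling it modulo $\mathfrak{n}$'' is precisely the unproven content. Everything downstream (transporting charts by $G(M)$, smoothness of transitions and of the multiplication, the identification $\Lts(\pi)=q$ via the exponential-intertwining criterion) is indeed routine once this is granted, but as written the crux is asserted rather than proved.

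Two smaller points. The appeal to a splitting $\mathfrak{m}=\mathfrak{n}\oplus F$ is not available from the hypotheses: $N$ is a closed symmetric subspace, so $\mathfrak{n}$ is closed but need not split; the quotient $\mathfrak{m}/\mathfrak{n}$ is a perfectly good Banach model space without a complement, but the ``local product structure'' $V\cong(V\cap\mathfrak{n})\times(V\cap F)$ you lean on is not. And ``Hausdorffness follows because every $R$-class is closed'' is insufficient for a quotient (closed classes give only $T_1$); one needs, for instance, that $R$ is closed in $M\times M$ together with openness of $\pi$. Both of these, like the main gap, evaporate in the group-theoretic route, which is presumably why \cite{Klo10Subspaces} takes it.
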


%
%
\subsection{Symmetric Lie Algebras and Lie Groups}\label{sec:symLieAlgAndLieGrp}
A \emph{symmetric Lie algebra} is a Banach--Lie algebra $\mathfrak{g}$ endowed with an involutive automorphism $\theta$ of $\mathfrak{g}$, i.e., $\theta^2=\id_\mathfrak{g}$. Given two symmetric Lie algebras $(\mathfrak{g}_1,\theta_1)$ and $(\mathfrak{g}_2,\theta_2)$, a continuous Lie algebra homomorphism $A\colon \mathfrak{g}_1 \rightarrow \mathfrak{g}_2$ is called a \emph{morphism of symmetric Lie algebras} if it satisfies $A\circ\theta_1=\theta_2\circ A$.
The kernel of a morphism $A\colon (\mathfrak{g}_1,\theta_1)\rightarrow (\mathfrak{g}_2,\theta_2)$ satisfies $\theta_1(\ker(A))\subseteq\ker(A)$, so that $(\ker(A),\theta_1|_{\ker(A)}^{\ker(A)})$ is a symmetric Lie algebra.
A symmetric Lie algebra $(\mathfrak{g},\theta)$ decomposes as the direct sum of its $(\pm 1)$-eigenspaces denoted by $\mathfrak{g}=\mathfrak{g}_+\oplus \mathfrak{g}_-$. The $(+1)$-eigenspace is a subalgebra of $\mathfrak{g}$. The $(-1)$-eigenspace $\mathfrak{g}_-$ becomes a Lie triple system by defining $[x,y,z]:=[[x,y],z]$ (cf.\ \cite[Prop.~5.9]{Klo09b}).
Assigning to each symmetric Lie algebra $(\mathfrak{g},\theta)$ the Lie triple system $\mathfrak{g}_-$ and to each morphism of symmetric Lie algebras its restriction to the Lie triple systems, we obtain a covariant functor $\calLts$.

Conversely, every Lie triple system $\mathfrak{m}$ arises as the direct summand $\mathfrak{g}_-$ of a symmetric Lie algebra $(\mathfrak{g},\theta)$. Indeed, the direct sum  $S(\mathfrak{m}):=\mathfrak{h}\oplus \mathfrak{m}$ where 
$$\mathfrak{h}:=\overline{\spann\{[x,y,\cdot]\in\gl(\mathfrak{m})\colon x,y\in\mathfrak{m}\}}\leq \gl(\mathfrak{m})$$
is the closure of the linear span of all continuous linear maps $[x,y,\cdot]$ becomes a Banach--Lie algebra with the Lie bracket defined by
$[A,B] := AB - BA$, $[A,x] := -[x,A] := Ax$ and $[x,y] := [x,y,\cdot]$
for all $A,B \in \mathfrak{h}$ and $x,y\in \mathfrak{m}$. It can obviously be considered as a symmetric Lie algebra (called the \emph{standard embedding}) with $S(\mathfrak{m})_+=\mathfrak{h}=\overline{[\mathfrak{m},\mathfrak{m}]}$ and $S(\mathfrak{m})_-=\mathfrak{m}$. Cf.\ \cite[p.~79]{Loo69} for the finite-dimensional case.

The center $\mathfrak{z}(\mathfrak{g})$ of a symmetric Lie algebra $(\mathfrak{g},\theta)$ is $\theta$-invariant, so that it is itself a symmetric Lie algebra denoted by $\mathfrak{z}(\mathfrak{g},\theta)$. It decomposes as a direct sum $\mathfrak{z}(\mathfrak{g})_+\oplus \mathfrak{z}(\mathfrak{g})_-$ with $\mathfrak{z}(\mathfrak{g})_+\leq \mathfrak{g}_+$ and $\mathfrak{z}(\mathfrak{g})_-\leq \mathfrak{g}_-$. Furthermore, we have $\mathfrak{z}(\mathfrak{g})_-\subseteq \mathfrak{z}(\mathfrak{g}_-)$. It is easy to check that the center $\mathfrak{z}(S(\mathfrak{m}))$ of the standard embedding of a Lie triple system $\mathfrak{m}$ moreover coincides with the center $\mathfrak{z}(\mathfrak{m})$.

A \emph{symmetric Lie group} is a Banach--Lie group $G$ together with an involutive automorphism $\sigma$ of $G$. A \emph{morphism between symmetric Lie groups $(G_1,\sigma_1)$ and $(G_2,\sigma_2)$} is a smooth group homomorphism $f\colon G_1\rightarrow G_2$ such that $f\circ\sigma_1=\sigma_2\circ f$.
The Lie functor $L$ assigns to each $(G,\sigma)$ the \emph{symmetric Lie algebra} $L(G,\sigma):=(L(G),L(\sigma))$ and to each morphism $f$ the morphism $L(f)$ of symmetric Lie algebras.
The kernel $\ker(f)\unlhd G_1$ of a morphism $f$ is a Lie subgroup (cf.\ \cite[Th.~II.2]{GN03}) that satisfies $\sigma_1(\ker(f))\subseteq\ker(f)$, so that $(\ker(f),\sigma_1|_{\ker(f)}^{\ker(f)})$ is a symmetric Lie group. Its Lie algebra is $(\ker(L(f)),L(\sigma_1)|_{\ker(L(f))}^{\ker(L(f))})$.

Given a symmetric Lie group $(G,\sigma)$ with symmetric Lie algebra $(\mathfrak{g},\theta)$, the subgroup $G^\sigma:=\{g\in G\colon \sigma(g)=g\}$ of $\sigma$-fixed points is a split Lie subgroup with Lie algebra $\mathfrak{g}_+$ (cf.\ \cite[Ex.~3.9]{Nee02Cartan}). Open subgroups of $G^\sigma$ are given by subgroups $K\leq G^\sigma$ satisfying $(G^\sigma)_0\subseteq K \subseteq G^\sigma$, where $(G^\sigma)_0$ denotes the identity component. For such a subgroup $K$, we call $((G,\sigma),K)$ a \emph{symmetric pair} and shall rather write $(G,\sigma,K)$.\footnote{Note that other authors do not include the involution $\sigma$ in their definition, but require its existence (cf.\ \cite{Hel01}).}
The quotient space $\Sym(G,\sigma,K):=G/K$ carries the structure of a pointed symmetric space with multiplication
$$gK \cdot hK:=g\sigma(g)^{-1}\sigma(h)K$$
and base point $K$ such that the quotient map $q\colon G\rightarrow G/K$ is a submersion.
Note that $q$ is a principal bundle with structure group $K$ 
that acts on $G$ by right translations (cf.\ \cite[III.1.5--6]{Bou89LieGroups}).
When we consider the underlying symmetric space of $G/K$ that is not pointed, then we shall frequently write $\calU(G/K)$ to prevent confusion.
The Lie triple system $\Lts(G/K)$ can be identified with $\mathfrak{g}_-$ via the isomorphism $(T_{\eins}q)|_{\mathfrak{g}_-}$. Then the exponential map of $G/K$ is given by
\begin{equation}\label{eqn:Exp=q exp}
	\Exp_{G/K}:=q\circ\exp_G|_{\mathfrak{g}_-}\colon \mathfrak{g}_- \rightarrow G/K,
\end{equation}
where $\exp_G$ denotes the exponential map of the Lie group $G$. For further details, cf.\ \cite[Ex.~3.9]{Nee02Cartan}.

%
\begin{proposition} \label{prop:1-connectedQuotient}
	Let $(G,\sigma,(G^\sigma)_0)$ be a symmetric pair. If $G$ is 1-connected, then the quotient $G/(G^\sigma)_0$ is also 1-connected.
\end{proposition}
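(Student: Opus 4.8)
The plan is to use that the quotient map $q\colon G\rightarrow G/K$ (with $K:=(G^\sigma)_0$) is a principal $K$-bundle, as recorded above in the discussion of the functor $\Sym$, and to feed this bundle into the long exact sequence of homotopy groups. Connectedness of $G/K$ comes for free: since $G$ is $1$-connected it is in particular connected, and $G/K=q(G)$ is the continuous image of a connected space, hence connected. It therefore only remains to prove that $\pi_1(G/K)$ is trivial.

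To this end I would first recall that a locally trivial fiber bundle over a metrizable (in particular, paracompact) base is a Serre fibration, so that the usual long exact homotopy sequence applies. As $q$ is a principal bundle with structure group $K$, it is locally trivial, and we obtain the exact sequence of pointed sets
\[
\pi_1(K)\ \rightarrow\ \pi_1(G)\ \xrightarrow{\,q_*\,}\ \pi_1(G/K)\ \xrightarrow{\,\partial\,}\ \pi_0(K)\ \rightarrow\ \pi_0(G)
\]
associated with the fibration $K\hookrightarrow G\xrightarrow{q}G/K$. Now I would substitute the hypotheses: by construction $K=(G^\sigma)_0$ is the identity component of $G^\sigma$, hence connected, so $\pi_0(K)$ is a single point; and $G$ being $1$-connected gives $\pi_1(G)=\{1\}$. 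Since $\pi_0(K)$ consists of its base point alone, the map $\partial$ sends every element of $\pi_1(G/K)$ to the base point, so $\pi_1(G/K)=\ker\partial$; by exactness $\ker\partial=\operatorname{im}(q_*)$, which is trivial because $\pi_1(G)=\{1\}$. Hence $\pi_1(G/K)=\{1\}$, and combined with connectedness this shows $G/K$ is $1$-connected.

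The step that requires genuine care — and that I expect to be the main obstacle — is the justification that the Banach principal bundle $q$ really is a fibration to which the long exact homotopy sequence may be applied. In finite dimensions this is routine, but in the Banach category one must use that $q$ is locally trivial (which is exactly the principal-bundle statement cited from \cite{Bou89LieGroups}) together with the fact that locally trivial bundles over metrizable spaces enjoy the homotopy lifting property. If one prefers to avoid invoking the abstract fibration sequence, the same conclusion can be reached by lifting loops and their null-homotopies through the local sections of $q$ and using the connectedness of the fiber $K$ to absorb the resulting fiberwise ambiguity; but once local triviality is in hand, routing the argument through the standard homotopy exact sequence is by far the cleanest option.
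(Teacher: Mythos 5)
Your proof is correct and rests on the same idea as the paper's: the paper lifts a loop in $G/(G^\sigma)_0$ to a path in $G$, closes it up using the connectedness of the fiber $(G^\sigma)_0$, and projects a null-homotopy from the $1$-connected $G$ — which is exactly the content of the segment $\pi_1(G)\to\pi_1(G/K)\to\pi_0(K)$ of your exact sequence. The paper's hands-on version needs only the path-lifting property of the locally trivial bundle rather than the full Serre-fibration machinery, which sidesteps the one technical point you flag, but both routes are sound.
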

\begin{proof}
	Let $\gamma$ be any loop in $G/(G^{\sigma})_0$. The quotient map $q\colon G\rightarrow G/(G^{\sigma})_0$ being a (principal) fiber bundle, there is a path $\widetilde\gamma$ in $G$ that is a lift of $\gamma$. Its endpoints belong to $(G^{\sigma})_0$ and can be joined by a path, since $(G^{\sigma})_0$ is connected. In this way, we obtain a loop in $G$ whose projection in $G/(G^{\sigma})_0$ is homotopic to $\gamma$. The loop in $G$ being null-homotopic, its projection is also null-homotopic.
\end{proof}
A \emph{morphism between symmetric pairs $(G_1,\sigma_1,K_1)$ and $(G_2,\sigma_2,K_2)$} is a morphism \linebreak $f\colon (G_1,\sigma_1)\rightarrow (G_2,\sigma_2)$ of symmetric Lie groups satisfying $f(K_1)\subseteq K_2$. Note that $f(G_1^{\sigma_1})\subseteq G_2^{\sigma_2}$ and $f((G_1^{\sigma_1})_0)\subseteq (G_2^{\sigma_2})_0$ are always satisfied and that, moreover, we have $f((G_1^{\sigma_1})_0)= (G_2^{\sigma_2})_0$ if $L(f)((\mathfrak{g}_1)_+)=(\mathfrak{g}_2)_+$.\footnote{Indeed, we have $f((G_1^{\sigma_1})_0) \ =\ \langle f(\exp_{G_1}((\mathfrak{g}_1)_+))\rangle \ =\ \langle\exp_{G_2}((\mathfrak{g}_2)_+) \rangle \ =\ (G_2^{\sigma_2})_0.$}\label{page:L(f)(g1_+)=g2_+}
Every morphism $f$ induces a unique map
$$\Sym(f):=f_\ast\colon G_1/K_1 \rightarrow G_2/K_2$$
with $\Sym(f)\circ q_1 = q_2\circ f$ that is automatically a morphism of pointed symmetric spaces.
The assignment $\Sym$ is a covariant functor from the category of symmetric pairs to the category of pointed symmetric spaces. Denoting by $F$ the forgetful functor from the category of symmetric pairs to the category of symmetric Lie groups, we have
\begin{equation} \label{eqn:LtsSym=calLtsL}
	\Lts\circ\Sym \ =\ \calLts\circ L \circ F,
\end{equation}
where we read $L$ as the Lie functor applied to symmetric Lie groups (cf.\ \cite[Sec.~2.3]{Klo10Subspaces}).

%
\begin{proposition}[The functor $\Sym$ applied to surjective and injective morphisms]
\label{prop:symToSurjectiveInjevtive}
	We have:
	\begin{enumerate}
		\item[\rm (1)] Given a surjective morphism $f\colon (G_1,\sigma_1,K_1)\rightarrow (G_2,\sigma_2,K_2)$ of symmetric pairs, the morphism $\Sym(f)\colon G_1/K_1\rightarrow G_2/K_2$ is also surjective.
		\item[\rm (2)] Let $f\colon (G_1,\sigma_1)\rightarrow (G_2,\sigma_2)$ be an injective morphism of a symmetric Lie group $(G_1,\sigma_1)$ to the underlying symmetric Lie group of a symmetric pair $(G_2,\sigma_2,K_2)$. Then\linebreak $(G_1,\sigma_1,K_1)$ with $K_1:=f^{-1}(K_2)$ is a symmetric pair turning $f$ into a morphism of symmetric pairs such that the morphism $\Sym(f)\colon G_1/K_1\rightarrow G_2/K_2$ is also injective.	(Note that $G_1^{\sigma_1}=f^{-1}(G_2^{\sigma_2})$.)
	\end{enumerate}
\end{proposition}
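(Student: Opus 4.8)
The plan is to work throughout with the defining relation $\Sym(f)\circ q_1 = q_2\circ f$, using that $\Sym(f)$ is already known (from the excerpt) to be a morphism of pointed symmetric spaces; everything then reduces to analysing $\Sym(f)$ as a map of sets.

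For part~(1) I would argue surjectivity directly. Given an arbitrary coset $q_2(g_2)\in G_2/K_2$, surjectivity of $f$ supplies some $g_1\in G_1$ with $f(g_1)=g_2$, whence
$$\Sym(f)(q_1(g_1)) \ =\ q_2(f(g_1)) \ =\ q_2(g_2),$$
so $\Sym(f)$ hits every coset. (This only uses that $q_2$ is onto, being a quotient map.)

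For part~(2) I would first establish the asserted identity $G_1^{\sigma_1}=f^{-1}(G_2^{\sigma_2})$, which is the single place where injectivity of $f$ is genuinely needed. The inclusion $G_1^{\sigma_1}\subseteq f^{-1}(G_2^{\sigma_2})$ holds for any morphism of symmetric Lie groups, since $f\circ\sigma_1=\sigma_2\circ f$ forces $f(g)$ to be $\sigma_2$-fixed whenever $g$ is $\sigma_1$-fixed; for the converse, $f(g)\in G_2^{\sigma_2}$ gives $f(\sigma_1(g))=\sigma_2(f(g))=f(g)$, and injectivity yields $\sigma_1(g)=g$. With this in hand I would check the symmetric-pair conditions for $(G_1,\sigma_1,K_1)$: the set $K_1=f^{-1}(K_2)$ is a subgroup as the preimage of one; from $K_2\subseteq G_2^{\sigma_2}$ I get $K_1\subseteq f^{-1}(G_2^{\sigma_2})=G_1^{\sigma_1}$; and since $f((G_1^{\sigma_1})_0)\subseteq (G_2^{\sigma_2})_0\subseteq K_2$ (the first inclusion always valid as recorded in the excerpt, the second by the symmetric-pair property of $(G_2,\sigma_2,K_2)$), I obtain $(G_1^{\sigma_1})_0\subseteq f^{-1}(K_2)=K_1$. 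Hence $(G_1^{\sigma_1})_0\subseteq K_1\subseteq G_1^{\sigma_1}$, so $(G_1,\sigma_1,K_1)$ is a symmetric pair.

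Finally I would note that $f$ is then a morphism of symmetric pairs, as $f(K_1)=f(f^{-1}(K_2))\subseteq K_2$, and that $\Sym(f)$ is injective: if $f(g)K_2=f(h)K_2$, then $f(h^{-1}g)=f(h)^{-1}f(g)\in K_2$, so $h^{-1}g\in f^{-1}(K_2)=K_1$ and thus $q_1(g)=q_1(h)$. I expect no real obstacle here; the only delicate point is the use of injectivity to get $f^{-1}(G_2^{\sigma_2})\subseteq G_1^{\sigma_1}$, for without it $K_1=f^{-1}(K_2)$ might fail to lie in $G_1^{\sigma_1}$ and $(G_1,\sigma_1,K_1)$ would not be a symmetric pair.
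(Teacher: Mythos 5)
Your proof is correct and is exactly the direct verification one would expect: the paper itself dismisses (1) as trivial and defers (2) to an external lemma (\cite[Lem.~2.1]{Klo10Subspaces}), so your argument simply supplies the details the paper omits. In particular, you correctly isolate the one place injectivity is needed, namely $f^{-1}(G_2^{\sigma_2})\subseteq G_1^{\sigma_1}$, which guarantees $(G_1^{\sigma_1})_0\subseteq K_1\subseteq G_1^{\sigma_1}$ and hence that $(G_1,\sigma_1,K_1)$ is a symmetric pair.
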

\begin{proof}
	The assertion (1) ist trivial and the assertion (2) is due to \cite[Lem.~2.1]{Klo10Subspaces}.
\end{proof}

%
%
\begin{example} \label{ex:integralSubspaceOfHomogeneousSpace}
	Let $(G,\sigma_G,K_G)$ be a symmetric pair and $\iota\colon (H,\sigma_H)\rightarrow (G,\sigma_G)$ be an injective morphism of symmetric Lie groups such that $\iota\colon H\rightarrow G$ is an integral subgroup.
	With $K_H:=\iota^{-1}(K_G)$, the map $\iota\colon (H,\sigma_H,K_H)\rightarrow (G,\sigma_G,K_G)$ is a morphism of symmetric pairs and $\Sym(\iota)\colon H/K_H\rightarrow G/K_G$ is an injective morphism of pointed symmetric spaces (cf.\ Proposition~\ref{prop:symToSurjectiveInjevtive}). The induced map $\Lts(\Sym(\iota))\colon \Lts(H/K_H)\rightarrow \Lts(G/K_G)$ is given by $\calLts(L(\iota))$ (cf.\ (\ref{eqn:LtsSym=calLtsL})), i.e., by the topological embedding 
	$\mathfrak{h}_- \hookrightarrow \mathfrak{g}_-$,
	where $\mathfrak{g}$ and $\mathfrak{h}$ denote the Lie algebras of $G$ and $H$, respectively.
	
	Therefore $\Sym(\iota)$ is an integral subspace if $H/K_H$ is $\Inn(\calU(H/K_H))$-transitive, but this additional assumption is not necessary (cf.\ \cite[Ex.~3.7]{Klo10Subspaces}).
	Furthermore, there is no other integral subspace structure on (the reflection subspace) $H/K_H$ with Lie triple system\linebreak $\mathfrak{h}_-\hookrightarrow \mathfrak{g}_-$ if we require, in addition, that the natural action $\tau\colon H \times H/K_H \rightarrow H/K_H$, $(g,hK_H)\mapsto ghK_H$ is smooth (cf.\ \cite[Rem.~3.9]{Klo10Subspaces}).
	(Actually, it suffices to require that it induces smooth maps $\tau_h\colon H/K_H \rightarrow H/K_H$ for all $h\in H$.)
\end{example}
\begin{definition} \label{def:exactSymPairMorphisms}
A sequence
$$(G_1,\sigma_1,K_1)\rightarrow (G_2,\sigma_2,K_2)\rightarrow \cdots \rightarrow (G_n,\sigma_n,K_n)$$
of morphisms of symmetric pairs is called \emph{exact}, if the sequences $G_1 \rightarrow G_2 \rightarrow\cdots\rightarrow G_n$ and $K_1 \rightarrow K_2 \rightarrow\cdots\rightarrow K_n$ of group homomorphisms are exact.
\end{definition}
\begin{remark}\label{rem:characterizeExactness}
	Given a sequence
	$$\eins \longrightarrow (G_1,\sigma_1,K_1)\stackrel{f_1}{\longrightarrow} (G_2,\sigma_2,K_2)\stackrel{f_2}{\longrightarrow} (G_3,\sigma_3,K_3) \longrightarrow \eins$$
	for which $\eins\rightarrow G_1\rightarrow G_2 \rightarrow G_3\rightarrow \eins$ is exact. Then $\eins\rightarrow K_1\rightarrow K_2 \rightarrow K_3\rightarrow \eins$ is exact if and only if $f_2(K_2)=K_3$ and $K_1=f_1^{-1}(K_2)$.
	
	Indeed, exactness at $K_2$ means that $f_1(K_1)=\ker(f_2)\cap K_2 \ (=\im(f_1) \cap K_2)$ and is hence equivalent to $K_1=f_1^{-1}(K_2)$ by the injectivity of $f_1$.
\end{remark}
\begin{example} \label{ex:typicalExactSequence}
	Starting with a surjective morphism $f\colon (G_2,\sigma_2,K_2)\rightarrow (G_3,\sigma_3,K_3)$ that satisfies $f(K_2)=K_3$, we automatically obtain an exact sequence
	$$\eins \rightarrow (\ker(f),\sigma_2|_{\ker(f)},K_2\cap \ker(f))\hookrightarrow (G_2,\sigma_2,K_2)\rightarrow (G_3,\sigma_3,K_3) \rightarrow \eins$$
	(cf.\ Proposition~\ref{prop:symToSurjectiveInjevtive}(2) to see that $K_2\cap \ker(f)$ turns $(\ker(f),\sigma_2|_{\ker(f)})$ actually into a symmetric pair).
\end{example}
For homomorphisms of pointed reflection spaces (resp., morphisms of pointed symmetric spaces), we define exact sequences as for pointed sets. We shall denote a singleton space simply by $o$. Note that a homomorphism $f\colon (M_1,b_1)\rightarrow (M_2,b_2)$ with trivial kernel $\ker(f):=f^{-1}(b_2)$ is not necessarily injective, but it is so if its domain space $(M_1,b_1)$ is $\Inn(M_1)$-transitive (cf.\ (\ref{eqn:imageOfInnAut})).
\begin{proposition}[Exactness of the functor $\Sym$]
\label{prop:exactnessOfSym}
	Let
	$$\eins\longrightarrow (G_1,\sigma_1,K_1)\stackrel{f_1}{\longrightarrow} (G_2,\sigma_2,K_2)\stackrel{f_2}{\longrightarrow} (G_3,\sigma_3,K_3)\longrightarrow \eins$$
	be an exact sequence of symmetric pairs. Then the sequence
	$$o \longrightarrow G_1/K_1\stackrel{(f_1)_\ast}{\longrightarrow} G_2/K_2\stackrel{(f_2)_\ast}{\longrightarrow} G_3/K_3\longrightarrow o$$
	of pointed symmetric spaces is exact and $(f_1)_\ast$ is actually injective.
	If $f_1$ is a topological embedding, then $(f_1)_\ast$ also is a topological embedding.
\end{proposition}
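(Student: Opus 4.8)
The plan is to settle exactness at the two ends by invoking Proposition~\ref{prop:symToSurjectiveInjevtive}, to handle the middle by an elementary coset chase (recalling that exactness of sequences of pointed symmetric spaces is understood in the sense of pointed sets, so a set-level argument suffices), and to reserve the topological-embedding statement as the genuinely geometric part. For the ends: since $f_2\colon G_2\to G_3$ is surjective, Proposition~\ref{prop:symToSurjectiveInjevtive}(1) gives that $(f_2)_\ast$ is surjective, which is exactness at $G_3/K_3$. Exactness of the $K$-sequence forces $K_1=f_1^{-1}(K_2)$ (Remark~\ref{rem:characterizeExactness}); as $f_1$ is injective, Proposition~\ref{prop:symToSurjectiveInjevtive}(2) then applies and yields that $(f_1)_\ast$ is genuinely injective (in particular its kernel is trivial), which is the asserted exactness at $G_1/K_1$.

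For exactness in the middle I would argue as follows. Functoriality of $\Sym$ gives $(f_2)_\ast\circ(f_1)_\ast=(f_2\circ f_1)_\ast$, and $f_2\circ f_1$ is the constant homomorphism $G_1\to G_3$ because $\im(f_1)=\ker(f_2)$; hence $(f_2\circ f_1)_\ast$ is constant with value the base point and $\im((f_1)_\ast)\subseteq\ker((f_2)_\ast)$. For the reverse inclusion I would take $g_2K_2$ with $f_2(g_2)\in K_3$; using surjectivity of $K_2\to K_3$ I pick $k_2\in K_2$ with $f_2(k_2)=f_2(g_2)$, so that $g_2k_2^{-1}\in\ker(f_2)=\im(f_1)$, say $g_2k_2^{-1}=f_1(g_1)$. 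Then $g_2K_2=f_1(g_1)K_2=(f_1)_\ast(g_1K_1)$, giving $\ker((f_2)_\ast)\subseteq\im((f_1)_\ast)$.

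Now assume $f_1\colon G_1\to G_2$ is a topological embedding; this is where the work lies. First I would observe that then its differential $L(f_1)$ is a topological embedding as well, so $f_1$ exhibits $f_1(G_1)$ as an integral subgroup compatible with the subspace topology; hence $f_1(G_1)$ is a (closed) Lie subgroup of $G_2$ and $\mathfrak h:=L(f_1)(\mathfrak g_1)$ is a closed $L(\sigma_2)$-invariant subspace of $\mathfrak g_2$, decomposing as $\mathfrak h_+\oplus\mathfrak h_-$ with $\mathfrak h_\pm\subseteq(\mathfrak g_2)_\pm$. In particular $\mathfrak h_-=\Lts((f_1)_\ast)((\mathfrak g_1)_-)$ is a closed subspace of $(\mathfrak g_2)_-$, and $\Lts((f_1)_\ast)\colon(\mathfrak g_1)_-\to\mathfrak h_-$ is a topological isomorphism. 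The strategy is to show that the image $S:=\im((f_1)_\ast)=q_2(f_1(G_1))$ is a symmetric subspace of $G_2/K_2$; being a submanifold it is then topologically embedded, and since $(f_1)_\ast$ induces a topological isomorphism on the Lie triple systems it is a local diffeomorphism onto $S$ at the base point, hence (by equivariance under the transitive $G_1$-action and the corresponding one on $S$) a homeomorphism $G_1/K_1\to S$ everywhere. Composing with the embedding $S\hookrightarrow G_2/K_2$ then gives the claim.

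The heart of the matter, and the step I expect to be the main obstacle, is the local computation near the base point. Writing $\mathfrak k_2:=L(K_2)=(\mathfrak g_2)_+$ and using that $(\xi,k)\mapsto\exp_{G_2}(\xi)k$ is a local diffeomorphism of $(\mathfrak g_2)_-\times K_2$ onto a neighbourhood of $\eins$ in $G_2$, one reads off the local product structure of the principal bundle $q_2$, together with $q_2(\exp_{G_2}(\xi)k)=\Exp_{G_2/K_2}(\xi)$. I would then aim to prove that for a sufficiently small $0$-neighbourhood $V\subseteq(\mathfrak g_2)_-$ one has $S\cap\Exp_{G_2/K_2}(V)=\Exp_{G_2/K_2}(\mathfrak h_-\cap V)$, i.e.\ that the normal chart $\Exp_{G_2/K_2}$ is a submanifold chart for $S$ with the closed model space $\mathfrak h_-$. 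The inclusion from right to left is immediate; the difficulty is the reverse inclusion, which amounts to ruling out that $\Exp_{G_2/K_2}(\xi)\in S$ for $\xi\notin\mathfrak h_-$ arbitrarily close to $0$, i.e.\ to controlling the saturation $f_1(G_1)K_2$ near $\eins$. This is exactly where the embeddedness of $f_1(G_1)$ is indispensable: because $f_1(G_1)$ is a closed Lie subgroup with tangent $\mathfrak h_+\oplus\mathfrak h_-$ and $\mathfrak h_+\subseteq\mathfrak k_2$, while $f_1(G_1)\cap K_2=f_1(K_1)$, the set $f_1(G_1)K_2$ has near $\eins$ the (closed) tangent $\mathfrak h_-\oplus\mathfrak k_2$, so that its $q_2$-image collapses precisely onto $\Exp_{G_2/K_2}(\mathfrak h_-\cap V)$. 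Making this transversality argument rigorous in the Banach setting, using the topological direct sum $\mathfrak g_2=\mathfrak k_2\oplus(\mathfrak g_2)_-$ and closedness of $\mathfrak h_-$ to guarantee closedness of $\mathfrak h_-\oplus\mathfrak k_2$, is the technical core; once the submanifold chart is in hand, homogeneity propagates it to all of $S$ and completes the proof.
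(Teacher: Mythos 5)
Your treatment of exactness at the two ends and of the middle term agrees with the paper's proof (same appeal to Proposition~\ref{prop:symToSurjectiveInjevtive} and Remark~\ref{rem:characterizeExactness}, same coset chase for $\ker((f_2)_\ast)\subseteq\im((f_1)_\ast)$). The problem is the topological-embedding part. What you call ``the heart of the matter'' --- proving that for small $V$ one has $S\cap\Exp_{G_2/K_2}(V)=\Exp_{G_2/K_2}(\mathfrak h_-\cap V)$, via a transversality/saturation argument for $f_1(G_1)K_2$ near $\eins$ --- is exactly the step you do not carry out: you describe it as the main obstacle, gesture at why it should work, and stop. As written, the proposal is a programme for a proof of the crucial claim (that $S:=\im((f_1)_\ast)$ is a symmetric subspace), not a proof of it. In the Banach setting the saturation argument is genuinely delicate (you would need, at minimum, that $\mathfrak h_-\oplus\mathfrak k_2$ is closed and that the local product decomposition of the bundle interacts correctly with the closed subgroup $f_1(G_1)$), so this cannot be waved through.

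The point you missed is that this hard step is unnecessary: by the middle exactness you have already established, $\im((f_1)_\ast)=\ker((f_2)_\ast)$, and Proposition~\ref{prop:kernelOfMorphismOfPointedSymSpaces} says that the kernel of a morphism of pointed symmetric spaces is automatically a closed symmetric subspace with the expected Lie triple system. That gives you $S$ as a submanifold for free, with no chart computation. The paper then identifies the two manifold structures on $S$ (the symmetric-subspace one and the integral-subspace one coming from $\Sym(f_1)$ via Example~\ref{ex:integralSubspaceOfHomogeneousSpace}, noting that $G_1=\ker(f_2)$ is a Lie subgroup of $G_2$) by checking that the smooth $G_2$-action on $G_2/K_2$ restricts to a smooth $G_1$-action on $N=S$ and invoking the uniqueness statement of Example~\ref{ex:integralSubspaceOfHomogeneousSpace}; this replaces your equivariance/local-diffeomorphism propagation and closes the argument. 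A further small caveat: your claim that $L(f_1)$ is a topological embedding because $f_1$ is one is asserted rather than argued; the paper avoids needing this by working with $G_1$ as the kernel of $f_2$.
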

\begin{proof}
	By Remark~\ref{rem:characterizeExactness}, we have $K_1=f_1^{-1}(K_2)$, so that the injectivity of $(f_1)_\ast$ and the surjectivity of $(f_2)_\ast$ follow by Proposition~\ref{prop:symToSurjectiveInjevtive}.
	The assertion $\im((f_1)_\ast)\subseteq \ker((f_2)_\ast)$ follows immediately from $\im(f_1)\subseteq \ker(f_2)$. To see that $\ker((f_2)_\ast) \subseteq \im((f_1)_\ast)$, we consider any $gK_2\in G_2/K_2$ satisfying $(f_2)_\ast(gK_2)=K_3$, i.e., $f_2(g)\in K_3$, and shall show that\linebreak $gK_2\in\im((f_1)_\ast)$. By the surjectivity of $K_2\rightarrow K_3$, there is a $g^\prime\in K_2$ with $f_2(g^\prime)=f_2(g)$, so that $f_2(g(g^\prime)^{-1})=\eins$. By the exactness of $G_1\rightarrow G_2\rightarrow G_3$, this entails the existence of some $g^{\prime\prime}\in G_1$ with $f_1(g^{\prime\prime})=g(g^\prime)^{-1}$. Hence, we obtain $(f_1)_\ast(g^{\prime\prime}K_1)=g(g^\prime)^{-1}K_2=gK_2$.
	
	Assume now that $f_1$ is a topological embedding.
	We consider $G_1$ as a subgroup of $G_2$ and note that it is a Lie subgroup, since it is the kernel of $f_2$.
	By Example~\ref{ex:integralSubspaceOfHomogeneousSpace}, the morphism $f_1$ induces an integral subspace $(f_1)_\ast=\Sym(f_1)\colon G_1/K_1\rightarrow G_2/K_2$. On the other hand, $N:=\im((f_1)_\ast)=\ker((f_2)_\ast)$ (cf.\ Proposition~\ref{prop:exactnessOfSym}) is a symmetric subspace of $G_2/K_2$ (cf.\ Proposition~\ref{prop:kernelOfMorphismOfPointedSymSpaces}). The smooth action $\tau\colon G_2 \times G_2/K_2 \rightarrow G_2/K_2$, $(g,hK_2)\mapsto ghK_2$ restricts to a smooth action $G_1 \times N \rightarrow N$, since, for all $g\in G_1$ and $hK_2\in N$, we have
	$$(f_2)_\ast(\tau(g,hK_2)) \ =\ f_2(gh)K_2 \ =\ f_2(g)f_2(h)K_2 \ =\ f_2(h)K_2 \ =\ (f_2)_\ast(hK_2) \ =\ K_3.$$
	Therefore, by Example~\ref{ex:integralSubspaceOfHomogeneousSpace}, the manifold structure on $N$ coincides with the one given by the integral subspace $(f_1)_\ast$, so that $(f_1)_\ast$ is a topological embedding.
\end{proof}
%
%

%
%
\subsection{The Automorphism Group of a Connected Symmetric Space}
\label{sec:AutOfConnectedSymSpace}
Let $M$ be a connected symmetric space. The Lie algebra $\Der(M)$ of derivations
can be uniquely turned into a Banach--Lie algebra such that for each frame $p$ in the frame bundle $\Fr(M)$ over $M$, the map
$$\Der(M)\rightarrow T_p(\Fr(M)),\ \xi\mapsto \left.\frac{d}{dt}\right|_{t=0}\Fr(\flow^{\xi}_t)(p)$$
is an embedding of Banach spaces, where $\flow^{\xi}_t$ is the time-$t$-flow of $\xi$ and $\Fr(\flow^{\xi}_t)$ is its induced automorphism of the frame bundle (cf.\ \cite[Cor.~5.18]{Klo09b}).
With regard to the natural affine connection on $M$, the Banach space structure can also be obtained by the requirement that 
$$\Der(M)\rightarrow T_bM \times \gl(T_bM),\ \xi\mapsto \big(\xi(b),\ v\mapsto\nabla\!_v\xi\big)$$
is an embedding of Banach spaces (cf.\ \cite[Prop.~2.2 and Prop.~5.17]{Klo09b}).
Given a base point $b\in M$, the involutive automorphism $(\mu_b)_\ast$ of the Lie algebra $\Der(M)$ is actually continuous and hence an automorphism of the Banach--Lie algebra $\Der(M)$, so that $(\Der(M),(\mu_b)_\ast)$ is a symmetric (Banach--)Lie algebra  and the evaluation map $\ev_b\colon\Der(M)_-\rightarrow \Lts(M,b)$ is an isomorphism of Lie triple systems (cf.\ \cite[Prop.~5.23]{Klo09b}).

The automorphism group $\Aut(M)$ can be turned into a Banach--Lie group such that
$$\exp\colon \Der(M)\rightarrow \Aut(M),\ \xi \mapsto \flow^{-\xi}_1$$
is its exponential map. The natural map $\tau\colon \Aut(M)\times M \rightarrow M$ is a transitive smooth action.
Together with the conjugation map $c_{\mu_b}\colon \Aut(M)\rightarrow \Aut(M)$, $g\mapsto\mu_b\circ g \circ \mu_b$, the automorphism group $\Aut(M)$ becomes a symmetric Lie group with Lie algebra $L(\Aut(M),c_{\mu_b})=(\Der(M),(\mu_b)_\ast)$. The stabilizer subgroup $\Aut(M)_b\leq \Aut(M)$ leads to the symmetric pair $(\Aut(M),c_{\mu_b},\Aut(M)_b)$. The induced symmetric space $\Aut(M)/\Aut(M)_b$ is isomorphic to $M$ via the isomorphism $\Phi\colon \Aut(M)/\Aut(M)_b\rightarrow M$ given by $\Phi(g\Aut(M)_b):=g(b)$. We refer to this fact as the \emph{homogeneity of connected symmetric spaces}. For further details, see \cite[Sec.~5.5]{Klo09b}.

For some purposes, the automorphism group of a connected symmetric space $M$ is too large and it is useful to consider the group $G(M)$ of displacements. In the finite-dimensional setting, this group is an integral subgroup of $\Aut(M)$ and leads to a further identification $M\cong G(M)/G(M)_b$ (cf.\ \cite{Loo69}). 
In \cite[Sec.~3.4]{Klo10Subspaces}, the author deals with the Banach case:

Let $(M,b)$ be a pointed connected symmetric space. Considering the symmetric Lie group $(\Aut(M),c_{\mu_b})$ with Lie algebra $(\Der(M),(\mu_b)_\ast)$, we denote by $G^\prime(M,b):=\langle\exp(\mathfrak{g}^\prime(M,b))\rangle$ the connected integral subgroup of $\Aut(M)$ that belongs to the closed subalgebra $$\mathfrak{g}^\prime(M,b):=\overline{[\Der(M)_-,\Der(M)_-]}\oplus\Der(M)_-$$
of $\Der(M)$. 
It satisfies $G(M)\leq G^\prime(M,b)\leq \overline{G(M)}$, where the closure is taken in $\Aut(M)$ (cf.\ \cite[Rem.~3.30]{Klo10Subspaces}).
Further, $G^\prime(M,b)$ is $c_{\mu_b}$-invariant and $(G^\prime(M,b),\sigma^\prime)$ with $\sigma^\prime:=c_{\mu_b}|_{G^\prime(M,b)}$ is a symmetric Lie group.
The symmetric pair $(G^\prime(M,b),\sigma^\prime,G^\prime(M,b)_b)$ where $G^\prime(M,b)_b\leq G^\prime(M,b)$ denotes the stabilizer subgroup of $b$ leads to an isomorphism
$$\Phi^\prime\colon G^\prime(M,b)/G^\prime(M,b)_b\rightarrow M,\,  gG^\prime(M,b)_b\mapsto g(b)$$
of symmetric spaces (cf.\ \cite[Prop.~3.28]{Klo10Subspaces}).
%
%
%
%
%
%
%
\section{Integrability of Lie Triple Systems}
The heart of this section is a characterization of integrable Lie triple systems. Along the way to this goal, we provide material concerning path and loop spaces and universal covering spaces of symmetric spaces. 
%
%
%
\subsection{Universal Covering Morphisms}
\begin{proposition}
\label{prop:universalCoveringMorphismOfSymSpace}
	Let $M$ be a connected symmetric space and $q_M\colon \widetilde M \rightarrow M$ a universal covering map of topological spaces. Then $\widetilde M$ carries a unique structure of a symmetric space for which $q_M$ is a morphism of symmetric spaces.
\end{proposition}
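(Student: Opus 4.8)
The plan is to transport both the smooth and the algebraic structure of $M$ along $q_M$ by means of covering space theory, using the uniqueness of lifts to verify the axioms and, for the uniqueness assertion, the intertwining of exponential maps.

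First I would equip $\widetilde M$ with the \emph{pullback smooth structure}. Since $q_M$ is a covering map it is a local homeomorphism, and composing the local inverses of $q_M$ (the sheet projections) with charts of $M$ yields an atlas whose transition maps are, on the overlaps, transition maps of $M$ and hence smooth. This makes $\widetilde M$ a smooth Banach manifold and $q_M$ a smooth local diffeomorphism; it is clearly the only smooth structure turning $q_M$ into an \'etale map. Fix base points $\tilde b\in\widetilde M$ and $b:=q_M(\tilde b)$. Next I would define the multiplication as a lift: the space $\widetilde M\times\widetilde M$ is connected, locally path-connected and simply connected (a product of copies of the universal cover $\widetilde M$, so $\pi_1$ is trivial), hence the continuous map $\mu\circ(q_M\times q_M)\colon\widetilde M\times\widetilde M\rightarrow M$ admits a unique continuous lift $\tilde\mu$ through $q_M$ with $\tilde\mu(\tilde b,\tilde b)=\tilde b$ (note $\mu(b,b)=b$). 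As a continuous lift through the local diffeomorphism $q_M$ of a smooth map, $\tilde\mu$ is smooth; and by construction $q_M\circ\tilde\mu=\mu\circ(q_M\times q_M)$, i.e.\ $q_M$ is a smooth homomorphism.

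The heart of the existence part is to verify the symmetric space axioms for $\tilde\mu$, and here the recurring tool is the uniqueness of lifts on a connected domain. For each Loos identity I would compare the two sides as maps $\widetilde M^{k}\rightarrow\widetilde M$: using the homomorphism property of $q_M$ together with the corresponding identity in $M$, both sides compose with $q_M$ to the \emph{same} continuous map $\widetilde M^{k}\rightarrow M$, so both are lifts of one map; as they agree at $(\tilde b,\dots,\tilde b)$ and $\widetilde M^{k}$ is connected, they coincide. This yields $\tilde\mu(\tilde x,\tilde x)=\tilde x$, the involutivity $\tilde\mu(\tilde x,\tilde\mu(\tilde x,\tilde y))=\tilde y$, and the automorphism identity $\tilde\mu(\tilde x,\tilde\mu(\tilde y,\tilde z))=\tilde\mu(\tilde\mu(\tilde x,\tilde y),\tilde\mu(\tilde x,\tilde z))$. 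The isolated-fixed-point condition I would check pointwise: near $\tilde x$ the map $q_M$ is injective, and any fixed point of $\tilde\mu_{\tilde x}$ lying in a small sheet maps to a fixed point of $\mu_{q_M(\tilde x)}$, which must be $q_M(\tilde x)$ because $M$ is a symmetric space, whence injectivity forces it to equal $\tilde x$. Thus $(\widetilde M,\tilde\mu)$ is a symmetric space and $q_M$ is a morphism.

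The delicate point is \emph{uniqueness}, where exotic structures must be excluded; I expect this to be the main obstacle. Given any symmetric space structure on $\widetilde M$ making $q_M$ a morphism, I would first argue that $q_M$ is then forced to be a local diffeomorphism. As a morphism of pointed symmetric spaces it intertwines exponentials, $q_M\circ\Exp_{(\widetilde M,\tilde b)}=\Exp_{(M,b)}\circ\Lts(q_M)$; since both exponential maps are local diffeomorphisms at $0$ while $q_M$ is a local homeomorphism, the continuous linear map $\Lts(q_M)$ is a local homeomorphism at $0$, hence bijective with continuous inverse, i.e.\ a topological isomorphism by the open mapping theorem, so $q_M$ is a local diffeomorphism at $\tilde b$ by the inverse function theorem. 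Intertwining translations, which act transitively on the connected space $\widetilde M$ (cf.\ (\ref{eqn:imageOfInnAut})), spreads this to every point. Consequently the smooth structure must be the pullback one, since in charts the identity of $\widetilde M$ is locally $q_M^{-1}\circ q_M$ and hence a diffeomorphism, and the two multiplications are both continuous lifts of $\mu\circ(q_M\times q_M)$ that agree at $(\tilde b,\tilde b)$ by the fixed-point axiom, so they coincide. Everything beyond this rigidity of the smooth structure reduces to the standard lifting and lift-uniqueness theorems for covering spaces.
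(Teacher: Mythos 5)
Your proof is correct and follows essentially the same route as the paper: pull back the smooth structure so that $q_M$ becomes a local diffeomorphism, lift the multiplication $\mu\circ(q_M\times q_M)$ through the covering with $\widetilde\mu(\widetilde b,\widetilde b)=\widetilde b$, verify the Loos identities by uniqueness of lifts on connected domains, and check the isolated-fixed-point condition locally through the sheets. The only difference is that you make the uniqueness assertion fully explicit --- using the intertwining of exponential maps to show that \emph{any} symmetric space structure turning $q_M$ into a morphism forces $q_M$ to be a local diffeomorphism, hence forces the pullback smooth structure and the unique lift --- whereas the paper leaves this step implicit in the cited uniqueness of the smooth structure and of lifts; your added argument is sound.
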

\begin{definition}
	We call $q_M$ a \emph{universal covering morphism (over $M$)}.
\end{definition}
\begin{proof}
	By \cite[Ex.~3.2Q(vii)]{AMR88}, $\widetilde M$ can be uniquely turned into a smooth Banach manifold making $q_M$ a local diffeomorphism. Fixing base points $\widetilde b\in\widetilde M$ and $b:=q_M(\widetilde b)\in M$ , the multiplication map $\mu$ on $M$ can be uniquely lifted to a continuous map $\widetilde\mu\colon \widetilde M \times \widetilde M \rightarrow \widetilde M$ with $\widetilde\mu(\widetilde b, \widetilde b)=\widetilde b$ by the lifting property of covering maps, i.e., we have $q_M\circ \widetilde\mu = \mu\circ (q_M\times q_M)$. Since $q_M$ and $q_M\times q_M$ are local diffeomorphisms, $\widetilde\mu$ is automatically smooth. It suffices to show that $(\widetilde M,\widetilde\mu)$ is a symmetric space.
	
	For this, we must verify the identities $x\cdot x = x$, $x \cdot (x\cdot y) = y$ and $x\cdot (y\cdot z) = (x\cdot y)\cdot (x\cdot z)$ for all $x,y,z\in\widetilde M$ and must check that for each $x\in\widetilde M$, the fixed point $x$ of $\widetilde\mu_x$ is an isolated fixed point. Exemplarily, we present the verification of $x \cdot (x\cdot y) = y$: We observe that it holds for $x:=y:=\widetilde b$ and that
	$$q_M(x \cdot (x \cdot y)) \ =\ q_M(x) \cdot (q_M(x) \cdot q_M(y)) \ =\ q_M(y),$$
	so that it holds for all $x$ and $y$ by uniqueness of liftings.
	Given $x\in\widetilde M$, we have $q_M\circ \widetilde\mu_x = \mu_{q_M(x)}\circ q_M$. Since $q_M$ is a local diffeomorphism (around $x$) and since $q_M(x)$ is an isolated fixed point of $\mu_{q_M(x)}$, it is easy to see that $x$ is an isolated fixed point of $\widetilde\mu_x$.
\end{proof}
\begin{remark}\label{rem:Lts(covering)=Iso}
	A universal covering morphism $q\colon (\widetilde M,\widetilde b)\rightarrow (M,b)$ of pointed symmetric spaces being a local diffeomorphism, it induces an isomorphism $\Lts(q)$ of Lie triple systems. Thus, we shall frequently identify $\Lts(\widetilde M,\widetilde b)$ with $\Lts(M,b)$.
	Conversely, given another morphism $q^\prime\colon (M^\prime,b^\prime)\rightarrow (M,b)$ over $(M,b)$ with 1-connected $M^\prime$ such that $\Lts(q^\prime)$ is an isomorphism, then $q^\prime$ is a universal covering morphism, since $(M^\prime, b^\prime)$ and $(\widetilde M,\widetilde b)$ are naturally isomorphic over $(M,b)$ by the Integrability Theorem.
\end{remark}
\begin{proposition}
\label{prop:universalCoveringMorphismOfSymLieGroup}
	Let $(G,\sigma)$ be a connected symmetric Lie group and $q\colon \widetilde G \rightarrow G$ a universal covering morphism of Lie groups. Then there is a unique involutive automorphism $\widetilde\sigma\colon \widetilde G\rightarrow \widetilde G$ making $q\colon(\widetilde G,\widetilde \sigma)\rightarrow (G,\sigma)$ a morphism of symmetric Lie groups.
\end{proposition}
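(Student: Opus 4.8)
The plan is to obtain $\widetilde\sigma$ as the unique lift of $\sigma\circ q$ through the covering $q$, and then to deduce all the required algebraic properties from the uniqueness of lifts. First I fix the identity elements: write $\eins$ for the identity of $G$ and $\widetilde\eins$ for the identity of $\widetilde G$, so that $q(\widetilde\eins)=\eins$. The map $\sigma\circ q\colon\widetilde G\to G$ is a smooth homomorphism with $(\sigma\circ q)(\widetilde\eins)=\eins$. Since $\widetilde G$ is $1$-connected (being a universal cover) and locally path-connected (being a Banach manifold), the lifting criterion for covering maps provides a unique continuous map $\widetilde\sigma\colon\widetilde G\to\widetilde G$ with $q\circ\widetilde\sigma=\sigma\circ q$ and $\widetilde\sigma(\widetilde\eins)=\widetilde\eins$; and since $q$ is a local diffeomorphism, $\widetilde\sigma$ is automatically smooth (locally it equals $q^{-1}\circ\sigma\circ q$ for suitable local inverses of $q$), just as in the proof of Proposition~\ref{prop:universalCoveringMorphismOfSymSpace}.

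Next I would verify that $\widetilde\sigma$ is a group homomorphism. The two smooth maps $\widetilde G\times\widetilde G\to\widetilde G$ given by $(x,y)\mapsto\widetilde\sigma(xy)$ and $(x,y)\mapsto\widetilde\sigma(x)\widetilde\sigma(y)$ are both lifts through $q$ of the single map $(x,y)\mapsto\sigma(q(x))\sigma(q(y))$, because $q$ and $\sigma$ are homomorphisms; moreover they agree at the base point $(\widetilde\eins,\widetilde\eins)$. Since $\widetilde G\times\widetilde G$ is connected, the uniqueness of lifts forces these two maps to coincide, so $\widetilde\sigma$ is a homomorphism.

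The involutivity is handled by the same principle: $q\circ\widetilde\sigma^2=\sigma\circ q\circ\widetilde\sigma=\sigma^2\circ q=q$, so both $\widetilde\sigma^2$ and $\id_{\widetilde G}$ are lifts of $q$ through $q$ that send $\widetilde\eins$ to $\widetilde\eins$; hence $\widetilde\sigma^2=\id_{\widetilde G}$. In particular $\widetilde\sigma$ is bijective, so it is an involutive automorphism, and $(\widetilde G,\widetilde\sigma)$ is a symmetric Lie group for which $q$ is a morphism. For uniqueness, any involutive automorphism $\widetilde\sigma'$ with $q\circ\widetilde\sigma'=\sigma\circ q$ is in particular a homomorphism with $\widetilde\sigma'(\widetilde\eins)=\widetilde\eins$ and a lift of $\sigma\circ q$; by the uniqueness clause of the lifting criterion, $\widetilde\sigma'=\widetilde\sigma$.

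The argument is essentially routine once the lifting criterion is in place; the only point that genuinely needs the homomorphism structure (rather than bare topology) is the verification that $\widetilde\sigma$ respects multiplication, and I expect that to be the main---though mild---obstacle. It is precisely there that one must invoke uniqueness of lifts on the product $\widetilde G\times\widetilde G$ rather than on $\widetilde G$, so I would make sure to note that this product is connected. An alternative route, should one prefer to avoid covering-space lifting, is to transport the involution $L(\sigma)$ to an involutive continuous Lie algebra automorphism $L(q)^{-1}\circ L(\sigma)\circ L(q)$ of $L(\widetilde G)$ and integrate it using that $\widetilde G$ is $1$-connected; the compatibility $q\circ\widetilde\sigma=\sigma\circ q$ then follows since both sides induce the same morphism of Lie algebras.
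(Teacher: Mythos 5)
Your proposal is correct and follows essentially the same route as the paper: the paper also obtains $\widetilde\sigma$ as the unique lift of $\sigma\circ q$ through $q$ (citing the lifting property for covering morphisms of Lie groups, which already delivers a homomorphism) and proves involutivity exactly as you do, from $q\circ\widetilde\sigma^2=q$ and uniqueness of liftings. Your only addition is to unpack that cited lifting property by verifying the homomorphism identity on the connected space $\widetilde G\times\widetilde G$, which is a correct filling-in of the black box rather than a different argument.
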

\begin{definition}
	We call $q$ a \emph{universal covering morphism over $(G,\sigma)$} and shall frequently denote it by $q_{(G,\sigma)}$.
\end{definition}
\begin{proof}
	By the lifting property of covering morphisms of Lie groups, we can lift the smooth group homomorphism $\sigma\circ q\colon \widetilde G \rightarrow G$ to a unique smooth group homomorphism $\widetilde\sigma\colon \widetilde G\rightarrow \widetilde G$ with $q\circ\widetilde\sigma = \sigma\circ q$. Since $\widetilde\sigma^2$ then satisfies
	$q\circ\widetilde\sigma^2 = \sigma^2\circ q=\id_G\circ q$, it is a lift of $q$ and is hence the identity map $\id_{\widetilde G}$ by uniqueness of liftings. Therefore, $q\colon(\widetilde G,\widetilde \sigma)\rightarrow (G,\sigma)$ is a morphism of symmetric Lie groups.
\end{proof}
\begin{remark} \label{rem:L(covering)=Iso}
	As in Remark~\ref{rem:Lts(covering)=Iso}, the morphism $q_{(G,\sigma)}$ induces an isomorphism $L(q_{(G,\sigma)})$ of symmetric Lie algebras, so that we shall frequently identify $L(\widetilde G,\widetilde \sigma)$ with $L(G,\sigma)$. Conversely, given any morphism $q^\prime\colon (G^\prime,\sigma^\prime)\rightarrow (G,\sigma)$ over $(G,\sigma)$ with 1-connected $G^\prime$ such that $L(q^\prime)$ is an isomorphism, then $q^\prime$ is a universal covering morphism.	
\end{remark}
%
%
%
%
\subsection{Path and Loop Spaces}
Let $(G,\sigma)$ be a symmetric Lie group with Lie algebra $(\mathfrak{g},\theta)$. The path group
$$P(G):=\{\gamma\in C([0,1],G)\colon \gamma(0)=\eins\}$$
of $G$, where the multiplication on $P(G)$ is defined pointwise, is a contractible (and hence 1-connected) Banach--Lie group with Lie algebra
$P(\mathfrak{g}):=\{\gamma\in C([0,1],\mathfrak{g})\colon \gamma(0)=0\}$
and exponential map
$P(\exp_G)\colon P(\mathfrak{g})\rightarrow P(G),\ \gamma\mapsto \exp_G\circ\gamma$
(cf.\ \cite{GN03}).
Endowing $P(G)$ with the involutive automorphism $P(\sigma)$ of $P(G)$ given by $P(\sigma)(\gamma):=\sigma\circ\gamma$, we make it a symmetric Lie group called the \emph{path group of $(G,\sigma)$} and denoted by $P(G,\sigma)$. Its Lie algebra is $P(\mathfrak{g})$ endowed with the involutive automorphism $P(\theta)$ of $P(\mathfrak{g})$ that is given by $P(\theta)(\gamma):=\theta\circ\gamma$. It is denoted by $P(\mathfrak{g},\theta)$.

The kernel $\Omega(G)\unlhd P(G)$ of the evaluation morphism
$$(\ev_1:=)\ \ev_1^{(G,\sigma)}\colon P(G,\sigma)\rightarrow (G,\sigma),\ \gamma\mapsto \gamma(1)$$
endowed with the involutive automorphism $\Omega(\sigma):=P(\sigma)|_{\Omega(G)}^{\Omega(G)}$ is called the \emph{loop group of $(G,\sigma)$} and is denoted by $\Omega(G,\sigma)$.
Its Lie algebra is the kernel $\Omega(\mathfrak{g})$ of the evaluation morphism
$$(\ev_1:=)\ \ev_1^{(\mathfrak{g},\theta)}\colon P(\mathfrak{g},\theta)\rightarrow (\mathfrak{g},\theta),\ \gamma\mapsto \gamma(1)$$
endowed with the involutive automorphism $\Omega(\theta):=P(\theta)|_{\Omega(\mathfrak{g})}^{\Omega(\mathfrak{g})}$. It is denoted by $\Omega(\mathfrak{g},\theta)$.

Let $(M,b)$ be a pointed symmetric space. The set
$$P(M,b):=\{\gamma\in C([0,1],M)\colon \gamma(0)=b\}$$
carries a natural structure of a pointed reflection space,
where the multiplication on $P(M,b)$ is defined pointwise and where the base point is given by the constant curve $\const_b$ with\linebreak value $b$. We intend to turn $P(M,b)$ into a pointed symmetric space with Lie triple system
$$P(\mathfrak{m}):=\{\gamma\in C([0,1],\mathfrak{m})\colon \gamma(0)=0\},$$
where the Lie bracket is defined pointwise.

\begin{proposition}\label{prop:P(G/K)}
	Let $(G,\sigma,K)$ be a symmetric pair with quotient morphism $q\colon G\rightarrow G/K$\linebreak and let $\pi\colon P(G)\rightarrow P(G)/P(G^\sigma)$ be the quotient morphism given by the symmetric pair $(P(G,\sigma),P(G^\sigma))$.
	Then the map $P(q)\colon P(G)\rightarrow P(G/K)$ defined by $P(q)(\gamma):=q\circ \gamma$ factors over $\pi$, i.e., there is a unique map $\Phi\colon P(G)/P(G^\sigma)\rightarrow P(G/K)$ with $\Phi\circ\pi=P(q)$. Furthermore, $\Phi$ is an isomorphism of pointed reflection spaces.
\end{proposition}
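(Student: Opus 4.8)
The plan is to establish that the maps $\pi$ and $P(q)$ have \emph{exactly the same fibres}; this single observation delivers at once the factorisation through $\pi$, the uniqueness of $\Phi$, and the injectivity of $\Phi$. Since the multiplication on $P(G)$ is pointwise, two paths $\gamma,\eta\in P(G)$ lie in the same left coset of $P(G^\sigma)$ precisely when $\gamma^{-1}\eta\in P(G^\sigma)$, that is, when $\gamma(t)^{-1}\eta(t)\in G^\sigma$ for every $t\in[0,1]$ (the curve $t\mapsto\gamma(t)^{-1}\eta(t)$ is automatically continuous and starts at $\eins\in G^\sigma$, so no further condition is needed). On the other hand, $P(q)(\gamma)=P(q)(\eta)$ says that $\gamma(t)K=\eta(t)K$ for all $t$, i.e.\ $\gamma(t)^{-1}\eta(t)\in K=G^\sigma$ for all $t$. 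These two conditions coincide, so $\pi(\gamma)=\pi(\eta)$ if and only if $P(q)(\gamma)=P(q)(\eta)$. Because $\pi$ is a surjective quotient map, $P(q)$ then descends to a unique well-defined map $\Phi$ with $\Phi\circ\pi=P(q)$, and the equality of fibres makes $\Phi$ injective.

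Next I would prove that $\Phi$ is surjective, which---$\pi$ being surjective---is the same as the surjectivity of $P(q)$. Given $\alpha\in P(G/K)$, i.e.\ a continuous curve $\alpha\colon[0,1]\to G/K$ with $\alpha(0)=K$, I must produce $\gamma\in P(G)$ with $q\circ\gamma=\alpha$ and $\gamma(0)=\eins$. This is precisely the path-lifting property of the principal $K$-bundle $q\colon G\to G/K$: choosing, via the Lebesgue number lemma, a partition of $[0,1]$ so fine that $\alpha$ maps each subinterval into a trivialising neighbourhood of $q$, one lifts $\alpha$ piecewise using local sections and corrects by elements of $K$ at the partition points to keep the lift continuous and to arrange $\gamma(0)=\eins$.

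It remains to verify that $\Phi$ is a homomorphism of pointed reflection spaces and then to upgrade the bijection to an isomorphism. The base point goes to the base point, since $\Phi(\pi(\const_\eins))=P(q)(\const_\eins)=\const_K$. For the multiplication I would compute both sides pointwise: on the quotient side $\pi(\gamma)\cdot\pi(\eta)=\pi\big(\gamma\,P(\sigma)(\gamma)^{-1}P(\sigma)(\eta)\big)$, whose image under $\Phi$ has value $q\big(\gamma(t)\sigma(\gamma(t))^{-1}\sigma(\eta(t))\big)$ at $t$; on the target side $(q\circ\gamma)\cdot(q\circ\eta)$ has value $\gamma(t)K\cdot\eta(t)K=\gamma(t)\sigma(\gamma(t))^{-1}\sigma(\eta(t))K$ at $t$ by the multiplication formula on $G/K$. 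The two values agree, so $\Phi$ respects the multiplication. Finally, a bijective base-point-preserving homomorphism of reflection spaces is automatically an isomorphism: applying $\Phi$ to $\Phi^{-1}(a)\cdot\Phi^{-1}(b)$ shows $\Phi^{-1}(a)\cdot\Phi^{-1}(b)=\Phi^{-1}(a\cdot b)$, so $\Phi^{-1}$ is again a homomorphism.

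The only genuinely analytic step is the surjectivity, and I expect the path-lifting argument to be the main---though entirely standard---obstacle; it relies on the local triviality of the bundle $q$ and the compactness of $[0,1]$. Everything else is coset bookkeeping together with the pointwise form of the symmetric-space multiplication.
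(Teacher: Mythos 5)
Your argument is correct and follows essentially the same route as the paper: equality of the fibres of $\pi$ and $P(q)$ via the identification $P(G^\sigma)=P(K)$, surjectivity of $P(q)$ from the path-lifting property of the principal bundle $q$, and a pointwise computation for compatibility with the multiplication. One small correction: $K$ need not equal $G^\sigma$ (only $(G^\sigma)_0\subseteq K\subseteq G^\sigma$ is assumed), but your fibre comparison survives because the continuous curve $t\mapsto\gamma(t)^{-1}\eta(t)$ starts at $\eins$ and therefore lies in $(G^\sigma)_0\subseteq K$ whenever it lies in $G^\sigma$, which is precisely the identity $P(G^\sigma)=P(K)$ that the paper also uses.
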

\begin{proof}
	Given any curves $\gamma_1, \gamma_2 \in P(G)$ with $\gamma_2=\gamma_1\delta$ for some $\delta\in P(G^\sigma)=P(K)$, it is easy to see that $P(q)(\gamma_2)= P(q)(\gamma_1)$, which shows that $P(q)$ factors over $\pi$.
	To see that the induced map $\Phi$ is injective, we take any curves $\gamma_1,\gamma_2\in P(G)$ with $P(q)(\gamma_1) = P(q)(\gamma_2)$. Then the curve $\delta:=\gamma_1^{-1}\gamma_2\in P(G)$ actually lies in $K$, i.e., is an element of $P(K)=P(G^\sigma)$, proving the injectivity of $\Phi$. Since $q$ is a fiber bundle, the map $P(q)$ is surjective by the path lifting property of fiber bundles, entailing the surjectivity of $\Phi$. Finally, we check that $\Phi$ is a homomorphism: Given any curves $\gamma_1,\gamma_2\in P(G)$, we have
	\begin{align*}
		\Phi(\pi(\gamma_1)\cdot\pi(\gamma_2))\
		&=\ \Phi\big(\pi(\gamma_1(\sigma\circ\gamma_1)^{-1}(\sigma\circ\gamma_2))\big) \ =\  q\circ (\gamma_1(\sigma\circ\gamma_1)^{-1}(\sigma\circ\gamma_2)) \\
		&=\ (q\circ\gamma_1)\cdot (q\circ\gamma_2) \ =\ \Phi(\pi(\gamma_1))\cdot \Phi(\pi(\gamma_2)).
		\qedhere
	\end{align*}
\end{proof}
\begin{corollary}\label{cor:P(M,b)isInn-transitive}
	For each pointed symmetric space $(M,b)$, the reflection space $P(M,b)$ is $\Inn(\calU(P(M,b)))$-transitive, where, to prevent confusion, $\calU(P(M,b))$ denotes the underlying reflection space of $P(M,b)$ that is not pointed.
\end{corollary}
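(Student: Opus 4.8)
The plan is to realize $P(M,b)$, up to isomorphism of reflection spaces, as the quotient of a connected (indeed contractible) Banach--Lie group, and then to invoke the fact that a connected symmetric space is $\Inn$-transitive.

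First I would reduce to the case that $M$ is connected. Any $\gamma\in P(M,b)$ has connected image containing $b$, hence takes values in the connected component $M_b$ of $b$. Since connected components of a symmetric space are symmetric subspaces (cf.\ Section~\ref{sec:subspacesAndQuotients}), $(M_b,b)$ is itself a pointed connected symmetric space, and because the multiplication on path spaces is defined pointwise, $P(M,b)=P(M_b,b)$ as pointed reflection spaces. Thus it suffices to treat connected $(M,b)$. Next, by the homogeneity of connected symmetric spaces (cf.\ Section~\ref{sec:AutOfConnectedSymSpace}), there is a symmetric pair $(G,\sigma,K)$---for instance $G=\Aut(M)$, $\sigma=c_{\mu_b}$, $K=\Aut(M)_b$---together with an isomorphism $\Psi\colon G/K\rightarrow M$ of pointed symmetric spaces. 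Applying $P$ pointwise, the map $P(\Psi)\colon P(G/K)\rightarrow P(M,b)$, $\gamma\mapsto\Psi\circ\gamma$, is an isomorphism of pointed reflection spaces. Composing it with the isomorphism $\Phi\colon P(G)/P(G^\sigma)\rightarrow P(G/K)$ provided by Proposition~\ref{prop:P(G/K)}, I obtain an isomorphism of pointed reflection spaces $P(G)/P(G^\sigma)\cong P(M,b)$.

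It then remains to observe that $P(G)/P(G^\sigma)=\Sym(P(G,\sigma),P(G^\sigma))$ is a \emph{connected} symmetric space: it is a pointed symmetric space by the construction of the functor $\Sym$, and it is connected because it is a continuous (surjective) image of the contractible, hence connected, path group $P(G)$. Consequently its inner automorphism group acts transitively on it, a connected symmetric space being $\Inn$-transitive (cf.\ Section~\ref{sec:LtsAndSymSpace}). Finally, $\Inn$-transitivity is preserved under any isomorphism of reflection spaces: such an isomorphism $\psi$ satisfies $\psi\circ\mu_x=\mu_{\psi(x)}\circ\psi$ for all $x$, so it conjugates $\Inn$ onto $\Inn$ and carries orbits to orbits. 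Transferring along the isomorphism built above yields that $\Inn(\calU(P(M,b)))$ acts transitively on $P(M,b)$.

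The substantive step is the identification $P(M,b)\cong P(G)/P(G^\sigma)$, which converts an \emph{a priori} purely reflection-space statement into one about a genuine connected symmetric space, where translations along one-parameter subspaces are available and transitivity is already known. By contrast, the reduction to the connected case and the transfer of transitivity along a reflection-space isomorphism are routine and should present no real difficulty.
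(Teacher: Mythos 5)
Your proposal is correct and follows essentially the same route as the paper's own proof: reduce to connected $M$, identify $M$ with a quotient $G/K$ by homogeneity, use Proposition~\ref{prop:P(G/K)} to identify $P(M,b)$ with the connected symmetric space $P(G)/P(G^\sigma)$, and transfer $\Inn$-transitivity along the resulting isomorphism. The only difference is that you spell out the reduction to the connected case and the transfer step in more detail than the paper does.
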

\begin{proof}
	W.l.o.g., we assume $M$ to be connected. By homogeneity, it can be identified with a quotient as in the preceding proposition. Since $P(G)$ and hence $P(G)/P(G^\sigma)$ are connected, the symmetric space $P(G)/P(G^\sigma)$ is $\Inn(\calU(P(G)/P(G^\sigma)))$-transitive. Thus, also the reflection space $P(G/K)$ is $\Inn(\calU(P(G/K)))$-transitive.
\end{proof}
\begin{theorem}\label{th:pathSymSpace}
	\begin{enumerate}
		\item[\rm (1)]
		Let $(M,b)$ be a pointed symmetric space with Lie triple system $\mathfrak{m}$. The pointed reflection space $P(M,b)$
		carries a unique structure of a Banach manifold making it a pointed symmetric space with Lie triple system $P(\mathfrak{m})$ such that $$P(\Exp_{(M,b)})\colon P(\mathfrak{m})\rightarrow P(M,b),\ \gamma \mapsto \Exp_{(M,b)}\circ\gamma$$
		is its exponential map. This space $P(M,b)$ is 1-connected.
		\item[\rm (2)]
		With regard to (1), the map $\Phi$ of Proposition~\ref{prop:P(G/K)} becomes an isomorphism of pointed symmetric spaces. In particular, we have $\Phi\circ\Exp_{P(G)/P(G^\sigma)}=P(\Exp_{G/K})$.
	\end{enumerate}
\end{theorem}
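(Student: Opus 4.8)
The plan is to realize $P(M,b)$ as a quotient of a symmetric Lie group, so that we can import its symmetric-space structure and exponential map directly, and then to transport this structure along the isomorphism $\Phi$ of reflection spaces. Without loss of generality, we may assume $M$ is connected: the general case follows since a path starting at $b$ stays in the connected component $M_b$ of $b$, so $P(M,b) = P(M_b,b)$ as pointed reflection spaces. By the homogeneity of connected symmetric spaces, we identify $(M,b)$ with $(G/K, K)$ for the symmetric pair $(G,\sigma,K) := (\Aut(M), c_{\mu_b}, \Aut(M)_b)$, with Lie algebra $(\mathfrak{g},\theta) = (\Der(M),(\mu_b)_\ast)$. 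Passing to the universal covering morphism $q_{(G,\sigma)}\colon \widetilde G \to G$ (Proposition~\ref{prop:universalCoveringMorphismOfSymLieGroup}) if necessary, we may further assume $G$ is 1-connected; this does not change $G/K$ up to isomorphism of pointed symmetric spaces, since $L(q_{(G,\sigma)})$ is an isomorphism and the quotients agree.

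First I would equip $P(G)/P(G^\sigma)$ with its canonical symmetric-space structure coming from the symmetric pair $(P(G,\sigma), P(G^\sigma))$. Here the key input is that $P(G)$ is a contractible, hence 1-connected, Banach--Lie group with Lie algebra $P(\mathfrak{g})$ and exponential map $P(\exp_G)$, and $P(G^\sigma) = (P(G)^{P(\sigma)})$ is a split Lie subgroup with Lie algebra $P(\mathfrak{g})_+ = P(\mathfrak{g}_+)$. By Proposition~\ref{prop:1-connectedQuotient}, the quotient $P(G)/P(G^\sigma)$ is 1-connected. Its Lie triple system is $\calLts(L(P(G,\sigma))) = P(\mathfrak{g})_- = P(\mathfrak{g}_-) = P(\mathfrak{m})$ (using that the $(-1)$-eigenspace of $P(\theta)$ is exactly the pointwise $(-1)$-eigenspace), and by formula~(\ref{eqn:Exp=q exp}) its exponential map is $\Exp_{P(G)/P(G^\sigma)} = \pi\circ P(\exp_G)|_{P(\mathfrak{g}_-)}$.

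Next I would push this structure through the reflection-space isomorphism $\Phi\colon P(G)/P(G^\sigma)\to P(G/K) = P(M,b)$ of Proposition~\ref{prop:P(G/K)}, declaring $\Phi$ to be a diffeomorphism. The crux is to identify the resulting exponential map as $P(\Exp_{(M,b)})$ and to verify the compatibility $\Phi\circ\Exp_{P(G)/P(G^\sigma)} = P(\Exp_{G/K})$ asserted in part~(2). This is a pointwise computation: for $\gamma\in P(\mathfrak{m})=P(\mathfrak{g}_-)$ one has $\Phi(\Exp_{P(G)/P(G^\sigma)}(\gamma)) = \Phi(\pi(P(\exp_G)(\gamma))) = q\circ(\exp_G\circ\gamma) = (q\circ\exp_G|_{\mathfrak{g}_-})\circ\gamma = \Exp_{G/K}\circ\gamma = P(\Exp_{G/K})(\gamma)$, where the third equality uses $\Phi\circ\pi = P(q)$ and the fourth uses~(\ref{eqn:Exp=q exp}). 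Since $\Phi$ is a homomorphism of pointed reflection spaces that intertwines the exponential maps via the identity on $P(\mathfrak{m})$, and since $P(G)/P(G^\sigma)$ is $\Inn$-transitive (Corollary~\ref{cor:P(M,b)isInn-transitive}), the criterion recalled in Section~\ref{sec:LtsAndSymSpace} makes $\Phi$ a morphism of pointed symmetric spaces with $\Lts(\Phi)=\id$; its inverse is handled symmetrically, so $\Phi$ is an isomorphism, proving~(2) and the existence half of~(1).

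The main obstacle is \emph{uniqueness} of the manifold structure in~(1), since a priori the transported structure could depend on the chosen presentation $(M,b)\cong G/K$. To settle this I would argue intrinsically: any Banach-manifold structure on $P(M,b)$ making it a pointed symmetric space with Lie triple system $P(\mathfrak{m})$ and exponential map $P(\Exp_{(M,b)})$ must agree with ours on a normal chart around the base point $\const_b$, because the exponential map is a local diffeomorphism at $0$ with $T_0 = \id$ and hence pins down the smooth structure near $\const_b$; then $\Inn$-transitivity of $P(M,b)$ (Corollary~\ref{cor:P(M,b)isInn-transitive}) propagates agreement to all of $P(M,b)$, since the inner automorphisms are diffeomorphisms for either structure and carry a normal chart at $\const_b$ to charts everywhere. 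The 1-connectedness of $P(M,b)$ then follows by transport from that of $P(G)/P(G^\sigma)$. I expect the bookkeeping around $P(G^\sigma)=P(K)$ being a genuine split Lie subgroup, and the pointwise identifications of eigenspaces and brackets, to be routine once the covering reduction secures 1-connectedness of $G$.
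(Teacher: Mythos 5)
Your proposal is correct and follows essentially the same route as the paper: reduce to the connected homogeneous case, realize $P(M,b)$ as $P(G)/P(G^\sigma)$ via the reflection-space isomorphism $\Phi$ of Proposition~\ref{prop:P(G/K)}, verify $\Phi\circ\Exp_{P(G)/P(G^\sigma)}=P(\Exp_{G/K})$ by the same pointwise computation, obtain $1$-connectedness from Proposition~\ref{prop:1-connectedQuotient}, and derive uniqueness from a shared exponential chart at $\const_b$ propagated by $\Inn$-transitivity (Corollary~\ref{cor:P(M,b)isInn-transitive}). The only deviation is your passage to the universal covering group of $G$, which is harmless but unnecessary, since $P(G)$ is contractible regardless of the topology of $G$.
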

\begin{definition}
	The pointed symmetric space $P(M,b)$ is called the \emph{path (symmetric) space of $(M,b)$}.
\end{definition}
\begin{proof}
	W.l.o.g., we assume $M$ to be connected and hence homogeneous. By Corollary~\ref{cor:P(M,b)isInn-transitive}, the reflection space $P(M,b)$ is $\Inn(\calU(P(M,b)))$-transitive. Therefore, it is clear that there exists at most one structure of a Banach manifold making $P(M,b)$ a pointed symmetric space with exponential map $P(\Exp_{(M,b)})$: Indeed, given two such structures, then for each $\gamma\in P(M,b)$, there exists some $g\in\Inn(\calU(P(M,b)))$ with $g(\const_b)=\gamma$, so that, considering a shareable\footnote{By a shareable chart, we mean a map that is a chart with respect to either manifold structure.} exponential chart $\varphi_{\const_b}=(P(\Exp_{(M,b)})|_V^U)^{-1}$ at $\const_b$, the map $\varphi_\gamma:=\varphi_{\const_b}\circ g^{-1}$ is a shareable chart at $\gamma$.
	
	With regard to the homogeneity of $M$, we consider the situation of (2), i.e., of Proposition~\ref{prop:P(G/K)}, and show that $P(G/K)$ can be turned into a pointed symmetric space with exponential map $P(\Exp_{G/K})$. It is clear that $P(G/K)$ inherits the structure of a pointed symmetric space via the map $\Phi$. Then the map $\Phi\circ\Exp_{P(G)/P(G^\sigma)}$ is its exponential map, so that we shall show that it is equal to $P(\Exp_{G/K})$. Indeed, we have
	\begin{eqnarray*}
		\Phi\circ\Exp_{P(G)/P(G^\sigma)} &=& \Phi\circ\pi \circ \exp_{P(G)}|_{P(\mathfrak{g})_-} \ =\ P(q) \circ P(\exp_{G})|_{P(\mathfrak{g}_-)} \\ &=& P(q\circ\exp_G|_{\mathfrak{g}_-}) \ =\ P(\Exp_{G/K}).
	\end{eqnarray*}
	By Proposition~\ref{prop:1-connectedQuotient}, the space $P(G)/P(G^\sigma)$ is 1-connected, whence $P(G/K)$ is 1-connected as well.
\end{proof}
\begin{proposition} \label{prop:compact-openP(M,b)}
	Let $(M,b)$ be a pointed symmetric space. Then the topology of its path symmetric space $P(M,b)$ coincides with the compact-open topology.
\end{proposition}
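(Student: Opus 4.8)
The plan is to pin down both topologies near the base point $\const_b$ and then spread the comparison over all of $P(M,b)$ by $\Inn$-transitivity. The guiding observation is that on the model space $P(\mathfrak{m})=\{\gamma\in C([0,1],\mathfrak{m})\colon\gamma(0)=0\}$ the Banach norm topology \emph{is} the compact-open topology: since $[0,1]$ is compact and $\mathfrak{m}$ is metric, the compact-open topology on $C([0,1],\mathfrak{m})$ coincides with the topology of uniform convergence, which is the sup-norm topology. Thus it suffices to check that the chart $P(\Exp_{(M,b)})$ defining the manifold structure of $P(M,b)$ (cf.\ Theorem~\ref{th:pathSymSpace}) is \emph{also} a homeomorphism for the compact-open topology on a neighborhood of $\const_b$, and that this local coincidence can be transported.

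First I would fix a normal chart of $(M,b)$, i.e.\ an open $0$-neighborhood $V_0\subseteq\mathfrak{m}$ so small that $\Exp_{(M,b)}|_{V_0}\colon V_0\to U_0:=\Exp_{(M,b)}(V_0)$ is a diffeomorphism onto an open $b$-neighborhood, and so small that $V:=\{\gamma\in P(\mathfrak{m})\colon\gamma([0,1])\subseteq V_0\}$ lies inside a neighborhood of $0$ on which $P(\Exp_{(M,b)})$ is a diffeomorphism for the manifold structure. Such a $V_0$ exists because $V$ is contained in an $\varepsilon$-ball as soon as $V_0$ is, and $P(\Exp_{(M,b)})$ is a local diffeomorphism at $0$. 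On this $V$ the map $P(\Exp_{(M,b)})|_V$ is a bijection onto $U:=\{\delta\in P(M,b)\colon\delta([0,1])\subseteq U_0\}$, with inverse $\delta\mapsto(\Exp_{(M,b)}|_{V_0})^{-1}\circ\delta$. Both this map and its inverse are postcompositions with a fixed continuous map, hence are continuous for the compact-open topology; therefore $P(\Exp_{(M,b)})|_V$ is a homeomorphism from $V$ (Banach topology) onto $U$ carrying the compact-open topology. By construction of the manifold structure it is equally a homeomorphism onto $U$ carrying the manifold topology. As both topologies are the pushforward of the same Banach topology on $V$ under the same bijection, they coincide on $U$; moreover $U=\{\delta\colon\delta([0,1])\subseteq U_0\}$ is a compact-open open set and is manifold-open, so $U$ is a common open neighborhood of $\const_b$.

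It then remains to propagate this local coincidence by $\Inn$-transitivity, for which I would use that every inner automorphism is a homeomorphism for \emph{both} topologies: for the manifold topology because it is a diffeomorphism of the symmetric space $P(M,b)$, and for the compact-open topology because pointwise multiplication $P(M,b)\times P(M,b)\to P(M,b)$ is the composite of the canonical homeomorphism $C([0,1],M)\times C([0,1],M)\cong C([0,1],M\times M)$ with postcomposition by the continuous map $\mu$, hence is compact-open continuous, so each (involutive) left multiplication $\mu_\gamma$ is a compact-open homeomorphism. By Corollary~\ref{cor:P(M,b)isInn-transitive}, for any $\gamma_0$ there is $g\in\Inn(\calU(P(M,b)))$ with $g(\const_b)=\gamma_0$; then $g(U)$ is open in both topologies and the two topologies agree on it, being carried from $U$ by the common homeomorphism $g$. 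Since every point admits such a neighborhood on which the two topologies agree, they agree globally. I expect the main obstacle to be the bookkeeping of the first two paragraphs, namely arranging that the exponential chart of the \emph{abstractly} defined manifold structure lives on a set of the concrete form $\{\delta\colon\delta([0,1])\subseteq U_0\}$, so that the manifold topology and the compact-open topology become directly comparable there; the continuity statements for the compact-open topology are standard.
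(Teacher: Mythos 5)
Your proposal is correct and follows essentially the same route as the paper: identify the two topologies near $\const_b$ via the exponential chart together with the fact that the compact-open topology on $P(\mathfrak{m})$ is the sup-norm topology, then spread the comparison by $\Inn$-transitivity, using that pointwise multiplication is compact-open continuous because it identifies with $P(\mu)$. You merely spell out more explicitly the local comparison at the base point, which the paper compresses into the statement that the two spaces have the same neighborhood filter at $\const_b$.
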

\begin{proof}
	We denote by $P(M,b)_{c.o.}$ the set of paths endowed with the compact-open topology. The spaces $P(M,b)$ and $P(M,b)_{c.o.}$ have the same neighborhood filter at $\const_b$, since the topology of $P(\mathfrak{m})$ is given also by the compact-open topology.
	Since the symmetric space $P(M,b)$ is connected and hence $\Inn(\calU(P(M,b)))$-transitive, it suffices to show that the symmetries around its points are also homeomorphisms of $P(M,b)_{c.o.}$. For this, it suffices to check that the multiplication on $P(M,b)$ is a continuous map $P(M,b)_{c.o.}\times P(M,b)_{c.o.}\rightarrow P(M,b)_{c.o.}$, but this is true, since
	it can be identified with the continuous map 
	$$P(\mu)\colon P(M\times M,(b,b))_{c.o.}\rightarrow P(M,b)_{c.o.},\ (\gamma_1,\gamma_2)\mapsto \mu\circ(\gamma_1,\gamma_2),$$
	where $\mu$ denotes the multiplication map on $M$.
\end{proof}
\begin{corollary}
	The path space $P(M,b)$ of a pointed symmetric space $(M,b)$ is contractible.
\end{corollary}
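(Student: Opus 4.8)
The plan is to reduce everything to the compact-open topology and then apply the standard contraction of a path space onto its base point. By Proposition~\ref{prop:compact-openP(M,b)}, the manifold topology of $P(M,b)$ coincides with the compact-open topology, so it suffices to exhibit a continuous homotopy $H\colon [0,1]\times P(M,b)\rightarrow P(M,b)$ with $H(1,\gamma)=\gamma$ and $H(0,\gamma)=\const_b$ for all $\gamma$, where $\const_b$ is the base point of $P(M,b)$.

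I would define $H$ by the reparametrization $H(s,\gamma)(t):=\gamma(st)$ for $s,t\in[0,1]$. Each curve $H(s,\gamma)$ is again continuous and starts at $\gamma(0)=b$, hence lies in $P(M,b)$; moreover $H(1,\gamma)=\gamma$ and $H(0,\gamma)$ is the constant curve $\const_b$. This gives a set-theoretic contraction of $P(M,b)$ onto its base point, so the only thing left to check is the continuity of $H$.

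To verify continuity, I would invoke the exponential law for the compact-open topology. Since $[0,1]$ is compact (hence locally compact) Hausdorff, the continuity of $H$ into the compact-open topology is equivalent to the continuity of its adjoint
$$\widehat H\colon [0,1]\times P(M,b)\times [0,1]\rightarrow M,\quad (s,\gamma,t)\mapsto \gamma(st).$$
But $\widehat H$ factors as the continuous map $(s,\gamma,t)\mapsto(\gamma,st)$ followed by the evaluation map $\ev\colon C([0,1],M)\times[0,1]\rightarrow M$, $(\gamma,u)\mapsto\gamma(u)$, which is continuous for the compact-open topology precisely because the parameter domain $[0,1]$ is locally compact Hausdorff. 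Hence $\widehat H$, and therefore $H$, is continuous.

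The only genuinely technical ingredient is this application of the exponential law together with the continuity of the evaluation map; everything else is formal. I expect no real obstacle here, since the preceding proposition has already carried out the essential work of identifying the manifold topology of $P(M,b)$ with the compact-open topology, which is exactly the setting in which the reparametrization homotopy is well behaved.
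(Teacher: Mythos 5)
Your proof is correct and follows essentially the same route as the paper: both reduce to the compact-open topology via Proposition~\ref{prop:compact-openP(M,b)} and use the reparametrization homotopy $H(s,\gamma)(t):=\gamma(st)$ (the paper simply cites Bredon for the continuity details that you spell out via the exponential law).
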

\begin{proof}
	Cf.\ \cite[VII.6, Prop.~6.18]{Bre93}. The homotopy $H\colon [0,1] \times P(M,b)\rightarrow P(M,b)$ can be defined as $H(s,\gamma)(t):=\gamma(st)$.
\end{proof}
Let $(M,b)$ be a pointed symmetric space with Lie triple system $\mathfrak{m}$. The evaluation map
$$(\ev_1:=)\ \ev_1^{(M,b)}\colon P(M,b)\rightarrow (M,b),\ \gamma \mapsto \gamma(1)$$
on its path space $P(M,b)$ is a homomorphism of pointed reflection spaces that satisfies
$$\ev_1^{(M,b)}\circ P(\Exp_{(M,b)}) \ =\ \Exp_{(M,b)}\circ \ev_1^\mathfrak{m}$$
with evaluation morphism
$$(\ev_1:=)\ \ev_1^\mathfrak{m}\colon P(\mathfrak{m})\rightarrow \mathfrak{m},\ \gamma\mapsto \gamma(1),$$
so that it is a morphism of pointed symmetric spaces with $\Lts(\ev_1^{(M,b)})=\ev_1^\mathfrak{m}$.
Its kernel $\Omega(M,b)\leq P(M,b)$ is a symmetric subspace called the \emph{loop (symmetric) space of $(M,b)$}, whose Lie triple system is the kernel $\Omega(\mathfrak{m})\unlhd P(\mathfrak{m})$ of $\ev_1^\mathfrak{m}$ (cf.\ Proposition~\ref{prop:kernelOfMorphismOfPointedSymSpaces}). The exponential map of $\Omega(M,b)$ is $\Omega(\Exp_{(M,b)}):=P(\Exp_{(M,b)})|_{\Omega(\mathfrak{m})}^{\Omega(M,b)}$. By Proposition~\ref{prop:compact-openP(M,b)}, the topology on $\Omega(M,b)$ is given by the compact-open topology. Therefore, for every pointed 1-connected symmetric space $(M,b)$, its loop space $\Omega(M,b)$ is connected.
%
%
%
\subsection{The Period Morphism of a Symmetric Lie Algebra}
\label{sec:periodMorphismOfSymLieAlgebra}
In this subsection, we collect useful definitions and results concerning the problem under which conditions a (symmetric) Lie algebra is \emph{integrable}, i.e., arises as the Lie algebra of a (symmetric) Lie group. In Section~\ref{sec:IntegrabilityCriterion}, we shall apply these considerations to solve the integration problem for Lie triple systems. It is a classical result that a Lie algebra $\mathfrak{g}$ is integrable if and only if its period group $\Pi(\mathfrak{g})$ is discrete. We refer to \cite{GN03}, where H.~Gl\"ockner and K.-H.~Neeb give a quite direct definition of the period group of a Lie algebra. For our purposes, we have to refine the process leading to this group by considering additional involutive automorphisms.
\begin{remark} \label{rem:integrabilityOfSymLieAlgebra}
	A symmetric Lie algebra $(\mathfrak{g},\theta)$ is integrable if and only if the underlying Lie algebra $\mathfrak{g}$ is integrable. Indeed, replacing a given Lie group $G$ with Lie algebra $\mathfrak{g}$ by its universal covering group, the involutive automorphism $\theta$ can be uniquely integrated to an involutive automorphism $\sigma$ of $G$.
\end{remark}
Let $(\mathfrak{g},\theta)$ be a symmetric Banach--Lie algebra. The quotient $\mathfrak{g}_{\ad}:=\mathfrak{g}/\mathfrak{z}(\mathfrak{g})$ endowed with the involutive automorphism $\theta_{\ad}$ that is induced by $\theta$ can be integrated to a 1-connected symmetric Lie group $(G_{\ad},\sigma_{\ad})$ (cf.\ \cite[p.~7]{GN03}).

The central extension
$$0\rightarrow \mathfrak{z}(\mathfrak{g},\theta)\hookrightarrow (\mathfrak{g},\theta)\rightarrow (\mathfrak{g}_{\ad},\theta_{\ad})\rightarrow 0,$$
can be pulled back via the evaluation morphism $\ev_1\colon P(\mathfrak{g}_{\ad},\theta_{\ad})\rightarrow (\mathfrak{g}_{\ad},\theta_{\ad})$ to a central extension
\begin{equation} \label{eqn:extensionHatP(g,Lsigma)}
	0\rightarrow \mathfrak{z}(\mathfrak{g},\theta)\hookrightarrow \widehat P(\mathfrak{g},\theta)\rightarrow P(\mathfrak{g}_{\ad},\theta_{\ad})\rightarrow 0
\end{equation}
where the symmetric Lie algebra $\widehat P(\mathfrak{g},\theta)$ is defined as the Lie algebra
$$\widehat P(\mathfrak{g}):=\{(\gamma,x)\in P(\mathfrak{g}_{\ad})\times\mathfrak{g}\colon\gamma(1)=x+\mathfrak{z}(\mathfrak{g})\}$$
endowed with the involutive automorphism $\widehat P(\theta) := \big(P(\theta_{\ad})\times \theta\big)|_{\widehat P(\mathfrak{g})}$.
Restricting (\ref{eqn:extensionHatP(g,Lsigma)}) to the preimage $\widehat\Omega(\mathfrak{g}):=\Omega(\mathfrak{g}_{\ad})\times\mathfrak{z}(\mathfrak{g})$ of $\Omega(\mathfrak{g}_{\ad})$, we obtain a central extension
\begin{equation} \label{eqn:extsenionHatOmega(g,Lsigma)}
	0\rightarrow \mathfrak{z}(\mathfrak{g},\theta)\hookrightarrow \widehat \Omega(\mathfrak{g},\theta)\rightarrow \Omega(\mathfrak{g}_{\ad},\theta_{\ad})\rightarrow 0,
\end{equation}
where $\widehat\Omega(\mathfrak{g},\theta)$ is the Lie algebra $\widehat\Omega(\mathfrak{g})$ endowed with the involutive automorphism
$$\widehat\Omega(\theta):=\widehat P(\theta)|_{\widehat \Omega(\mathfrak{g})}=\big(\Omega(\theta_{\ad})\times \theta\big)|_{\widehat \Omega(\mathfrak{g})}.$$

We claim that there exists an exact sequence
\begin{equation}\label{eqn:sequenceHatP(G,sigma)}
	0\rightarrow \mathfrak{z}(\mathfrak{g},\theta) \hookrightarrow \widehat P(G,\sigma)\stackrel{\chi_{(\mathfrak{g},\theta)}}{\longrightarrow} P(G_{\ad},\sigma_{\ad})\rightarrow \eins
\end{equation}
of symmetric Lie groups	corresponding to (\ref{eqn:extensionHatP(g,Lsigma)}), where $\widehat P(G,\sigma)$ is a suitable 1-connected Lie group $\widehat P(G)$ endowed with a suitable involutive automorphism $\widehat P(\sigma)$ and where the inclusion of $\mathfrak{z}(\mathfrak{g})$ into $\widehat P(G)$ is a topological embedding.\footnote{Following the notation of \cite{GN03}, we use notations like $\widehat P(G,\sigma)$ and (presently) $\widehat \Omega(G,\sigma)$ without assuming the existence of a symmetric Lie group $(G,\sigma)$ with Lie algebra $(\mathfrak{g},\theta)$.}
Indeed, there is a central group extension
$$0\rightarrow \mathfrak{z}(\mathfrak{g}) \hookrightarrow \widehat P(G)\rightarrow P(G_{\ad})\rightarrow \eins$$
(cf.\ \cite[p.~7]{GN03}, due to \cite{Est62}), so that we merely have to define $\widehat P(\sigma)$ as the unique integral of $\widehat P(\theta)$.
The kernel of the smooth group homomorphism
\begin{equation} \label{eqn:ev_1CircChi}
	\ev_1^{(G_{\ad},\sigma_{\ad})}\circ\chi_{(\mathfrak{g},\theta)}\colon \widehat P(G,\sigma)\rightarrow (G_{\ad},\sigma_{\ad}),\ g\mapsto \chi_{(\mathfrak{g},\theta)}(g)(1)
\end{equation}
is a symmetric Lie subgroup $\widehat\Omega(G,\sigma):=(\widehat\Omega(G),\widehat\Omega(\sigma)):= (\widehat\Omega(G),\widehat P(\sigma)|_{\widehat\Omega(G)})\unlhd \widehat P(G,\sigma)$,
whose Lie algebra is the kernel of the morphism
$$\ev_1^{(\mathfrak{g}_{\ad},\theta_{\ad})}\circ L(\chi_{(\mathfrak{g},\theta)})\colon \widehat P(\mathfrak{g},\theta)\rightarrow (\mathfrak{g}_{\ad},\theta_{\ad}),\ (\gamma,x) \mapsto \gamma(1)=x+\mathfrak{z}(\mathfrak{g})$$
of symmetric Lie algebras, i.e., the symmetric Lie algebra $\widehat\Omega(\mathfrak{g},\theta)$. Note that $\widehat P(G)/\widehat\Omega(G)\cong G_{\ad}$, so that $\widehat\Omega(G)$ is connected, $G_{\ad}$ being 1-connected (cf. \cite[p.~8]{GN03}).
Since we have $\widehat \Omega(G,\sigma)=\chi_{(\mathfrak{g},\theta)}^{-1}(\Omega(G_{\ad}))$, the exact sequence (\ref{eqn:sequenceHatP(G,sigma)}) can be restricted to the exact sequence
\begin{equation} \label{eqn:sequenceHatOmega(G,sigma)}
	0\rightarrow \mathfrak{z}(\mathfrak{g},\theta)\hookrightarrow \widehat \Omega(G,\sigma)\rightarrow \Omega(G_{\ad},\sigma_{\ad})\rightarrow \eins,
\end{equation}
which corresponds to (\ref{eqn:extsenionHatOmega(g,Lsigma)}).

The Lie algebra $\Omega(\mathfrak{g}_{\ad},\theta_{\ad})$ being integrable (to $\Omega(G_{\ad},\sigma_{\ad})$), there is a 1-connected symmetric Lie group $\widetilde\Omega(G_{\ad},\sigma_{\ad})$ whose Lie algebra is $\Omega(\mathfrak{g}_{\ad},\theta_{\ad})$.
We integrate the inclusion morphism
$$\Omega(\mathfrak{g}_{\ad},\theta_{\ad})\hookrightarrow \Omega(\mathfrak{g}_{\ad},\theta_{\ad})\times\mathfrak{z}(\mathfrak{g},\theta)=\widehat\Omega(\mathfrak{g},\theta),\ x\mapsto (x,0)$$
to a morphism
\begin{equation}\label{eqn:f_(g,theta)}
	f_{(\mathfrak{g},\theta)}\colon \widetilde\Omega(G_{\ad},\sigma_{\ad})\rightarrow \widehat\Omega(G,\sigma).
\end{equation}
Composing it with $\widehat\Omega(G,\sigma)\rightarrow \Omega(G_{\ad},\sigma_{\ad})$ (cf.\ (\ref{eqn:sequenceHatOmega(G,sigma)})) gives us a morphism
$$q_{\Omega(G_{\ad},\sigma_{\ad})}\colon \widetilde\Omega(G_{\ad},\sigma_{\ad})\stackrel{f_{(\mathfrak{g},\theta)}}{\longrightarrow} \widehat\Omega(G,\sigma) \rightarrow \Omega(G_{\ad},\sigma_{\ad})$$
satisfying $L(q_{\Omega(G_{\ad},\sigma_{\ad})})=\id_{\Omega(\mathfrak{g}_{\ad},\sigma_{\ad})}$, so that it is a universal covering morphism (cf.\ Remark~\ref{rem:L(covering)=Iso}). Since $\mathfrak{z}(\mathfrak{g})$ is the kernel of $\widehat\Omega(G,\sigma)\rightarrow \Omega(G_{\ad},\sigma_{\ad})$, the kernel $\pi_1(\Omega(G_{\ad}))$ of $q_{\Omega(G_{\ad},\sigma_{\ad})}$ is mapped by $f_{(\mathfrak{g},\theta)}$ onto $\im(f_{(\mathfrak{g},\theta)})\cap \mathfrak{z}(\mathfrak{g})$. Endowing the Lie group $\pi_1(\Omega(G_{\ad}))$ with the involutive automorphism $\pi_1(\Omega(\sigma_{\ad})):=\widetilde\Omega(\sigma_{\ad})|_{\pi_1(\Omega(G_{\ad}))}$ gives us a symmetric Lie group that we denote by $\pi_1(\Omega(G_{\ad},\sigma_{\ad}))$.
\begin{definition}
	The restriction $\per_{(\mathfrak{g},\theta)}\colon \pi_1(\Omega(G_{\ad},\sigma_{\ad}))\rightarrow \mathfrak{z}(\mathfrak{g},\theta)$ of $f_{(\mathfrak{g},\theta)}$ is called the \emph{period morphism of $(\mathfrak{g},\theta)$} and its image $\Pi(\mathfrak{g}):=\im(\per_{(\mathfrak{g},\theta)}) = \im(f_{(\mathfrak{g},\theta)})\cap \mathfrak{z}(\mathfrak{g})$ the \emph{period group} (which is independent of $\theta$).
\end{definition}
\begin{remark}\label{rem:periodGroupWithInvolution}
	Being $\theta$-invariant, the period group becomes a topological group with involution\footnote{To avoid confusion, we do not speak of a \emph{symmetric group}, since this term is usually used for the group of all bijections of a set.}, which we denote by $\Pi(\mathfrak{g},\theta):=(\Pi(\mathfrak{g}),\Pi(\theta))$ with $\Pi(\theta):=\theta|_{\Pi(\mathfrak{g})}^{\Pi(\mathfrak{g})}$. We denote the fixed point groups of the involutive automorphisms $\Pi(\theta)$ and $-\Pi(\theta)$ by $\Pi(\mathfrak{g})_+:=\Pi(\mathfrak{g})^{\Pi(\theta)}\subseteq \mathfrak{z}(\mathfrak{g})_+$ and $\Pi(\mathfrak{g})_-:=\Pi(\mathfrak{g})^{-\Pi(\theta)}\subseteq \mathfrak{z}(\mathfrak{g})_-$, respectively. We do not know if $\Pi(\mathfrak{g})$ necessarily has to be given by $\Pi(\mathfrak{g})_+ \oplus \Pi(\mathfrak{g})_-$,\footnote{Consider, for example, a Banach--Lie algebra $\mathfrak{h}$ with period group $\Pi(\mathfrak{h})=\ZZ$ (cf.\ e.g.\ \cite[Ex.~VI.3]{GN03}). Then the symmetric Lie algebra $(\mathfrak{g},\theta)$ defined by $\mathfrak{g}:=\mathfrak{h}\times \mathfrak{h}$ and $\theta(h_1,h_2):=(h_2,h_1)$ has period group $\Pi(\mathfrak{g})=\ZZ\times \ZZ$, but leads to $\Pi(\mathfrak{g})_+ + \Pi(\mathfrak{g})_-=(2\ZZ\times 2\ZZ) \cup ((1+2\ZZ)\times (1+2\ZZ))$.} but we know that
	\begin{equation} \label{eqn:Pi_+Pi_-}
		\Pi(\mathfrak{g})_+ \oplus \Pi(\mathfrak{g})_- \ \leq\ \Pi(\mathfrak{g})\ \leq\ \frac{1}{2}\Pi(\mathfrak{g})_+ \oplus \frac{1}{2}\Pi(\mathfrak{g})_-.
	\end{equation}
	Indeed, given any $g\in \Pi(\mathfrak{g})$, we have
	$$g \ =\ \frac{g+\theta(g)}{2} + \frac{g-\theta(g)}{2} \ \in\ \frac{1}{2}\Pi(\mathfrak{g})_+ \oplus \frac{1}{2}\Pi(\mathfrak{g})_-.$$
\end{remark}
\begin{remark} \label{rem:im(f)LieSubgroupIffPi(g)Discrete}
	The image $\im(f_{(\mathfrak{g},\theta)})$ of $f_{(\mathfrak{g},\theta)}$ is the connected normal integral subgroup of $\widehat\Omega(G)$ with Lie algebra $\Omega(\mathfrak{g}_{\ad})\hookrightarrow \Omega(\mathfrak{g}_{\ad})\times \mathfrak{z}(\mathfrak{g})=\widehat\Omega(\mathfrak{g})$. It is actually a Lie subgroup if and only if the period group $\Pi(\mathfrak{g})$ is discrete (cf.\ \cite{GN03} and also \cite[Prop.~2.6]{Nee02Central}). This is equivalent to the discreteness of $\Pi(\mathfrak{g})_+$ and $\Pi(\mathfrak{g})_-$ (cf.\ (\ref{eqn:Pi_+Pi_-})).
\end{remark}
\begin{remark} \label{rem:periodGroup=kernel}
	Given a 1-connected Lie group $G$ with Lie algebra $\mathfrak{g}$, the period group is given by  $\Pi(\mathfrak{g})=\ker(\exp_G|_{\mathfrak{z}(\mathfrak{g})})$ and is isomorphic to the fundamental group $\pi_1(Z(G))$ of the center of $G$ (cf.\ \cite[Prop.~III.8]{GN03}).
\end{remark}
%
%
%
%
\subsection{Integrability Criterion for Lie Triple Systems}
\label{sec:IntegrabilityCriterion}
In this subsection, we investigate the problem under which conditions a Lie triple system is \emph{integrable}, i.e., arises as the Lie triple system of a pointed symmetric space. Note that an integrable Lie triple system can always be integrated to a 1-connected symmetric space (cf.\ Remark~\ref{rem:Lts(covering)=Iso}), which is unique up to an isomorphism by the Integrability Theorem. First, we show that a Lie triple system is integrable if and only if its standard embedding is integrable. In a second step, we translate the classical result that a Lie algebra $\mathfrak{g}$ is integrable if and only if its period group $\Pi(\mathfrak{g})$ is discrete into the language of symmetric spaces.
%
\begin{lemma}\label{lem:BanachSpaceEmbeddingOfDer(M)_+}
	Let $(M,b)$ be a pointed connected symmetric space. Then the map
	$$\Der(M)_+ \rightarrow \gl(T_bM),\ \xi\mapsto (v\mapsto\nabla\!_v\xi)$$
	is an embedding of Banach spaces.
\end{lemma}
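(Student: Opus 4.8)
The plan is to show that the stated map $\Phi\colon \Der(M)_+ \to \gl(T_bM)$, $\xi \mapsto (v\mapsto \nabla\!_v\xi)$, is both a bounded linear map and a topological embedding by exploiting the already-established Banach-space structure on $\Der(M)$. Recall from Section~\ref{sec:AutOfConnectedSymSpace} that $\Der(M)$ carries a unique Banach--Lie algebra structure for which
$$\Der(M)\rightarrow T_bM \times \gl(T_bM),\ \xi\mapsto \big(\xi(b),\ v\mapsto\nabla\!_v\xi\big)$$
is an embedding of Banach spaces. My first step is to recall that $\Der(M)_+$ is the $(+1)$-eigenspace of the continuous involution $(\mu_b)_\ast$, hence a closed subspace of $\Der(M)$, and that it coincides with $\ker(\ev_b) = \{\xi\in\Der(M)\colon \xi(b)=0\}$. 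This last identification is exactly what makes the argument work: on $\Der(M)_+$ the first component $\xi(b)$ of the big embedding vanishes identically.

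The second and central step is to observe that the map in the lemma is simply the restriction to $\Der(M)_+$ of the second component of the known embedding above. Since restricting a topological embedding to a closed subspace again yields a topological embedding onto its image, it suffices to argue that on $\Der(M)_+$ the full embedding $\xi\mapsto(\xi(b),\,v\mapsto\nabla\!_v\xi)$ carries the same topology as its second component alone. But on $\Der(M)_+=\ker(\ev_b)$ the first component is constantly $0$, so the projection onto the $\gl(T_bM)$-factor is injective on $\Der(M)_+$ and the norm $\|\xi(b)\| + \|v\mapsto\nabla\!_v\xi\|$ coincides with $\|v\mapsto\nabla\!_v\xi\|$ there. Consequently $\Phi$ is bounded, injective, and bounded below on $\Der(M)_+$, which is exactly the statement that it is an embedding of Banach spaces.

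The only point requiring genuine justification — and the step I expect to be the main obstacle — is the injectivity claim, i.e.\ that a derivation $\xi\in\Der(M)_+$ with $\nabla\!_v\xi=0$ for all $v\in T_bM$ must vanish. Here connectedness of $M$ is essential. Since $\xi\in\Der(M)_+$ means $\xi(b)=0$, the derivation $\xi$ vanishes to first order at $b$; combined with $\nabla\xi=0$ at $b$, the vector field $\xi$ together with its covariant derivative vanish at the base point. A derivation is determined by its $1$-jet at a single point (it is an affine vector field for the natural connection, being a morphism of symmetric spaces, so it is a Jacobi-type field along geodesics and thus governed by a linear ODE with initial data $(\xi(b),\nabla\xi(b))$); hence vanishing $1$-jet at $b$ forces $\xi\equiv 0$ on the connected manifold $M$. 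I would spell this out by invoking that the big map $\xi\mapsto(\xi(b),\nabla\xi)$ is injective, which is precisely the content of its being an embedding; then on $\ker(\ev_b)$ the vanishing of the second component alone already forces $\xi=0$, giving injectivity of $\Phi$ directly and completing the proof.
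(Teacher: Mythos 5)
Your proposal is correct and follows essentially the same route as the paper: the paper likewise notes that under the known embedding $\Der(M)\rightarrow T_bM\times\gl(T_bM)$, $\xi\mapsto(\xi(b),\,v\mapsto\nabla\!_v\xi)$, the closed subspace $\Der(M)_+=\ker(\ev_b)$ maps into $\{0\}\times\gl(T_bM)$, so the assertion follows by restriction. Your extra discussion of injectivity via Jacobi-type fields is not needed, since, as you yourself conclude, injectivity is already contained in the fact that the full map is an embedding whose first component vanishes on $\Der(M)_+$.
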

\begin{proof}
	Considering the embedding
	$$\Der(M)\rightarrow T_bM \times \gl(T_bM),\ \xi\mapsto \big(\xi(b),\ v\mapsto\nabla\!_v\xi\big)$$
	(cf.\ Section~\ref{sec:AutOfConnectedSymSpace}), the image of the closed subspace $\Der(M)_+$ lies in $\{0\}\times\gl(T_bM)$, so that the assertion follows by restricting this map.
\end{proof}
\begin{proposition}\label{prop:LieAlgebraEmbeddingOfDer(M)_+}
	Let $(M,b)$ be a pointed connected symmetric space. Then the map
	$$\Der(M)_+ \rightarrow \gl(\Der(M)_-),\ \xi\mapsto [\xi,\cdot]|_{\Der(M)_-}$$
	is an embedding of Banach--Lie algebras.
\end{proposition}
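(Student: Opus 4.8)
The plan is to factor the map $\ad\colon \Der(M)_+ \to \gl(\Der(M)_-)$, $\xi\mapsto[\xi,\cdot]|_{\Der(M)_-}$, through the embedding provided by Lemma~\ref{lem:BanachSpaceEmbeddingOfDer(M)_+}. I separate the two things to be verified: that the map is a homomorphism of Lie algebras (a purely algebraic fact) and that it is a topological embedding (the analytic content). The latter is where all the work lies.

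First I would check well-definedness and the homomorphism property. Since $(\mu_b)_\ast$ is an automorphism, for $\xi\in\Der(M)_+$ and $\eta\in\Der(M)_-$ we get $(\mu_b)_\ast[\xi,\eta]=[\xi,-\eta]=-[\xi,\eta]$, so $[\xi,\eta]\in\Der(M)_-$ and $\ad_\xi$ genuinely restricts to an operator on $\Der(M)_-$. The Jacobi identity gives $\ad_{[\xi,\eta]}=[\ad_\xi,\ad_\eta]$ on all of $\Der(M)$; restricting to the $\ad$-invariant subspace $\Der(M)_-$ shows that $\ad\colon\Der(M)_+\to\gl(\Der(M)_-)$ is a homomorphism of Lie algebras.

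The heart of the argument is the embedding property, and here the idea is to express $\ad_\xi|_{\Der(M)_-}$ in terms of the map $\Lambda\colon\xi\mapsto(v\mapsto\nabla_v\xi)$ from Lemma~\ref{lem:BanachSpaceEmbeddingOfDer(M)_+}. Because the natural connection is torsion-free, for vector fields one has $[\xi,\eta]=\nabla_\xi\eta-\nabla_\eta\xi$. Evaluating at $b$ and using tensoriality of $\nabla$ in its first slot gives $\ev_b([\xi,\eta])=\nabla_{\xi(b)}\eta-\nabla_{\eta(b)}\xi$. Since $\Der(M)_+=\ker(\ev_b)$ (cf.\ Section~\ref{sec:AutOfConnectedSymSpace}), the first term vanishes, so
$$\ev_b\big([\xi,\eta]\big)\ =\ -\nabla_{\eta(b)}\xi\ =\ -\Lambda(\xi)\big(\ev_b(\eta)\big)$$
for all $\eta\in\Der(M)_-$. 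Writing $C:=\ev_b|_{\Der(M)_-}$, which is a topological isomorphism onto $T_bM$ (cf.\ Section~\ref{sec:AutOfConnectedSymSpace}), this commuting square reads $C\circ\ad_\xi|_{\Der(M)_-}=-\Lambda(\xi)\circ C$, that is,
$$\ad_\xi|_{\Der(M)_-}\ =\ -\,C^{-1}\circ\Lambda(\xi)\circ C.$$

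To conclude, I would observe that the map $\Phi\colon\gl(T_bM)\to\gl(\Der(M)_-)$, $A\mapsto-C^{-1}AC$, is a topological linear isomorphism of Banach spaces, being the composite of negation and conjugation by the fixed topological isomorphism $C$. The displayed identity says precisely that $\ad=\Phi\circ\Lambda$. As $\Lambda$ is an embedding of Banach spaces by Lemma~\ref{lem:BanachSpaceEmbeddingOfDer(M)_+} and $\Phi$ is a topological isomorphism, the composite $\ad$ is again an embedding of Banach spaces, in particular continuous and injective. Together with the homomorphism property established above, this yields that $\ad$ is an embedding of Banach--Lie algebras. The only delicate point is the bracket-versus-covariant-derivative identity combined with the vanishing $\xi(b)=0$; once that commuting square is in place, the embedding property is inherited directly from Lemma~\ref{lem:BanachSpaceEmbeddingOfDer(M)_+}.
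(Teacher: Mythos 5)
Your argument is correct and follows essentially the same route as the paper: both reduce the embedding property to Lemma~\ref{lem:BanachSpaceEmbeddingOfDer(M)_+} by showing that conjugation by $\ev_b|_{\Der(M)_-}$ (up to a sign) carries $[\xi,\cdot]|_{\Der(M)_-}$ to $\nabla_{\cdot}\,\xi$, using torsion-freeness of the connection together with $\xi(b)=0$ for $\xi\in\Der(M)_+$. Your explicit verification of well-definedness and the homomorphism property replaces the paper's appeal to the representation from Section~\ref{sec:symLieAlgAndLieGrp}, but this is only a cosmetic difference.
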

\begin{proof}
	Cf.\ \cite[p.~91]{Loo69} for the finite-dimensional case. Since the map is a representation of a Banach--Lie algebra (cf.\ Section~\ref{sec:symLieAlgAndLieGrp}), it remains to show that it is a closed embedding of Banach spaces.
	Considering the topological linear isomorphism $\ev_b\colon \Der(M)_-\rightarrow T_bM$, we have a
	topological linear isomorphism
	$$(\ev_b)_\ast\colon \gl(\Der(M)_-)\rightarrow \gl(T_bM),\ A\mapsto \big(v\mapsto \ev_b(A(\xi_v))\big),$$
	where $\xi_v:=(\ev_b)^{-1}(v)$.
	In the light of Lemma~\ref{lem:BanachSpaceEmbeddingOfDer(M)_+}, it suffices to show that, for each\linebreak $\xi\in\Der(M)_+$, the isomorphism $-(\ev_b)_\ast$ maps $[\xi,\cdot]|_{\Der(M)_-}\in\gl(\Der(M)_-)$ to $\nabla\!_\cdot\xi\in\gl(T_bM)$.
	Given any $v\in T_bM$, we have
	$$-(\ev_b)_\ast([\xi,\cdot]|_{\Der(M)_-})(v) \ =\ -\ev_b([\xi,\xi_v]) \ =\ -\ev_b\big(\nabla\!_\xi\xi_v-\nabla\!_{\xi_v}\xi-\Tor(\xi,\xi_v)\big),$$
	where $\Tor$ denotes the torsion tensor (cf.\ \cite{Klo09b}).
	Since the affine connection of a symmetric space is torsionfree and since $\xi\in\Der(M)_+$ entails $\xi(b)=0$ and hence $\nabla\!_{\xi(b)}\xi_v=0$, this expression simplifies to $\nabla\!_v\xi$.
\end{proof}
\begin{proposition} \label{prop:g'(M,b)IsomorphicS(m)}
	Let $(M,b)$ be a pointed connected symmetric space and abbreviate\linebreak $\mathfrak{m}:=\Der(M)_-$ (motivated by the isomorphism $\ev_b\colon\Der(M)_-\rightarrow \mathfrak{m}$). The symmetric Lie algebra
	$$\mathfrak{g}^\prime(M,b)=\overline{[\mathfrak{m},\mathfrak{m}]}\oplus\mathfrak{m}\ \leq\ \Der(M)_+\oplus\Der(M)_-$$
	(cf.\ Section~\ref{sec:AutOfConnectedSymSpace}) is isomorphic to the standard embedding $S(\mathfrak{m})$ of $\mathfrak{m}$ via the isomorphism
	$$\Phi\colon\overline{[\mathfrak{m},\mathfrak{m}]}\oplus\mathfrak{m} \rightarrow S(\mathfrak{m})_+\oplus S(\mathfrak{m})_-,\quad \xi_+\oplus\xi_-\mapsto [\xi_+,\cdot]|_{\mathfrak{m}}\oplus \xi_-.$$
\end{proposition}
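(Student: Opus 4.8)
The plan is to verify in turn that $\Phi$ is well-defined, a topological linear isomorphism, a homomorphism of Lie algebras, and compatible with the involutions. The crucial input is Proposition~\ref{prop:LieAlgebraEmbeddingOfDer(M)_+}, which states that $\psi\colon \Der(M)_+\rightarrow\gl(\mathfrak{m})$, $\xi\mapsto[\xi,\cdot]|_{\mathfrak{m}}$, is an embedding of Banach--Lie algebras. Indeed, the first component of $\Phi$ is exactly the restriction $\psi|_{\overline{[\mathfrak{m},\mathfrak{m}]}}$, while the second component is the identity on $\mathfrak{m}=\Der(M)_-=S(\mathfrak{m})_-$.

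First I would identify the image of the first component. For $\xi,\eta\in\mathfrak{m}=\Der(M)_-$, the definition of the triple bracket on $\Der(M)_-$ gives $\psi([\xi,\eta])=[[\xi,\eta],\cdot]|_{\mathfrak{m}}=[\xi,\eta,\cdot]$, which is a generating element of $\mathfrak{h}=S(\mathfrak{m})_+$. Hence $\psi$ maps the algebraic commutator space $[\mathfrak{m},\mathfrak{m}]$ bijectively onto $\spann\{[x,y,\cdot]\colon x,y\in\mathfrak{m}\}$. Since $\psi$ is a topological embedding, it carries closed subspaces to closed subspaces and therefore satisfies $\psi(\overline{[\mathfrak{m},\mathfrak{m}]})=\overline{\psi([\mathfrak{m},\mathfrak{m}])}=\overline{\spann\{[x,y,\cdot]\}}=\mathfrak{h}$. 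Thus $\Phi$ is well-defined, and $\psi|_{\overline{[\mathfrak{m},\mathfrak{m}]}}$ is a topological isomorphism onto $S(\mathfrak{m})_+$; together with $\id_{\mathfrak{m}}$, this makes $\Phi$ a topological linear isomorphism onto $S(\mathfrak{m})_+\oplus S(\mathfrak{m})_-$.

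Next I would check the bracket relations by splitting into the three homogeneous cases, using the $\ZZ/2$-grading $[\Der(M)_\pm,\Der(M)_\pm]\subseteq\Der(M)_+$ and $[\Der(M)_+,\Der(M)_-]\subseteq\Der(M)_-$. On $\overline{[\mathfrak{m},\mathfrak{m}]}\times\overline{[\mathfrak{m},\mathfrak{m}]}$, compatibility is precisely the assertion of Proposition~\ref{prop:LieAlgebraEmbeddingOfDer(M)_+} that $\psi$ is a homomorphism of Lie algebras. For $\xi_+\in\overline{[\mathfrak{m},\mathfrak{m}]}$ and $\eta_-\in\mathfrak{m}$, the bracket $[\xi_+,\eta_-]$ lies in $\mathfrak{m}$, and $\Phi([\xi_+,\eta_-])=[\xi_+,\eta_-]=\psi(\xi_+)(\eta_-)$, which matches the rule $[A,x]=Ax$ in $S(\mathfrak{m})$. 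For $\xi_-,\eta_-\in\mathfrak{m}$, the bracket $[\xi_-,\eta_-]$ lies in $[\mathfrak{m},\mathfrak{m}]$, and $\Phi([\xi_-,\eta_-])=[[\xi_-,\eta_-],\cdot]|_{\mathfrak{m}}=[\xi_-,\eta_-,\cdot]$, which matches the rule $[x,y]=[x,y,\cdot]$. Finally, $\Phi$ respects the $(\pm1)$-eigenspace decompositions on both sides by construction, so it intertwines the two involutions and is a morphism of symmetric Lie algebras.

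I expect the main obstacle to be the surjectivity of the first component onto $\mathfrak{h}$: one must match the closure $\overline{[\mathfrak{m},\mathfrak{m}]}$ taken in $\Der(M)$ with the closure $\overline{\spann\{[x,y,\cdot]\}}=\mathfrak{h}$ taken in $\gl(\mathfrak{m})$, and this is exactly the point where the \emph{topological-embedding} property of Proposition~\ref{prop:LieAlgebraEmbeddingOfDer(M)_+} is indispensable. A merely continuous injective $\psi$ would only yield the inclusion $\psi(\overline{[\mathfrak{m},\mathfrak{m}]})\subseteq\mathfrak{h}$, not equality; the embedding property guarantees that $\psi$ maps the complete subspace $\overline{[\mathfrak{m},\mathfrak{m}]}$ to a closed subspace, forcing it to contain $\overline{\spann\{[x,y,\cdot]\}}$. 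Once this identification of images is in place, the bracket computations and the involution compatibility are routine.
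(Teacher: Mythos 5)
Your proposal is correct and follows essentially the same route as the paper's proof: both use the closed embedding of Proposition~\ref{prop:LieAlgebraEmbeddingOfDer(M)_+} to identify $\overline{[\mathfrak{m},\mathfrak{m}]}$ with $S(\mathfrak{m})_+$ (matching the two closures via the topological-embedding property) and then verify the bracket on the homogeneous pieces. The only difference is that you spell out the case-by-case bracket computations that the paper dismisses as "quite easy," which is a harmless elaboration.
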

\begin{proof}
	Cf.\ \cite[p.~91]{Loo69} for the finite-dimensional case.
	The closed embedding
	$$\Der(M)_+ \rightarrow \gl(\mathfrak{m}),\ \xi\mapsto [\xi,\cdot]|_{\mathfrak{m}}$$
	(cf.\ Proposition~\ref{prop:LieAlgebraEmbeddingOfDer(M)_+}) maps the subalgebra $[\mathfrak{m},\mathfrak{m}]$ onto the subalgebra
	$$\spann\{[x,y,\cdot]\in\gl(\mathfrak{m})\colon x,y\in\mathfrak{m}\}\leq \gl(\mathfrak{m})$$
	and hence the closure $\overline{[\mathfrak{m},\mathfrak{m}]}$ onto the closure, which is $S(\mathfrak{m})_+$. 
	Therefore, the partial map $\overline{[\mathfrak{m},\mathfrak{m}]}\rightarrow S(\mathfrak{m})_+$ of $\Phi$ is correctly defined and is an isomorphism of Banach--Lie algebras.
	Thus it is clear that $\Phi$ is a topological linear isomorphism. To see that $\Phi$ is a homomorphism of Lie algebras, it remains to consider the restrictions of the Lie bracket on $\mathfrak{g}^\prime(M,b)$ to the sets $\overline{[\mathfrak{m},\mathfrak{m}]}\times \mathfrak{m}$, $\mathfrak{m}\times \overline{[\mathfrak{m},\mathfrak{m}]}$ and $\mathfrak{m}\times \mathfrak{m}$, but then the proof is quite easy.
\end{proof}
\begin{corollary} \label{cor:S(Lts(M,b))IsIntegrable}
	Let $(M,b)$ be a pointed symmetric space with Lie triple system $\mathfrak{m}$. Then the standard embedding $S(\mathfrak{m})$ of $\mathfrak{m}$ is integrable.
\end{corollary}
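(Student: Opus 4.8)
The plan is to reduce the statement to the connected case and then invoke Proposition~\ref{prop:g'(M,b)IsomorphicS(m)}, into which the essential content has already been absorbed.

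First I would replace $M$ by its connected component $M_0$ through $b$. Since connected components of a symmetric space are open symmetric subspaces (cf.\ Section~\ref{sec:subspacesAndQuotients}), the space $M_0$ is itself a pointed symmetric space, and, being open in $M$, it shares with $M$ the tangent space $T_bM$ together with the triple bracket at $b$; hence $\Lts(M_0,b)=\Lts(M,b)=\mathfrak{m}$. Thus it suffices to integrate $S(\mathfrak{m})$ under the additional assumption that $M$ is connected.

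For connected $M$, Proposition~\ref{prop:g'(M,b)IsomorphicS(m)} provides an isomorphism of Lie algebras $\Phi\colon \mathfrak{g}^\prime(M,b)=\overline{[\mathfrak{m},\mathfrak{m}]}\oplus\mathfrak{m}\rightarrow S(\mathfrak{m})$, where the domain is a closed subalgebra of $\Der(M)=L(\Aut(M))$ (cf.\ Section~\ref{sec:AutOfConnectedSymSpace}). Being a closed subalgebra of the Lie algebra of the Banach--Lie group $\Aut(M)$, the algebra $\mathfrak{g}^\prime(M,b)$ is integrable; indeed, it is precisely the Lie algebra of the connected integral subgroup $G^\prime(M,b)=\langle\exp(\mathfrak{g}^\prime(M,b))\rangle\leq\Aut(M)$ (cf.\ the facts recalled in Section~\ref{sec:terminologySub}). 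Transporting this structure along $\Phi$ shows that $S(\mathfrak{m})$ is integrable as a Lie algebra, and, since integrability of a symmetric Lie algebra is equivalent to integrability of the underlying Lie algebra (cf.\ Remark~\ref{rem:integrabilityOfSymLieAlgebra}), the standard embedding $S(\mathfrak{m})$ is integrable as a symmetric Lie algebra.

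There is essentially no remaining obstacle: the real work has been front-loaded into Proposition~\ref{prop:g'(M,b)IsomorphicS(m)} together with the construction of the Banach--Lie group structure on $\Aut(M)$. The only step warranting a word of justification is the connectedness reduction, i.e.\ the identification $\Lts(M_0,b)=\mathfrak{m}$, which holds because $M_0$ is open in $M$.
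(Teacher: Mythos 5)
Your proof is correct and follows essentially the same route as the paper's: both rest on the isomorphism $\mathfrak{g}^\prime(M,b)\cong S(\mathfrak{m})$ from Proposition~\ref{prop:g'(M,b)IsomorphicS(m)}, the integrability of $\mathfrak{g}^\prime(M,b)$ as the Lie algebra of the integral subgroup $G^\prime(M,b)\leq\Aut(M)$, and Remark~\ref{rem:integrabilityOfSymLieAlgebra}. Your explicit reduction to the connected component $M_0$ (with $\Lts(M_0,b)=\mathfrak{m}$ since $M_0$ is open) merely makes precise a step the paper leaves implicit.
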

\begin{proof}
	Since the symmetric Lie algebra $\mathfrak{g}^\prime(M,b)$ is integrable (cf.\ Section~\ref{sec:AutOfConnectedSymSpace} and Remark~\ref{rem:integrabilityOfSymLieAlgebra}), the standard embedding $S(\mathfrak{m})$ is integrable as well.
\end{proof}
\begin{lemma} \label{lem:integrableSymLieAlgebra}
	Let $(\mathfrak{g},\theta)$ be an integrable symmetric Lie algebra. Then also the Lie triple system $\mathfrak{g}_-$ is integrable.
\end{lemma}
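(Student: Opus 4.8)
The plan is to integrate $(\mathfrak{g},\theta)$ to a symmetric Lie group and then read off the Lie triple system of the associated quotient symmetric space, which should turn out to be exactly $\mathfrak{g}_-$. The whole argument is an assembly of machinery already in place, so I expect it to be short.

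First I would invoke the integrability hypothesis: by Remark~\ref{rem:integrabilityOfSymLieAlgebra}, the symmetric Lie algebra $(\mathfrak{g},\theta)$ is the Lie algebra of a symmetric Lie group $(G,\sigma)$, i.e.\ $L(G,\sigma)=(\mathfrak{g},\theta)$, and we may take $G$ to be connected (passing, if necessary, to the identity component or to the universal covering group, cf.\ Proposition~\ref{prop:universalCoveringMorphismOfSymLieGroup}). The fixed-point group $G^\sigma$ is then a split Lie subgroup of $G$ with Lie algebra $\mathfrak{g}_+$ (cf.\ Section~\ref{sec:symLieAlgAndLieGrp}), so the choice $K:=(G^\sigma)_0$ yields a symmetric pair $(G,\sigma,K)$.

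Next I would apply the functor $\Sym$ to obtain the pointed symmetric space $M:=\Sym(G,\sigma,K)=G/K$ with base point $b:=K$. The key identity (\ref{eqn:LtsSym=calLtsL}), $\Lts\circ\Sym=\calLts\circ L\circ F$, together with the fact that $\calLts$ assigns to $(\mathfrak{g},\theta)$ precisely its $(-1)$-eigenspace, then gives
\[
	\Lts(M,b)\ =\ \calLts(L(G,\sigma))\ =\ \calLts(\mathfrak{g},\theta)\ =\ \mathfrak{g}_-.
\]
Thus $\mathfrak{g}_-$ arises as the Lie triple system of the pointed symmetric space $(M,b)$ and is therefore integrable.

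I do not anticipate a genuine obstacle here, since every ingredient is supplied by the preliminaries. The only point deserving a moment's care is that $G^\sigma$ is indeed a split Lie subgroup with Lie algebra $\mathfrak{g}_+$, so that the symmetric pair $(G,\sigma,K)$ and hence the functor $\Sym$ are legitimately applicable; but this is exactly what is recorded in Section~\ref{sec:symLieAlgAndLieGrp}, and any subgroup $K$ with $(G^\sigma)_0\subseteq K\subseteq G^\sigma$ would serve equally well.
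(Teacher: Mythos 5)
Your proposal is correct and follows essentially the same route as the paper, which simply takes a symmetric Lie group $(G,\sigma)$ integrating $(\mathfrak{g},\theta)$ and observes that $G/G^\sigma$ is a symmetric space with Lie triple system $\mathfrak{g}_-$. Your choice of $K=(G^\sigma)_0$ instead of $G^\sigma$ is an immaterial variation, and the extra care about connectedness and the functor $\Sym$ only spells out what the paper leaves implicit.
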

\begin{proof}
	Let $(G,\sigma)$ be a symmetric Lie group with Lie algebra $(\mathfrak{g},\theta)$, then $G/G^\sigma$ is a symmetric space with Lie triple system $\mathfrak{g}_-$.
\end{proof}
\begin{theorem} \label{th:integrability m<=>S(m)}
	A Lie triple system $\mathfrak{m}$ is integrable to a pointed symmetric space if and only if its standard embedding $S(\mathfrak{m})$ is integrable to a (symmetric) Lie group.
\end{theorem}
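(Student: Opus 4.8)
The plan is to prove the two implications separately, since each reduces directly to a result already in hand. The forward implication is essentially Corollary~\ref{cor:S(Lts(M,b))IsIntegrable}, and the backward implication is essentially Lemma~\ref{lem:integrableSymLieAlgebra} combined with Remark~\ref{rem:integrabilityOfSymLieAlgebra}. The substantive work has therefore already been done upstream---most notably in Proposition~\ref{prop:g'(M,b)IsomorphicS(m)}, which identifies $S(\mathfrak{m})$ with the integrable algebra $\mathfrak{g}^\prime(M,b)\leq\Der(M)$---so the proof itself is short, and the only care required is to keep track of the difference between isomorphism and literal equality.

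For the ``only if'' direction, I would suppose $\mathfrak{m}$ is integrable, i.e.\ that there exists a pointed symmetric space $(M,b)$ together with an isomorphism $\mathfrak{m}\cong\Lts(M,b)$ of Lie triple systems. Corollary~\ref{cor:S(Lts(M,b))IsIntegrable} then tells us that $S(\Lts(M,b))$ is integrable. Since the standard embedding is functorial in the obvious sense, an isomorphism of Lie triple systems induces an isomorphism $S(\mathfrak{m})\cong S(\Lts(M,b))$ of the associated symmetric Lie algebras, and integrability is invariant under isomorphism of (symmetric) Lie algebras; hence $S(\mathfrak{m})$ is integrable.

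For the ``if'' direction, I would suppose that $S(\mathfrak{m})$ is integrable to a Lie group. By Remark~\ref{rem:integrabilityOfSymLieAlgebra}, integrability of the underlying Lie algebra $S(\mathfrak{m})$ is equivalent to integrability of the symmetric Lie algebra $(S(\mathfrak{m}),\theta)$, where $\theta$ is the canonical involution with $S(\mathfrak{m})_+=\overline{[\mathfrak{m},\mathfrak{m}]}$ and $S(\mathfrak{m})_-=\mathfrak{m}$. Thus there is a symmetric Lie group $(G,\sigma)$ with $L(G,\sigma)=(S(\mathfrak{m}),\theta)$, and Lemma~\ref{lem:integrableSymLieAlgebra} yields that $\mathfrak{g}_-=S(\mathfrak{m})_-=\mathfrak{m}$ is integrable, as desired.

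Since both halves are immediate applications of named results, I do not anticipate a genuine obstacle here. The only point deserving a line of care is the ``only if'' direction: Corollary~\ref{cor:S(Lts(M,b))IsIntegrable} is phrased for $S(\Lts(M,b))$ rather than for an abstractly given $\mathfrak{m}$, so I would spell out that replacing $\Lts(M,b)$ by its isomorphic copy $\mathfrak{m}$ is harmless, because standard embeddings of isomorphic triple systems are isomorphic and integrability is an isomorphism invariant. If one wished to be fully explicit, one could instead transport the symmetric space structure along the isomorphism $\mathfrak{m}\cong\Lts(M,b)$ so that $\mathfrak{m}$ literally equals $\Lts(M,b)$ and then invoke the corollary verbatim.
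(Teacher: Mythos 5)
Your proposal is correct and follows essentially the same route as the paper, which proves the theorem directly from Corollary~\ref{cor:S(Lts(M,b))IsIntegrable} and Lemma~\ref{lem:integrableSymLieAlgebra} (with Remark~\ref{rem:integrabilityOfSymLieAlgebra} handling the passage between the symmetric Lie algebra and its underlying Lie algebra). Your added remark about transporting along the isomorphism $\mathfrak{m}\cong\Lts(M,b)$ is a harmless bit of extra care that the paper leaves implicit.
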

\begin{proof}
	The theorem follows immediately by Corollary~\ref{cor:S(Lts(M,b))IsIntegrable} and Lemma~\ref{lem:integrableSymLieAlgebra}.
\end{proof}
\begin{corollary} \label{cor:finite-dimLtsIsIntegrable}
	Every finite-dimensional Lie triple system is integrable.
\end{corollary}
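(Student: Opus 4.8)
The plan is to combine Theorem~\ref{th:integrability m<=>S(m)} with the classical Lie third theorem. Since Theorem~\ref{th:integrability m<=>S(m)} reduces the integrability of $\mathfrak{m}$ to that of its standard embedding $S(\mathfrak{m})$, it suffices to verify that $S(\mathfrak{m})$ is integrable whenever $\mathfrak{m}$ is finite-dimensional.

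First I would observe that finite dimensionality of $\mathfrak{m}$ forces the standard embedding $S(\mathfrak{m})=\mathfrak{h}\oplus\mathfrak{m}$ to be finite-dimensional as well. Indeed, recall from Section~\ref{sec:symLieAlgAndLieGrp} that
$$\mathfrak{h}=\overline{\spann\{[x,y,\cdot]\in\gl(\mathfrak{m})\colon x,y\in\mathfrak{m}\}}\leq \gl(\mathfrak{m}).$$
When $\dim\mathfrak{m}<\infty$, the space $\gl(\mathfrak{m})$ is finite-dimensional, so its subspace $\mathfrak{h}$ is finite-dimensional too (and automatically closed), whence $\dim S(\mathfrak{m})=\dim\mathfrak{h}+\dim\mathfrak{m}<\infty$.

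Second I would invoke the classical Lie third theorem: every finite-dimensional Banach--Lie algebra is integrable to a (finite-dimensional) Lie group. Applied to $S(\mathfrak{m})$, this yields a Lie group with Lie algebra $S(\mathfrak{m})$; by Remark~\ref{rem:integrabilityOfSymLieAlgebra}, the symmetric Lie algebra $(S(\mathfrak{m}),\theta)$ is then integrable to a symmetric Lie group as well. Finally, Theorem~\ref{th:integrability m<=>S(m)} turns the integrability of $S(\mathfrak{m})$ into the integrability of $\mathfrak{m}$, completing the argument.

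There is no genuine obstacle here; the only thing to notice is the passage from finite dimensionality of $\mathfrak{m}$ to that of $S(\mathfrak{m})$, and the whole weight of the statement rests on the preceding equivalence theorem together with the (classical, finite-dimensional) integrability of Lie algebras. This is precisely the expected analogue of the finite-dimensional picture mentioned in the introduction, where obstructions to integrability are a purely infinite-dimensional phenomenon.
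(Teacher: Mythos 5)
Your argument is correct and is precisely the intended one: the paper states this corollary without proof as an immediate consequence of Theorem~\ref{th:integrability m<=>S(m)}, relying on exactly the observation you make, namely that $S(\mathfrak{m})=\mathfrak{h}\oplus\mathfrak{m}$ with $\mathfrak{h}\leq\gl(\mathfrak{m})$ is finite-dimensional and hence integrable by the classical Lie third theorem (with Remark~\ref{rem:integrabilityOfSymLieAlgebra} supplying the involution). Nothing is missing.
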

\begin{remark} \label{rem:m_ad}
	Let $\mathfrak{m}$ be a Lie triple system $\mathfrak{m}$ with center $\mathfrak{z}(\mathfrak{m})$. Then the quotient Lie triple system $\mathfrak{m}_{\ad}:=\mathfrak{m}/\mathfrak{z}(\mathfrak{m})$ is integrable.
	Indeed, since the center $\mathfrak{z}(S(\mathfrak{m}))$ of the standard embedding coincides with $\mathfrak{z}(\mathfrak{m})$ (cf.\ Section~\ref{sec:LtsAndSymSpace}), the Lie algebra $S(\mathfrak{m})_{\ad}:=S(\mathfrak{m})/\mathfrak{z}(S(\mathfrak{m}))$ is given by $S(\mathfrak{m})_+\oplus \mathfrak{m}_{\ad}$. The Lie algebra $S(\mathfrak{m})_{\ad}$ being integrable (cf.\ \cite{GN03}), the Lie triple system $\mathfrak{m}_{\ad}$ is integrable, too, by Lemma~\ref{lem:integrableSymLieAlgebra}.
\end{remark}
In the following, we refer to the way of defining the period morphism $\per_{(\mathfrak{g},\theta)}$ and the period group $\Pi(\mathfrak{g}):=\im(\per_{(\mathfrak{g},\theta)})$ of a symmetric Lie algebra $(\mathfrak{g},\theta)$ as presented in Section~\ref{sec:periodMorphismOfSymLieAlgebra}. We aim to apply those considerations to the standard embedding $S(\mathfrak{m})$ of a Lie triple\linebreak system $\mathfrak{m}$ (or more generally to any symmetric Lie algebra $(\mathfrak{g},\theta)$ with $\mathfrak{g}_-=\mathfrak{m}$ and $\mathfrak{z}(\mathfrak{g})=\mathfrak{z}(\mathfrak{g}_-)$) in order to deduce similar terms and results for Lie triple systems. By the way, we obtain a further proof (and moreover a generalization) of Theorem~\ref{th:integrability m<=>S(m)}. 

Before we start with a Lie triple system, we firstly consider any symmetric Lie algebra $(\mathfrak{g},\theta)$. The following proposition provides the decisive link to translate the knowledge of Section~\ref{sec:periodMorphismOfSymLieAlgebra} into the setting of Lie triple systems.
\begin{proposition} \label{prop:diagramOfSymPairs}
	Let $(\mathfrak{g},\theta)$ be a symmetric Lie algebra.
	Then the exact sequences of morphisms of symmetric Lie groups defined in Section~\ref{sec:periodMorphismOfSymLieAlgebra} can be turned into exact sequences of morphisms of symmetric pairs that lead to the following commutative diagram:
	$$\begin{xy}
		\xymatrixcolsep{1pc} 
		\xymatrix{
			& 0\ar[d] & 0\ar[d]\\
			&
			(\mathfrak{z}(\mathfrak{g},\theta),\mathfrak{z}(\mathfrak{g})_+)\ar@{=}[r]\ar@{^{(}->}[d]
			&
			(\mathfrak{z}(\mathfrak{g},\theta),\mathfrak{z}(\mathfrak{g})_+)\ar@{^{(}->}[d] \\
			\eins\ar[r] & (\widehat\Omega(G,\sigma),(\widehat P(G)^{\widehat P(\sigma)})_0 \cap \widehat\Omega(G))\ar@{^{(}->}[r]\ar[d]
			&
			(\widehat P(G,\sigma), (\widehat P(G)^{\widehat P(\sigma)})_0)\ar[r]\ar[d]^{\chi_{(\mathfrak{g},\theta)}}
			&
			(G_{\ad},\sigma_{\ad}, (G_{\ad}^{\sigma_{\ad}})_0)\ar[r]\ar@{=}[d]
			& \eins \\
			\eins\ar[r] & (\Omega(G_{\ad},\sigma_{\ad}),\Omega(G_{\ad}^{\sigma_{\ad}}))\ar@{^{(}->}[r]\ar[d]
			&
			(P(G_{\ad},\sigma_{\ad}),P(G_{\ad}^{\sigma_{\ad}}))\ar[r]^-{\ev_1}\ar[d]
			&
			(G_{\ad},\sigma_{\ad},(G_{\ad}^{\sigma_{\ad}})_0)\ar[r]
			&
			\eins \\
			& \eins & \eins
		}
	\end{xy}$$
\end{proposition}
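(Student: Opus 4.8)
The plan is to equip every symmetric Lie group occurring in the two exact sequences of Section~\ref{sec:periodMorphismOfSymLieAlgebra} with the open subgroup of its fixed-point group prescribed by the diagram, and then to check three things: that each pair is a genuine symmetric pair, that every displayed arrow is a morphism of symmetric pairs making the diagram commute, and that each row and column is exact in the sense of Definition~\ref{def:exactSymPairMorphisms}. The underlying sequences of symmetric Lie groups and their commutativity are already available from Section~\ref{sec:periodMorphismOfSymLieAlgebra} (the horizontal rows being the path--loop fibrations with kernels $\widehat\Omega(G)=\ker(\ev_1\circ\chi_{(\mathfrak{g},\theta)})$ and $\Omega(G_{\ad})=\ker(\ev_1)$, the vertical columns being the central extensions (\ref{eqn:sequenceHatP(G,sigma)}) and (\ref{eqn:sequenceHatOmega(G,sigma)})), so the whole burden lies at the level of the chosen subgroups $K$, where I would lean throughout on the characterization of exactness in Remark~\ref{rem:characterizeExactness}.

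First I would check the symmetric-pair condition for each object. For the path and loop groups this is immediate: the $P(\sigma_{\ad})$-fixed (resp.\ $\Omega(\sigma_{\ad})$-fixed) points of $P(G_{\ad})$ (resp.\ $\Omega(G_{\ad})$) are exactly the curves taking values in $G_{\ad}^{\sigma_{\ad}}$, i.e.\ $P(G_{\ad}^{\sigma_{\ad}})$ (resp.\ $\Omega(G_{\ad}^{\sigma_{\ad}})$), so the prescribed $K$ is the \emph{whole} fixed-point group and the condition holds trivially. For $(G_{\ad},\sigma_{\ad})$ and $\widehat P(G,\sigma)$ the prescribed $K$ is the identity component of a fixed-point group, which is permitted by definition. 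For $\widehat\Omega(G,\sigma)$ one checks that $K_{\widehat\Omega}:=(\widehat P(G)^{\widehat P(\sigma)})_0\cap\widehat\Omega(G)$ satisfies $(\widehat\Omega(G)^{\widehat\Omega(\sigma)})_0\subseteq K_{\widehat\Omega}\subseteq \widehat\Omega(G)^{\widehat\Omega(\sigma)}$, which follows from $\widehat\Omega(\sigma)=\widehat P(\sigma)|_{\widehat\Omega(G)}$ together with the connectedness of $(\widehat\Omega(G)^{\widehat\Omega(\sigma)})_0$.

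Next I would verify that each arrow respects the subgroups and that the diagram commutes; this is routine, since all maps are the previously defined $\chi_{(\mathfrak{g},\theta)}$, $\ev_1$ and the central inclusions, and morphisms of symmetric Lie groups automatically carry fixed points to fixed points and identity components to identity components. By Remark~\ref{rem:characterizeExactness}, the exactness of each short sequence of pairs reduces to two conditions: a kernel-type identity $K_1=f_1^{-1}(K_2)$ and an image identity $f_2(K_2)=K_3$. The kernel-type identities are essentially built into the choice of subgroups: $K_{\widehat\Omega}$ is \emph{defined} as $K_{\widehat P}\cap\widehat\Omega(G)$, one has $\mathfrak{z}(\mathfrak{g})\cap(\widehat P(G)^{\widehat P(\sigma)})_0=\mathfrak{z}(\mathfrak{g})_+$ (because $\mathfrak{z}(\mathfrak{g})$ is $\theta$-graded and $\mathfrak{z}(\mathfrak{g})_+$ is connected), and $\Omega(G_{\ad}^{\sigma_{\ad}})=\Omega(G_{\ad})\cap P(G_{\ad}^{\sigma_{\ad}})$; so these are quick set-theoretic identifications.

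The main obstacle is the image identities, i.e.\ the surjectivity of the maps onto the prescribed target subgroups. For the two columns I would prove $\chi_{(\mathfrak{g},\theta)}((\widehat P(G)^{\widehat P(\sigma)})_0)=P(G_{\ad}^{\sigma_{\ad}})$, and for the top row $(\ev_1\circ\chi_{(\mathfrak{g},\theta)})((\widehat P(G)^{\widehat P(\sigma)})_0)=(G_{\ad}^{\sigma_{\ad}})_0$; both are instances of the criterion in the footnote on page~\pageref{page:L(f)(g1_+)=g2_+}, namely that a morphism $f$ of symmetric Lie groups satisfies $f((G_1^{\sigma_1})_0)=(G_2^{\sigma_2})_0$ once $L(f)$ maps $(\mathfrak{g}_1)_+$ onto $(\mathfrak{g}_2)_+$ (for the column one also uses that $P(G_{\ad}^{\sigma_{\ad}})$, being a path group, is connected and thus equals $(P(G_{\ad})^{P(\sigma_{\ad})})_0$). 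This eigenspace surjectivity I would read off from $\widehat P(\mathfrak{g})=\{(\gamma,x)\colon\gamma(1)=x+\mathfrak{z}(\mathfrak{g})\}$ with $L(\chi_{(\mathfrak{g},\theta)})(\gamma,x)=\gamma$: a given $+1$-eigenvector downstairs has its endpoint lifted through the surjection $\mathfrak{g}_+\twoheadrightarrow(\mathfrak{g}_{\ad})_+$, and for the evaluation map one takes the linear path $t\mapsto tv$. The remaining surjectivity $\chi_{(\mathfrak{g},\theta)}(K_{\widehat\Omega})=\Omega(G_{\ad}^{\sigma_{\ad}})$ then follows cleanly without a further eigenspace computation: any $\ell\in\Omega(G_{\ad}^{\sigma_{\ad}})$ is in particular a path, hence $\ell=\chi_{(\mathfrak{g},\theta)}(g)$ for some $g\in(\widehat P(G)^{\widehat P(\sigma)})_0$ by the column surjectivity, and since $\widehat\Omega(G)=\ker(\ev_1\circ\chi_{(\mathfrak{g},\theta)})$ and $\ell(1)=\eins$, the element $g$ automatically lies in $\widehat\Omega(G)$, hence in $K_{\widehat\Omega}$.
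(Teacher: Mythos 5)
Your proposal is correct and follows essentially the same route as the paper's proof: verifying the symmetric-pair condition for $\widehat\Omega(G,\sigma)$ via $\widehat\Omega(G)^{\widehat\Omega(\sigma)}=\widehat P(G)^{\widehat P(\sigma)}\cap\widehat\Omega(G)$, reducing exactness to the kernel and image identities of Remark~\ref{rem:characterizeExactness}, establishing the key surjectivity $\chi_{(\mathfrak{g},\theta)}((\widehat P(G)^{\widehat P(\sigma)})_0)=P(G_{\ad}^{\sigma_{\ad}})$ through the eigenspace computation $\widehat P(\mathfrak{g})_+=\{(\gamma,x)\in P((\mathfrak{g}_{\ad})_+)\times\mathfrak{g}_+\colon\gamma(1)=x+\mathfrak{z}(\mathfrak{g})_+\}$ and the criterion of the footnote on page~\pageref{page:L(f)(g1_+)=g2_+}, and deducing the last surjectivity onto $\Omega(G_{\ad}^{\sigma_{\ad}})$ by the same diagram chase the paper displays. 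The only cosmetic difference is that you phrase that final chase element-wise rather than as a commutative exact diagram.
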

\begin{proof}
	Because of $\widehat\Omega(G)^{\widehat\Omega(\sigma)}= \widehat P(G)^{\widehat P(\sigma)} \cap \widehat\Omega(G)$, we have
	\begin{equation*}
		(\widehat\Omega(G)^{\widehat\Omega(\sigma)})_0 \ \subseteq\ (\widehat P(G)^{\widehat P(\sigma)})_0 \cap \widehat\Omega(G) \ \subseteq\ \widehat\Omega(G)^{\widehat\Omega(\sigma)},
	\end{equation*}
	so that $(\widehat\Omega(G,\sigma),(\widehat P(G)^{\widehat P(\sigma)})_0 \cap \widehat\Omega(G))$
	is indeed a symmetric pair. Thus, the above diagram consists of morphisms of symmetric pairs and we need only prove its exactness.
	
	To see the exactness of the sequence
	\begin{equation} \label{eqn:sequenceHatP(G,sigma)WithPairs}
		0\rightarrow (\mathfrak{z}(\mathfrak{g},\theta),\mathfrak{z}(\mathfrak{g})_+) \hookrightarrow (\widehat P(G,\sigma),(\widehat P(G)^{\widehat P(\sigma)})_0)\rightarrow (P(G_{\ad},\sigma_{\ad}),P(G_{\ad}^{\sigma_{\ad}}))\rightarrow \eins
	\end{equation}
	of symmetric pairs, we must check that we have $\mathfrak{z}(\mathfrak{g})_+ = (\widehat P(G)^{\widehat P(\sigma)})_0 \cap \mathfrak{z}(\mathfrak{g})$ and that the map $(\widehat P(G)^{\widehat P(\sigma)})_0\rightarrow P(G_{\ad}^{\sigma_{\ad}})$ is surjective (cf.\ Remark~\ref{rem:characterizeExactness}). The former is clear by Proposition~\ref{prop:symToSurjectiveInjevtive}(2) and the connectedness of $\mathfrak{z}(\mathfrak{g})_+$ admits only one symmetric pair for $\mathfrak{z}(\mathfrak{g},\theta)$.
	To see the latter, it suffices to check that the morphism $\widehat P(\mathfrak{g},\theta)\rightarrow P(\mathfrak{g}_{\ad},\theta_{\ad})$ maps $\widehat P(\mathfrak{g})_+$ onto $P(\mathfrak{g}_{\ad})_+=P((\mathfrak{g}_{\ad})_+)$ (cf.\ p.~\pageref{page:L(f)(g1_+)=g2_+}), but this is clear, since we have $(\mathfrak{g}_{\ad})_+=\mathfrak{g}_+ / \mathfrak{z}(\mathfrak{g})_+$ and hence $\widehat P(\mathfrak{g})_+ = \{(\gamma,x)\in P((\mathfrak{g}_{\ad})_+)\times\mathfrak{g}_+\colon \gamma(1)=x+\mathfrak{z}(\mathfrak{g})_+\}$.
			
	To see the exactness of the sequence
	\begin{equation*}
		\eins \rightarrow (\widehat\Omega(G,\sigma),(\widehat P(G)^{\widehat P(\sigma)})_0 \cap \widehat\Omega(G))\hookrightarrow (\widehat P(G,\sigma), (\widehat P(G)^{\widehat P(\sigma)})_0) \rightarrow (G_{\ad},\sigma_{\ad}, (G_{\ad}^{\sigma_{\ad}})_0) \rightarrow \eins
	\end{equation*}
 	of symmetric pairs, we have to verify the exactness of the sequence
	\begin{equation*} 
		\eins \rightarrow (\widehat P(G)^{\widehat P(\sigma)})_0 \cap \widehat\Omega(G) \hookrightarrow (\widehat P(G)^{\widehat P(\sigma)})_0 \rightarrow (G_{\ad}^{\sigma_{\ad}})_0 \rightarrow \eins
	\end{equation*}
	of Lie groups.
	The homomorphism $(\widehat P(G)^{\widehat P(\sigma)})_0 \rightarrow (G_{\ad}^{\sigma_{\ad}})_0$ is surjective, since it is given by the composition of the surjective map $(\widehat P(G)^{\widehat P(\sigma)})_0 \rightarrow P(G_{\ad}^{\sigma_{\ad}})$ (cf.\ (\ref{eqn:sequenceHatP(G,sigma)WithPairs})) and the surjective evaluation map $P(G_{\ad}^{\sigma_{\ad}})\rightarrow (G_{\ad}^{\sigma_{\ad}})_0$. Its kernel is obviously given by $(\widehat P(G)^{\widehat P(\sigma)})_0 \cap \widehat\Omega(G)$, so that the sequence is exact.
	
	It is clear that the sequence
	$$\eins\rightarrow (\Omega(G_{\ad},\sigma_{\ad}),\Omega(G_{\ad}^{\sigma_{\ad}})) \hookrightarrow (P(G_{\ad},\sigma_{\ad}),P(G_{\ad}^{\sigma_{\ad}}))  \stackrel{\ev_1}{\rightarrow} (G_{\ad},\sigma_{\ad},(G_{\ad}^{\sigma_{\ad}})_0) \rightarrow \eins$$
	is exact, so that it remains to show the exactness of the sequence
	\begin{equation*}
	0\rightarrow (\mathfrak{z}(\mathfrak{g},\theta),\mathfrak{z}(\mathfrak{g})_+)\hookrightarrow (\widehat\Omega(G,\sigma),(\widehat P(G)^{\widehat P(\sigma)})_0 \cap \widehat\Omega(G))\rightarrow (\Omega(G_{\ad},\sigma_{\ad}),\Omega(G_{\ad}^{\sigma_{\ad}}))\rightarrow \eins
	\end{equation*}
	of symmetric pairs. Because of the connectedness of $\mathfrak{z}(\mathfrak{g})_+$, the same argument as above shows that, for this, we need only check that the map $(\widehat P(G)^{\widehat P(\sigma)})_0 \cap \widehat\Omega(G)\rightarrow \Omega(G_{\ad}^{\sigma_{\ad}})$ is surjective, but this is true, since this map arises in the commutative and exact diagram
	$$\begin{xy}
		\xymatrix{
			\eins\ar[r] & (\widehat P(G)^{\widehat P(\sigma)})_0 \cap \widehat\Omega(G)\ar@{^{(}->}[r]\ar[d]
			&
			(\widehat P(G)^{\widehat P(\sigma)})_0\ar[r]\ar[d]^{\text{surjective}}
			&
			(G_{\ad}^{\sigma_{\ad}})_0\ar[r]\ar@{=}[d]
			& \eins \\
			\eins\ar[r] & \Omega(G_{\ad}^{\sigma_{\ad}})\ar@{^{(}->}[r]
			&
			P(G_{\ad}^{\sigma_{\ad}})\ar[r]^-{\ev_1}
			&
			(G_{\ad}^{\sigma_{\ad}})_0\ar[r]
			&
			\eins\lefteqn{.} 
		}
	\end{xy}$$
\end{proof}
\begin{corollary}\label{cor:diagramOfQuotients}
	Applying the exact functor $\Sym$ (cf.\ Propostion~\ref{prop:exactnessOfSym}) to the diagram in Proposition~\ref{prop:diagramOfSymPairs} leads to the commutative and exact diagram
	$$\begin{xy}
		\xymatrixcolsep{1pc} 
		\xymatrix{
			& o\ar[d] & o\ar[d]\\
			&
			\mathfrak{z}(\mathfrak{g})/\mathfrak{z}(\mathfrak{g})_+\ar@{=}[r]\ar@{^{(}->}[d]
			&
			\mathfrak{z}(\mathfrak{g})/\mathfrak{z}(\mathfrak{g})_+\ar@{^{(}->}[d] \\
			o\ar[r] & \widehat\Omega(G)/((\widehat P(G)^{\widehat P(\sigma)})_0 \cap \widehat\Omega(G))\ar@{^{(}->}[r]\ar[d]
			&
			\widehat P(G)/(\widehat P(G)^{\widehat P(\sigma)})_0\ar[r]\ar[d]
			&
			G_{\ad}/(G_{\ad}^{\sigma_{\ad}})_0\ar[r]\ar@{=}[d]
			& o \\
			o\ar[r] & \Omega(G_{\ad})/\Omega(G_{\ad}^{\sigma_{\ad}})\ar@{^{(}->}[r]\ar[d]
			&
			P(G_{\ad})/P(G_{\ad}^{\sigma_{\ad}})\ar[r]\ar[d]
			&
			G_{\ad}/(G_{\ad}^{\sigma_{\ad}})_0\ar[r]
			&
			o \\
			& o & o
		}
	\end{xy}$$
	of pointed symmetric spaces, where all arising inclusion maps are topological embeddings.
\end{corollary}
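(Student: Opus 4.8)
The plan is to use that $\Sym$ is a covariant functor from symmetric pairs to pointed symmetric spaces, so that applying it to the commutative diagram of Proposition~\ref{prop:diagramOfSymPairs} yields, without further argument, a commutative diagram of pointed symmetric spaces in which each symmetric pair $(G_i,\sigma_i,K_i)$ is replaced by its quotient $G_i/K_i=\Sym(G_i,\sigma_i,K_i)$ and in which equalities are sent to equalities. In particular, the symmetric pair $(\mathfrak{z}(\mathfrak{g},\theta),\mathfrak{z}(\mathfrak{g})_+)$ produces the quotient $\mathfrak{z}(\mathfrak{g})/\mathfrak{z}(\mathfrak{g})_+$ and the repeated object $(G_{\ad},\sigma_{\ad},(G_{\ad}^{\sigma_{\ad}})_0)$ the quotient $G_{\ad}/(G_{\ad}^{\sigma_{\ad}})_0$, matching the claimed diagram. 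Hence the only substantive tasks are the exactness of the resulting rows and columns and the topological-embedding assertion.

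For exactness, I would apply Proposition~\ref{prop:exactnessOfSym} separately to each of the four short exact sequences of symmetric pairs occurring in the diagram, namely the two columns and the two horizontal rows. Each of these was already shown to be exact in Proposition~\ref{prop:diagramOfSymPairs}, so the proposition guarantees that the corresponding sequence of pointed symmetric spaces is exact and that its left-hand morphism is injective; this delivers all four exact lines of the target diagram at once, and functoriality of $\Sym$ provides the commutativity.

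For the embeddings, I would invoke the final clause of Proposition~\ref{prop:exactnessOfSym}: whenever the injective morphism $f_1$ of a short exact sequence is a topological embedding, so is the induced morphism $(f_1)_\ast$. It thus suffices to check that the underlying group homomorphism of each of the four injections is a topological embedding. The two central inclusions $\mathfrak{z}(\mathfrak{g})\hookrightarrow\widehat P(G)$ and $\mathfrak{z}(\mathfrak{g})\hookrightarrow\widehat\Omega(G)$ are topological embeddings: the former was required to be one in the construction of the central extension in Section~\ref{sec:periodMorphismOfSymLieAlgebra}, and the latter factors through it via the Lie subgroup $\widehat\Omega(G)\leq\widehat P(G)$. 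The two remaining inclusions $\widehat\Omega(G)\hookrightarrow\widehat P(G)$ and $\Omega(G_{\ad})\hookrightarrow P(G_{\ad})$ are topological embeddings because $\widehat\Omega(G)$ and $\Omega(G_{\ad})$ arise as kernels of morphisms of Banach--Lie groups and are therefore Lie subgroups, i.e., submanifolds. Consequently every inclusion morphism in the resulting diagram of pointed symmetric spaces is a topological embedding.

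I expect no genuine obstacle, since the hard content (exactness of the symmetric-pair diagram) was already settled in Proposition~\ref{prop:diagramOfSymPairs}; the only point requiring attention is the bookkeeping of the topological-embedding hypothesis, which reduces to the observation that the distinguished central inclusions and the relevant kernels are Lie subgroups.
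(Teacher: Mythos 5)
Your proposal is correct and follows essentially the same route as the paper, which states the corollary without a separate proof, treating it as an immediate application of Proposition~\ref{prop:exactnessOfSym} to the two rows and two columns of the exact diagram established in Proposition~\ref{prop:diagramOfSymPairs}. Your explicit verification that the four underlying group inclusions are topological embeddings (the central inclusion $\mathfrak{z}(\mathfrak{g})\hookrightarrow\widehat P(G)$ by construction, the others because kernels of morphisms of Banach--Lie groups are Lie subgroups and hence carry the subspace topology) is precisely the bookkeeping the paper leaves implicit.
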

\begin{remark} \label{rem:z(g)/z(g)_+}
	The quotient $\mathfrak{z}(\mathfrak{g})/\mathfrak{z}(\mathfrak{g})_+$ can be identified with the pointed symmetric space $\mathfrak{z}(\mathfrak{g})_-$. Indeed, the multiplication map is given by $x\cdot y := x-\theta(x)+\theta(y) = 2x - y$.
\end{remark}
%
\begin{corollary} \label{cor:diagramOfLtsOfQuotients}
	Applying the Lie functor $\Lts$ to the diagram in Corollary~\ref{cor:diagramOfQuotients} leads to the commutative and exact diagram
	$$\begin{xy}
	\xymatrix{
		& 0\ar[d] & 0\ar[d]\\
		&
		\mathfrak{z}(\mathfrak{g})_-\ar@{=}[r]\ar@{^{(}->}[d]
		&
		\mathfrak{z}(\mathfrak{g})_-\ar@{^{(}->}[d] \\
		0\ar[r] & \widehat\Omega(\mathfrak{g})_-\ar@{^{(}->}[r]\ar[d]
		&
		\widehat P(\mathfrak{g})_-\ar[r]\ar[d]
		&
		(\mathfrak{g}_{\ad})_-\ar[r]\ar@{=}[d]
		& 0 \\
		0\ar[r] & \Omega(\mathfrak{g}_{\ad})_-\ar@{^{(}->}[r]\ar[d]
		&
		P(\mathfrak{g}_{\ad})_-\ar[r]^-{\ev_1}\ar[d]
		&
		(\mathfrak{g}_{\ad})_-\ar[r]
		& 0 \\
		& 0 & 0
	}
	\end{xy}$$
	of Lie triple systems. Note that all inclusion maps are topological embeddings.
\end{corollary}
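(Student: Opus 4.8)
The plan is to transport the whole question to the level of symmetric Lie algebras by means of the functorial identity $\Lts\circ\Sym=\calLts\circ L\circ F$ of (\ref{eqn:LtsSym=calLtsL}). Since the diagram of Corollary~\ref{cor:diagramOfQuotients} is nothing but $\Sym$ applied to the diagram of symmetric pairs in Proposition~\ref{prop:diagramOfSymPairs}, applying the Lie functor $\Lts$ to it produces exactly the diagram obtained by applying $\calLts\circ L\circ F$ to that same diagram of pairs. First I would invoke the functoriality of $\Lts$ to obtain commutativity for free, and then read off the objects: for every symmetric pair $(H,\tau,K)$ occurring in Proposition~\ref{prop:diagramOfSymPairs}, equation (\ref{eqn:LtsSym=calLtsL}) identifies $\Lts(H/K)$ with $\calLts(L(H),L(\tau))=L(H)_-$, the $(-1)$-eigenspace of the associated symmetric Lie algebra. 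This yields the labels $\mathfrak{z}(\mathfrak{g})_-$ (in agreement with Remark~\ref{rem:z(g)/z(g)_+}), $\widehat\Omega(\mathfrak{g})_-$, $\widehat P(\mathfrak{g})_-$, $\Omega(\mathfrak{g}_{\ad})_-$, $P(\mathfrak{g}_{\ad})_-$ and $(\mathfrak{g}_{\ad})_-$, while every arrow becomes the restriction to the $(-1)$-eigenspaces of the corresponding morphism of symmetric Lie algebras.

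Next I would record that the underlying diagram of symmetric Lie algebras, i.e.\ $L\circ F$ applied to the diagram of Proposition~\ref{prop:diagramOfSymPairs}, already has exact rows and columns with topological embeddings as its inclusions. Indeed, its two non-trivial columns are the central extensions (\ref{eqn:extensionHatP(g,Lsigma)}) and (\ref{eqn:extsenionHatOmega(g,Lsigma)}); its bottom row is the defining short exact sequence $0\to\Omega(\mathfrak{g}_{\ad},\theta_{\ad})\hookrightarrow P(\mathfrak{g}_{\ad},\theta_{\ad})\stackrel{\ev_1}{\longrightarrow}(\mathfrak{g}_{\ad},\theta_{\ad})\to 0$; and its middle row is $0\to\widehat\Omega(\mathfrak{g},\theta)\hookrightarrow\widehat P(\mathfrak{g},\theta)\to(\mathfrak{g}_{\ad},\theta_{\ad})\to 0$, whose middle term was defined precisely as the kernel of $\ev_1\circ L(\chi_{(\mathfrak{g},\theta)})$.

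It then remains to show that $\calLts$, the passage to $(-1)$-eigenspaces, preserves short exactness and topological embeddings; this is the technical core, although it is elementary. Because every morphism of symmetric Lie algebras commutes with the involutions, it respects the topological direct sum $\mathfrak{g}=\mathfrak{g}_+\oplus\mathfrak{g}_-$ into closed complemented eigenspaces, so a short exact sequence $0\to(\mathfrak{a},\theta_a)\stackrel{i}{\to}(\mathfrak{b},\theta_b)\stackrel{p}{\to}(\mathfrak{c},\theta_c)\to 0$ splits into the direct sum of its $(+1)$-part and its $(-1)$-part, each again short exact. For the $(-1)$-part, injectivity of $i|_{\mathfrak{a}_-}$ is clear, exactness at $\mathfrak{b}_-$ holds because $i(a)$ lies in $\mathfrak{b}_-$ if and only if $a\in\mathfrak{a}_-$ (again by injectivity of $i$), and surjectivity of $p|_{\mathfrak{b}_-}$ onto $\mathfrak{c}_-$ follows from the averaging formula $p\big(\tfrac12(b-\theta_b(b))\big)=c$ for any preimage $b$ of $c\in\mathfrak{c}_-$, where $\tfrac12(b-\theta_b(b))\in\mathfrak{b}_-$. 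Since the eigenspaces are closed complemented subspaces, a topological embedding restricts to a topological embedding on the $(-1)$-parts. Applying this to each of the four exact sequences above gives the exactness of all rows and columns of the diagram of Lie triple systems, with every inclusion a topological embedding. I expect the only genuine obstacle to be bookkeeping: matching the labels produced by (\ref{eqn:LtsSym=calLtsL}) with $\widehat\Omega(\mathfrak{g})_-$, $\widehat P(\mathfrak{g})_-$ and the remaining objects, and confirming that the two identity arrows, on $(\mathfrak{g}_{\ad})_-$ and on $\mathfrak{z}(\mathfrak{g})_-$, come out as claimed.
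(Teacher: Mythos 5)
Your proposal is correct and follows essentially the same route as the paper, whose entire proof is the single observation that $\Lts\circ\Sym=\calLts\circ L\circ F$ (equation (\ref{eqn:LtsSym=calLtsL})). The additional verifications you supply --- that the underlying diagram of symmetric Lie algebras is exact and that passing to $(-1)$-eigenspaces preserves short exactness and topological embeddings via the averaging formula $\tfrac12(b-\theta_b(b))$ --- are exactly the routine details the paper leaves implicit, and they are carried out correctly.
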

\begin{proof}
	This follows by $\Lts\circ\Sym = \calLts\circ L \circ F$ (cf.\ (\ref{eqn:LtsSym=calLtsL}).
\end{proof}
Now, let $\mathfrak{m}$ be a Lie triple system. Consider some symmetric Lie algebra $(\mathfrak{g},\theta)$ with $\mathfrak{g}_-=\mathfrak{m}$ and $\mathfrak{z}(\mathfrak{g})=\mathfrak{z}(\mathfrak{m})$, e.g.\ the standard embedding $S(\mathfrak{m})$ of $\mathfrak{m}$.
Note that $\mathfrak{z}(\mathfrak{g},\theta)$ then stands for the symmetric Lie algebra $(\mathfrak{z}(\mathfrak{g}),-\id_{\mathfrak{z}(\mathfrak{g})})$ with $\mathfrak{z}(\mathfrak{g})_+=\{0\}$ and $\mathfrak{z}(\mathfrak{g})_-=\mathfrak{z}(\mathfrak{m})$.

The central extension
$$0\rightarrow \mathfrak{z}(\mathfrak{m})\hookrightarrow \mathfrak{m}\rightarrow \mathfrak{m}_{\ad}\rightarrow 0$$
can be pulled back via the evaluation morphism $\ev_1\colon P(\mathfrak{m}_{\ad})\rightarrow \mathfrak{m}_{\ad}$ to a central extension
\begin{equation} \label{eqn:extensionHatP(m)}
	0\rightarrow \mathfrak{z}(\mathfrak{m})\hookrightarrow \widehat P(\mathfrak{m})\rightarrow P(\mathfrak{m}_{\ad})\rightarrow 0
\end{equation}
with $$\widehat P(\mathfrak{m}):=\{(\gamma,x)\in P(\mathfrak{m}_{\ad})\times\mathfrak{m}\colon\gamma(1)=x+\mathfrak{z}(\mathfrak{m})\}.$$
Restricting it to the preimage $\widehat\Omega(\mathfrak{m}):=\Omega(\mathfrak{m}_{\ad})\times\mathfrak{z}(\mathfrak{m})$ of $\Omega(\mathfrak{m}_{\ad})$, we obtain a central extension
\begin{equation} \label{eqn:extsenionHatOmega(m)}
	0\rightarrow \mathfrak{z}(\mathfrak{m})\hookrightarrow \widehat \Omega(\mathfrak{m})\rightarrow \Omega(\mathfrak{m}_{\ad})\rightarrow 0.
\end{equation}
%
%
\begin{remark}\label{rem:diagramOfLts}
	The diagram in Corollary~\ref{cor:diagramOfLtsOfQuotients} simplifies to
	$$\begin{xy}
		\xymatrix{
			& 0\ar[d] & 0\ar[d]\\
			&
			\mathfrak{z}(\mathfrak{m})\ar@{=}[r]\ar@{^{(}->}[d]
			&
			\mathfrak{z}(\mathfrak{m})\ar@{^{(}->}[d] \\
			0\ar[r] & \widehat\Omega(\mathfrak{m})\ar@{^{(}->}[r]\ar[d]
			&
			\widehat P(\mathfrak{m})\ar[r]\ar[d]
			&
			\mathfrak{m}_{\ad}\ar[r]\ar@{=}[d]
			& 0 \\
			0\ar[r] & \Omega(\mathfrak{m}_{\ad})\ar@{^{(}->}[r]\ar[d]
			&
			P(\mathfrak{m}_{\ad})\ar[r]^-{\ev_1}\ar[d]
			&
			\mathfrak{m}_{\ad}\ar[r]
			& 0 \\
			& 0 & 0
		}
	\end{xy}$$
	This follows easily by the decompositions $\mathfrak{g}= \mathfrak{g}_+\oplus\mathfrak{m}$, $\mathfrak{z}(\mathfrak{g})=0\oplus\mathfrak{z}(\mathfrak{m})$ and $\mathfrak{g}_{\ad}=\mathfrak{g}_+\oplus\mathfrak{m}_{\ad}$.
\end{remark}
We define the pointed symmetric space $(M_{\ad},b_{\ad}):=G_{\ad}/(G_{\ad}^{\sigma_{\ad}})_0$ and note that it is 1-connected (cf.\ Proposition~\ref{prop:1-connectedQuotient}) and that $\mathfrak{m}_{\ad}$ is its Lie triple system (cf.\ Remark~\ref{rem:diagramOfLts}). We shall frequently consider the Banach space $\mathfrak{z}(\mathfrak{m})$ endowed with its natural structure of a pointed symmetric space and note that $\Lts(\mathfrak{z}(\mathfrak{m}))=\mathfrak{z}(\mathfrak{m})$.
\begin{proposition} \label{prop:chi_m}
	Let $\mathfrak{m}$ be a Lie triple system and $(\mathfrak{g},\theta)$ a symmetric Lie algebra with $\mathfrak{g}_-=\mathfrak{m}$ and $\mathfrak{z}(\mathfrak{g})=\mathfrak{z}(\mathfrak{m})$.
	Then there exists an exact sequence
	$$o\rightarrow\mathfrak{z}(\mathfrak{m})\hookrightarrow \widehat P(M,b)\stackrel{\chi_{\mathfrak{m}}}{\rightarrow} P(M_{\ad},b_{\ad})\rightarrow o$$
	of pointed symmetric spaces corresponding to (\ref{eqn:extensionHatP(m)}), for a suitable pointed 1-connected symmetric space $\widehat P(M,b)$, where the inclusion of $\mathfrak{z}(\mathfrak{m})$ into $\widehat P(M,b)$ is a topological embedding.\footnote{Following the notation of Section~\ref{sec:periodMorphismOfSymLieAlgebra} and \cite{GN03}, we use notations like $\widehat P(M,b)$ and (presently) $\widehat \Omega (M,b)$ although there need not be a pointed symmetric space $(M,b)$ with Lie triple system $\mathfrak{m}$.}
\end{proposition}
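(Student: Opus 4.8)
The plan is to obtain the asserted sequence as the middle vertical column of the commutative and exact diagram of pointed symmetric spaces in Corollary~\ref{cor:diagramOfQuotients}, after specializing it to the present situation. First I would note that the hypothesis $\mathfrak{z}(\mathfrak{g})=\mathfrak{z}(\mathfrak{m})\subseteq\mathfrak{g}_-$ forces $\mathfrak{z}(\mathfrak{g})_+=\mathfrak{z}(\mathfrak{g})\cap\mathfrak{g}_+=0$ and $\mathfrak{z}(\mathfrak{g})_-=\mathfrak{z}(\mathfrak{m})$. Hence the term $\mathfrak{z}(\mathfrak{g})/\mathfrak{z}(\mathfrak{g})_+$ occurring in that diagram is just $\mathfrak{z}(\mathfrak{g})=\mathfrak{z}(\mathfrak{m})$, and by Remark~\ref{rem:z(g)/z(g)_+} it carries precisely the natural pointed symmetric space structure $x\cdot y=2x-y$ on $\mathfrak{z}(\mathfrak{m})$.

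Reading off the middle column of the diagram in Corollary~\ref{cor:diagramOfQuotients}, I would set $\widehat P(M,b):=\widehat P(G)/(\widehat P(G)^{\widehat P(\sigma)})_0$ and take $\chi_{\mathfrak{m}}$ to be the induced morphism, so that the column becomes
$$o\rightarrow\mathfrak{z}(\mathfrak{m})\hookrightarrow\widehat P(M,b)\stackrel{\chi_{\mathfrak{m}}}{\rightarrow}P(G_{\ad})/P(G_{\ad}^{\sigma_{\ad}})\rightarrow o.$$
This sequence is exact and its inclusion is a topological embedding, both being asserted in Corollary~\ref{cor:diagramOfQuotients}. It remains to identify the right-hand term with $P(M_{\ad},b_{\ad})$: applying Theorem~\ref{th:pathSymSpace}(2) (i.e.\ the map $\Phi$ of Proposition~\ref{prop:P(G/K)}) to the symmetric pair $(G_{\ad},\sigma_{\ad},(G_{\ad}^{\sigma_{\ad}})_0)$ yields an isomorphism $P(G_{\ad})/P(G_{\ad}^{\sigma_{\ad}})\cong P(G_{\ad}/(G_{\ad}^{\sigma_{\ad}})_0)=P(M_{\ad},b_{\ad})$ of pointed symmetric spaces.

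For the 1-connectedness of $\widehat P(M,b)$ I would use that $\widehat P(G)$ is 1-connected (cf.\ the construction of (\ref{eqn:sequenceHatP(G,sigma)})) together with Proposition~\ref{prop:1-connectedQuotient}, which shows that the quotient $\widehat P(G)/(\widehat P(G)^{\widehat P(\sigma)})_0$ is again 1-connected. Finally, to confirm that this sequence ``corresponds to'' the central extension (\ref{eqn:extensionHatP(m)}), I would apply the Lie functor $\Lts$ and compare with the middle column of the diagram in Corollary~\ref{cor:diagramOfLtsOfQuotients}; by Remark~\ref{rem:diagramOfLts} this simplifies, under the present hypotheses, exactly to $0\rightarrow\mathfrak{z}(\mathfrak{m})\rightarrow\widehat P(\mathfrak{m})\rightarrow P(\mathfrak{m}_{\ad})\rightarrow 0$. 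Since all the heavy lifting has been done in the preceding results, I expect no genuine obstacle; the one point deserving care is the bookkeeping that matches the column of Corollary~\ref{cor:diagramOfQuotients} termwise with the data of the proposition, in particular the identification of the path-space quotient via Theorem~\ref{th:pathSymSpace}(2).
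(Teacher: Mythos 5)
Your proposal is correct and follows essentially the same route as the paper's proof: both define $\widehat P(M,b):=\widehat P(G)/(\widehat P(G)^{\widehat P(\sigma)})_0$, read off the middle column of the diagram in Corollary~\ref{cor:diagramOfQuotients}, identify $P(G_{\ad})/P(G_{\ad}^{\sigma_{\ad}})$ with $P(M_{\ad},b_{\ad})$ via Theorem~\ref{th:pathSymSpace}(2), get 1-connectedness from Proposition~\ref{prop:1-connectedQuotient}, and match the sequence with (\ref{eqn:extensionHatP(m)}) through Remark~\ref{rem:diagramOfLts}. The only cosmetic difference is that the paper additionally remarks that the topological embedding property of the inclusion also follows from Proposition~\ref{prop:integralSubreflectionSpace} and Proposition~\ref{prop:kernelOfMorphismOfPointedSymSpaces}, whereas you take it directly from Corollary~\ref{cor:diagramOfQuotients}, which is equally valid.
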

\begin{proof}
	We put $\widehat P(M,b) := \widehat P(G)/(\widehat P(G)^{\widehat P(\sigma)})_0$, which is 1-connected by Proposition~\ref{prop:1-connectedQuotient}. The exact sequence
	\begin{equation*} 
		o\rightarrow\mathfrak{z}(\mathfrak{g})/\{0\} \hookrightarrow \widehat P(M,b)\rightarrow P(G_{\ad})/P(G_{\ad}^{\sigma_{\ad}})\rightarrow o
	\end{equation*}
	given in Corollary~\ref{cor:diagramOfQuotients} corresponds to (\ref{eqn:extensionHatP(m)}) by Remark~\ref{rem:diagramOfLts}.
	By Theorem~\ref{th:pathSymSpace}(2), we can replace the quotient $P(G_{\ad})/P(G_{\ad}^{\sigma_{\ad}})$ by $P(M_{\ad},b_{\ad})$, since we have defined $(M_{\ad},b_{\ad})$ as the quotient $G_{\ad}/(G_{\ad}^{\sigma_{\ad}})_0$. By Remark~\ref{rem:z(g)/z(g)_+}, we have $\mathfrak{z}(\mathfrak{g})/\{0\}\cong\mathfrak{z}(\mathfrak{m})$, entailing the assertion. Note that the assertion that the inclusion is a topological embedding also follows automatically in the light of Proposition~\ref{prop:integralSubreflectionSpace} and Proposition~\ref{prop:kernelOfMorphismOfPointedSymSpaces}.
\end{proof}
Let us consider the morphism
$$\ev_1^{(M_{\ad},b_{\ad})}\circ\chi_{\mathfrak{m}}\colon \widehat P(M,b)\rightarrow (M_{\ad},b_{\ad}),\ x\mapsto \chi_{\mathfrak{m}}(x)(1)$$
of pointed symmetric spaces. Its kernel is a symmetric subspace $\widehat \Omega(M,b)\unlhd \widehat P(M,b)$ whose Lie triple system is the kernel of the morphism
$$\ev_1^{\mathfrak{m}_{\ad}}\circ \Lts(\chi_{\mathfrak{m}})\colon \widehat P(\mathfrak{m})\rightarrow \mathfrak{m}_{\ad},\ (\gamma,x) \mapsto \gamma(1)=x+\mathfrak{z}(\mathfrak{m})$$
of Lie triple systems (cf.\ Proposition~\ref{prop:kernelOfMorphismOfPointedSymSpaces}), i.e., is the Lie triple system $\widehat\Omega(\mathfrak{m})=\Omega(\mathfrak{m}_{\ad})\times\mathfrak{z}(\mathfrak{m})$. Since we have $\widehat\Omega(M,b)=(\chi_\mathfrak{m})^{-1}(\Omega(M_{\ad},b_{\ad}))$, 
the exact sequence of Proposition~\ref{prop:chi_m} can be restricted to the exact sequence
\begin{equation} \label{eqn:restrictionOfChi^m}
	o\rightarrow\mathfrak{z}(\mathfrak{m})\hookrightarrow \widehat \Omega(M,b)\rightarrow \Omega(M_{\ad},b_{\ad})\rightarrow o,
\end{equation}
which corresponds to (\ref{eqn:extsenionHatOmega(m)}).
\begin{remark} \label{rem:diagramSymSpace}
	The commutative and exact diagram
	$$\begin{xy}
		\xymatrix{
			& o\ar[d] & o\ar[d]\\
			&
			\mathfrak{z}(\mathfrak{m})\ar@{=}[r]\ar@{^{(}->}[d]
			&
			\mathfrak{z}(\mathfrak{m})\ar@{^{(}->}[d] \\
			o\ar[r] & \widehat\Omega(M,b)\ar@{^{(}->}[r]\ar[d]
			&
			\widehat P(M,b)\ar[r]\ar[d]^{\chi_{\mathfrak{m}}}
			&
			(M_{\ad},b_{\ad})\ar[r]\ar@{=}[d]
			& o \\
			o\ar[r] & \Omega(M_{\ad},b_{\ad})\ar@{^{(}->}[r]\ar[d]
			&
			P(M_{\ad},b_{\ad})\ar[r]^-{\ev_1}\ar[d]
			&
			(M_{\ad},b_{\ad})\ar[r]
			&
			o \\
			& o & o
		}
	\end{xy}$$
	of pointed symmetric spaces gives a good overview. It can be identified with the diagram given in Corollary~\ref{cor:diagramOfQuotients} (with $\mathfrak{z}(\mathfrak{g})_+=\{0\}$) and corresponds to the diagram given in Remark~\ref{rem:diagramOfLts}.
\end{remark}
\begin{remark}
	Being isomorphic to $\widehat\Omega(G)/((\widehat P(G)^{\widehat P(\sigma)})_0 \cap \widehat\Omega(G))$, the symmetric space\linebreak $\widehat\Omega(M,b)$ is connected, since $\widehat\Omega(G)$ is connected.
\end{remark}
The Lie triple system $\Omega(\mathfrak{m}_{\ad})$ being integrable (to $\Omega(M_{\ad},b_{\ad})$), there is a pointed\linebreak 1-connected symmetric space $\widetilde\Omega(M_{\ad},b_{\ad})$ whose Lie triple system is $\Omega(\mathfrak{m}_{\ad})$. We integrate the inclusion morphism
$$\Omega(\mathfrak{m}_{\ad})\hookrightarrow \Omega(\mathfrak{m}_{\ad})\times\mathfrak{z}(\mathfrak{m})=\widehat\Omega(\mathfrak{m}),\ x\mapsto (x,0) $$
to a morphism
$$f_\mathfrak{m}\colon \widetilde\Omega(M_{\ad},b_{\ad})\rightarrow \widehat\Omega(M,b).$$
Composing it with $\widehat\Omega(M,b)\rightarrow \Omega(M_{\ad},b_{\ad})$ (cf.\ (\ref{eqn:restrictionOfChi^m})) gives us a morphism
$$q_{\Omega(M_{\ad},b_{\ad})}\colon \widetilde\Omega(M_{\ad},b_{\ad})\stackrel{f_\mathfrak{m}}{\rightarrow} \widehat\Omega(M,b) \rightarrow \Omega(M_{\ad},b_{\ad})$$
satisfying $\Lts(q_{\Omega(M_{\ad},b_{\ad})})=\id_{\Omega(\mathfrak{m}_{\ad})}$, so that it is a universal covering morphism (cf.\ Remark~\ref{rem:Lts(covering)=Iso}). Since $\mathfrak{z}(\mathfrak{m})$ is the kernel of $\widehat\Omega(M,b)\rightarrow \Omega(M_{\ad},b_{\ad})$, the kernel  of $q_{\Omega(M_{\ad},b_{\ad})}$ is mapped by $f_\mathfrak{m}$ onto $\im(f_\mathfrak{m})\cap \mathfrak{z}(\mathfrak{m})$. 
\begin{definition}\label{def:per_m}
	The restriction $\per_{\mathfrak{m}}\colon \ker(q_{\Omega(M_{\ad},b_{\ad})})\rightarrow \mathfrak{z}(\mathfrak{m})$ of $f_\mathfrak{m}$ is called the \emph{period morphism of $\mathfrak{m}$}. We denote its image by $\Pi(\mathfrak{m}):=\im(\per_{\mathfrak{m}}) = \im(f_\mathfrak{m})\cap \mathfrak{z}(\mathfrak{m})$.
\end{definition}
\begin{lemma} \label{lem:Sym(f_g)=f_m}
	If we turn the morphism
	$f_{(\mathfrak{g},\theta)}\colon \widetilde\Omega(G_{\ad},\sigma_{\ad})\rightarrow \widehat\Omega(G,\sigma)$
	of symmetric Lie groups (cf.\ (\ref{eqn:f_(g,theta)})) into the morphism
	$$f_{(\mathfrak{g},\theta)}\colon (\widetilde\Omega(G_{\ad},\sigma_{\ad}),(\widetilde\Omega(G_{\ad})^{\widetilde\Omega(\sigma_{\ad})})_0)\rightarrow (\widehat\Omega(G,\sigma),(\widehat P(G)^{\widehat P(\sigma)})_0 \cap \widehat\Omega(G))$$
	of symmetric pairs, then $\Sym(f_{(\mathfrak{g},\theta)})$ can be identified with $f_\mathfrak{m}$.
\end{lemma}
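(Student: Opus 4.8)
The plan is to invoke the Integrability Theorem: both $\Sym(f_{(\mathfrak{g},\theta)})$ and $f_\mathfrak{m}$ are morphisms of pointed symmetric spaces whose common domain is $1$-connected, so once I have identified their domains and their codomains it suffices to check that the two induced maps on the Lie triple systems coincide.

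First I would identify the domains and codomains. Applying $\Sym$ to the symmetric pair $(\widetilde\Omega(G_{\ad},\sigma_{\ad}),(\widetilde\Omega(G_{\ad})^{\widetilde\Omega(\sigma_{\ad})})_0)$ yields the quotient $\widetilde\Omega(G_{\ad})/(\widetilde\Omega(G_{\ad})^{\widetilde\Omega(\sigma_{\ad})})_0$, which is $1$-connected by Proposition~\ref{prop:1-connectedQuotient} (since $\widetilde\Omega(G_{\ad})$ is $1$-connected) and whose Lie triple system is $\calLts(L(\widetilde\Omega(G_{\ad},\sigma_{\ad})))=\Omega(\mathfrak{g}_{\ad})_-=\Omega(\mathfrak{m}_{\ad})$ by (\ref{eqn:LtsSym=calLtsL}). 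By the uniqueness of $1$-connected integrations (cf.\ Remark~\ref{rem:Lts(covering)=Iso} and the Integrability Theorem), this quotient is canonically isomorphic to $\widetilde\Omega(M_{\ad},b_{\ad})$. Likewise, $\Sym$ sends the symmetric pair $(\widehat\Omega(G,\sigma),(\widehat P(G)^{\widehat P(\sigma)})_0\cap\widehat\Omega(G))$ to $\widehat\Omega(G)/((\widehat P(G)^{\widehat P(\sigma)})_0\cap\widehat\Omega(G))$, which is exactly $\widehat\Omega(M,b)$ under the identification already recorded in Remark~\ref{rem:diagramSymSpace}. Under these identifications, $\Sym(f_{(\mathfrak{g},\theta)})$ and $f_\mathfrak{m}$ share the ($1$-connected) domain $\widetilde\Omega(M_{\ad},b_{\ad})$ and the codomain $\widehat\Omega(M,b)$.

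It then remains to compute the two Lie triple system maps. On one hand, $\Lts(f_\mathfrak{m})$ is, by the construction preceding Definition~\ref{def:per_m}, the inclusion $\Omega(\mathfrak{m}_{\ad})\hookrightarrow\widehat\Omega(\mathfrak{m})=\Omega(\mathfrak{m}_{\ad})\times\mathfrak{z}(\mathfrak{m})$, $x\mapsto(x,0)$. On the other hand, by (\ref{eqn:LtsSym=calLtsL}) we have $\Lts(\Sym(f_{(\mathfrak{g},\theta)}))=\calLts(L(f_{(\mathfrak{g},\theta)}))$; since $f_{(\mathfrak{g},\theta)}$ integrates the inclusion $\Omega(\mathfrak{g}_{\ad},\theta_{\ad})\hookrightarrow\widehat\Omega(\mathfrak{g},\theta)$, $x\mapsto(x,0)$ (cf.\ (\ref{eqn:f_(g,theta)})), the functor $\calLts$ restricts this map to the $(-1)$-eigenspaces, giving $\Omega(\mathfrak{g}_{\ad})_-\rightarrow\widehat\Omega(\mathfrak{g})_-$. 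Using $\Omega(\mathfrak{g}_{\ad})_-=\Omega(\mathfrak{m}_{\ad})$ and $\widehat\Omega(\mathfrak{g})_-=\widehat\Omega(\mathfrak{m})$ (cf.\ Remark~\ref{rem:diagramOfLts}), this is again the inclusion $x\mapsto(x,0)$. Hence both morphisms induce the same map on Lie triple systems, and the Integrability Theorem forces $\Sym(f_{(\mathfrak{g},\theta)})=f_\mathfrak{m}$.

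I expect the main obstacle to be bookkeeping rather than substance: one must ensure that the identifications of the domains and codomains chosen above are natural, i.e.\ compatible with the fixed identifications of the Lie triple systems $\Omega(\mathfrak{m}_{\ad})$ and $\widehat\Omega(\mathfrak{m})$, so that ``the same map on Lie triple systems'' is a legitimate statement and the appeal to the Integrability Theorem is valid. Once the diagram of Remark~\ref{rem:diagramSymSpace} (and its Lie-algebra counterpart in Remark~\ref{rem:diagramOfLts}) is used to fix these identifications coherently, the argument closes.
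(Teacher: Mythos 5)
Your proposal is correct and follows essentially the same route as the paper's proof: identify the quotient $\widehat\Omega(G)/((\widehat P(G)^{\widehat P(\sigma)})_0\cap\widehat\Omega(G))$ with $\widehat\Omega(M,b)$ and the $1$-connected quotient $\widetilde\Omega(G_{\ad})/(\widetilde\Omega(G_{\ad})^{\widetilde\Omega(\sigma_{\ad})})_0$ with $\widetilde\Omega(M_{\ad},b_{\ad})$ via its Lie triple system $\Omega(\mathfrak{g}_{\ad})_-=\Omega(\mathfrak{m}_{\ad})$, then compare $\Lts(\Sym(f_{(\mathfrak{g},\theta)}))=\calLts(L(f_{(\mathfrak{g},\theta)}))$ with $\Lts(f_\mathfrak{m})$ (both being the inclusion $x\mapsto(x,0)$) and conclude by uniqueness of integration. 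Your closing remark about keeping the identifications coherent with Remarks~\ref{rem:diagramSymSpace} and~\ref{rem:diagramOfLts} is exactly the bookkeeping the paper relies on implicitly.
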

\begin{proof}
	Applying the functor $\Sym$ leads to a morphism
	$$\Sym(f_{(\mathfrak{g},\theta)})\colon \widetilde\Omega(G_{\ad})/(\widetilde\Omega(G_{\ad})^{\widetilde\Omega(\sigma_{\ad})})_0\rightarrow \widehat\Omega(G)/((\widehat P(G)^{\widehat P(\sigma)})_0 \cap \widehat\Omega(G))$$
	of pointed symmetric spaces.
	We identify $\widehat\Omega(G)/((\widehat P(G)^{\widehat P(\sigma)})_0 \cap \widehat\Omega(G))$ with $\widehat\Omega(M,b)$ (cf.\ Remark~\ref{rem:diagramSymSpace}) and we can identify the 1-connected space $\widetilde\Omega(G_{\ad})/(\widetilde\Omega(G_{\ad})^{\widetilde\Omega(\sigma_{\ad})})_0$ (cf.\ Proposition~\ref{prop:1-connectedQuotient}) with $\widetilde\Omega(M_{\ad},b_{\ad})$, since its Lie triple system is $\Omega(\mathfrak{g}_{\ad})_-=\Omega(\mathfrak{m}_{\ad})$.
	
	Because $\Lts(\Sym(f_{(\mathfrak{g},\theta)}))=\calLts(L(f_{(\mathfrak{g},\theta)}))$ is given by the inclusion morphism
	$$\Omega(\mathfrak{g}_{\ad})_-\hookrightarrow \Omega(\mathfrak{g}_{\ad})_-\times\mathfrak{z}(\mathfrak{g})_-=\widehat\Omega(\mathfrak{g})_-,\ x\mapsto (x,0),$$
	it coincides with $\Lts(f_\mathfrak{m})$, so that the assertion follows by uniqueness of integration.
\end{proof}
\begin{remark} \label{rem:diagram f_g f_m}
	We obtain the following commutative diagram
	$$\begin{xy}
		\xymatrix{
			&&&	\mathfrak{z}(\mathfrak{m})\ar@{^{(}->}[dd] \\
			&& (\mathfrak{z}(\mathfrak{g}),-\id_{\mathfrak{z}(\mathfrak{g})})\ar@{^{(}->}[dd]\ar[ru]^{q_1} \\
			&\widetilde\Omega(M_{\ad},b_{\ad})\ar[rr]^<<<<<<<<<{f_\mathfrak{m}}
			&& \widehat\Omega(M,b) \\
			\widetilde\Omega(G_{\ad},\sigma_{\ad})\ar[rr]^-{f_{(\mathfrak{g},\theta)}}\ar[ru]^{q_2}
			&& \widehat\Omega(G,\sigma)\ar[ru]^{q_3}
		}
	\end{xy}$$
	of smooth maps, where the maps $q_i$ with $i=1,2,3$ are the quotient maps given by the corresponding symmetric pairs (cf.\ Remark~\ref{rem:diagramSymSpace} and Lemma~\ref{lem:Sym(f_g)=f_m}). Note that $q_1$ is given by the symmetric pair $(\mathfrak{z}(\mathfrak{g}),-\id_{\mathfrak{z}(\mathfrak{g})},\{0\})$ and hence maps $\mathfrak{z}(\mathfrak{g})$ identically onto $\mathfrak{z}(\mathfrak{m})$ (cf.\ also Remark~\ref{rem:z(g)/z(g)_+}).
\end{remark}
\begin{proposition} \label{prop:Pi(g)inPi(m)}
	Let $\mathfrak{m}$ be a Banach--Lie triple system. Considering some symmetric Banach--Lie algebra $(\mathfrak{g},\theta)$ with $\mathfrak{g}_-=\mathfrak{m}$ and $\mathfrak{z}(\mathfrak{g})=\mathfrak{z}(\mathfrak{m})$ (e.g.\ the standard embedding $S(\mathfrak{m})$ of $\mathfrak{m}$), the image $\Pi(\mathfrak{m})$ of the period morphism $\per_\mathfrak{m}$ is given by
	$$\Pi(\mathfrak{m}) \ =\ \big(\im(f_{(\mathfrak{g},\theta)})((\widehat P(G)^{\widehat P(\sigma)})_0\cap\widehat\Omega(G))\big) \cap \mathfrak{z}(\mathfrak{g}).$$
	In particular, we have $\Pi(\mathfrak{g})\subseteq \Pi(\mathfrak{m})$.
\end{proposition}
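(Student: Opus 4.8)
The plan is to reduce everything to the morphism $f_{(\mathfrak{g},\theta)}$ of symmetric Lie groups by exploiting that $f_\mathfrak{m}$ arises from it by applying $\Sym$. Write $K:=(\widehat P(G)^{\widehat P(\sigma)})_0\cap\widehat\Omega(G)$ for the subgroup defining the symmetric pair attached to $\widehat\Omega(G,\sigma)$, so that the quotient morphism $q_3\colon\widehat\Omega(G)\to\widehat\Omega(G)/K=\widehat\Omega(M,b)$ is the coset map $g\mapsto gK$, while $q_2\colon\widetilde\Omega(G_{\ad},\sigma_{\ad})\to\widetilde\Omega(M_{\ad},b_{\ad})$ is surjective. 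By Lemma~\ref{lem:Sym(f_g)=f_m} we have $f_\mathfrak{m}\circ q_2=q_3\circ f_{(\mathfrak{g},\theta)}$, and since $q_2$ is onto, this yields $\im(f_\mathfrak{m})=q_3(\im(f_{(\mathfrak{g},\theta)}))$. Moreover, by the commutative diagram of Remark~\ref{rem:diagram f_g f_m}, in which $q_1$ identifies $\mathfrak{z}(\mathfrak{g})$ with $\mathfrak{z}(\mathfrak{m})$, the copy of $\mathfrak{z}(\mathfrak{m})$ inside $\widehat\Omega(M,b)$ is exactly $q_3(\mathfrak{z}(\mathfrak{g}))$. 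Hence $\Pi(\mathfrak{m})=\im(f_\mathfrak{m})\cap\mathfrak{z}(\mathfrak{m})=q_3(\im(f_{(\mathfrak{g},\theta)}))\cap q_3(\mathfrak{z}(\mathfrak{g}))$.

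Next I would unfold this intersection using that $q_3$ is a coset map: for $z\in\mathfrak{z}(\mathfrak{g})$ and $a\in\im(f_{(\mathfrak{g},\theta)})$ one has $q_3(z)=q_3(a)$ precisely when $z\in aK$. Tracking both inclusions shows
$$q_3(\im(f_{(\mathfrak{g},\theta)}))\cap q_3(\mathfrak{z}(\mathfrak{g}))\ =\ q_3\big((\im(f_{(\mathfrak{g},\theta)})K)\cap\mathfrak{z}(\mathfrak{g})\big),$$
so that $\Pi(\mathfrak{m})=q_3\big((\im(f_{(\mathfrak{g},\theta)})K)\cap\mathfrak{z}(\mathfrak{g})\big)$. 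To pass from this to the claimed equality of subsets of $\mathfrak{z}(\mathfrak{g})$, I must check that $q_3|_{\mathfrak{z}(\mathfrak{g})}$ is injective and coincides with the identification $q_1$. Injectivity follows from $\mathfrak{z}(\mathfrak{g})\cap K=\{0\}$: an element lying in both is fixed by $\widehat P(\sigma)$ (being in $K\subseteq\widehat P(G)^{\widehat P(\sigma)}$) and inverted by it (being in $\mathfrak{z}(\mathfrak{g})$, on which $\theta=-\id_{\mathfrak{z}(\mathfrak{g})}$), hence is its own inverse and therefore trivial in the torsion-free vector group $\mathfrak{z}(\mathfrak{g})$. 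Since the diagram of Remark~\ref{rem:diagram f_g f_m} gives $q_3|_{\mathfrak{z}(\mathfrak{g})}=q_1$ under the standing identification, removing $q_3$ yields
$$\Pi(\mathfrak{m})\ =\ \big(\im(f_{(\mathfrak{g},\theta)})K\big)\cap\mathfrak{z}(\mathfrak{g})\ =\ \big(\im(f_{(\mathfrak{g},\theta)})((\widehat P(G)^{\widehat P(\sigma)})_0\cap\widehat\Omega(G))\big)\cap\mathfrak{z}(\mathfrak{g}),$$
which is the asserted formula.

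Finally, the inclusion $\Pi(\mathfrak{g})\subseteq\Pi(\mathfrak{m})$ drops out immediately: since $\eins\in K$ we have $\im(f_{(\mathfrak{g},\theta)})\subseteq\im(f_{(\mathfrak{g},\theta)})K$, and intersecting with $\mathfrak{z}(\mathfrak{g})$ gives $\Pi(\mathfrak{g})=\im(f_{(\mathfrak{g},\theta)})\cap\mathfrak{z}(\mathfrak{g})\subseteq\Pi(\mathfrak{m})$. I expect the only genuinely delicate point to be the bookkeeping around the coset quotient $q_3$—specifically verifying $\mathfrak{z}(\mathfrak{g})\cap K=\{0\}$ and that the identifications supplied by Lemma~\ref{lem:Sym(f_g)=f_m} and Remark~\ref{rem:diagram f_g f_m} are mutually compatible—whereas the set-theoretic manipulation of the images is routine.
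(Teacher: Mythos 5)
Your proposal is correct and follows essentially the same route as the paper's proof: both pass through the commutative square $f_{\mathfrak{m}}\circ q_2=q_3\circ f_{(\mathfrak{g},\theta)}$ from Remark~\ref{rem:diagram f_g f_m} and the identification of $\mathfrak{z}(\mathfrak{m})\hookrightarrow\widehat\Omega(M,b)$ with $\mathfrak{z}(\mathfrak{g})/\{0\}\hookrightarrow\widehat\Omega(G)/((\widehat P(G)^{\widehat P(\sigma)})_0\cap\widehat\Omega(G))$. You merely make explicit the coset bookkeeping (and the fact $\mathfrak{z}(\mathfrak{g})\cap K=\{0\}$, which also follows from $\mathfrak{z}(\mathfrak{g})_+=\{0\}$ and the exactness established in Proposition~\ref{prop:diagramOfSymPairs}) that the paper compresses into the phrase ``identifying the map \dots leads to''.
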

\begin{proof}
	With the aid of the diagram in Remark~\ref{rem:diagram f_g f_m}, we obtain
	$$\Pi(\mathfrak{m}) \ =\ \im(f_\mathfrak{m})\cap\mathfrak{z}(\mathfrak{m}) \ =\ q_3(\im(f_{(\mathfrak{g},\theta)}))\cap\mathfrak{z}(\mathfrak{m}).$$
	Identifying the map $\mathfrak{z}(\mathfrak{m})\hookrightarrow \widehat\Omega(M,b)$ with $\mathfrak{z}(\mathfrak{g})/\{0\}\hookrightarrow \widehat\Omega(G)/((\widehat P(G)^{\widehat P(\sigma)})_0\cap\widehat\Omega(G))$ (cf.\ Remark~\ref{rem:diagramSymSpace}) leads to
	\[\Pi(\mathfrak{m}) \ =\ \big(\im(f_{(\mathfrak{g},\theta)})((\widehat P(G)^{\widehat P(\sigma)})_0\cap\widehat\Omega(G))\big) \cap\mathfrak{z}(\mathfrak{g}) \ \supseteq\ \im(f_{(\mathfrak{g},\theta)})\cap\mathfrak{z}(\mathfrak{g}) \ =\ \Pi(\mathfrak{g}).\qedhere\]
\end{proof}
\begin{corollary}
	The subset $\Pi(\mathfrak{m})\subseteq \mathfrak{z}(\mathfrak{m})$ is a subgroup.
\end{corollary}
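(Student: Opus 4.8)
The plan is to read off the group structure of $\Pi(\mathfrak{m})$ directly from the description obtained in Proposition~\ref{prop:Pi(g)inPi(m)}, which realizes $\Pi(\mathfrak{m})$ as an intersection inside the Lie group $\widehat\Omega(G)$. Under the identification of $\mathfrak{z}(\mathfrak{m})$ with the central subgroup $\mathfrak{z}(\mathfrak{g})\hookrightarrow\widehat\Omega(G)$, that proposition gives
$$\Pi(\mathfrak{m}) \ =\ \big(\im(f_{(\mathfrak{g},\theta)})\,((\widehat P(G)^{\widehat P(\sigma)})_0\cap\widehat\Omega(G))\big)\cap\mathfrak{z}(\mathfrak{g}).$$
So it suffices to show that the set inside the outer parentheses is a subgroup of $\widehat\Omega(G)$ and to recall that $\mathfrak{z}(\mathfrak{g})$ is itself a subgroup; then $\Pi(\mathfrak{m})$ is exhibited as an intersection of two subgroups and is hence a subgroup.

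First I would observe that $(\widehat P(G)^{\widehat P(\sigma)})_0\cap\widehat\Omega(G)$ is a subgroup of $\widehat\Omega(G)$, being the intersection of the identity component of the fixed-point group $\widehat P(G)^{\widehat P(\sigma)}$ with the (normal Lie) subgroup $\widehat\Omega(G)\unlhd\widehat P(G)$. Next, by Remark~\ref{rem:im(f)LieSubgroupIffPi(g)Discrete}, the image $\im(f_{(\mathfrak{g},\theta)})$ is a \emph{normal} subgroup of $\widehat\Omega(G)$. Since the product $NH$ of a normal subgroup $N$ and an arbitrary subgroup $H$ of a group satisfies $NH=HN$ and is therefore again a subgroup, the product
$$\im(f_{(\mathfrak{g},\theta)})\,\big((\widehat P(G)^{\widehat P(\sigma)})_0\cap\widehat\Omega(G)\big)$$
is a subgroup of $\widehat\Omega(G)$.

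Finally, the central extension (\ref{eqn:sequenceHatOmega(G,sigma)}) exhibits $\mathfrak{z}(\mathfrak{g})$ as a central (in particular normal) subgroup of $\widehat\Omega(G)$. As the intersection of two subgroups is again a subgroup, the displayed formula for $\Pi(\mathfrak{m})$ shows that $\Pi(\mathfrak{m})$ is a subgroup of $\mathfrak{z}(\mathfrak{g})$, hence of $\mathfrak{z}(\mathfrak{m})$ via the identification $\mathfrak{z}(\mathfrak{g})\cong\mathfrak{z}(\mathfrak{m})$ of Remark~\ref{rem:diagram f_g f_m}. I do not expect a genuine obstacle here; the single point that must not be overlooked is that a product of two arbitrary subgroups need not be a subgroup, so the argument relies essentially on the normality of $\im(f_{(\mathfrak{g},\theta)})$ in $\widehat\Omega(G)$ supplied by Remark~\ref{rem:im(f)LieSubgroupIffPi(g)Discrete}.
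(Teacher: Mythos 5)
Your argument is correct and is essentially the paper's own proof: both rely on the formula of Proposition~\ref{prop:Pi(g)inPi(m)}, use the normality of $\im(f_{(\mathfrak{g},\theta)})$ in $\widehat\Omega(G)$ (from Remark~\ref{rem:im(f)LieSubgroupIffPi(g)Discrete}) to see that the product is a subgroup, and then intersect with $\mathfrak{z}(\mathfrak{g})$. No gaps.
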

\begin{proof}
	The product $\im(f_{(\mathfrak{g},\theta)})((\widehat P(G)^{\widehat P(\sigma)})_0\cap\widehat\Omega(G))$ of subgroups of $\widehat\Omega(G)$ is itself a subgroup, since $\im(f_{(\mathfrak{g},\theta)})\leq\widehat\Omega(G)$ is normal (cf.\ Remark~\ref{rem:im(f)LieSubgroupIffPi(g)Discrete}).
	Thus, its intersection with $\mathfrak{z}(\mathfrak{g})$ is a subgroup of $\mathfrak{z}(\mathfrak{g})$.
\end{proof}
\begin{definition}
	The image $\Pi(\mathfrak{m})$ of the period morphism $\per_\mathfrak{m}$ is called the \emph{period group of $\mathfrak{m}$}.
\end{definition}
\begin{remark}
	Recall that, in Definition~\ref{def:per_m}, the subset $\Pi(\mathfrak{m})\subseteq \mathfrak{z}(\mathfrak{m})$ was defined independently of the choice of $(\mathfrak{g},\theta)$. Hence, also its group structure is independent of $(\mathfrak{g},\theta)$, although the formula in Proposition~\ref{prop:Pi(g)inPi(m)} depends on $(\mathfrak{g},\theta)$.
\end{remark}
\begin{remark}
	Although the period morphism of $\mathfrak{m}$ is defined in a similar way as the period morphism $\per_{(\mathfrak{g},\theta)}\colon \pi_1(\Omega(G_{\ad},\sigma_{\ad}))\rightarrow\mathfrak{z}(\mathfrak{g},\theta)$ in Section~\ref{sec:periodMorphismOfSymLieAlgebra}, the following subtlety arises: It is not possible to turn the period morphism $\per_{(\mathfrak{g},\theta)}$ into a morphism of symmetric pairs that, applying the functor $\Sym$, leads to the period morphism $\per_{\mathfrak{m}}$. The contrary would be nice, because then the period groups $\Pi(\mathfrak{g})$ and $\Pi(\mathfrak{m})$ would coincide. However, we shall see in Example~\ref{ex:u(H)} that the case $\Pi(\mathfrak{g})\subsetneq \Pi(\mathfrak{m})$ can occur.
\end{remark}
\pagebreak
\begin{proposition}[Functoriality of the period group] \label{prop:functorialityOfPeriodSpace}
	Let $A\colon \mathfrak{m}\rightarrow \bar{\mathfrak{m}}$ be a morphism of Lie triple systems with $A(\mathfrak{z}(\mathfrak{m}))\subseteq\mathfrak{z}(\bar{\mathfrak{m}})$. 
	In view of the diagrams given in Remark~\ref{rem:diagramSymSpace}, the morphism $A$ induces a commutative diagram as follows:
	$$\begin{xy}
		\xymatrixcolsep{-1pc} 
		\xymatrix{
			&\ker(q_{\Omega(\bar M_{\ad},\bar b_{\ad})})\ar@{-->}[rr]^-{\per_{\bar{\mathfrak{m}}}}\ar@{^{(}-->}[dd]
			&&
			\mathfrak{z}(\bar{\mathfrak{m}})\ar@{==}[rr]\ar@{^{(}-->}[dd]
			&&
			\mathfrak{z}(\bar{\mathfrak{m}})\ar@{^{(}-->}[dd] \\
			\ker(q_{\Omega(M_{\ad},b_{\ad})})\ar[rr]^>>>>>>>>{\per_{\mathfrak{m}}}\ar@{^{(}->}[dd]\ar[ru]
			&&
			\mathfrak{z}(\mathfrak{m})\ar@{=}[rr]\ar@{^{(}->}[dd]\ar[ru]^{A}
			&&
			\mathfrak{z}(\mathfrak{m})\ar@{^{(}->}[dd]\ar[ru]^{A} \\
			&\widetilde\Omega(\bar M_{\ad},\bar b_{\ad})\ar@{-->}[rr]^<<<<<{f_{\bar{\mathfrak{m}}}}\ar@{-->}[dd]_>>>>>{q_{\Omega(\bar M_{\ad},\bar b_{\ad})}}
			&&
			\widehat\Omega(\bar M,\bar b)\ar@{^{(}-->}[rr]\ar@{-->}[dd]
			&&
			\widehat P(\bar M,\bar b)\ar@{-->}[rr]\ar@{-->}[dd]^<<<<<<<<{\chi_{\bar{\mathfrak{m}}}}
			&&
			(\bar M_{\ad},\bar b_{\ad})\ar@{==}[dd]\\
			\widetilde\Omega(M_{\ad},b_{\ad})\ar[rr]^>>>>>>>>{f_\mathfrak{m}}\ar[dd]_{q_{\Omega(M_{\ad},b_{\ad})}}\ar[ru]
			&&
			\widehat\Omega(M,b)\ar@{^{(}->}[rr]\ar[dd]\ar[ru]
			&&
			\widehat P(M,b)\ar[rr]\ar[dd]^<<<<<<<<{\chi_{\mathfrak{m}}}\ar[ru]
			&&
			(M_{\ad},b_{\ad})\ar@{=}[dd]\ar[ru]\\
			&\Omega(\bar M_{\ad},\bar b_{\ad})\ar@{==}[rr]
			&&
			\Omega(\bar M_{\ad},\bar b_{\ad})\ar@{^{(}-->}[rr]
			&&
			P(\bar M_{\ad},\bar b_{\ad})\ar@{-->}[rr]^<<<{\ev_1}
			&&
			(\bar M_{\ad},\bar b_{\ad}) \\
			\Omega(M_{\ad},b_{\ad})\ar@{=}[rr]\ar[ru]
			&&
			\Omega(M_{\ad},b_{\ad})\ar@{^{(}->}[rr]\ar[ru]
			&&
			P(M_{\ad},b_{\ad})\ar[rr]^-{\ev_1}\ar[ru]
			&&
			(M_{\ad},b_{\ad})\ar[ru]
		}
	\end{xy}$$
	In particular, we then have $A(\Pi(\mathfrak{m}))\subseteq \Pi(\bar{\mathfrak{m}})$.
\end{proposition}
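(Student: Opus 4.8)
The plan is to build the three-dimensional diagram one horizontal layer at a time: at each stage I produce the arrows induced by $A$ first at the level of Lie triple systems, then integrate them, and I deduce every commutativity relation and the final inclusion $A(\Pi(\mathfrak{m}))\subseteq\Pi(\bar{\mathfrak{m}})$ from the uniqueness statements of the Integrability Theorem.

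First, at the level of Lie triple systems, I would produce all the ``vertical'' arrows directly from $A$. Since $A(\mathfrak{z}(\mathfrak{m}))\subseteq\mathfrak{z}(\bar{\mathfrak{m}})$, the map $A$ descends to a morphism $A_{\ad}\colon\mathfrak{m}_{\ad}\rightarrow\bar{\mathfrak{m}}_{\ad}$. Post-composition yields $P(A_{\ad})\colon P(\mathfrak{m}_{\ad})\rightarrow P(\bar{\mathfrak{m}}_{\ad})$ and, by restriction, $\Omega(A_{\ad})\colon\Omega(\mathfrak{m}_{\ad})\rightarrow\Omega(\bar{\mathfrak{m}}_{\ad})$, while the pullback descriptions of $\widehat P(\mathfrak{m})$ and $\widehat\Omega(\mathfrak{m})$ supply the morphisms $(\gamma,x)\mapsto(A_{\ad}\circ\gamma,A(x))$ between them; the defining condition $\gamma(1)=x+\mathfrak{z}(\mathfrak{m})$ is respected precisely because $A_{\ad}$ is the map induced by $A$. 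A routine check shows that these maps commute with all inclusions, projections and evaluation morphisms, so that the front and back Lie-triple-system faces (cf.\ Remark~\ref{rem:diagramOfLts}) are joined into a commutative diagram.

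Next I would integrate. The spaces $(M_{\ad},b_{\ad})$, $\widehat P(M,b)$ and $\widetilde\Omega(M_{\ad},b_{\ad})$ are $1$-connected (cf.\ Proposition~\ref{prop:1-connectedQuotient}, Proposition~\ref{prop:chi_m} and the construction of $\widetilde\Omega(M_{\ad},b_{\ad})$), so by the Integrability Theorem the morphisms $A_{\ad}$, $\widehat P(A)$ and $\Omega(A_{\ad})$ integrate uniquely to morphisms $g_{\ad}\colon(M_{\ad},b_{\ad})\rightarrow(\bar M_{\ad},\bar b_{\ad})$, $\widehat P(M,b)\rightarrow\widehat P(\bar M,\bar b)$ and $\widetilde\Omega(M_{\ad},b_{\ad})\rightarrow\widetilde\Omega(\bar M_{\ad},\bar b_{\ad})$. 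Applying the path functor to $g_{\ad}$ and invoking Theorem~\ref{th:pathSymSpace} gives the morphism $P(M_{\ad},b_{\ad})\rightarrow P(\bar M_{\ad},\bar b_{\ad})$ (whose Lie functor is $P(A_{\ad})$), and restricting it to the loop spaces supplies $\Omega(M_{\ad},b_{\ad})\rightarrow\Omega(\bar M_{\ad},\bar b_{\ad})$. The remaining arrow $\widehat\Omega(M,b)\rightarrow\widehat\Omega(\bar M,\bar b)$ is obtained by restricting the integral of $\widehat P(A)$: since both composites $\ev_1\circ\chi_{\bar{\mathfrak{m}}}\circ\widehat P(A)$ and $g_{\ad}\circ\ev_1\circ\chi_{\mathfrak{m}}$ integrate the same Lie-triple-system morphism on the connected domain $\widehat P(M,b)$, they agree, so the integral of $\widehat P(A)$ carries the kernel $\widehat\Omega(M,b)$ into $\widehat\Omega(\bar M,\bar b)$. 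Every square of the resulting spatial diagram commutes by the same principle: the two relevant composites are morphisms of pointed symmetric spaces out of a connected domain inducing the same Lie-triple-system map, namely the corresponding vertical arrow of the previous step, and uniqueness forces equality.

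Finally, I would read off the period statement. The universal covering morphisms $q_{\Omega(M_{\ad},b_{\ad})}$ and $q_{\Omega(\bar M_{\ad},\bar b_{\ad})}$ commute with the induced map $\widetilde\Omega(M_{\ad},b_{\ad})\rightarrow\widetilde\Omega(\bar M_{\ad},\bar b_{\ad})$ (the lower-left face, again by uniqueness), so this map carries $\ker(q_{\Omega(M_{\ad},b_{\ad})})$ into $\ker(q_{\Omega(\bar M_{\ad},\bar b_{\ad})})$. Since $\per_{\mathfrak{m}}$ and $\per_{\bar{\mathfrak{m}}}$ are the restrictions of $f_{\mathfrak{m}}$ and $f_{\bar{\mathfrak{m}}}$, and $A$ is identified with the map $q_1$ of Remark~\ref{rem:diagram f_g f_m} on $\mathfrak{z}(\mathfrak{m})$, the square through $f_{\mathfrak{m}}$ restricts to $A\circ\per_{\mathfrak{m}}=\per_{\bar{\mathfrak{m}}}\circ(\text{induced map})$, whence $A(\Pi(\mathfrak{m}))=A(\im(\per_{\mathfrak{m}}))\subseteq\im(\per_{\bar{\mathfrak{m}}})=\Pi(\bar{\mathfrak{m}})$. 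I expect the main obstacle to be the third step: producing the arrows between the connected but not simply connected spaces $\Omega(M_{\ad},b_{\ad})$ and $\widehat\Omega(M,b)$---where direct integration is unavailable---by restricting the integrals built on the $1$-connected ambient spaces, and then verifying that these restrictions are well defined (kernels into kernels) and compatible with the rest of the diagram.
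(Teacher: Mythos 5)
Your proposal is correct and follows essentially the same route as the paper's proof: induce $A_{\ad}$, $P(A_{\ad})$ and $\widehat P(A)$ on the level of Lie triple systems, integrate on the $1$-connected spaces via the Integrability Theorem, obtain the remaining (non-simply-connected) arrows by restriction to kernels and by passing to universal covers, and deduce every commutativity from uniqueness of integration before restricting the $f_{\mathfrak{m}}$-square to read off $A(\Pi(\mathfrak{m}))\subseteq\Pi(\bar{\mathfrak{m}})$. The only cosmetic difference is that you obtain the arrow $P(M_{\ad},b_{\ad})\rightarrow P(\bar M_{\ad},\bar b_{\ad})$ by applying the path functor to the integrated map $g_{\ad}$ rather than by integrating $P(A_{\ad})$ directly on the $1$-connected path space; both are valid and equivalent by uniqueness.
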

\begin{proof}
	Cf.\ \cite[Lem.~III.3]{GN03} for the Lie algebra case.
	We shall refer to the different parts of the diagram by calling them cubes and faces of cubes.
	Since $A\colon\mathfrak{m}\rightarrow\bar{\mathfrak{m}}$ maps $\mathfrak{z}(\mathfrak{m})$ into  $\mathfrak{z}(\bar{\mathfrak{m}})$, it induces a morphism $A_{\ad}\colon \mathfrak{m}_{\ad}\rightarrow \bar{\mathfrak{m}}_{\ad}$ and hence morphisms $P(A_{\ad})\colon P(\mathfrak{m}_{\ad})\rightarrow P(\bar{\mathfrak{m}}_{\ad})$ and $\widehat P(A)\colon \widehat P(\mathfrak{m})\rightarrow \widehat P(\bar{\mathfrak{m}})$, which can be integrated to respective morphisms of pointed symmetric spaces. This gives us the rightmost cube, which is commutative, since it is easy to check that the corresponding cube on the level of Lie triple systems is commutative.
	
	By restricting the morphisms to the respective kernels, we obtain the three cubes on the right. Note that the induced morphism $\mathfrak{z}(\mathfrak{m})\rightarrow\mathfrak{z}(\bar{\mathfrak{m}})$ coincides with the suitable restriction of $A$, since $\Exp_{\mathfrak{z}(\mathfrak{m})}=\id_{\mathfrak{z}(\mathfrak{m})}$ and since the corresponding assertion on the level of Lie triple systems holds.
	The morphism $\Omega(M_{\ad},b_{\ad})\rightarrow \Omega(\bar M_{\ad},\bar b_{\ad})$ induces a morphism\linebreak $\widetilde\Omega(M_{\ad},b_{\ad})\rightarrow \widetilde\Omega(\bar M_{\ad},\bar b_{\ad})$ of the universal covering spaces, which can be restricted to the kernels of the covering morphisms. We obtain the two cubes on the left. The common face between these cubes is indeed commutative (and hence also the topmost face is), since it is easy to see that the corresponding diagram on the level of Lie triple systems is so.
\end{proof}
\begin{corollary} \label{cor:A(Pi(m))=Pi(bar m)}
	We have:
	\begin{enumerate}
		\item[\rm (1)] If $A\colon \mathfrak{m}\rightarrow \bar{\mathfrak{m}}$ is a morphism of Lie triple systems with $A(\mathfrak{z}(\mathfrak{m}))\subseteq \mathfrak{z}(\bar{\mathfrak{m}})$ for which the induced map $\ker(q_{\Omega(M_{\ad},b_{\ad})})\rightarrow \ker(q_{\Omega(\bar M_{\ad},\bar b_{\ad})})$ is surjective, then $A(\Pi(\mathfrak{m}))=\Pi(\bar{\mathfrak{m}})$.
		\item[\rm (2)] If $A\colon \mathfrak{m}\rightarrow \bar{\mathfrak{m}}$ is a surjective morphism of Lie triple systems with $A(\mathfrak{z}(\mathfrak{m}))= \mathfrak{z}(\bar{\mathfrak{m}})$ and $\ker(A)\subseteq \mathfrak{z}(\mathfrak{m})$, then $A(\Pi(\mathfrak{m}))=\Pi(\bar{\mathfrak{m}})$.
	\end{enumerate}
\end{corollary}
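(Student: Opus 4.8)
The plan is to derive both statements from the commutative diagram of Proposition~\ref{prop:functorialityOfPeriodSpace}. Writing $\alpha\colon \ker(q_{\Omega(M_{\ad},b_{\ad})})\rightarrow \ker(q_{\Omega(\bar M_{\ad},\bar b_{\ad})})$ for the map induced on the kernels of the universal covering morphisms, the topmost face of that diagram is the commuting square
$$\per_{\bar{\mathfrak{m}}}\circ\alpha \ =\ A|_{\mathfrak{z}(\mathfrak{m})}\circ\per_{\mathfrak{m}}.$$
The inclusion $A(\Pi(\mathfrak{m}))\subseteq\Pi(\bar{\mathfrak{m}})$ is already furnished by Proposition~\ref{prop:functorialityOfPeriodSpace}, so in each part it remains only to prove the reverse inclusion, and this is where the respective surjectivity hypotheses enter.

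For (1), I would argue directly from this square. Since $\Pi(\bar{\mathfrak{m}})=\im(\per_{\bar{\mathfrak{m}}})$ and $\alpha$ is surjective by assumption, every element of $\Pi(\bar{\mathfrak{m}})$ is of the form $\per_{\bar{\mathfrak{m}}}(\alpha(x))$ for some $x\in\ker(q_{\Omega(M_{\ad},b_{\ad})})$. By commutativity this equals $A(\per_{\mathfrak{m}}(x))\in A(\Pi(\mathfrak{m}))$, so $\Pi(\bar{\mathfrak{m}})\subseteq A(\Pi(\mathfrak{m}))$ and hence $A(\Pi(\mathfrak{m}))=\Pi(\bar{\mathfrak{m}})$.

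For (2), the strategy is to show that the hypotheses force $\alpha$ to be surjective, so that part (1) applies. First I would check that the induced morphism $A_{\ad}\colon\mathfrak{m}_{\ad}\rightarrow\bar{\mathfrak{m}}_{\ad}$ is bijective: surjectivity is inherited from $A$, and injectivity follows because $A_{\ad}(x+\mathfrak{z}(\mathfrak{m}))=0$ gives $A(x)\in\mathfrak{z}(\bar{\mathfrak{m}})=A(\mathfrak{z}(\mathfrak{m}))$, hence $A(x-z)=0$ for some $z\in\mathfrak{z}(\mathfrak{m})$, so $x=(x-z)+z\in\ker(A)+\mathfrak{z}(\mathfrak{m})\subseteq\mathfrak{z}(\mathfrak{m})$. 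As a continuous linear bijection of Banach spaces, $A_{\ad}$ is then a topological isomorphism by the open mapping theorem, and therefore an isomorphism of Lie triple systems.

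Now the key observation is that the whole construction of $\ker(q_{\Omega(M_{\ad},b_{\ad})})$ depends only on $\mathfrak{m}_{\ad}$: it is the kernel of the universal covering morphism of the loop space of the $1$-connected space $M_{\ad}$, whose Lie triple system is $\mathfrak{m}_{\ad}$. Consequently the isomorphism $A_{\ad}$ integrates (by the Integrability Theorem) to an isomorphism $M_{\ad}\cong\bar M_{\ad}$ and induces functorially isomorphisms $\Omega(M_{\ad},b_{\ad})\cong\Omega(\bar M_{\ad},\bar b_{\ad})$, then $\widetilde\Omega(M_{\ad},b_{\ad})\cong\widetilde\Omega(\bar M_{\ad},\bar b_{\ad})$, and finally an isomorphism on the kernels of the covering morphisms. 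In particular this kernel map is surjective, and part (1) yields $A(\Pi(\mathfrak{m}))=\Pi(\bar{\mathfrak{m}})$. I expect the only delicate point to be the verification that the map $\alpha$ appearing in the functoriality diagram genuinely coincides with this functorially induced isomorphism; this should follow from uniqueness of integration, once one checks that the two maps agree on the level of Lie triple systems.
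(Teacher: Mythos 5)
Your proposal is correct and follows essentially the same route as the paper: part (1) is read off from the commuting top face of the diagram in Proposition~\ref{prop:functorialityOfPeriodSpace} together with the surjectivity of the induced kernel map, and part (2) reduces to (1) by showing that $A_{\ad}$ is an isomorphism via the computation $A^{-1}(\mathfrak{z}(\bar{\mathfrak{m}}))=\mathfrak{z}(\mathfrak{m})+\ker(A)=\mathfrak{z}(\mathfrak{m})$, whence the induced maps on $M_{\ad}$, the loop spaces, their universal covers, and the kernels of the covering morphisms are all isomorphisms. Your extra remarks (the open mapping theorem for $A_{\ad}$ and the identification of $\alpha$ with the functorially induced map via uniqueness of integration) only make explicit what the paper leaves implicit.
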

\begin{proof}
	Cf.\ \cite[Cor.~III.4]{GN03} for the Lie algebra case.
	
	(1) This follows immediately by Proposition~\ref{prop:functorialityOfPeriodSpace}.
	
	(2) The induced morphism $A_{\ad}\colon \mathfrak{m}_{\ad}\rightarrow \bar{\mathfrak{m}}_{\ad}$ is not only surjective, but also injective and hence an isomorphism, since we have $A^{-1}(\mathfrak{z}(\bar{\mathfrak{m}}))=\mathfrak{z}(\mathfrak{m})+\ker(A)=\mathfrak{z}(\mathfrak{m})$. Therefore, also the induced map $(M_{\ad},b_{\ad})\rightarrow (\bar M_{\ad},\bar b_{\ad})$ is an isomorphism. The assertion follows from (1).
\end{proof}
\begin{remark}
	We can consider $\Pi\colon \mathfrak{m}\mapsto \Pi(\mathfrak{m})$ as a covariant functor from the category of Lie triple systems where the morphisms are required to map center to center to the category of abelian topological groups.
\end{remark}
\begin{remark} \label{rem:periodSpaceOfDirectProduct}
	Given the direct product $\mathfrak{m}_1\times \mathfrak{m}_2$ of two Lie triple systems, one observes that $\Pi(\mathfrak{m}_1\times \mathfrak{m}_2)=\Pi(\mathfrak{m}_1)\times \Pi(\mathfrak{m}_2)$ by following the construction of the period group.
\end{remark}
\begin{lemma} \label{lem:im(f_m)IntegralSubspace}
	The image $\im(f_\mathfrak{m})$ of $f_\mathfrak{m}$ is the connected integral subspace of $\widehat\Omega(M,b)$ (and hence of $\widehat P(M,b)$) with Lie triple system
	$$\Omega(\mathfrak{m}_{\ad})\ \hookrightarrow\ \Omega(\mathfrak{m}_{\ad})\times \mathfrak{z}(\mathfrak{m}) \ =\ \widehat\Omega(\mathfrak{m}) \ \subseteq\ \widehat P(\mathfrak{m}).$$
\end{lemma}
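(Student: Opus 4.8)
The statement is the symmetric-space analogue of Remark~\ref{rem:im(f)LieSubgroupIffPi(g)Discrete}, and the plan is to mirror that argument, replacing connected integral subgroups generated by exponential images by connected integral subspaces generated by exponential images (Proposition~\ref{prop:integralSubreflectionSpace}). The crux is to identify the set $\im(f_\mathfrak{m})$ with the reflection subspace $\langle\Exp_{\widehat\Omega(M,b)}(\Omega(\mathfrak{m}_{\ad}))\rangle$; once this is done, Proposition~\ref{prop:integralSubreflectionSpace} does the rest.

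First, I would record the data from the construction of $f_\mathfrak{m}$ (cf.\ the paragraph preceding Definition~\ref{def:per_m}): the space $\widetilde\Omega(M_{\ad},b_{\ad})$ is $1$-connected with $\Lts(\widetilde\Omega(M_{\ad},b_{\ad}))=\Omega(\mathfrak{m}_{\ad})$, and $\Lts(f_\mathfrak{m})$ is the inclusion morphism $\Omega(\mathfrak{m}_{\ad})\hookrightarrow\widehat\Omega(\mathfrak{m})$, $x\mapsto(x,0)$. Being a morphism of Lie triple systems, this inclusion has as image the triple subsystem $\Omega(\mathfrak{m}_{\ad})\times\{0\}$ of $\widehat\Omega(\mathfrak{m})=\Lts(\widehat\Omega(M,b))$, which is closed since it is a direct summand of $\Omega(\mathfrak{m}_{\ad})\times\mathfrak{z}(\mathfrak{m})$. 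Hence Proposition~\ref{prop:integralSubreflectionSpace} applies and the connected integral subspace with Lie triple system $\Omega(\mathfrak{m}_{\ad})$ is precisely $\langle\Exp_{\widehat\Omega(M,b)}(\Omega(\mathfrak{m}_{\ad}))\rangle$.

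Second, I would carry out the core identification. Since $\widetilde\Omega(M_{\ad},b_{\ad})$ is connected, it agrees with its basic connected component and is therefore generated by the image of its exponential map (Section~\ref{sec:LtsAndSymSpace}), i.e.\ $\widetilde\Omega(M_{\ad},b_{\ad})=\langle\Exp_{\widetilde\Omega(M_{\ad},b_{\ad})}(\Omega(\mathfrak{m}_{\ad}))\rangle$. As $f_\mathfrak{m}$ is a homomorphism of reflection spaces, it maps generated reflection subspaces to generated reflection subspaces: for any $S$ one has $f_\mathfrak{m}(\langle S\rangle)=\langle f_\mathfrak{m}(S)\rangle$ (the inclusion $\supseteq$ because images of reflection subspaces are reflection subspaces, the inclusion $\subseteq$ because preimages of reflection subspaces are reflection subspaces). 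Feeding in the intertwining identity $f_\mathfrak{m}\circ\Exp_{\widetilde\Omega(M_{\ad},b_{\ad})}=\Exp_{\widehat\Omega(M,b)}\circ\Lts(f_\mathfrak{m})$ together with the fact that $\Lts(f_\mathfrak{m})$ is the above inclusion yields
$$\im(f_\mathfrak{m}) \ =\ f_\mathfrak{m}\big(\langle\Exp_{\widetilde\Omega(M_{\ad},b_{\ad})}(\Omega(\mathfrak{m}_{\ad}))\rangle\big) \ =\ \langle\Exp_{\widehat\Omega(M,b)}(\Omega(\mathfrak{m}_{\ad}))\rangle,$$
which is the required identification.

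Finally, for the parenthetical claim, I would note that $\widehat\Omega(M,b)$ is a symmetric subspace, and hence an integral subspace, of $\widehat P(M,b)$ (it is the kernel of $\ev_1^{(M_{\ad},b_{\ad})}\circ\chi_\mathfrak{m}$; cf.\ Proposition~\ref{prop:kernelOfMorphismOfPointedSymSpaces}). The composite inclusion $\im(f_\mathfrak{m})\hookrightarrow\widehat\Omega(M,b)\hookrightarrow\widehat P(M,b)$ is then smooth and induces on Lie triple systems the composite of closed topological embeddings $\Omega(\mathfrak{m}_{\ad})\hookrightarrow\widehat\Omega(\mathfrak{m})\hookrightarrow\widehat P(\mathfrak{m})$, which is again a closed topological embedding; thus $\im(f_\mathfrak{m})$ is an integral subspace of $\widehat P(M,b)$ as well. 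I expect no genuine obstacle here; the only points requiring care are the identity $f_\mathfrak{m}(\langle S\rangle)=\langle f_\mathfrak{m}(S)\rangle$ and the use of connectedness of $\widetilde\Omega(M_{\ad},b_{\ad})$ to ensure it is generated by its exponential image.
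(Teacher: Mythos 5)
Your proposal is correct and follows essentially the same route as the paper: the paper's proof is exactly the chain $\im(f_\mathfrak{m})=f_\mathfrak{m}\big(\big\langle\im(\Exp_{\widetilde\Omega(M_{\ad},b_{\ad})})\big\rangle\big)=\big\langle\Exp_{\widehat\Omega(M,b)}(\Omega(\mathfrak{m}_{\ad})\times\{0\})\big\rangle$ followed by an appeal to Proposition~\ref{prop:integralSubreflectionSpace}, and your argument just makes explicit the two ingredients (generation by the exponential image and $f(\langle S\rangle)=\langle f(S)\rangle$) that the paper leaves implicit. Your additional verification of the parenthetical claim about $\widehat P(M,b)$ is a harmless elaboration the paper omits.
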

\begin{proof}
	In the light of Proposition~\ref{prop:integralSubreflectionSpace}, the assertion follows by
	\begin{align*}
		\im(f_\mathfrak{m}) \ &=\  f_\mathfrak{m}\big(\big<\im(\Exp_{\widetilde\Omega(M_{\ad},b_{\ad})})\big>\big) \ =\ \big< \Exp_{\widehat\Omega(M,b)}\big(\im(\Lts(f_\mathfrak{m}))\big)\big> \\
		&=\ \big< \Exp_{\widehat\Omega(M,b)}(\Omega(\mathfrak{m_{\ad}})\times \{0\})\big>. \qedhere
	\end{align*}
\end{proof}
\begin{lemma} \label{lem:im(f_m)subsymSpaceIff...}
	The integral subspace $\im(f_\mathfrak{m})\leq \widehat\Omega(M,b)$ is actually a symmetric subspace if and only if the period group $\Pi(\mathfrak{m})$ is discrete.
\end{lemma}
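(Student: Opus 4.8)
The plan is to deduce this from the characterization of symmetric subspaces among connected integral subspaces given in Proposition~\ref{prop:integralSubSpace=subSymSpace}. By Lemma~\ref{lem:im(f_m)IntegralSubspace}, the subset $\im(f_\mathfrak{m})$ is the connected integral subspace of $\widehat\Omega(M,b)$ whose Lie triple system is $\mathfrak{n}:=\Omega(\mathfrak{m}_{\ad})\times\{0\}$ inside $\widehat\Omega(\mathfrak{m})=\Omega(\mathfrak{m}_{\ad})\times\mathfrak{z}(\mathfrak{m})$. Since the Lie triple system $\widehat\Omega(\mathfrak{m})$ is literally a direct product of Banach spaces, $\mathfrak{n}$ splits with topological complement $F:=\{0\}\times\mathfrak{z}(\mathfrak{m})$, which we identify with $\mathfrak{z}(\mathfrak{m})$. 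Denoting the base point of $\widehat\Omega(M,b)$ by $p$, Proposition~\ref{prop:integralSubSpace=subSymSpace} then asserts that $\im(f_\mathfrak{m})$ is a symmetric subspace if and only if there is a $0$-neighborhood $W\subseteq F$ with $\im(f_\mathfrak{m})\cap\Exp_{\widehat\Omega(M,b)}(W)=\{p\}$.

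The decisive step is to identify the restriction $\Exp_{\widehat\Omega(M,b)}|_F$ with the inclusion $\iota\colon\mathfrak{z}(\mathfrak{m})\hookrightarrow\widehat\Omega(M,b)$ from the diagram in Remark~\ref{rem:diagramSymSpace}. This morphism of pointed symmetric spaces has $\Lts(\iota)$ equal to the central embedding $\mathfrak{z}(\mathfrak{m})\hookrightarrow\widehat\Omega(\mathfrak{m})$, $z\mapsto(0,z)$, whose image is exactly $F$ (cf.\ Remark~\ref{rem:diagramOfLts}). Since every morphism of pointed symmetric spaces intertwines the exponential maps and since $\Exp_{\mathfrak{z}(\mathfrak{m})}=\id_{\mathfrak{z}(\mathfrak{m})}$, we obtain $\iota=\Exp_{\widehat\Omega(M,b)}\circ\Lts(\iota)$; under the identification $F\cong\mathfrak{z}(\mathfrak{m})$ this reads $\Exp_{\widehat\Omega(M,b)}|_F=\iota$, so that $\Exp_{\widehat\Omega(M,b)}(W)=\iota(W)$ for every $W\subseteq F\cong\mathfrak{z}(\mathfrak{m})$.

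With this identification in hand, recall from Definition~\ref{def:per_m} that $\Pi(\mathfrak{m})=\im(f_\mathfrak{m})\cap\mathfrak{z}(\mathfrak{m})=\im(f_\mathfrak{m})\cap\iota(\mathfrak{z}(\mathfrak{m}))$. Hence, for a $0$-neighborhood $W\subseteq F\cong\mathfrak{z}(\mathfrak{m})$,
$$\im(f_\mathfrak{m})\cap\Exp_{\widehat\Omega(M,b)}(W)\ =\ \im(f_\mathfrak{m})\cap\iota(W)\ =\ \iota\big(\Pi(\mathfrak{m})\cap W\big),$$
so that the neighborhood condition of Proposition~\ref{prop:integralSubSpace=subSymSpace} amounts to $\Pi(\mathfrak{m})\cap W=\{0\}$. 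Finally, since $\Pi(\mathfrak{m})$ is a subgroup of the abelian topological group $(\mathfrak{z}(\mathfrak{m}),+)$, the existence of such a $W$ is equivalent to $0$ being an isolated point of $\Pi(\mathfrak{m})$, which for a subgroup is equivalent to discreteness of $\Pi(\mathfrak{m})$. Chaining these equivalences gives the claim. I expect the main obstacle to be the careful justification of $\Exp_{\widehat\Omega(M,b)}|_F=\iota$ in the second step, in particular the bookkeeping of the identifications of Remark~\ref{rem:diagramSymSpace}; once this is settled, the translation between the neighborhood condition and discreteness is routine.
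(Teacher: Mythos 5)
Your argument is correct and follows essentially the same route as the paper's proof: both apply Proposition~\ref{prop:integralSubSpace=subSymSpace} with the complement $F\cong\mathfrak{z}(\mathfrak{m})$ of $\Omega(\mathfrak{m}_{\ad})\times\{0\}$ in $\widehat\Omega(\mathfrak{m})$, identify $\Exp_{\widehat\Omega(M,b)}|_{\mathfrak{z}(\mathfrak{m})}$ with the inclusion via $\Exp_{\mathfrak{z}(\mathfrak{m})}=\id$, and reduce the neighborhood condition to $\Pi(\mathfrak{m})\cap W=\{0\}$. The intertwining argument you flag as the main obstacle is exactly the step the paper handles by noting that $\mathfrak{z}(\mathfrak{m})$ is a symmetric subspace of $\widehat\Omega(M,b)$ by the exact sequence (\ref{eqn:restrictionOfChi^m}).
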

\begin{proof}
	By Proposition~\ref{prop:integralSubSpace=subSymSpace}, $\im(f_\mathfrak{m})$ is a symmetric subspace if and only if there exists a 0-neighborhood $W\subseteq\mathfrak{z}(\mathfrak{m})$ such that $\im(f_\mathfrak{m})\cap\Exp_{\widehat\Omega(M,b)}(W)=\{0\}$. Since the symmetric space $\mathfrak{z}(\mathfrak{m})$ is a symmetric subspace of $\widehat\Omega(M,b)$ with Lie algebra $\mathfrak{z}(\mathfrak{m})\hookrightarrow \widehat\Omega(\mathfrak{m})$ (cf.\ (\ref{eqn:restrictionOfChi^m})), we have $\Exp_{\widehat\Omega(M,b)}(W)=\Exp_{\mathfrak{z}(\mathfrak{m})}(W)=W$. Together with $\im(f_\mathfrak{m})\cap W=\im(f_\mathfrak{m})\cap\mathfrak{z}(\mathfrak{m})\cap W = \Pi(\mathfrak{m})\cap W$, we obtain the assertion.
\end{proof}
\begin{theorem}[Integrability Criterion for Lie triple systems] \label{th:integrabilityCriterion}
	Let $\mathfrak{m}$ be a Banach--Lie triple system. Considering some symmetric Banach--Lie algebra $(\mathfrak{g},\theta)$ with $\mathfrak{g}_-=\mathfrak{m}$ and $\mathfrak{z}(\mathfrak{g})=\mathfrak{z}(\mathfrak{m})$ (e.g.\ the standard embedding $S(\mathfrak{m})$ of $\mathfrak{m}$), the following are equivalent:
	\begin{enumerate}
		\item[\rm (a)] The Lie triple system $\mathfrak{m}$ is integrable to a pointed symmetric space.
		\item[\rm (b)] The Lie algebra $\mathfrak{g}$ is integrable to a Lie group.
		\item[\rm (c)] The period group $\Pi(\mathfrak{m})$ is discrete.
		\item[\rm (d)] The period group $\Pi(\mathfrak{g})$ is discrete.
	\end{enumerate}
\end{theorem}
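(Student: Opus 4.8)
The plan is to prove the cyclic chain of implications $\text{(a)}\Rightarrow\text{(c)}\Rightarrow\text{(d)}\Rightarrow\text{(b)}\Rightarrow\text{(a)}$, which forces all four statements to be equivalent. Three of the arrows are essentially bookkeeping. The implication (c)~$\Rightarrow$~(d) follows from Proposition~\ref{prop:Pi(g)inPi(m)}: we have $\Pi(\mathfrak{g})\subseteq\Pi(\mathfrak{m})$ inside the Banach space $\mathfrak{z}(\mathfrak{g})=\mathfrak{z}(\mathfrak{m})$, and a subgroup of a discrete subgroup is again discrete (if $0$ is isolated in $\Pi(\mathfrak{m})$, the same neighbourhood isolates $0$ in $\Pi(\mathfrak{g})$). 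The equivalence (d)~$\Leftrightarrow$~(b) is the classical integrability criterion for Banach--Lie algebras, namely that $\mathfrak{g}$ is integrable iff $\Pi(\mathfrak{g})$ is discrete (cf.\ \cite{EK64,GN03} and Remark~\ref{rem:im(f)LieSubgroupIffPi(g)Discrete}). Finally (b)~$\Rightarrow$~(a) is Lemma~\ref{lem:integrableSymLieAlgebra} combined with Remark~\ref{rem:integrabilityOfSymLieAlgebra}: integrability of $\mathfrak{g}$ yields integrability of $(\mathfrak{g},\theta)$, whence its $(-1)$-eigenspace $\mathfrak{g}_-=\mathfrak{m}$ is integrable. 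Since (a) and (c) do not involve $(\mathfrak{g},\theta)$, closing the cycle will establish the equivalence simultaneously for \emph{every} admissible $\mathfrak{g}$.

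The heart of the argument is (a)~$\Rightarrow$~(c), and here is the idea I would use. Assuming $\mathfrak{m}$ is integrable, Remark~\ref{rem:Lts(covering)=Iso} lets me integrate it to a $1$-connected pointed symmetric space $(M,b)$ with $\Lts(M,b)=\mathfrak{m}$. I consider the projection
\[
p\colon \widehat P(\mathfrak{m})\rightarrow \mathfrak{m},\qquad (\gamma,x)\mapsto x,
\]
which is a morphism of Lie triple systems because $\widehat P(\mathfrak{m})$ is a triple subsystem of $P(\mathfrak{m}_{\ad})\times\mathfrak{m}$ carrying the componentwise bracket, and whose kernel is exactly $\Omega(\mathfrak{m}_{\ad})\times\{0\}=\im(\Lts(f_\mathfrak{m}))$. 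Since $\widehat P(M,b)$ is $1$-connected (Proposition~\ref{prop:chi_m}) with Lie triple system $\widehat P(\mathfrak{m})$, the Integrability Theorem integrates $p$ to a unique morphism of pointed symmetric spaces $F\colon \widehat P(M,b)\rightarrow (M,b)$ with $\Lts(F)=p$. By Proposition~\ref{prop:kernelOfMorphismOfPointedSymSpaces}, $\ker(F)$ is then a closed symmetric subspace of $\widehat P(M,b)$ whose Lie triple system is $\ker(p)=\Omega(\mathfrak{m}_{\ad})\times\{0\}$.

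It then remains to identify $\im(f_\mathfrak{m})$ with the basic connected component $\ker(F)_0$. By Lemma~\ref{lem:im(f_m)IntegralSubspace}, $\im(f_\mathfrak{m})=\langle\Exp_{\widehat P(M,b)}(\Omega(\mathfrak{m}_{\ad})\times\{0\})\rangle$ is the connected integral subspace with Lie triple system $\Omega(\mathfrak{m}_{\ad})\times\{0\}$, while $\ker(F)_0=\langle\Exp_{\widehat P(M,b)}(\ker(p))\rangle$ is the connected integral subspace with the very same Lie triple system; by the uniqueness in Proposition~\ref{prop:integralSubreflectionSpace} they coincide, both as subsets and as integral subspaces. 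As a connected component of the symmetric subspace $\ker(F)$, the set $\ker(F)_0$ is itself a symmetric subspace of $\widehat P(M,b)$, and, being contained in the symmetric subspace $\widehat\Omega(M,b)$, it is a symmetric subspace of $\widehat\Omega(M,b)$ as well. Hence $\im(f_\mathfrak{m})$ is a symmetric subspace, and Lemma~\ref{lem:im(f_m)subsymSpaceIff...} yields that $\Pi(\mathfrak{m})$ is discrete, completing (a)~$\Rightarrow$~(c).

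The main obstacle is exactly this implication. The period group $\Pi(\mathfrak{m})$ is built abstractly from the universal covering morphism $q_{\Omega(M_{\ad},b_{\ad})}$ without reference to any integrating space, so the difficulty is to convert the bare hypothesis ``$\mathfrak{m}$ integrates'' into the concrete geometric conclusion that $\im(f_\mathfrak{m})$ is a \emph{symmetric} (and not merely an integral) subspace. Integrating the projection $p$ and recognising $\im(f_\mathfrak{m})$ as the identity component of a kernel is precisely the device that bridges this gap; it is the symmetric-space counterpart of the identity $\Pi(\mathfrak{g})=\ker(\exp_G|_{\mathfrak{z}(\mathfrak{g})})$ used in the Lie algebra case (Remark~\ref{rem:periodGroup=kernel}). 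I expect the remaining verifications (that $p$ is a morphism, and the transitivity of the symmetric-subspace property) to be routine.
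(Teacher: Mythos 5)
Your proposal is correct and follows essentially the same route as the paper: the peripheral implications (c)$\Rightarrow$(d), (d)$\Leftrightarrow$(b), (b)$\Rightarrow$(a) are handled by the same citations, and your proof of (a)$\Rightarrow$(c) — integrating the projection $\widehat P(\mathfrak{m})\rightarrow\mathfrak{m}$ to a morphism $\widehat P(M,b)\rightarrow (M,b)$, identifying the basic connected component of its kernel with $\im(f_\mathfrak{m})$ via Lemma~\ref{lem:im(f_m)IntegralSubspace} and Proposition~\ref{prop:integralSubreflectionSpace}, and concluding with Lemma~\ref{lem:im(f_m)subsymSpaceIff...} — is exactly the paper's argument. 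The only difference is cosmetic: you arrange the implications as a single cycle, while the paper states (b)$\Leftrightarrow$(d) separately.
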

\begin{proof}
	The equivalence (b)$\Leftrightarrow$(d) holds by \cite[Th.~III.7]{GN03}. The implications (b)$\Rightarrow$(a) and (c)$\Rightarrow$(d) follow by Lemma~\ref{lem:integrableSymLieAlgebra} (and Remark~\ref{rem:integrabilityOfSymLieAlgebra}) and Proposition~\ref{prop:Pi(g)inPi(m)}, respectively. It remains to show the implication (a)$\Rightarrow$(c). For this, let $(M,b)$ be a pointed symmetric space with Lie triple system $\mathfrak{m}$. By the Integrability Theorem, we can integrate the natural morphism $\widehat P(\mathfrak{m})\rightarrow \mathfrak{m}$, $(\gamma,x)\mapsto x$ of Lie triple systems to a morphism	$p\colon\widehat P(M,b)\rightarrow (M,b)$ of pointed symmetric spaces. By Proposition~\ref{prop:kernelOfMorphismOfPointedSymSpaces}, its kernel $\ker(p)\unlhd\widehat P(M,b)$ is a symmetric subspace with Lie triple system $$\Lts(\ker(p)) \ =\ \ker(\Lts(p)) \ =\ \Omega(\mathfrak{m}_{\ad}) \ \hookrightarrow \ \Omega(\mathfrak{m}_{\ad})\times \mathfrak{z}(\mathfrak{m}) \ =\ \widehat\Omega(\mathfrak{m}) \ \subseteq\ \widehat P(\mathfrak{m}).$$
	Therefore, the basic connected component $(\ker(p))_0$ coincides with $\im(f_\mathfrak{m})$ by Lemma~\ref{lem:im(f_m)IntegralSubspace} and Proposition~\ref{prop:integralSubreflectionSpace}, so that (c) follows by Lemma~\ref{lem:im(f_m)subsymSpaceIff...}.
\end{proof}
\begin{remark}
	We shall see that the period group of a finite-dimensional Lie triple system is trivial (cf.\ Remark~\ref{rem:periodSpaceOfFiniteDimLts}).
\end{remark}
\begin{remark} \label{rem:ker(p)IsConnected}
	For later purposes, we need to know that the kernel $\ker(p)$ of the morphism $p\colon\widehat P(M,b)\rightarrow (M,b)$ (cf.\ the proof) is connected if $M$ is 1-connected. To see this, let $(G,\sigma)$ be a 1-connected symmetric Lie group with Lie algebra $(\mathfrak{g},\theta)$. Let $p_{(\mathfrak{g},\theta)}\colon \widehat P(G,\sigma)\rightarrow (G,\sigma)$ be the morphism of symmetric Lie groups with $L(p_{(\mathfrak{g},\theta)})\colon \widehat P(\mathfrak{g},\theta)\rightarrow (\mathfrak{g},\theta)$, $(\gamma,x)\mapsto x$. It is surjective, since $G$ is connected and $L(p_{(\mathfrak{g},\theta)})$ is surjective. Thus we have $G\cong \widehat P(G)/\ker(p_{(\mathfrak{g},\theta)})$ (cf.\ \cite[Th.~II.2]{GN03}), so that $\ker(p_{(\mathfrak{g},\theta)})$ is connected, the group $G$  being 1-connected (cf.\ \cite[p.~10]{GN03}).
	
	By Example~\ref{ex:typicalExactSequence}, we have an exact sequence
	\begin{multline} \label{eqn:sequence_p_(g,theta)}
		\eins \rightarrow (\ker(p_{(\mathfrak{g},\theta)}),\widehat P(\sigma)|_{\ker(p_{(\mathfrak{g},\theta)})},(\widehat P(G)^{\widehat P(\sigma)})_0 \cap  \ker(p_{(\mathfrak{g},\theta)}))\hookrightarrow (\widehat P(G,\sigma),(\widehat P(G)^{\widehat P(\sigma)})_0)\\
		\rightarrow (G,\sigma, (G^\sigma)_0)\rightarrow \eins
	\end{multline}
	of symmetric pairs:
	Indeed, to see that $p_{(\mathfrak{g},\theta)}((\widehat P(G)^{\widehat P(\sigma)})_0) = (G^\sigma)_0$, it suffices to verify the surjectivity of $\widehat P(\mathfrak{g})_+\rightarrow \mathfrak{g}_+$ (cf.\ p.~\pageref{page:L(f)(g1_+)=g2_+}), which is clear, since $(\mathfrak{g}_{\ad})_+=\mathfrak{g}_+$ entails $\widehat P(\mathfrak{g})_+ = \{(\gamma,x)\in P(\mathfrak{g}_+)\times \mathfrak{g}_+\colon \gamma(1)=x\}$.
	
	Applying the functor $\Sym$ to (\ref{eqn:sequence_p_(g,theta)}) leads to the exact sequence
	$$o \rightarrow \ker(p_{(\mathfrak{g},\theta)})/((\widehat P(G)^{\widehat P(\sigma)})_0 \cap  \ker(p_{(\mathfrak{g},\theta)}))\hookrightarrow \widehat P(G)/(\widehat P(G)^{\widehat P(\sigma)})_0\rightarrow G/(G^\sigma)_0\rightarrow o$$
	of pointed symmetric spaces where the injection is a topological embedding (cf.\ Proposition~\ref{prop:exactnessOfSym}).
	
	Since $G/(G^\sigma)_0$ is 1-connected
	with $\Lts(G/(G^\sigma)_0)=\mathfrak{g}_-=\mathfrak{m}$, it can be identified with $(M,b)$. With $\widehat P(M,b)=\widehat P(G)/(\widehat P(G)^{\widehat P(\sigma)})_0$ (cf.\ Proposition~\ref{prop:chi_m}), the morphisms $\Sym(p_{(\mathfrak{g},\theta)})$ and $p$ can be identified, since $\Lts(\Sym(p_{(\mathfrak{g},\theta)}))=\calLts(L(p_{(\mathfrak{g},\theta)})) = \Lts(p)$ (cf.\ (\ref{eqn:LtsSym=calLtsL})). Consequently, the kernel $\ker(p)$ is isomorphic to $\ker(p_{(\mathfrak{g},\theta)})/((\widehat P(G)^{\widehat P(\sigma)})_0 \cap  \ker(p_{(\mathfrak{g},\theta)}))$ and is hence connected, the kernel $\ker(p_{(\mathfrak{g},\theta)})$ being connected.
\end{remark}
For some purposes, a generalization of Proposition~\ref{prop:Pi(g)inPi(m)} is useful that applies for symmetric Lie algebras $(\mathfrak{g},\theta)$ that not necessarily satisfy  $\mathfrak{z}(\mathfrak{g}) = \mathfrak{z}(\mathfrak{m})$.
\begin{proposition} \label{prop:Pi(g)_-inPi(m)}
	Let $\mathfrak{m}$ be a Lie triple system. Given a symmetric Lie algebra $(\mathfrak{g},\theta)$ with $\mathfrak{g}_-=\mathfrak{m}$ (so that $\mathfrak{z}(\mathfrak{g})_-\subseteq \mathfrak{z}(\mathfrak{m})$, cf.\ Section~\ref{sec:symLieAlgAndLieGrp}). Then the group $\Pi(\mathfrak{g})_-$ (cf.\ Remark~\ref{rem:periodGroupWithInvolution}) is a subset of the period group $\Pi(\mathfrak{m})$.
\end{proposition}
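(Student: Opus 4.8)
The plan is to follow the proof of Proposition~\ref{prop:Pi(g)inPi(m)} as closely as possible; the single new phenomenon is that $\mathfrak{z}(\mathfrak{g})_-$ may now be a \emph{proper} closed ideal of $\mathfrak{z}(\mathfrak{m})$. As a result, the $1$-connected pointed symmetric space $M_{\ad}^{\mathfrak{g}}:=G_{\ad}/(G_{\ad}^{\sigma_{\ad}})_0$ belonging to $(\mathfrak{g},\theta)$ has Lie triple system $(\mathfrak{g}_{\ad})_-=\mathfrak{m}/\mathfrak{z}(\mathfrak{g})_-$, which is merely a quotient of the system $\mathfrak{m}_{\ad}=\mathfrak{m}/\mathfrak{z}(\mathfrak{m})$ underlying the construction of $\Pi(\mathfrak{m})$. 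The two period constructions will therefore be compared by means of the natural surjection $(\mathfrak{g}_{\ad})_-\twoheadrightarrow\mathfrak{m}_{\ad}$ together with the inclusion $\mathfrak{z}(\mathfrak{g})_-\hookrightarrow\mathfrak{z}(\mathfrak{m})$ of centres.

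The guiding picture is transparent when $\mathfrak{g}$ is integrable. Then $\mathfrak{m}=\mathfrak{g}_-$ is integrable as well (Lemma~\ref{lem:integrableSymLieAlgebra}), and choosing a $1$-connected $(G,\sigma)$ with $L(G,\sigma)=(\mathfrak{g},\theta)$ the quotient $M:=G/(G^{\sigma})_0$ is $1$-connected with Lie triple system $\mathfrak{m}$ and $\Exp_{M}|_{\mathfrak{g}_-}=q\circ\exp_G|_{\mathfrak{g}_-}$ for the projection $q\colon G\rightarrow M$ (cf.\ (\ref{eqn:Exp=q exp})). Using the explicit kernel description of the period group (Remark~\ref{rem:periodGroup=kernel}), $\Pi(\mathfrak{g})_-=\ker(\exp_G|_{\mathfrak{z}(\mathfrak{g})_-})$; hence any $z\in\Pi(\mathfrak{g})_-\subseteq\mathfrak{z}(\mathfrak{g})_-\subseteq\mathfrak{z}(\mathfrak{m})$ satisfies $\Exp_{M}(z)=q(\exp_G(z))=q(\eins)=b$, so $z$ lies in the kernel of $\Exp_{M}|_{\mathfrak{z}(\mathfrak{m})}$, i.e.\ in $\Pi(\mathfrak{m})$. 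For general $\mathfrak{g}$ I would run this same argument with $G$ and $M$ replaced by the central-extension surrogates of Section~\ref{sec:periodMorphismOfSymLieAlgebra}.

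Concretely, I would first apply the functor $\Sym$ to $f_{(\mathfrak{g},\theta)}$ exactly as in Lemma~\ref{lem:Sym(f_g)=f_m}; its construction of the pertinent symmetric pairs rests on Corollary~\ref{cor:diagramOfQuotients} and never uses $\mathfrak{z}(\mathfrak{g})=\mathfrak{z}(\mathfrak{m})$, so it is available here and produces a morphism $\Sym(f_{(\mathfrak{g},\theta)})$ over $M_{\ad}^{\mathfrak{g}}$. Since the relevant quotient map of Corollary~\ref{cor:diagramOfQuotients} restricts on $\mathfrak{z}(\mathfrak{g})$ to the projection $\mathfrak{z}(\mathfrak{g})\rightarrow\mathfrak{z}(\mathfrak{g})/\mathfrak{z}(\mathfrak{g})_+\cong\mathfrak{z}(\mathfrak{g})_-$ (Remark~\ref{rem:z(g)/z(g)_+}), which is the identity on $\mathfrak{z}(\mathfrak{g})_-$, it maps $\Pi(\mathfrak{g})_-$ onto itself and yields $\Pi(\mathfrak{g})_-\subseteq\im(\Sym(f_{(\mathfrak{g},\theta)}))\cap\mathfrak{z}(\mathfrak{g})_-$. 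Next I would integrate, by the Integrability Theorem, the surjection $(\mathfrak{g}_{\ad})_-\twoheadrightarrow\mathfrak{m}_{\ad}$ to a morphism $\rho\colon M_{\ad}^{\mathfrak{g}}\rightarrow M_{\ad}$ of $1$-connected pointed symmetric spaces and let $\rho$ act on the entire diagram of Remark~\ref{rem:diagramSymSpace} through the induced maps on path spaces, loop spaces, their central extensions and universal covers, just as in Proposition~\ref{prop:functorialityOfPeriodSpace}. On centres this comparison is the inclusion $\mathfrak{z}(\mathfrak{g})_-\hookrightarrow\mathfrak{z}(\mathfrak{m})$ and it carries $\Sym(f_{(\mathfrak{g},\theta)})$ to $f_{\mathfrak{m}}$; consequently it sends $\im(\Sym(f_{(\mathfrak{g},\theta)}))\cap\mathfrak{z}(\mathfrak{g})_-$ into $\im(f_{\mathfrak{m}})\cap\mathfrak{z}(\mathfrak{m})=\Pi(\mathfrak{m})$, whence $\Pi(\mathfrak{g})_-\subseteq\Pi(\mathfrak{m})$.

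The hard part will be this last comparison: verifying that the diagram commutes and that its effect on centres is precisely the inclusion $\mathfrak{z}(\mathfrak{g})_-\hookrightarrow\mathfrak{z}(\mathfrak{m})$. This is the analogue of Proposition~\ref{prop:functorialityOfPeriodSpace}, the subtlety being that the source and target period diagrams are built from \emph{different} central ideals of the one system $\mathfrak{m}$, namely $\mathfrak{z}(\mathfrak{g})_-$ and $\mathfrak{z}(\mathfrak{m})$. As there, I expect the verification to reduce to the routine commutativity of the corresponding diagram of Lie triple systems, whereupon uniqueness of integration propagates it up to the level of symmetric spaces.
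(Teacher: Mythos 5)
Your proposal follows essentially the same route as the paper's proof: it compares the two period constructions via the natural surjection $(\mathfrak{g}_{\ad})_-\twoheadrightarrow\mathfrak{m}_{\ad}$ and the inclusion $\mathfrak{z}(\mathfrak{g})_-\hookrightarrow\mathfrak{z}(\mathfrak{m})$, integrates the induced maps on the hatted path/loop spaces and universal covers to obtain a commutative diagram intertwining $\Sym(f_{(\mathfrak{g},\theta)})$ with $f_{\mathfrak{m}}$ (with commutativity checked on the level of Lie triple systems and propagated by uniqueness of integration), and then uses $\Pi(\mathfrak{g})_-\subseteq\Pi(\mathfrak{g})$ to conclude. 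This is exactly the paper's argument.
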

\begin{proof}
	We have $\mathfrak{g}_{\ad} = \mathfrak{g}_+/\mathfrak{z}(\mathfrak{g})_+ \oplus \mathfrak{m}/\mathfrak{z}(\mathfrak{g})_-$,
	so that $(\mathfrak{g}_{\ad})_-=\mathfrak{m}/\mathfrak{z}(\mathfrak{g})_-$. The natural map $(\mathfrak{g}_{\ad})_-\rightarrow \mathfrak{m}_{\ad}=\mathfrak{m}/\mathfrak{z}(\mathfrak{m})$ leads to a morphism $\Omega((\mathfrak{g}_{\ad})_-)\rightarrow\Omega(\mathfrak{m}_{\ad})$ as well as to one from $\widehat\Omega(\mathfrak{g})_-=\Omega((\mathfrak{g}_{\ad})_-)\times \mathfrak{z}(\mathfrak{g})_-$ to $\widehat\Omega(\mathfrak{m})=\Omega(\mathfrak{m}_{\ad})\times \mathfrak{z}(\mathfrak{m})$. We claim that they can be integrated to morphisms
	$$g_1\colon\widetilde\Omega(G_{\ad})/(\widetilde\Omega(G_{\ad})^{\widetilde\Omega(\sigma_{\ad})})_0\rightarrow \widetilde\Omega(M_{\ad},b_{\ad})$$
	and
	$$g_2\colon\widehat\Omega(G)/((\widehat P(G)^{\widehat P(\sigma)})_0 \cap \widehat\Omega(G)) \rightarrow \widehat\Omega(M,b),$$
	respectively. The former integration works by the Integrability Theorem and Proposition~\ref{prop:1-connectedQuotient}.
	To show the existence of $g_2$, we first consider the natural morphism from
	$$\widehat P(\mathfrak{g})_- =\{(\gamma,x)\in P((\mathfrak{g}_{\ad})_-)\times\mathfrak{g}_-\colon\gamma(1)=x+\mathfrak{z}(\mathfrak{g})_-\}$$
	to
	$$\widehat P(\mathfrak{m}) =\{(\gamma,x)\in P(\mathfrak{m}_{\ad})\times\mathfrak{m}\colon\gamma(1)=x+\mathfrak{z}(\mathfrak{m})\}$$
	as well as the morphism $(\mathfrak{g}_{\ad})_-\rightarrow \mathfrak{m}_{\ad}$. They can be integrated to morphisms
	$$g_3\colon \widehat P(G)/(\widehat P(G)^{\widehat P(\sigma)})_0 \rightarrow \widehat P(M,b)
	\quad\text{and}\quad
	g_4\colon G_{\ad}/(G_{\ad}^{\sigma_{\ad}})_0\rightarrow (M_{\ad},b_{\ad}),$$
	respectively, by the Integrability Theorem and Proposition~\ref{prop:1-connectedQuotient}.
	We obtain the diagram
	$$\begin{xy}
		\xymatrix{
			o\ar[r] & \widehat\Omega(M,b)\ar@{^{(}->}[r] & \widehat P(M,b)\ar[r] & (M_{\ad},b_{\ad})\ar[r] & o \\
			o\ar[r] & \Omega(G)/((\widehat P(G)^{\widehat P(\sigma)})_0 \cap \widehat\Omega(G))\ar@{^{(}->}[r] & \widehat P(G)/(\widehat P(G)^{\widehat P(\sigma)})_0\ar[r]\ar[u]_{g_3} & G_{\ad}/(G_{\ad}^{\sigma_{\ad}})_0\ar[r]\ar[u]_{g_4} & o
		}
	\end{xy}$$
	where the exact sequences are taken from the diagrams in Corollary~\ref{cor:diagramOfQuotients} and Remark~\ref{rem:diagramSymSpace}. It is commutative, since the corresponding diagram on the level of Lie triple systems is commutative. Thus $g_3$ restricts to the wanted morphism $g_2$.
		
	We have $f_{\mathfrak{m}}\circ g_1 = g_2\circ \Sym(f_{(\mathfrak{g},\theta)})$, since the corresponding morphisms
	$$\Lts(f_{\mathfrak{m}}\circ g_1)\colon \Omega((\mathfrak{g}_{\ad})_-)\rightarrow\Omega(\mathfrak{m}_{\ad}) \hookrightarrow \widehat\Omega(\mathfrak{m})$$
	and
	$$\Lts(g_2\circ \Sym(f_{(\mathfrak{g},\theta)}))\colon \Omega((\mathfrak{g}_{\ad})_-)\hookrightarrow \widehat\Omega(\mathfrak{g})_- \rightarrow \widehat\Omega(\mathfrak{m})$$
	are equal.
	
	We obtain the commutative diagram
	$$\begin{xy}
		\xymatrixcolsep{-1pc} 
		\xymatrix{
			&&&&&	\mathfrak{z}(\mathfrak{m})\ar@{^{(}->}[ddd] \\
			&&&& \mathfrak{z}(\mathfrak{g})/\mathfrak{z}(\mathfrak{g})_+\cong \mathfrak{z}(\mathfrak{g})_-\ar@{^{(}->}[ddd]\ar@{^{(}->}[ru]^{\iota} \\
			&&&
			\mathfrak{z}(\mathfrak{g},\theta)\ar@{^{(}->}[ddd]\ar[ru]^{q_1} \\
			&&\widetilde\Omega(M_{\ad},b_{\ad})\ar[rrr]^{f_\mathfrak{m}}
			&&& \widehat\Omega(M,b) \\
			&\widetilde\Omega(G_{\ad})/(\widetilde\Omega(G_{\ad})^{\widetilde\Omega(\sigma_{\ad})})_0\ar[rrr]^-{\Sym(f_{(\mathfrak{g},\theta)})}\ar[ru]^{g_1}
			&&& \widehat\Omega(G)/((\widehat P(G)^{\widehat P(\sigma)})_0 \cap \widehat\Omega(G))\ar[ru]^{g_2} \\
			\widetilde\Omega(G_{\ad},\sigma_{\ad})\ar[rrr]^-{f_{(\mathfrak{g},\theta)}}\ar[ru]^{q_2}
			&&& \widehat\Omega(G,\sigma)\ar[ru]^{q_3} \\
		}
	\end{xy}$$
	where $\iota\colon \mathfrak{z}(\mathfrak{g})_-\hookrightarrow \mathfrak{z}(\mathfrak{m})$ is the inclusion morphism and where the maps $q_i$ with $i=1,2,3$ are the quotient maps given by the corresponding symmetric pairs. (Note that the commutativity of the part of the diagram containing $\iota$ and $g_2$ follows by the corresponding commutative diagram for the level of Lie triple systems.)
	
%
	Similarly as in the proof of Proposition~\ref{prop:Pi(g)inPi(m)}, we have
	$$(\iota\circ q_1)(\Pi(\mathfrak{g})) \ \subseteq\ (g_2\circ q_3)(\im(f_{(\mathfrak{g},\theta)}))\cap\mathfrak{z}(\mathfrak{m}) \ \subseteq\ \im(f_{\mathfrak{m}})\cap\mathfrak{z}(\mathfrak{m}) \ =\ \Pi(\mathfrak{m}).$$
	From $\Pi(\mathfrak{g})_+ \oplus \Pi(\mathfrak{g})_- \leq \Pi(\mathfrak{g})$ (cf.\ Remark~\ref{rem:periodGroupWithInvolution}), we deduce that $\Pi(\mathfrak{g})_- \leq q_1(\Pi(\mathfrak{g}))$, entailing that $\Pi(\mathfrak{g})_- \subseteq \Pi(\mathfrak{m})$.
\end{proof}
Let $G$ be a Banach--Lie group with Lie algebra $\mathfrak{g}$ and let $x \in \mathfrak{z}(\mathfrak{g})$ and $y \in \mathfrak{g}$. Then $\exp(x)$ is in the center $Z(G)$ of $G$, so that $\exp(x+y)=\exp(x)\exp(y)$ can be observed by the Trotter Product Formula (cf.\ \cite[Th.~IV.2]{Nee04}).
In particular, the restriction $\exp|_{\mathfrak{z}(\mathfrak{g})}$ of the exponential map is a smooth homomorphism of Lie groups. An analogous statement for symmetric spaces is given by the following lemma:
\begin{lemma}\label{lem:Exp|_z(m)}
	Let $(M,b)$ be a pointed symmetric space with Lie triple system $\mathfrak{m}$. Then, for all $x\in \mathfrak{z}(\mathfrak{m})$ and $y\in \mathfrak{m}$, we have
	$$\Exp_{(M,b)}(2x-y) \ =\ \Exp_{(M,b)}(x)\cdot \Exp_{(M,b)}(y).$$
	In particular, the restriction $\Exp_{(M,b)}|_{\mathfrak{z}(\mathfrak{m})}$ of the exponential map is a morphism of pointed symmetric spaces.
\end{lemma}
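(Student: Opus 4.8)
The plan is to reduce everything to the Lie group statement recalled just before the lemma by realizing $(M,b)$ as a homogeneous space of a carefully chosen symmetric Lie group. First I would replace $M$ by its basic connected component $M_0$: the factors $\Exp_{(M,b)}(x)$, $\Exp_{(M,b)}(y)$ and $\Exp_{(M,b)}(2x-y)$ all lie in $\im(\Exp_{(M,b)})\subseteq M_0$, and $M_0$, being a reflection subspace, is stable under the multiplication, so both sides of the claimed identity live in $M_0$ and I may assume $M$ connected. Then, invoking the homogeneity of connected symmetric spaces (Section~\ref{sec:AutOfConnectedSymSpace}), I identify $(M,b)$ with the quotient $G/K$ given by the symmetric pair $(G^\prime(M,b),\sigma^\prime,G^\prime(M,b)_b)$. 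The decisive point of this particular choice is that its Lie algebra $\mathfrak{g}:=\mathfrak{g}^\prime(M,b)$ is isomorphic to the standard embedding $S(\mathfrak{m})$ (Proposition~\ref{prop:g'(M,b)IsomorphicS(m)}), whence $\mathfrak{z}(\mathfrak{g})=\mathfrak{z}(S(\mathfrak{m}))=\mathfrak{z}(\mathfrak{m})$ sits entirely in $\mathfrak{g}_-=\mathfrak{m}$; in particular $x\in\mathfrak{z}(\mathfrak{m})$ becomes a \emph{central} element of the Lie algebra $\mathfrak{g}$. I expect this to be the main obstacle, or rather the only genuinely essential idea: a naive realization (for instance via $\Der(M)$ itself) would only guarantee $\mathfrak{z}(\mathfrak{g})_-\subseteq\mathfrak{z}(\mathfrak{m})$, so $x$ need not be central in $\mathfrak{g}$ and the group computation below would break down.

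With this identification, $\Exp_{(M,b)}=q\circ\exp_G|_{\mathfrak{g}_-}$ by (\ref{eqn:Exp=q exp}), where $q\colon G\rightarrow G/K$. For $v\in\mathfrak{g}_-=\mathfrak{m}$ one has $\sigma(\exp_G(v))=\exp_G(\theta v)=\exp_G(-v)=\exp_G(v)^{-1}$. Writing $a:=\exp_G(x)$ and $c:=\exp_G(y)$ and using the quotient multiplication $gK\cdot hK=g\sigma(g)^{-1}\sigma(h)K$, I compute
\[
	\Exp_{(M,b)}(x)\cdot\Exp_{(M,b)}(y)=q(a)\cdot q(c)=a\,\sigma(a)^{-1}\sigma(c)\,K=a^2c^{-1}K=\exp_G(2x)\exp_G(-y)\,K.
\]
Now $2x\in\mathfrak{z}(\mathfrak{g})$, so $\exp_G(2x)$ lies in the center $Z(G)$ and the centrality (Trotter) identity recalled immediately before the lemma gives $\exp_G(2x)\exp_G(-y)=\exp_G(2x-y)$. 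Since $2x-y\in\mathfrak{g}_-$, applying $q$ yields $\exp_G(2x-y)K=\Exp_{(M,b)}(2x-y)$, which establishes the asserted product formula.

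Finally, for the ``in particular'' I regard $\mathfrak{z}(\mathfrak{m})$ as a pointed symmetric space with multiplication $u\cdot v=2u-v$ and base point $0$. The restriction $\Exp_{(M,b)}|_{\mathfrak{z}(\mathfrak{m})}$ is smooth, being the restriction of the smooth map $\Exp_{(M,b)}$ to the closed subspace $\mathfrak{z}(\mathfrak{m})$, and it sends $0$ to $b$. Applying the product formula with $x=u$ and $y=v$ ranging over $\mathfrak{z}(\mathfrak{m})$ gives $\Exp_{(M,b)}(u\cdot v)=\Exp_{(M,b)}(u)\cdot\Exp_{(M,b)}(v)$, so it is a homomorphism of pointed reflection spaces; being in addition smooth, it is a morphism of pointed symmetric spaces, which completes the argument.
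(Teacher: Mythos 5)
Your proof is correct and follows essentially the same route as the paper's: reduce to connected $M$, identify $(M,b)$ with the quotient of the symmetric pair $(G^\prime(M,b),\sigma^\prime,G^\prime(M,b)_b)$, use Proposition~\ref{prop:g'(M,b)IsomorphicS(m)} to see that $\mathfrak{z}(\mathfrak{g}^\prime(M,b))=\mathfrak{z}(\mathfrak{m})$ so that $x$ is central in the Lie algebra, and then combine the Trotter identity $\exp(2x-y)=\exp(2x)\exp(-y)$ with the quotient multiplication $gK\cdot hK=g\sigma(g)^{-1}\sigma(h)K$. The only differences are cosmetic (you run the chain of equalities in the opposite direction and spell out the w.l.o.g.\ reduction and the ``in particular'' clause more explicitly).
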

\begin{proof}
	W.l.o.g., we assume $M$ to be connected, so that we can identify it with the quotient $G^\prime(M,b)/G^\prime(M,b)_b$  of the symmetric pair
	$(G^\prime(M,b),\sigma^\prime,G^\prime(M,b)_b)$ (cf.\ Section~\ref{sec:AutOfConnectedSymSpace}).
	Denoting the quotient map by $q\colon G^\prime(M,b) \rightarrow G^\prime(M,b)/G^\prime(M,b)_b$, 
	the exponential maps $\exp\colon \mathfrak{g}^\prime(M,b)\rightarrow G^\prime(M,b)$ and $\Exp\colon \mathfrak{g}^\prime(M,b)_-\rightarrow G^\prime(M,b)/G^\prime(M,b)_b$ satisfy $\Exp=q\circ \exp|_{\mathfrak{g}^\prime(M,b)_-}$ (cf.\ (\ref{eqn:Exp=q exp})).
	
	The symmetric Lie algebra $\mathfrak{g}^\prime(M,b)$ can be identified with the standard embedding of $\mathfrak{g}^\prime(M,b)_-$ (cf.\ Proposition~\ref{prop:g'(M,b)IsomorphicS(m)}), so that its center $\mathfrak{z}(\mathfrak{g}^\prime(M,b))$ coincides with $\mathfrak{z}(\mathfrak{g}^\prime(M,b)_-)$ (cf.\ Section~\ref{sec:symLieAlgAndLieGrp}).
	Therefore, given any $x\in \mathfrak{z}(\mathfrak{g}^\prime(M,b)_-)$ and $y\in\mathfrak{g}^\prime(M,b)_-$, we have $\exp(2x-y)=\exp(2x)\exp(-y)$. Because of $\sigma^\prime\circ\exp=\exp\circ L(\sigma^\prime)$ and $L(\sigma^\prime)|_{\mathfrak{g}^\prime(M,b)_-}=-\id_{\mathfrak{g}^\prime(M,b)_-}$, we have $\sigma^\prime(\exp(x))=\exp(-x)=(\exp(x))^{-1}$ and $\sigma^\prime(\exp(-y))=\exp(y)$, so that we obtain
	\begin{align*}
		\Exp(2x-y) \ &=\ q(\exp(x)\exp(x)\exp(-y)) \ =\ q(\exp(x)\sigma^\prime(\exp(x))^{-1}\sigma^\prime(\exp(y))) \\
		&=\ \Exp(x)\cdot\Exp(y). \qedhere
	\end{align*}
\end{proof}
\begin{proposition} \label{prop:periodSpace=kernel}
	Let $(M,b)$ be a pointed 1-connected symmetric space with Lie triple system $\mathfrak{m}$. Then the period group is given by $\Pi(\mathfrak{m}) = \ker(\Exp_{(M,b)}|_{\mathfrak{z}(\mathfrak{m})})$.
\end{proposition}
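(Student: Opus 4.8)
The plan is to reuse the morphism $p\colon \widehat P(M,b)\rightarrow (M,b)$ of pointed symmetric spaces constructed in the proof of Theorem~\ref{th:integrabilityCriterion}, which integrates the natural morphism $\widehat P(\mathfrak{m})\rightarrow\mathfrak{m}$, $(\gamma,x)\mapsto x$, of Lie triple systems, and then to identify $\ker(p)\cap\mathfrak{z}(\mathfrak{m})$ with $\ker(\Exp_{(M,b)}|_{\mathfrak{z}(\mathfrak{m})})$. First I would recall from Definition~\ref{def:per_m} that $\Pi(\mathfrak{m})=\im(f_\mathfrak{m})\cap\mathfrak{z}(\mathfrak{m})$, and from the proof of Theorem~\ref{th:integrabilityCriterion} (via Lemma~\ref{lem:im(f_m)IntegralSubspace} and Proposition~\ref{prop:integralSubreflectionSpace}) that $\im(f_\mathfrak{m})$ is precisely the basic connected component $(\ker(p))_0$ of $\ker(p)$. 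Since $M$ is assumed to be $1$-connected, Remark~\ref{rem:ker(p)IsConnected} guarantees that $\ker(p)$ is connected, so $\ker(p)=(\ker(p))_0=\im(f_\mathfrak{m})$ and hence $\Pi(\mathfrak{m})=\ker(p)\cap\mathfrak{z}(\mathfrak{m})$.

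The core of the argument is to show that the restriction of $p$ to the symmetric subspace $\mathfrak{z}(\mathfrak{m})\hookrightarrow\widehat P(M,b)$ (cf.\ Remark~\ref{rem:diagramSymSpace}) coincides with $\Exp_{(M,b)}|_{\mathfrak{z}(\mathfrak{m})}$. On the level of Lie triple systems the inclusion $\mathfrak{z}(\mathfrak{m})\hookrightarrow\widehat P(\mathfrak{m})$ is given by $x\mapsto(0,x)$, so its composite with $\Lts(p)\colon(\gamma,x)\mapsto x$ is just the inclusion $\iota\colon\mathfrak{z}(\mathfrak{m})\hookrightarrow\mathfrak{m}$. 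Therefore the morphism $h\colon\mathfrak{z}(\mathfrak{m})\hookrightarrow\widehat P(M,b)\stackrel{p}{\rightarrow}M$ of pointed symmetric spaces satisfies $\Lts(h)=\iota$. Because $\Exp_{\mathfrak{z}(\mathfrak{m})}=\id_{\mathfrak{z}(\mathfrak{m})}$, the intertwining property of exponential maps (cf.\ Section~\ref{sec:LtsAndSymSpace}) yields $h=h\circ\Exp_{\mathfrak{z}(\mathfrak{m})}=\Exp_{(M,b)}\circ\iota=\Exp_{(M,b)}|_{\mathfrak{z}(\mathfrak{m})}$ (the latter restriction being a morphism by Lemma~\ref{lem:Exp|_z(m)}).

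Finally I would conclude: an element $x\in\mathfrak{z}(\mathfrak{m})$, regarded inside $\widehat P(M,b)$, lies in $\ker(p)=p^{-1}(b)$ if and only if $\Exp_{(M,b)}|_{\mathfrak{z}(\mathfrak{m})}(x)=b$, so that $\ker(p)\cap\mathfrak{z}(\mathfrak{m})=\ker(\Exp_{(M,b)}|_{\mathfrak{z}(\mathfrak{m})})$, giving $\Pi(\mathfrak{m})=\ker(\Exp_{(M,b)}|_{\mathfrak{z}(\mathfrak{m})})$. I expect the main obstacle to be the bookkeeping needed to make the identification $p|_{\mathfrak{z}(\mathfrak{m})}=\Exp_{(M,b)}|_{\mathfrak{z}(\mathfrak{m})}$ fully rigorous, namely tracing the embedding $\mathfrak{z}(\mathfrak{m})\hookrightarrow\widehat P(M,b)$ of Remark~\ref{rem:diagramSymSpace} through to the level of Lie triple systems and confirming that $p$ restricts to it as claimed; the appeal to $1$-connectedness via Remark~\ref{rem:ker(p)IsConnected} is indispensable, since without it $\ker(p)$ could be strictly larger than $\im(f_\mathfrak{m})$ and the equality with the period group would fail.
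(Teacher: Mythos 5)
Your proposal is correct and follows essentially the same route as the paper: identify $\Pi(\mathfrak{m})=\im(f_\mathfrak{m})\cap\mathfrak{z}(\mathfrak{m})$ with $\ker(p)\cap\mathfrak{z}(\mathfrak{m})$ using the connectedness of $\ker(p)$ from Remark~\ref{rem:ker(p)IsConnected}, and then show $p\circ\iota=\Exp_{(M,b)}|_{\mathfrak{z}(\mathfrak{m})}$ by comparing the induced maps on Lie triple systems. The only cosmetic difference is that you derive this last identity directly from the intertwining property $h\circ\Exp_{\mathfrak{z}(\mathfrak{m})}=\Exp_{(M,b)}\circ\Lts(h)$ together with $\Exp_{\mathfrak{z}(\mathfrak{m})}=\id$, whereas the paper appeals to uniqueness of morphisms with equal $\Lts$ on a connected domain; both are valid and equivalent in substance.
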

\begin{proof}
	As in the proof of Theorem~\ref{th:integrabilityCriterion}, let $p\colon \widehat P(M,b)\rightarrow (M,b)$ be the morphism of pointed symmetric spaces with $\Lts(p)\colon\widehat P(\mathfrak{m})\rightarrow \mathfrak{m}$, $(\gamma,x)\mapsto x$. Denoting by $\iota\colon \mathfrak{z}(\mathfrak{m})\hookrightarrow \widehat P(M,b)$ the inclusion morphism given in the diagram of Remark~\ref{rem:diagramSymSpace}, we claim that $\Exp_{(M,b)}|_{\mathfrak{z}(\mathfrak{m})} = p\circ \iota$. To see this, it suffices to show that $\Lts(\Exp_{(M,b)}|_{\mathfrak{z}(\mathfrak{m})}) =\Lts( p\circ \iota)$, i.e., that $(\id_{\mathfrak{m}})|_{\mathfrak{z}(\mathfrak{m})} = \Lts(p)\circ \Lts(\iota)$, but the latter is true, since $\Lts(\iota)$ is the inclusion morphism $\mathfrak{z}(\mathfrak{m})\hookrightarrow \widehat P(\mathfrak{m})$ given in the diagram of Remark~\ref{rem:diagramOfLts}.
	
	The kernel $\ker(p \circ \iota)$ of $\Exp_{(M,b)}|_{\mathfrak{z}(\mathfrak{m})}$ is given by $\ker(p)\cap \mathfrak{z}(\mathfrak{m})$.
	Since $\ker(p)$ is connected (cf.\ Remark~\ref{rem:ker(p)IsConnected}), we have $\ker(p)=\im(f_\mathfrak{m})$ (cf.\ the proof of Theorem~\ref{th:integrabilityCriterion}), so that we obtain
	\[ \Pi(\mathfrak{m}) \ =\ \im(f_\mathfrak{m})\cap\mathfrak{z}(\mathfrak{m}) \ =\ \ker(p)\cap\mathfrak{z}(\mathfrak{m}) \ =\ \ker(\Exp_{(M,b)}|_{\mathfrak{z}(\mathfrak{m})}). \qedhere\]
\end{proof}
\begin{corollary} \label{cor:periodSpace&Group&Kernel}
	Considering some 1-connected symmetric Lie group $(G,\sigma)$ whose Lie algebra $(\mathfrak{g},\theta)$ satisfies $\mathfrak{g}_-=\mathfrak{m}$ and $\mathfrak{z}(\mathfrak{g})=\mathfrak{z}(\mathfrak{m})$, we have $\Pi(\mathfrak{m})=(\exp_G|_{\mathfrak{z}(\mathfrak{g})})^{-1}((G^\sigma)_0)$.
\end{corollary}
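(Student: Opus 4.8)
The plan is to reduce this immediately to Proposition~\ref{prop:periodSpace=kernel} by exhibiting a concrete pointed $1$-connected symmetric space whose exponential map is governed by $\exp_G$. Given the $1$-connected symmetric Lie group $(G,\sigma)$, I would set $(M,b):=G/(G^\sigma)_0$, i.e.\ consider the symmetric pair $(G,\sigma,(G^\sigma)_0)$. By Proposition~\ref{prop:1-connectedQuotient}, the quotient $(M,b)$ is again $1$-connected, and by the identification described in Section~\ref{sec:symLieAlgAndLieGrp} its Lie triple system is $\mathfrak{g}_-=\mathfrak{m}$. Hence $(M,b)$ is a pointed $1$-connected symmetric space integrating $\mathfrak{m}$, and Proposition~\ref{prop:periodSpace=kernel} applies to give $\Pi(\mathfrak{m})=\ker\big(\Exp_{(M,b)}|_{\mathfrak{z}(\mathfrak{m})}\big)$.

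The next step is a small but essential bookkeeping observation about the center. From the hypothesis $\mathfrak{z}(\mathfrak{g})=\mathfrak{z}(\mathfrak{m})=\mathfrak{z}(\mathfrak{g}_-)\subseteq\mathfrak{g}_-$ together with the decomposition $\mathfrak{z}(\mathfrak{g})=\mathfrak{z}(\mathfrak{g})_+\oplus\mathfrak{z}(\mathfrak{g})_-$ with $\mathfrak{z}(\mathfrak{g})_+\leq\mathfrak{g}_+$ (cf.\ Section~\ref{sec:symLieAlgAndLieGrp}), I would note that $\mathfrak{z}(\mathfrak{g})_+=\mathfrak{z}(\mathfrak{g})\cap\mathfrak{g}_+=\{0\}$, so that in fact $\mathfrak{z}(\mathfrak{g})=\mathfrak{z}(\mathfrak{g})_-=\mathfrak{z}(\mathfrak{m})\subseteq\mathfrak{g}_-$. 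This is what makes the restriction $\exp_G|_{\mathfrak{z}(\mathfrak{g})}$ in the statement coincide with the restriction to $\mathfrak{z}(\mathfrak{m})$ of $\exp_G|_{\mathfrak{g}_-}$, and it is the only place where the specific hypothesis $\mathfrak{z}(\mathfrak{g})=\mathfrak{z}(\mathfrak{m})$ (rather than merely $\mathfrak{z}(\mathfrak{g})_-=\mathfrak{z}(\mathfrak{m})$) is used.

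Finally I would invoke the exponential formula (\ref{eqn:Exp=q exp}) for a quotient, namely $\Exp_{(M,b)}=q\circ\exp_G|_{\mathfrak{g}_-}$ where $q\colon G\rightarrow G/(G^\sigma)_0$ is the quotient map. Restricting to $\mathfrak{z}(\mathfrak{m})=\mathfrak{z}(\mathfrak{g})\subseteq\mathfrak{g}_-$ gives $\Exp_{(M,b)}|_{\mathfrak{z}(\mathfrak{m})}=q\circ\exp_G|_{\mathfrak{z}(\mathfrak{g})}$. Since the base point $b$ is the coset $(G^\sigma)_0$ and the fiber of $q$ over $b$ is exactly $(G^\sigma)_0$, an element $x\in\mathfrak{z}(\mathfrak{g})$ lies in the kernel precisely when $\exp_G(x)\in(G^\sigma)_0$. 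Chaining these equalities yields
$$
\Pi(\mathfrak{m}) \ =\ \ker\big(\Exp_{(M,b)}|_{\mathfrak{z}(\mathfrak{m})}\big) \ =\ \{x\in\mathfrak{z}(\mathfrak{g})\colon \exp_G(x)\in(G^\sigma)_0\} \ =\ (\exp_G|_{\mathfrak{z}(\mathfrak{g})})^{-1}\big((G^\sigma)_0\big),
$$
which is the claim. There is no genuine obstacle here; the argument is essentially a translation of Proposition~\ref{prop:periodSpace=kernel} through formula (\ref{eqn:Exp=q exp}), and the only point requiring a moment's care is the verification that $\mathfrak{z}(\mathfrak{g})\subseteq\mathfrak{g}_-$ so that the two restrictions of $\exp_G$ agree.
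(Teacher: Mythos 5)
Your proposal is correct and follows essentially the same route as the paper: identify $(M,b)$ with $G/(G^\sigma)_0$ via Proposition~\ref{prop:1-connectedQuotient} and the Integrability Theorem, apply Proposition~\ref{prop:periodSpace=kernel}, and translate the kernel condition through $\Exp_{G/(G^\sigma)_0}=q\circ\exp_G|_{\mathfrak{g}_-}$. The only difference is that you spell out the (correct) observation that $\mathfrak{z}(\mathfrak{g})=\mathfrak{z}(\mathfrak{m})\subseteq\mathfrak{g}_-$, which the paper records just before this passage rather than inside the proof.
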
	
\begin{proof}
	By the Integrability Theorem and Proposition~\ref{prop:1-connectedQuotient}, we can identify $(M,b)$ with\linebreak $G/(G^\sigma)_0$, since we have $\Lts(G/(G^\sigma)_0)=\mathfrak{g}_-=\mathfrak{m}$. The exponential map $\Exp_{G/(G^\sigma)_0}$ is given by $q\circ \exp_G|_{\mathfrak{m}}$ with the quotient map $q\colon G\rightarrow G/(G^\sigma)_0$, so that we obtain
	\[\Pi(\mathfrak{m})\ =\ \ker(\Exp_{G/(G^\sigma)_0}|_{\mathfrak{z}(\mathfrak{g})}) \ =\ (\exp_G|_{\mathfrak{z}(\mathfrak{g})})^{-1}((G^\sigma)_0). \qedhere\]
\end{proof}
\begin{example}[Period group of a loop triple system]
	Let $(M,b)$ be a pointed 2-connected symmetric space with Lie triple system $\mathfrak{m}$, i.e., $M$ is connected and its homotopy groups $\pi_1(M)$ and $\pi_2(M)$ are trivial. Then the loop space $\Omega(M,b)$ is 1-connected (cf.\ \cite[Cor.~VII.4.4]{Bre93}), so that the period group of the loop triple system $\Omega(\mathfrak{m})$ of $\mathfrak{m}$ is given by
	$\Pi(\Omega(\mathfrak{m}))=\ker(\Exp_{\Omega(M,b)}|_{\mathfrak{z}(\Omega(\mathfrak{m}))})$
	(cf.\ Proposition~\ref{prop:periodSpace=kernel}). Since the Lie triple bracket on $\Omega(\mathfrak{m})$ is defined pointwise and since for each $x\in\mathfrak{m}$ and $t\in [0,1]$, there exists a loop $\gamma\in\Omega(\mathfrak{m})$ with $\gamma(t)=x$, the center $\mathfrak{z}(\Omega(\mathfrak{m}))$ is given by $\Omega(\mathfrak{z}(\mathfrak{m}))$. Therefore, we obtain
	$$\Pi(\Omega(\mathfrak{m})) \ =\ \ker(\Omega(\Exp_{(M,b)})|_{\Omega(\mathfrak{z}(\mathfrak{m}))}) \ =\ \Omega(\ker(\Exp_{(M,b)}|_{\mathfrak{z}(\mathfrak{m})})) \ =\ \Omega(\Pi(\mathfrak{m}))$$
	(cf.\ Proposition~\ref{prop:periodSpace=kernel}). By Theorem~\ref{th:integrabilityCriterion}, the period group $\Pi(\mathfrak{m})$ is discrete. Hence it follows that $\Pi(\Omega(\mathfrak{m}))=\{0\}$.

\end{example}
We now construct an example of a Lie triple system with one-dimensional center and period group isomorphic to $\ZZ$.
\begin{example} \label{ex:u(H)}
	Let $(H,I)$ be an infinite-dimensional\footnote{The infinite-dimensionality is not required until Kuiper's Theorem is applied.} complex Hilbert space $H$ with a conjugation $I$, i.e., an antilinear isometry satisfying $I^2=\id_{H}$. The unitary group
	$U(H):=\{g\in \GL(H)\colon g^\ast=g^{-1}\}$
	(where $\GL(H)$ is related to $\CC$) is a real Lie subgroup of $\GL(H)$ and can be endowed with the involutive automorphism
	$\sigma\colon U(H)\rightarrow U(H)$, $g\mapsto IgI$.
	Its Lie algebra
	$\mathfrak{u}(H):=\{X\in B(H)\colon X^\ast=-X\}$
	of skew-hermitian operators is endowed with the involutive automorphism
	$L(\sigma)\colon \mathfrak{u}(H)\rightarrow \mathfrak{u}(H)$, $X\mapsto IXI$.
	
	Considering the real form $H_\RR:=\{v\in H\colon I(v)=v\}$ of the complex Hilbert space $H$, there is a canonical closed embedding
	$\varepsilon\colon B(H_\RR)\hookrightarrow B(H),\ X\mapsto X_\CC$
	satisfying $X_\CC|_{H_\RR}=X$ and $(X_\CC|_{iH_\RR})(iv)=iX(v)$ (for all $v\in H_\RR$), whose image is $\im(\varepsilon)=\{X\in B(H)\colon XI = IX\}$ and that maps $\GL(H_\RR)$ onto $\{g\in \GL(H)\colon gI = Ig\}$.
	
	Then the fixed point group $U(H)^\sigma$ is given by
	$$U(H)^\sigma \ =\ U(H) \cap \varepsilon(\GL(H_\RR)) \ =\ \varepsilon(O(H_\RR)) \ \cong\ O(H_\RR) :=\{g\in\GL(H_\RR)\colon g^\top = g^{-1}\}$$
	and its Lie algebra $\mathfrak{u}(H)_+$ by
	$$\mathfrak{u}(H)_+ \ =\ \mathfrak{u}(H) \cap \im(\varepsilon) \ =\ \varepsilon(\mathfrak{o}(H_\RR)) \ \cong\ \mathfrak{o}(H_\RR) :=\{X\in B(H_\RR)\colon X^\top = -X\}.$$
	Further we have
	$$\mathfrak{u}(H)_- \ =\ \mathfrak{u}(H) \cap i\im(\varepsilon) \ =\ i\varepsilon(\Sym(H_\RR)) \quad \text{with } \Sym(H_\RR) :=\{X\in B(H_\RR)\colon X^\top = X\}.$$
	
	It is well known that the center $\mathfrak{z}(\mathfrak{u}(H))$ of the Lie algebra $\mathfrak{u}(H)$ is given by $\RR i\id_H$. We claim that it coincides with the center $\mathfrak{z}(\mathfrak{u}(H)_-)$ of the Lie triple system $\mathfrak{u}(H)_-$.
	From $\mathfrak{z}(\mathfrak{u}(H))=\RR i\id_H\subseteq \mathfrak{u}(H)_-$, we deduce that $\RR i\id_H=\mathfrak{z}(\mathfrak{u}(H))_- \subseteq \mathfrak{z}(\mathfrak{u}(H)_-)$ (cf.\ Section~\ref{sec:symLieAlgAndLieGrp}), so that it remains to show that $\RR i\id_H \supseteq \mathfrak{z}(\mathfrak{u}(H)_-)$.
	
	Given any $i\varepsilon(X)\in \mathfrak{z}(\mathfrak{u}(H)_-)$ (with $X\in\Sym(H_\RR)$), we have $[[i\varepsilon(X),i\varepsilon(Y)],i\varepsilon(Z)]=0$ for all $Y,Z\in\Sym(H_\RR)$, entailing $[[X,Y],Z]=0$ in the Lie algebra $B(H_\RR)$. For each $v\in H_\RR\backslash \{0\}$, we define $T_v\in \Sym(H_\RR)$ by $T_v(x\oplus y):=x\oplus 0$ with respect to the decomposition $H_\RR=\RR v \oplus v^\bot$ and note that $T_v^2 = T_v$.
	Because of
	\begin{equation} \label{eqn:[A,T_v](v)}
		[A,T_v](v) = A(v) - T_v(A(v)) \quad \text{for all $A\in B(H_\RR)$,}
	\end{equation}
	we have
	\begin{eqnarray*}
		[[X,T_v],T_v](v) &=& [X,T_v](v) - T_v([X,T_v](v)) \\
		&=&  X(v) - T_v(X(v)) - T_v(X(v)) + (T_vT_vX)(v)
		\ =\ X(v) - T_v(X(v)),
	\end{eqnarray*}
	and hence $X(v)=T_v(X(v))\in\RR v$. Thus, every vector $v\in H_{\RR}$ is an eigenvector of $X$, so that we have $X=\lambda\id_{H_{\RR}}$ for some $\lambda\in\RR$ by a simple argument of linear algebra. It follows that $i\varepsilon(X)=\lambda i \id_H$, i.e., $\RR i\id_H \supseteq \mathfrak{z}(\mathfrak{u}(H)_-)$.
		
	Knowing now that
	$$\mathfrak{z}(\mathfrak{u}(H)) \ =\ \RR i\id_H \ =\ \mathfrak{z}(\mathfrak{u}(H)_-)$$
	and taking into account that $U(H)$ is 1-connected (actually contractible) by Kuiper's Theorem (cf.\ \cite{Nee02Classical}), we can apply Corollary~\ref{cor:periodSpace&Group&Kernel} and obtain
	$$\Pi(\mathfrak{u}(H)_-) \ =\ (\exp_{U(H)}|_{\mathfrak{z}(\mathfrak{u}(H))})^{-1}((U(H)^\sigma)_0) \ =\ (\exp_{U(H)}|_{\RR i\id_H})^{-1}(U(H)^\sigma),$$
	since $U(H)^\sigma\cong O(H_\RR)$ is connected (by Kuiper's Theorem). Since $\exp_{U(H)}$ is a restriction of the exponential map of $\GL(H)$ that is given by $\exp\colon B(H)\rightarrow \GL(H)$, $X\mapsto \sum_{n=0}^\infty \frac{X^n}{n!}$ (cf.\ \cite[Prop.~IV.9]{Nee04}), we have $\exp(t i\id_H)= e^{it}\id_H$ for all $t\in \RR$. The condition\linebreak $e^{it}\id_H\in U(H)^\sigma$ with  $U(H)^\sigma = U(H)\cap \varepsilon(\GL(H_\RR))$ is satisfied if and only if $e^{it}\id_HI=Ie^{it}\id_H$, i.e., $Ie^{-it}\id_H=Ie^{it}\id_H$. This is equivalent to $1=e^{2it}$, so that the period group of $\mathfrak{u}(H)_-$ is given by
	$$\Pi(\mathfrak{u}(H)_-) \ =\ \ZZ \pi i \id_H \ \cong\ \ZZ.$$
	To compare this with the period group of $\mathfrak{u}(H)$, we compute
	\begin{eqnarray*}
		\Pi(\mathfrak{u}(H)) &=& \ker(\exp_{U(H)}|_{\RR i\id_H}) \ =\ \{ti\id_H \in \RR i \id_H\colon e^{it}=1\} \\
		&=& \ZZ 2\pi i\id_H \ =\ 2\Pi(\mathfrak{u}(H)_-) \ \cong\ \ZZ
	\end{eqnarray*}
	(cf.\ Remark~\ref{rem:periodGroup=kernel}).
\end{example}
%
%
%
%
%
%
\subsection{Constructing Non-Integrable Quotients}
In this subsection, we construct non-integrable Lie triple systems by finding suitable quotients of integrable Lie triple systems with non-trivial period group.
\begin{lemma} \label{lem:sequence2x_n-y_n}
	Let $\mathfrak{m}$ be a Lie triple system, $\mathfrak{n}\unlhd \mathfrak{m}$ a closed ideal and $q\colon \mathfrak{m}\rightarrow \mathfrak{m}/\mathfrak{n}$ the corresponding quotient morphism. Then the following are equivalent:
	\begin{enumerate}
		\item[\rm (a)] The subgroup $q(\Pi(\mathfrak{m}))\leq \mathfrak{m}/\mathfrak{n}$ is not discrete.
		\item[\rm (b)] There exist sequences $(x_n)_{n\in\NN}$ in $\Pi(\mathfrak{m})\backslash\mathfrak{n}$ and $(y_n)_{n\in\NN}$ in $\mathfrak{n}$ with $\lim_{n\to\infty} 2x_n-y_n = 0$ in $\mathfrak{m}$.
	\end{enumerate}
\end{lemma}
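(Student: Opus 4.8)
The plan is to unwind both conditions into the single statement that $0$ fails to be isolated in the additive subgroup $q(\Pi(\mathfrak{m}))$ of the Banach space $\mathfrak{m}/\mathfrak{n}$. First I would record that $\Pi(\mathfrak{m})$ is an additive subgroup of $\mathfrak{z}(\mathfrak{m})\subseteq\mathfrak{m}$ and that $q$ is a continuous linear surjection with $\ker(q)=\mathfrak{n}$, so that $q(\Pi(\mathfrak{m}))$ is an additive subgroup of $\mathfrak{m}/\mathfrak{n}$. Since the topology of an additive topological group is translation invariant, such a subgroup is discrete if and only if $0$ is isolated in it, which in the metrizable space $\mathfrak{m}/\mathfrak{n}$ is equivalent to the non-existence of a sequence in $q(\Pi(\mathfrak{m}))\setminus\{0\}$ converging to $0$. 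Thus (a) is precisely the assertion that such a null sequence exists, and it remains to match this up with the sequences in (b).

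For the implication (b)$\Rightarrow$(a), I would take the given $x_n\in\Pi(\mathfrak{m})\setminus\mathfrak{n}$ and $y_n\in\mathfrak{n}$ with $2x_n-y_n\to 0$. Applying the continuous map $q$ and using $q(y_n)=0$ (as $y_n\in\mathfrak{n}=\ker(q)$) yields $2q(x_n)=q(2x_n-y_n)\to 0$, hence $q(x_n)\to 0$. Moreover $x_n\notin\mathfrak{n}$ gives $q(x_n)\neq 0$, so $(q(x_n))_{n\in\NN}$ is a sequence in $q(\Pi(\mathfrak{m}))\setminus\{0\}$ tending to $0$; therefore $q(\Pi(\mathfrak{m}))$ is not discrete.

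For the converse (a)$\Rightarrow$(b), non-discreteness of $q(\Pi(\mathfrak{m}))$ produces a sequence $z_n=q(x_n)$ with $x_n\in\Pi(\mathfrak{m})$, $z_n\neq 0$ and $z_n\to 0$; here $z_n\neq 0$ forces $x_n\notin\mathfrak{n}$. The point is now to turn convergence in the quotient into the existence of the $y_n$: since $\mathfrak{n}$ is closed, $\mathfrak{m}/\mathfrak{n}$ carries the quotient norm $\|q(x)\|=\inf_{y\in\mathfrak{n}}\|x-y\|$, so $z_n\to 0$ means $\inf_{y\in\mathfrak{n}}\|x_n-y\|\to 0$, and choosing near-infimizers $y_n'\in\mathfrak{n}$ with $\|x_n-y_n'\|\to 0$ gives $x_n-y_n'\to 0$. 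Setting $y_n:=2y_n'\in\mathfrak{n}$ then yields $2x_n-y_n=2(x_n-y_n')\to 0$, as required.

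There is no genuinely hard step here; the only point that needs care is the passage between convergence in $\mathfrak{m}/\mathfrak{n}$ and the lifted sequences, which rests on the description of the quotient norm (equivalently, on the openness of the quotient map $q$). The factor $2$ in (b) is cosmetic, since $\mathfrak{n}$ is a linear subspace and hence stable under scaling; it is kept only so that the statement reads against the symmetric-space product $x\cdot y=2x-y$ (cf.\ Lemma~\ref{lem:Exp|_z(m)}), which is what makes (b) the convenient form for the subsequent construction of non-integrable quotients.
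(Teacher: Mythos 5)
Your proof is correct and follows essentially the same route as the paper's: both directions reduce to matching null sequences in $q(\Pi(\mathfrak{m}))\setminus\{0\}$ with null sequences of the form $2x_n-y_n$ upstairs, pushing forward with $q$ for (b)$\Rightarrow$(a) and lifting for (a)$\Rightarrow$(b). The one genuine difference is in the lifting step: the paper invokes Michael's Selection Theorem to realize a null sequence in $q(2\Pi(\mathfrak{m}))\setminus\{0\}$ as the image of a null sequence in $(2\Pi(\mathfrak{m})+\mathfrak{n})\setminus\mathfrak{n}$, whereas you obtain the same conclusion elementarily from the quotient norm $\|q(x)\|=\inf_{y\in\mathfrak{n}}\|x-y\|$ by choosing near-infimizers $y_n'\in\mathfrak{n}$. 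Since only a sequence (not a continuous selection) needs to be lifted, your argument is a lighter and fully adequate substitute; the rest of your reasoning, including the observation that scaling by $2$ is harmless because $\mathfrak{n}$ is a linear subspace and discreteness is a translation-invariant property detected at $0$, matches the paper's.
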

\begin{proof}
	Because of $2(\Pi(\mathfrak{m})\backslash\mathfrak{n})-\mathfrak{n} =(2\Pi(\mathfrak{m})+\mathfrak{n})\backslash\mathfrak{n}$, Condition (b) is equivalent to the existence of a sequence $(z_n)_{n\in\NN}$ in $(2\Pi(\mathfrak{m})+\mathfrak{n})\backslash\mathfrak{n}$ with $\lim_{n\to\infty} z_n = 0$. The image of such a sequence $(z_n)_{n\in\NN}$ under $q$ is a null sequence lying in $q(2\Pi(\mathfrak{m}))\backslash \{0\}$. Conversely, by Michael's Selection Theorem (cf.\ \cite{Mic56} and also \cite{BG52}), every null sequence lying in $q(2\Pi(\mathfrak{m}))\backslash \{0\}$ can be obtained as the image of such a sequence $(z_n)_{n\in\NN}$. Thus, Condition (b) is equivalent to the existence of a sequence $(w_n)_{n\in\NN}$ in $2q(\Pi(\mathfrak{m}))\backslash \{0\}$ with $\lim_{n\to\infty} w_n = 0$, hence to Condition (a).
\end{proof}
\begin{theorem}[Integrability Criterion for quotients]
\label{th:necessaryCondForIntegrabOfQuot}
	Let $(M,b)$ be a pointed 1-connected symmetric space with Lie triple system $\mathfrak{m}$. Let $\mathfrak{n}\unlhd \mathfrak{m}$ be a closed ideal and $q\colon \mathfrak{m}\rightarrow \mathfrak{m}/\mathfrak{n}$ the corresponding quotient morphism. The conditions
	\begin{enumerate}
		\item[\rm (a)] The quotient $\mathfrak{m}/\mathfrak{n}$ is integrable.
		\item[\rm (b)] The normal integral subspace $N:=\langle\Exp_{(M,b)}(\mathfrak{n})\rangle \unlhd M$ is a closed symmetric subspace.
	\end{enumerate}
	are equivalent and imply the condition
	\begin{enumerate}
		\item[\rm (c)] The subgroup $q(\Pi(\mathfrak{m}))\leq \mathfrak{m}/\mathfrak{n}$ is discrete.
	\end{enumerate}
\end{theorem}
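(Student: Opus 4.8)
The plan is to prove (a)$\Leftrightarrow$(b) and then to deduce (c) from (a). For (b)$\Rightarrow$(a), note that $N=\langle\Exp_{(M,b)}(\mathfrak{n})\rangle$ is the connected integral subspace associated with the closed ideal $\mathfrak{n}$ and is therefore connected and normal (cf.\ Section~\ref{sec:subspacesAndQuotients}). Assuming (b), it is moreover a closed symmetric subspace, so Theorem~\ref{th:quotients} makes $M/N$ a symmetric space with Lie triple system $\mathfrak{m}/\mathfrak{n}$, whence $\mathfrak{m}/\mathfrak{n}$ is integrable.

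For (a)$\Rightarrow$(b), I would choose a pointed symmetric space $(\bar M,\bar b)$ with Lie triple system $\mathfrak{m}/\mathfrak{n}$ and, since $(M,b)$ is 1-connected, integrate $q$ via the Integrability Theorem to a morphism $Q\colon(M,b)\rightarrow(\bar M,\bar b)$ with $\Lts(Q)=q$. By Proposition~\ref{prop:kernelOfMorphismOfPointedSymSpaces}, the kernel $\ker(Q)$ is a closed symmetric subspace with Lie triple system $\ker(q)=\mathfrak{n}$, and its basic connected component is precisely $\langle\Exp_{(M,b)}(\mathfrak{n})\rangle=N$ (cf.\ Section~\ref{sec:subspacesAndQuotients}). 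As a connected component of the manifold $\ker(Q)$, the subspace $N$ is open and closed in $\ker(Q)$, hence a closed symmetric subspace of $M$; this is (b).

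To obtain (c) from (a), I would keep $Q$ and use $\Pi(\mathfrak{m})=\ker(\Exp_{(M,b)}|_{\mathfrak{z}(\mathfrak{m})})$ from Proposition~\ref{prop:periodSpace=kernel}. For $x\in\Pi(\mathfrak{m})$ we have $\Exp_{(M,b)}(x)=b$, and since $Q$ intertwines the exponential maps, $\Exp_{(\bar M,\bar b)}(q(x))=Q(b)=\bar b$; thus $q(\Pi(\mathfrak{m}))$ is a subgroup of $(\mathfrak{m}/\mathfrak{n},+)$ contained in $\ker(\Exp_{(\bar M,\bar b)})$. As $\Exp_{(\bar M,\bar b)}$ is a local diffeomorphism at $0$, there is a $0$-neighborhood $V$ with $\ker(\Exp_{(\bar M,\bar b)})\cap V=\{0\}$, so $0$ is isolated in $q(\Pi(\mathfrak{m}))$ and this subgroup is discrete.

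The main obstacle is the passage to (c): the natural idea of comparing $q(\Pi(\mathfrak{m}))$ with $\Pi(\mathfrak{m}/\mathfrak{n})$ through the functoriality of Proposition~\ref{prop:functorialityOfPeriodSpace} is blocked, because $q$ need not map $\mathfrak{z}(\mathfrak{m})$ into $\mathfrak{z}(\mathfrak{m}/\mathfrak{n})$; the detour through $\ker(\Exp_{(\bar M,\bar b)})$ circumvents it. As an alternative to the argument above, (b)$\Rightarrow$(c) can be shown by contraposition: if $q(\Pi(\mathfrak{m}))$ fails to be discrete, Lemma~\ref{lem:sequence2x_n-y_n} produces sequences $x_n\in\Pi(\mathfrak{m})\setminus\mathfrak{n}$ and $y_n\in\mathfrak{n}$ with $2x_n-y_n\to 0$, and Lemma~\ref{lem:Exp|_z(m)} gives $\Exp_{(M,b)}(2x_n-y_n)=b\cdot\Exp_{(M,b)}(y_n)\in N$ with limit $b$, contradicting the chart property of a symmetric subspace in Proposition~\ref{prop:expChartOfSubsymSpace} (which would force $x_n\in\mathfrak{n}$).
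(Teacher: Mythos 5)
Your proof of the equivalence (a)$\Leftrightarrow$(b) is correct and follows essentially the same path as the paper: (b)$\Rightarrow$(a) via Theorem~\ref{th:quotients}, and (a)$\Rightarrow$(b) by integrating the quotient morphism, applying Proposition~\ref{prop:kernelOfMorphismOfPointedSymSpaces}, and identifying $N$ with the basic connected component of the kernel.

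Where you genuinely diverge is in the derivation of (c). The paper proves (b)$\Rightarrow$(c) by contradiction: it invokes Lemma~\ref{lem:sequence2x_n-y_n} (whose proof rests on Michael's Selection Theorem) to produce sequences $x_n\in\Pi(\mathfrak{m})\setminus\mathfrak{n}$, $y_n\in\mathfrak{n}$ with $2x_n-y_n\to 0$, then uses Lemma~\ref{lem:Exp|_z(m)} together with Proposition~\ref{prop:periodSpace=kernel} to place $\Exp_{(M,b)}(2x_n-y_n)$ in $N$, and derives a contradiction from the normal-chart property of the symmetric subspace $N$ (Proposition~\ref{prop:expChartOfSubsymSpace}) --- exactly the alternative you sketch at the end. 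Your primary argument instead deduces (c) directly from (a): since $\Pi(\mathfrak{m})=\ker(\Exp_{(M,b)}|_{\mathfrak{z}(\mathfrak{m})})$ and the integrated morphism $Q$ intertwines the exponential maps, $q(\Pi(\mathfrak{m}))$ is an additive subgroup of $\mathfrak{m}/\mathfrak{n}$ contained in $\Exp_{(\bar M,\bar b)}^{-1}(\bar b)$, in which $0$ is isolated because $\Exp_{(\bar M,\bar b)}$ is a local diffeomorphism at $0$; a subgroup of a topological group with isolated identity is discrete. This route is correct and arguably more elementary: it bypasses both the selection-theorem lemma and the chart property of $N$, at the cost of using only what condition (a) provides (the existence of an integrating space for $\mathfrak{m}/\mathfrak{n}$) rather than the geometric description of $N$ inside $M$. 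Since (a) and (b) are already shown equivalent, either starting point is legitimate, and both arguments establish the theorem.
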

\begin{proof}
	(a)$\Rightarrow$(b): Let $(Q,b_Q)$ be a pointed 1-connected symmetric space with $\Lts(Q,b_Q)=\mathfrak{m}/\mathfrak{n}$. If we consider the unique morphism $q\colon (M,b)\rightarrow (Q,b_Q)$ for which $\Lts(q)\colon \mathfrak{m}\rightarrow \mathfrak{m}/\mathfrak{n}$ is the quotient morphism, then $\ker(q)$ is a closed symmetric subspace of $M$ with $\Lts(\ker(q))=\mathfrak{n}$ (cf.\ Proposition~\ref{prop:kernelOfMorphismOfPointedSymSpaces}). It follows that its basic connected component $(\ker(q))_0$ coincides with $N$, so that $N$ is a symmetric subspace by uniqueness (cf.\ Proposition~\ref{prop:integralSubreflectionSpace}). Further, $N$ is closed in $M$, since $(\ker(q))_0$ is an open and hence closed subspace of $\ker(q)$ (cf.\ Section~\ref{sec:subspacesAndQuotients}).
	
	(b)$\Rightarrow$(a): By Theorem~\ref{th:quotients}, $M/N$ carries the structure of a pointed symmetric space with Lie triple system $\mathfrak{m}/\mathfrak{n}$.
	
	(b)$\Rightarrow$(c): To reach a contradiction, suppose that $q(\Pi(\mathfrak{m}))$ is not discrete. By Lemma~\ref{lem:sequence2x_n-y_n}, there then exist sequences $(x_n)_{n\in\NN}$ in $\Pi(\mathfrak{m})\backslash\mathfrak{n}$ and $(y_n)_{n\in\NN}$ in $\mathfrak{n}$ with $\lim_{n\to\infty} 2x_n-y_n = 0$ in $\mathfrak{m}$. By Proposition~\ref{prop:expChartOfSubsymSpace}, there is an open $0$-neighborhood $V\subseteq\mathfrak{m}$ such that $\Exp_{(M,b)}|_V$ is a diffeomorphism onto an open subset of $M$ and $\Exp_{(M,b)}(V\cap \mathfrak{n}) \ =\ \Exp_{(M,b)}(V) \cap N$. The sequence $(z_n)_{n\in\NN}$ with $z_n:=2x_n-y_n$ satisfies $z_n\notin \mathfrak{n}$ (for all $n\in\NN$), since $x_n\notin \mathfrak{n}$, but $y_n\in \mathfrak{n}$. On the other hand, because of $x_n \in\Pi(\mathfrak{m})$, we have
	$$\Exp_{(M,b)}(z_n) \ =\ \Exp_{(M,b)}(x_n)\cdot\Exp_{(M,b)}(y_n) \ \in\ b\cdot N \ =\ N $$
	by Lemma~\ref{lem:Exp|_z(m)} and Proposition~\ref{prop:periodSpace=kernel}. From $\lim_{n\to\infty} z_n = 0$, we deduce that there exists an $n_0\in\NN$ with $z_{n_0}\in V$, so that
	$$\Exp_{(M,b)}(z_{n_0}) \ \in\ \Exp_{(M,b)}(V)\cap N \ =\ \Exp_{(M,b)}(V\cap \mathfrak{n}).$$
	Hence it follows that $z_{n_0}\in \mathfrak{n}$, which leads to a contradiction.
\end{proof}
\begin{example}\label{ex:non-integrableQuotient}
	Let $\mathfrak{m}$ be an integrable Lie triple system with non-trivial period group $\Pi(\mathfrak{m})\neq \{0\}$ (cf., e.g., Example~\ref{ex:u(H)}). Considering some closed ideal $\mathfrak{n}\unlhd\mathfrak{m}$ with $\mathfrak{n}\cap \Pi(\mathfrak{m}) \neq \{0\}$ (e.g.\ $\mathfrak{n}:=\mathfrak{m}$ or $\mathfrak{n}:=\mathfrak{z}(\mathfrak{m})$), we shall show that the quotient $(\mathfrak{m}\times \mathfrak{m})/\{(x,\sqrt{2}x)\in\mathfrak{m}\times\mathfrak{m}\colon x\in\mathfrak{n}\}$ is not integrable by Theorem~\ref{th:necessaryCondForIntegrabOfQuot}.\footnote{Of course, $\sqrt{2}$ can be replaced by any other irrational number.}
	
	Given some $d\in (\mathfrak{n}\cap \Pi(\mathfrak{m}))\backslash \{0\}$, we have
	\begin{equation} \label{eqn:ZdxZd}
		\Pi(\mathfrak{m}\times\mathfrak{m}) \ =\ \Pi(\mathfrak{m})\times\Pi(\mathfrak{m}) \ \supseteq\ \langle d \rangle \times \langle d \rangle \ =\ \ZZ d \times \ZZ d
	\end{equation}
	(cf.\ Remark~\ref{rem:periodSpaceOfDirectProduct}).
	Let
	$q\colon \mathfrak{m}\times\mathfrak{m} \rightarrow (\mathfrak{m}\times\mathfrak{m}) / \{(x,\sqrt{2}x)\in\mathfrak{m}\times\mathfrak{m}\colon x\in\mathfrak{n}\}$
	be the quotient morphism. If $q(\Pi(\mathfrak{m}\times\mathfrak{m}))$ was discrete, then its preimage
	$$\big(\Pi(\mathfrak{m}\times\mathfrak{m}) + \{(x,\sqrt{2}x)\in\mathfrak{m}\times\mathfrak{m}\colon x\in\mathfrak{n}\}\big) \cap (\RR d \times \{0\})$$
	under the injective restriction $q|_{(\RR d \times \{0\})}$ would be discrete, too. The subset
	$$((\ZZ d \times \ZZ d) + \RR(d,\sqrt{2}d)) \cap (\RR d\times \{0\})$$
	(cf.\ (\ref{eqn:ZdxZd})) would then be discrete, which would contradict the non-discreteness of\linebreak $((\ZZ\times\ZZ) + \RR(1,\sqrt{2})) \cap (\RR \times \{0\})$, the former set arising as the image of the latter under the topological embedding $\RR\times\RR \hookrightarrow \mathfrak{m}\times\mathfrak{m}$, $(x,y)\mapsto (xd,yd)$.
	\end{example}
\begin{remark} \label{rem:periodSpaceOfFiniteDimLts}
	The period group of a finite-dimensional Lie triple system is trivial, because otherwise Example~\ref{ex:non-integrableQuotient} would give us a non-integrable finite-dimensional Lie triple system (contradicting Corollary~\ref{cor:finite-dimLtsIsIntegrable}).
\end{remark}
%
%
%
%
%
%
%
\subsection{From Non-Integrable Lie Algebras to Non-Integrable Lie Triple Systems}
Given a Banach--Lie algebra $\mathfrak{g}$, we turn it into a Lie triple system $\mathfrak{g}^+$ by equipping it with the triple bracket $[x,y,z]:=\frac{1}{4}[[x,y],z]$. This is motivated by the following fact: A Banach--Lie group $G$ (with Lie algebra $\mathfrak{g}$) can be considered as a symmetric space (denoted by $G^+$) with multiplication $g\cdot h :=gh^{-1}g$ (\cite[Ex.~3.9]{Nee02Cartan}), whose Lie triple system $\Lts(G^+)$ is given by $\mathfrak{g}^+$ (cf.\ arguments of \cite[p.~81]{Loo69}). The exponential maps $\exp_G$ and $\Exp_{G^+}$ coincide (cf.\ \cite[p.~88]{Loo69} or \cite[Ex.~3.9]{Nee02Cartan}).

Endowing $\mathfrak{g}\times \mathfrak{g}$ with the flip involution $\theta\colon\mathfrak{g}\times\mathfrak{g}\rightarrow \mathfrak{g}\times\mathfrak{g}$, $(x,y)\mapsto (y,x)$ leads to the Lie triple system $(\mathfrak{g}\times\mathfrak{g})_-=\{(x,-x)\in\mathfrak{g}\times\mathfrak{g}\}$, which is isomorphic to $\mathfrak{g}^+$ via the isomorphism $\Phi\colon(\mathfrak{g}\times\mathfrak{g})_-\rightarrow \mathfrak{g}^+$, $(x,-x)\mapsto 2x$.
\begin{lemma}
	Given a Banach--Lie algebra $\mathfrak{g}$, we have $2\Pi(\mathfrak{g})\subseteq \Pi(\mathfrak{g}^+)$.
\end{lemma}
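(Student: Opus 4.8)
The plan is to realise $\mathfrak{g}^+$ as the $(-1)$-eigenspace of the symmetric Banach--Lie algebra $(\mathfrak{g}\times\mathfrak{g},\theta)$ with the flip $\theta$, and then to read off both period groups through the isomorphism $\Phi$ of the preceding paragraph. Concretely, I would set $\mathfrak{m}:=(\mathfrak{g}\times\mathfrak{g})_-=\{(x,-x)\colon x\in\mathfrak{g}\}$, so that $\Phi\colon\mathfrak{m}\rightarrow\mathfrak{g}^+$, $(x,-x)\mapsto 2x$ is an isomorphism of Lie triple systems. Being an isomorphism, $\Phi$ maps center onto center, so applying the functoriality of the period group (Proposition~\ref{prop:functorialityOfPeriodSpace}) to both $\Phi$ and $\Phi^{-1}$ gives $\Phi(\Pi(\mathfrak{m}))=\Pi(\mathfrak{g}^+)$. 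Since $\Phi(a,-a)=2a$, we have $\Phi(\{(a,-a)\colon a\in\Pi(\mathfrak{g})\})=2\Pi(\mathfrak{g})$; hence it suffices to prove the inclusion $\{(a,-a)\colon a\in\Pi(\mathfrak{g})\}\subseteq\Pi(\mathfrak{m})$, as applying $\Phi$ to it yields exactly $2\Pi(\mathfrak{g})\subseteq\Pi(\mathfrak{g}^+)$.

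The key input is Proposition~\ref{prop:Pi(g)_-inPi(m)} applied to the symmetric Lie algebra $(\mathfrak{g}\times\mathfrak{g},\theta)$, whose $(-1)$-eigenspace is $\mathfrak{m}$: it provides $\Pi(\mathfrak{g}\times\mathfrak{g})_-\subseteq\Pi(\mathfrak{m})$. Thus everything reduces to identifying $\Pi(\mathfrak{g}\times\mathfrak{g})_-$ and showing it contains $\{(a,-a)\colon a\in\Pi(\mathfrak{g})\}$. First I would note that $\mathfrak{z}(\mathfrak{g}\times\mathfrak{g})=\mathfrak{z}(\mathfrak{g})\times\mathfrak{z}(\mathfrak{g})$ and that $\theta$ restricts on it to the flip, whence $\mathfrak{z}(\mathfrak{g}\times\mathfrak{g})_-=\{(c,-c)\colon c\in\mathfrak{z}(\mathfrak{g})\}$ and therefore, by the definition of $\Pi(\,\cdot\,)_-$ in Remark~\ref{rem:periodGroupWithInvolution}, $\Pi(\mathfrak{g}\times\mathfrak{g})_-=\Pi(\mathfrak{g}\times\mathfrak{g})\cap\{(c,-c)\colon c\in\mathfrak{z}(\mathfrak{g})\}$.

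It then remains to show $\{(a,-a)\colon a\in\Pi(\mathfrak{g})\}\subseteq\Pi(\mathfrak{g}\times\mathfrak{g})$. Applying the functoriality of the Lie-algebra period group (cf.\ \cite[Lem.~III.3]{GN03}) to the two coordinate embeddings $\mathfrak{g}\hookrightarrow\mathfrak{g}\times\mathfrak{g}$, $x\mapsto(x,0)$ and $x\mapsto(0,x)$ — each a morphism of Banach--Lie algebras carrying center into center — yields $\Pi(\mathfrak{g})\times\{0\}\subseteq\Pi(\mathfrak{g}\times\mathfrak{g})$ and $\{0\}\times\Pi(\mathfrak{g})\subseteq\Pi(\mathfrak{g}\times\mathfrak{g})$. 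Since $\Pi(\mathfrak{g})$ is a subgroup of $\mathfrak{z}(\mathfrak{g})$ we have $-a\in\Pi(\mathfrak{g})$ whenever $a\in\Pi(\mathfrak{g})$, and since $\Pi(\mathfrak{g}\times\mathfrak{g})$ is a group, $(a,-a)=(a,0)+(0,-a)\in\Pi(\mathfrak{g}\times\mathfrak{g})$. As such an element already lies in $\{(c,-c)\colon c\in\mathfrak{z}(\mathfrak{g})\}$, it lies in $\Pi(\mathfrak{g}\times\mathfrak{g})_-$, completing the chain $\{(a,-a)\colon a\in\Pi(\mathfrak{g})\}\subseteq\Pi(\mathfrak{g}\times\mathfrak{g})_-\subseteq\Pi(\mathfrak{m})$ and hence the lemma.

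The argument is essentially bookkeeping, so there is no serious obstacle; the two places demanding care are invoking functoriality for Banach--Lie \emph{algebras} (rather than for triple systems) on the coordinate embeddings, and keeping the factor $2$ in $\Phi$ straight. The latter is in fact the whole content of the factor $2$ in the statement: because $\Phi$ sends $(a,-a)$ to $2a$, the inclusion $\Pi(\mathfrak{g}\times\mathfrak{g})_-\subseteq\Pi(\mathfrak{m})$ transports to $2\Pi(\mathfrak{g})\subseteq\Pi(\mathfrak{g}^+)$ and not to $\Pi(\mathfrak{g})\subseteq\Pi(\mathfrak{g}^+)$.
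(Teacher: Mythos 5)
Your proposal is correct and follows essentially the same route as the paper: realise $\mathfrak{g}^+$ as $(\mathfrak{g}\times\mathfrak{g})_-$ for the flip involution, apply Proposition~\ref{prop:Pi(g)_-inPi(m)} to get $\Pi(\mathfrak{g}\times\mathfrak{g})_-\subseteq\Pi((\mathfrak{g}\times\mathfrak{g})_-)$, and transport through $\Phi$, which turns $(a,-a)$ into $2a$. The only (harmless) deviation is that the paper identifies $\Pi(\mathfrak{g}\times\mathfrak{g})_-$ by citing the product formula $\Pi(\mathfrak{g}\times\mathfrak{g})=\Pi(\mathfrak{g})\times\Pi(\mathfrak{g})$ (Remark~\ref{rem:periodSpaceOfDirectProduct}), whereas you derive the needed inclusion from functoriality applied to the coordinate embeddings, which is equally valid.
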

\begin{proof}
	From Proposition~\ref{prop:Pi(g)_-inPi(m)}, we know that $\Pi(\mathfrak{g}\times\mathfrak{g})_-\subseteq \Pi((\mathfrak{g}\times\mathfrak{g})_-)$. Via the isomorphism $\Phi$, we deduce that $\Phi(\Pi(\mathfrak{g}\times\mathfrak{g})_-)\subseteq \Pi(\mathfrak{g}^+)$. Since $\Pi(\mathfrak{g}\times\mathfrak{g})_-=\{(x,-x)\in\Pi(\mathfrak{g})\times\Pi(\mathfrak{g})\}$ (cf.\ Remark~\ref{rem:periodSpaceOfDirectProduct}) is mapped by $\Phi$ to $2\Pi(\mathfrak{g})$, we obtain the assertion.
\end{proof}
\begin{proposition}
	A Banach--Lie algebra $\mathfrak{g}$ is integrable if and only if the Lie triple system $\mathfrak{g}^+$ is integrable.
\end{proposition}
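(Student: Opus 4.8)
The plan is to prove the two implications separately, handling the forward direction geometrically and the converse through the period groups, using the preceding lemma together with the equivalences already established.

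First I would dispose of the implication ``$\mathfrak{g}$ integrable $\Rightarrow$ $\mathfrak{g}^+$ integrable'', which is immediate from the motivating observation preceding the lemma. If $\mathfrak{g}$ is integrable, choose a Banach--Lie group $G$ with $L(G)=\mathfrak{g}$. Then $G^+$, with the multiplication $g\cdot h:=gh^{-1}g$, is a symmetric space whose Lie triple system is $\mathfrak{g}^+$. Hence $\mathfrak{g}^+$ arises as the Lie triple system of a pointed symmetric space, i.e.\ it is integrable. No period-group machinery is needed here.

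For the converse ``$\mathfrak{g}^+$ integrable $\Rightarrow$ $\mathfrak{g}$ integrable'' I would argue via discreteness of period groups. Applying Theorem~\ref{th:integrabilityCriterion} to the Lie triple system $\mathfrak{g}^+$ (for instance through its standard embedding $S(\mathfrak{g}^+)$, which always satisfies the required center condition $\mathfrak{z}(S(\mathfrak{g}^+))=\mathfrak{z}(\mathfrak{g}^+)$), the integrability of $\mathfrak{g}^+$ is equivalent to the discreteness of $\Pi(\mathfrak{g}^+)$. By the preceding lemma we have $2\Pi(\mathfrak{g})\subseteq\Pi(\mathfrak{g}^+)$. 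Since a subspace of a discrete topological space is discrete, $2\Pi(\mathfrak{g})$ is discrete in its subspace topology, which coincides with the one it inherits as a subset of $\mathfrak{z}(\mathfrak{g})$. Because the dilation $x\mapsto\frac{1}{2}x$ is a topological automorphism of the Banach space $\mathfrak{z}(\mathfrak{g})$ carrying $2\Pi(\mathfrak{g})$ onto $\Pi(\mathfrak{g})$, it follows that $\Pi(\mathfrak{g})$ is discrete as well. By the classical period-group criterion for Banach--Lie algebras (cf.\ \cite{GN03}), this yields the integrability of $\mathfrak{g}$, completing the equivalence.

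I expect the only point requiring genuine care to be this last passage from $\Pi(\mathfrak{g}^+)$ to $\Pi(\mathfrak{g})$: the lemma supplies only the inclusion $2\Pi(\mathfrak{g})\subseteq\Pi(\mathfrak{g}^+)$, not $\Pi(\mathfrak{g})\subseteq\Pi(\mathfrak{g}^+)$, so one must observe explicitly that the spurious factor $2$ does not affect discreteness, using that scaling is a homeomorphism of $\mathfrak{z}(\mathfrak{g})$ and that subgroups of discrete groups inherit a discrete topology. There is no reverse inclusion of period groups available (the factor $2$ reflects the mismatch between $\mathfrak{z}(\mathfrak{g}\times\mathfrak{g})$ and the center of the Lie triple system $(\mathfrak{g}\times\mathfrak{g})_-$), which is precisely why I would prove the implication ``$\mathfrak{g}$ integrable $\Rightarrow$ $\mathfrak{g}^+$ integrable'' directly via the symmetric space $G^+$ rather than attempting to run the period-group comparison in both directions.
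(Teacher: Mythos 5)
Your proposal is correct and follows essentially the same route as the paper: the forward direction via the symmetric space $G^+$ with $\Lts(G^+)=\mathfrak{g}^+$, and the converse via the Integrability Criterion, the inclusion $2\Pi(\mathfrak{g})\subseteq\Pi(\mathfrak{g}^+)$ from the preceding lemma, and the resulting discreteness of $\Pi(\mathfrak{g})$. The only difference is that you make explicit the (harmless) passage from discreteness of $2\Pi(\mathfrak{g})$ to that of $\Pi(\mathfrak{g})$ via the scaling homeomorphism, which the paper leaves implicit.
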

\begin{proof}
	If $\mathfrak{g}$ is integrable to a Lie group $G$, then $\mathfrak{g}^+$ is integrable to $G^+$. Conversely, let $\mathfrak{g}^+$ be integrable. Then, by the Integrability Criterion, the period group $\Pi(\mathfrak{g}^+)$ is discrete, so that also the subset $2\Pi(\mathfrak{g})$ is discrete. Thus the period group $\Pi(\mathfrak{g})$ is discrete, so that $\mathfrak{g}$ is integrable.
\end{proof}
\begin{remark}
	Given an integrable Banach--Lie algebra $\mathfrak{g}$, we moreover know that\linebreak $\Pi(\mathfrak{g})\subseteq \Pi(\mathfrak{g}^+)$. Indeed, let $G$ be a 1-connected Lie group with $L(G)=\mathfrak{g}$.
	Observing that
	$$\mathfrak{z}(\mathfrak{g}^+) \ =\ \{x\in \mathfrak{g}^+\colon [x,y]\in\mathfrak{z}(\mathfrak{g}) \text{ for all } y\in\mathfrak{g}\},$$
	we see that $\mathfrak{z}(\mathfrak{g})\subseteq \mathfrak{z}(\mathfrak{g}^+)$. By Remark~\ref{rem:periodGroup=kernel} and Proposition~\ref{prop:periodSpace=kernel}, we thus obtain
	$$\Pi(\mathfrak{g}) \ =\ \ker(\exp_G|_{\mathfrak{z}(\mathfrak{g})}) \ \subseteq\ \ker(\Exp_{G^+}|_{\mathfrak{z}(\mathfrak{g}^+)}) \ =\ \Pi(\mathfrak{g}^+),$$
	since the exponential maps $\exp_G$ and $\Exp_{G^+}$ coincide.
\end{remark}
%
%
%
%
%
%
%
\subsection{Integrability of Complexifications of Real Banach--Lie Algebras}
In this subsection, where also complex Banach--Lie algebras are considered, we shall frequently speak of \emph{real} Banach--Lie algebras (which were simply called Banach--Lie algebras in earlier sections).
\begin{theorem} 
	Let $\mathfrak{g}$ be a real Banach--Lie algebra and $\mathfrak{g}_\CC = \mathfrak{g}\oplus i\mathfrak{g}$ its complexification. Then the following are equivalent:
	\begin{enumerate}
		\item[\rm (a)] The complex Lie algebra $\mathfrak{g}_\CC$ is integrable.
		\item[\rm (b)] The real Lie algebra $\mathfrak{g}$ and the (real) Lie triple system $i\mathfrak{g}$ both are integrable.
	\end{enumerate}
\end{theorem}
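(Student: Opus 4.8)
The plan is to view the complexification as a symmetric Lie algebra $(\mathfrak{g}_\CC,\theta)$, where $\theta$ is complex conjugation $x+iy\mapsto x-iy$ (for $x,y\in\mathfrak{g}$). Its $(\pm1)$-eigenspaces are $(\mathfrak{g}_\CC)_+=\mathfrak{g}$ and $(\mathfrak{g}_\CC)_-=i\mathfrak{g}$, and the Lie triple system induced on $i\mathfrak{g}$ is exactly the one in (b). Since the center of a complexification is the complexification of the center, $\mathfrak{z}(\mathfrak{g}_\CC)=\mathfrak{z}(\mathfrak{g})\oplus i\mathfrak{z}(\mathfrak{g})$, so that $\mathfrak{z}(\mathfrak{g}_\CC)_+=\mathfrak{z}(\mathfrak{g})$ and $\mathfrak{z}(\mathfrak{g}_\CC)_-=i\mathfrak{z}(\mathfrak{g})$. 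I record at once that in general $i\mathfrak{z}(\mathfrak{g})\subsetneq\mathfrak{z}(i\mathfrak{g})$, so the center hypothesis of Theorem~\ref{th:integrabilityCriterion} \emph{fails} for $(\mathfrak{g}_\CC,\theta)$ with $\mathfrak{m}=i\mathfrak{g}$, and that equivalence cannot be invoked directly; this is precisely what forces both conditions in (b) to appear.

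For the implication (a)$\Rightarrow$(b), I would argue directly. If $\mathfrak{g}_\CC$ is integrable to $G_\CC$, then the inclusion $\mathfrak{g}\hookrightarrow\mathfrak{g}_\CC=L(G_\CC)$ is an injective continuous homomorphism, so $\mathfrak{g}$ is integrable by the criterion recalled in Section~\ref{sec:terminologySub}. Moreover $\mathfrak{g}_\CC$ integrable makes $(\mathfrak{g}_\CC,\theta)$ an integrable symmetric Lie algebra (Remark~\ref{rem:integrabilityOfSymLieAlgebra}), so its $(-1)$-eigenspace $i\mathfrak{g}$ is integrable by Lemma~\ref{lem:integrableSymLieAlgebra}. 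This direction is routine.

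For (b)$\Rightarrow$(a), the strategy is to show that $\Pi(\mathfrak{g}_\CC)$ is discrete, which by the Gl\"ockner--Neeb criterion (Theorem~\ref{th:integrabilityCriterion}, (b)$\Leftrightarrow$(d)) yields integrability, and which by Remark~\ref{rem:im(f)LieSubgroupIffPi(g)Discrete} reduces to the discreteness of the two pieces $\Pi(\mathfrak{g}_\CC)_+\subseteq\mathfrak{z}(\mathfrak{g})$ and $\Pi(\mathfrak{g}_\CC)_-\subseteq i\mathfrak{z}(\mathfrak{g})$. The $(-1)$-part is immediate: Proposition~\ref{prop:Pi(g)_-inPi(m)}, applied to $(\mathfrak{g}_\CC,\theta)$ with $\mathfrak{m}=i\mathfrak{g}$, gives $\Pi(\mathfrak{g}_\CC)_-\subseteq\Pi(i\mathfrak{g})$, and $\Pi(i\mathfrak{g})$ is discrete because $i\mathfrak{g}$ is integrable (Theorem~\ref{th:integrabilityCriterion}, (a)$\Leftrightarrow$(c)); a subgroup of a discrete group is discrete. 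So the whole burden falls on the $(+1)$-part, handled via the real form $\mathfrak{g}$.

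I expect the discreteness of $\Pi(\mathfrak{g}_\CC)_+$ to be the main obstacle. The naive tool, functoriality of the period group along the inclusion $\mathfrak{g}=(\mathfrak{g}_\CC)_+\hookrightarrow\mathfrak{g}_\CC$, only produces $\Pi(\mathfrak{g})\subseteq\Pi(\mathfrak{g}_\CC)_+$, i.e.\ the containment runs the \emph{wrong way}; and since $i\mathfrak{g}$ is not an ideal of $\mathfrak{g}_\CC$ there is no Lie-algebra projection $\mathfrak{g}_\CC\to\mathfrak{g}$ to push periods back. The reverse containment $\Pi(\mathfrak{g}_\CC)_+\subseteq\Pi(\mathfrak{g})$ is moreover false in general: comparing kernels of exponential maps along the covering of the real form by the $1$-connected group with Lie algebra $\mathfrak{g}$ shows that the quotient $\Pi(\mathfrak{g}_\CC)_+/\Pi(\mathfrak{g})$ can be nontrivial, injecting into the fundamental group of the real form. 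The plan is therefore not to bound $\Pi(\mathfrak{g}_\CC)_+$ by $\Pi(\mathfrak{g})$, but to show that this quotient is \emph{discrete}: using the equality $\mathfrak{z}(\mathfrak{g}_\CC)_+=\mathfrak{z}(\mathfrak{g})=\mathfrak{z}((\mathfrak{g}_\CC)_+)$, I would run the period-morphism construction of Section~\ref{sec:periodMorphismOfSymLieAlgebra} relative to the fixed-point data, dualizing the argument that proves Proposition~\ref{prop:Pi(g)_-inPi(m)} but carried out on the even eigenspace and its (genuinely subalgebra) fixed-point group. Since $\Pi(\mathfrak{g})$ is discrete (as $\mathfrak{g}$ is integrable) and the exponential of the $1$-connected integrating group is a local diffeomorphism near $0$, a subgroup of $\mathfrak{z}(\mathfrak{g})$ containing $\Pi(\mathfrak{g})$ with discrete quotient is again discrete, giving the discreteness of $\Pi(\mathfrak{g}_\CC)_+$. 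Combining the two parts yields that $\Pi(\mathfrak{g}_\CC)$ is discrete, whence (a); the factor-$\tfrac12$ indeterminacy of Remark~\ref{rem:periodGroupWithInvolution} between $\Pi(\mathfrak{g}_\CC)$ and $\Pi(\mathfrak{g}_\CC)_\pm$ is harmless, as it does not affect discreteness.
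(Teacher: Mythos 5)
Your setup, your direction (a)$\Rightarrow$(b), and your treatment of the odd part of (b)$\Rightarrow$(a) all match the paper: the reduction to the discreteness of $\Pi(\mathfrak{g}_\CC)_+$ and $\Pi(\mathfrak{g}_\CC)_-$ via Remark~\ref{rem:im(f)LieSubgroupIffPi(g)Discrete}, and the bound $\Pi(\mathfrak{g}_\CC)_-\subseteq\Pi(i\mathfrak{g})$ from Proposition~\ref{prop:Pi(g)_-inPi(m)} combined with the Integrability Criterion, are exactly what the paper does. The genuine gap is in the even part, which you correctly single out as the main obstacle but then do not resolve. Your two supporting ideas do not combine into an argument: the claim that $\Pi(\mathfrak{g}_\CC)_+/\Pi(\mathfrak{g})$ injects into the fundamental group of ``the real form'' presupposes a group integrating $\mathfrak{g}_\CC$ inside which the real form sits --- that is, the very integrability you are trying to establish --- so at this stage of the proof there is no such injection to speak of; and ``dualizing'' Proposition~\ref{prop:Pi(g)_-inPi(m)} to the even eigenspace would produce the containment $\Pi(\mathfrak{g}_\CC)_+\subseteq\Pi((\mathfrak{g}_\CC)_+)=\Pi(\mathfrak{g})$, which is precisely the statement you declare false in general (and which the paper also avoids). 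After the correct observation that naive functoriality only gives $\Pi(\mathfrak{g})\subseteq\Pi(\mathfrak{g}_\CC)_+$, no actual mechanism for the discreteness of $\Pi(\mathfrak{g}_\CC)_+$ is supplied.

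The paper closes this gap by comparing $\Pi(\mathfrak{g}_\CC)_+$ not with $\Pi(\mathfrak{g})$ but with the period group of a third, auxiliary Lie algebra. It quotients by the central ideal $i\mathfrak{z}(\mathfrak{g})\unlhd\mathfrak{g}_\CC$ to obtain $q\colon\mathfrak{g}_\CC\rightarrow\mathfrak{g}\oplus i\mathfrak{g}_{\ad}$; since $q$ maps center onto center and $\ker(q)\subseteq\mathfrak{z}(\mathfrak{g}_\CC)$, the quotient form of functoriality (the Lie algebra analogue of Corollary~\ref{cor:A(Pi(m))=Pi(bar m)}(2), cf.\ \cite{GN03}) gives $q(\Pi(\mathfrak{g}_\CC))=\Pi(\mathfrak{g}\oplus i\mathfrak{g}_{\ad})$; and because $q$ annihilates $\Pi(\mathfrak{g}_\CC)_-\subseteq i\mathfrak{z}(\mathfrak{g})$ while restricting to the identity on $\mathfrak{z}(\mathfrak{g})$, one concludes $\Pi(\mathfrak{g}_\CC)_+\subseteq\Pi(\mathfrak{g}\oplus i\mathfrak{g}_{\ad})$. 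Discreteness then follows from the integrability of $\mathfrak{g}\oplus i\mathfrak{g}_{\ad}$, which is seen by injecting it into $\mathfrak{g}\times(\mathfrak{g}_{\ad}\oplus i\mathfrak{g}_{\ad})$, using the hypothesis on $\mathfrak{g}$ for the first factor and the adjoint embedding of the second into $\gl(\mathfrak{g})\oplus i\gl(\mathfrak{g})=L(\GL(\mathfrak{g}_\CC))$. To repair your outline you would need to replace your even-part paragraph by an argument of this kind, i.e.\ exhibit a concrete integrable Lie algebra whose period group provably contains $\Pi(\mathfrak{g}_\CC)_+$, rather than asserting the discreteness of an abstractly defined quotient.
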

\begin{proof}
	Noting that a complex Lie algebra is integrable if and only if its underlying real Lie algebra is integrable (cf.\ \cite[Prop.~5 in III.6.3]{Bou89LieGroups}), we consider $\mathfrak{g}_\CC$ simply as a real Lie algebra, endow it with the complex conjugation and note that $i\mathfrak{g}=(\mathfrak{g}_\CC)_-$.
	
	(a)$\Rightarrow$(b): Being a real subalgebra of the integrable Lie algebra $\mathfrak{g}_\CC$, the Lie algebra $\mathfrak{g}$ is integrable. By Lemma~\ref{lem:integrableSymLieAlgebra}, $i\mathfrak{g}$ is integrable, since $\mathfrak{g}_\CC$ is so.
	
	(b)$\Rightarrow$(a):
	To see that $\mathfrak{g}_\CC$ is integrable, we shall verify the discreteness of the period group $\Pi(\mathfrak{g}_\CC)$ by checking that $\Pi(\mathfrak{g}_\CC)_+$ and $\Pi(\mathfrak{g}_\CC)_-$ are discrete (cf.\ Remark~\ref{rem:im(f)LieSubgroupIffPi(g)Discrete}).
	From $\Pi(\mathfrak{g}_\CC)_-\subseteq\ \Pi((\mathfrak{g}_\CC)_-)$ (cf.\ Proposition~\ref{prop:Pi(g)_-inPi(m)}), we deduce that $\Pi(\mathfrak{g}_\CC)_-$ is discrete, since $\Pi((\mathfrak{g}_\CC)_-)$ is discrete by the Integrability Criterion. To show the discreteness of $\Pi(\mathfrak{g}_\CC)_+$, we consider the quotient morphism $q\colon\mathfrak{g}_\CC\rightarrow \mathfrak{g}\oplus i\mathfrak{g}_{\ad}$ induced by the ideal $i\mathfrak{z}(\mathfrak{g})\unlhd \mathfrak{g}_\CC$. It is easy to see that $\mathfrak{z}(\mathfrak{g}_\CC) = \mathfrak{z}(\mathfrak{g})\oplus i\mathfrak{z}(\mathfrak{g})$ and $\mathfrak{z}(\mathfrak{g}\oplus i\mathfrak{g}_{\ad}) = \mathfrak{z}(\mathfrak{g})$, so that $q$ maps center onto center. Since we further have $\ker(q)= i\mathfrak{z}(\mathfrak{g})\subseteq \mathfrak{z}(\mathfrak{g}_\CC)$, Corollary~\ref{cor:A(Pi(m))=Pi(bar m)}(2) applies and entails that $q(\Pi(\mathfrak{g}_\CC)) = \Pi(\mathfrak{g}\oplus i\mathfrak{g}_{\ad})$. Together with $\Pi(\mathfrak{g}_\CC)_+\oplus \Pi(\mathfrak{g}_\CC)_- \subseteq \Pi(\mathfrak{g}_\CC)$ (cf.\ Remark~\ref{rem:periodGroupWithInvolution}), this leads to
	$$\Pi(\mathfrak{g}_\CC)_+ \ =\ q(\Pi(\mathfrak{g}_\CC)_+\oplus \Pi(\mathfrak{g}_\CC)_-) \ \subseteq\ \Pi(\mathfrak{g}\oplus i\mathfrak{g}_{\ad}).$$
	It suffices to show that $\mathfrak{g}\oplus i\mathfrak{g}_{\ad}$ is integrable, since this implies the discreteness of $\Pi(\mathfrak{g}\oplus i\mathfrak{g}_{\ad})$ and hence of $\Pi(\mathfrak{g}_\CC)_+$. 
	With regard to the injective morphism
	$$\mathfrak{g}\oplus i\mathfrak{g}_{\ad} \rightarrow \mathfrak{g} \times (\mathfrak{g}_{\ad}\oplus i\mathfrak{g}_{\ad}),\ x+iy\mapsto \big(x,(x+\mathfrak{z}(\mathfrak{g}))+iy\big),$$
	it suffices to check that $\mathfrak{g}_{\ad}\oplus i\mathfrak{g}_{\ad}$ is integrable, keeping in mind that $\mathfrak{g}$ is integrable by assumption.
	For this, we again inject $\mathfrak{g}_{\ad}\oplus i\mathfrak{g}_{\ad}$ into $\gl(\mathfrak{g})\oplus i\gl(\mathfrak{g})$ via the map induced by
	$$\mathfrak{g}\oplus i\mathfrak{g}\rightarrow \gl(\mathfrak{g})\oplus i\gl(\mathfrak{g}),\ x+iy\mapsto [x,\cdot] + i[y,\cdot]$$
	and show that $\gl(\mathfrak{g})\oplus i\gl(\mathfrak{g})$ is integrable.
	In fact, we observe that $\gl(\mathfrak{g})\oplus i\gl(\mathfrak{g})$ is the Lie algebra of the general linear group $\GL(\mathfrak{g}_\CC)$ where $\mathfrak{g}_\CC$ is considered as a \emph{complex} Lie algebra.
\end{proof}
This theorem applies to the complexification of real Banach--Lie groups (cf.\ \cite[Th.~IV.7]{GN03}).
\begin{acknowledgements}
	I am grateful to Karl-Hermann Neeb for his helpful communications and proof reading during my research towards this article. This work was supported by the Technical University of Darmstadt and by the Studienstiftung des deutschen Volkes.
\end{acknowledgements}

\bibliography{paperE}
\bibliographystyle{amsalpha}
\end{document}